\theoremstyle{plain}
\newtheorem{Th}{Theorem}[section]
\newtheorem{Lemma}[Th]{Lemma}
\newtheorem{Cor}[Th]{Corollary}
\newtheorem{Prop}[Th]{Proposition}
\theoremstyle{definition}
\newtheorem*{Def*}{Hypothesis \HH}
\newtheorem*{DefCO}{Hypothesis \COF}
\newtheorem{Def}[Th]{Definition}
\newtheorem{Rem}[Th]{Remark}
\numberwithin{equation}{section}
\def\Xint#1{\mathchoice
{\XXint\displaystyle\textstyle{#1}}%
{\XXint\textstyle\scriptstyle{#1}}%
{\XXint\scriptstyle\scriptscriptstyle{#1}}%
{\XXint\scriptscriptstyle\scriptscriptstyle{#1}}%
\!\int}
\def\XXint#1#2#3{{\setbox0=\hbox{$#1{#2#3}{\int}$ }
\vcenter{\hbox{$#2#3$ }}\kern-.6\wd0}}
\def\dashint{\Xint-}
\def\Yint#1{\mathchoice
    {\YYint\displaystyle\textstyle{#1}}%
    {\YYint\textstyle\scriptstyle{#1}}%
    {\YYint\scriptstyle\scriptscriptstyle{#1}}%
    {\YYint\scriptscriptstyle\scriptscriptstyle{#1}}%
      \!\iint}
\def\YYint#1#2#3{{\setbox0=\hbox{$#1{#2#3}{\iint}$}
    \vcenter{\hbox{$#2#3$}}\kern-.51\wd0}}
\def\longdash{{-}\mkern-3.5mu{-}} 
\def\fiint{\Yint\longdash}
\DeclareMathOperator{\Tr}{Tr}
\DeclareMathOperator{\dist}{dist}
\DeclareMathOperator{\diver}{div}
\DeclareMathOperator{\supp}{supp}
\DeclareMathOperator{\Jac}{Jac}
\newcommand{\R}{\mathbb R}
\renewcommand{\S}{\mathbb S}
\newcommand{\A}{\mathcal A}
\newcommand{\B}{\mathcal B}
\newcommand{\C}{\mathcal C}
\newcommand{\wt}{\widetilde}
\newcommand{\ep}{\hfill $\square$}
\newcommand{\1}{{\mathds 1}}
\newcommand{\I}{\RN{1}}
\newcommand{\II}{\RN{2}}
\newcommand{\III}{\RN{3}}
\newcounter{hcountno}
\newcommand{\hcount}[1]{\refstepcounter{hcountno} \label{#1}}
\newcommand{\HH}{(\hyperref[H1]{$\mathcal H$})} 
\newcommand{\COF}{(\hyperref[H2]{$\mathcal{COF}$})}
\NewDocumentCommand{\RN}{m}
 {
  \textup{ \int_to_Roman:n { #1 } }
 }
\renewcommand*\env@matrix[1][*\c@MaxMatrixCols c]{%
  \hskip -\arraycolsep
  \let\@ifnextchar\new@ifnextchar
  \array{#1}}
\title{The Regularity Problem in Domains with Lower Dimensional Boundaries}
\author[Dai]{Zanbing Dai}
\address{Zanbing Dai. School of Mathematics, University of Minnesota, Minneapolis, MN 55455, USA}
\email{dai00003@umn.edu}
\author[Feneuil]{Joseph Feneuil}
\address{Joseph Feneuil. Mathematical Sciences Institute, Australian National University, Acton, ACT, Australia}
\email{joseph.feneuil@anu.edu.au}
\author[Mayboroda]{Svitlana Mayboroda}
\address{Svitlana Mayboroda. School of Mathematics, University of Minnesota, Minneapolis, MN 55455, USA}
\email{svitlana@math.umn.edu}
\thanks{S. Mayboroda was partly supported by the NSF
RAISE-TAQS grant DMS-1839077 and the Simons foundation grant 563916, SM. J. Feneuil was partially supported by the Simons foundation grant 601941, GD and by the European Research Council via the project ERC-2019-StG 853404 VAREG}
\begin{document}

\maketitle

\begin{abstract}

In the present paper we establish the solvability of the Regularity boundary value problem in domains with lower dimensional boundaries (flat and Lipschitz) for operators whose coefficients exhibit small oscillations analogous to the Dahlberg-Kenig-Pipher condition. 

The proof follows the classical strategy of showing bounds on the square function and the non-tangential maximal function. The key novelty and difficulty of this setting is the presence of multiple non-tangential derivatives. To solve it, we consider a cylindrical system of derivatives and establish new estimates on the ``angular derivatives''. 
\end{abstract}

\maketitle

\tableofcontents

\section{Introduction}

There are three principal types of boundary value problems for elliptic operators with rough ($L^p$) data: Dirichlet, Neumann, and Regularity. The Dirichlet problem consists of establishing the existence and uniqueness of solutions with a given trace on the boundary, the Neumann problem corresponds to prescribing the flux, that is, the normal derivative on the boundary, again, in $L^p$. The Regularity problem postulates that the tangential derivative of the trace of the solution is known, once again, in some $L^p$ space. As such, it can be seen as a companion of the Neumann problem in which the tangential rather than the normal derivative of the solution is given, or as a version of the Dirichlet problem corresponding to the smoother boundary data. 

The Dirichlet problem has received a lot of attention in the past 30-40 years and we will not be able to even briefly mention all the references in the subject. Its well-posedness was established, in particular, for $t$-independent operators on all Lipschitz domains \cite{jerison1981dirichlet, kenig2000new, hofmann2015regularity}, for the the Laplacian on all uniformly rectifiable sets with mild topological conditions \cite{dahlberg1977estimates, hofmann2014uniformb, azzam2021semi, azzam2020harmonic}, which was then extended to the sharp class of the so-called Dahlberg-Kenig-Pipher (DKP) operators \cite{kenig2001dirichlet, dindos2007lp, hofmann2021uniform} and for their analogues in domains with lower-dimensional boundaries \cite{david2019dahlberg, feneuil2018dirichlet}.

The Neumann and Regularity problems in $L^p$ proved to be much more challenging. In particular, concerning the latter, up until recently the only known results pertained to either $t$-independent scenario \cite{kenig1993neumann} or a ``small constant" DKP case \cite{dindovs2017boundary}. The breakthrough article \cite{mourgoglou2021regularity} by Mourgoglou and Tolsa was the first one to consider the regularity problem on domains beyond Lipschitz graphs: they proved the solvability of the regularity problem for the Laplacian on domains with uniformly rectifiable boundaries and some mild topology. Just in the past few months the first ``big constant" DKP result was announced, by two different arguments, by Dindo{\v s}, Hofmann, Pipher \cite{dindos2022regularity} in the half plane and Lipschitz domains, and simultaneously, by Mourgoglou, Poggi, Tolsa \cite{mourgoglou2022lp} on domains with uniformly rectifiable boundaries.

The present paper is devoted to the setting of domains with lower dimensional boundaries. It establishes the solvability of the regularity problem in the complement of $\R^d$, or more generally, of a Lipschitz graph, for an appropriate analogue of the ``small constant" DKP coefficients. The higher co-dimensional setting presented numerous new challenges, particularly, due to the presence of ``torsion", the derivatives which roughly speaking turn the solution around a thin boundary which are not present in the traditional $(n-1)$-dimensional case. Respectively, we had to invent new structural properties of the operators which on one hand, are amenable to the analysis in desired geometric scenarios, and on the other, still allow for a control of the second derivatives of a solution in a square function. All this will be discussed in detail below. 

Let us also mention that in the setting of the domains with lower dimensional boundaries we are bound to work with degenerate elliptic operators, whose coefficients grow as powers of the distance to the boundary. This provides a curious new motivation point. Our operators, as explained below, essentially look like $-{\rm div} \dist(\cdot, \partial\Omega)^\beta \nabla$ with a suitable power $\beta$ depending on the dimension of the set and of the boundary. This is reminiscent of the Caffarelli-Silvestre extension operator which allows one to view the fractional Laplacian $(-\Delta)^\gamma$, $\gamma\in (0,1)$, on $\R^d$ as a Dirichlet-to-Neumann map for the operator $-{\rm div}\, {\rm dist} (\cdot, \R^d)^\beta \nabla $ on $\R^{d+1}$, where $\beta = 1-2\gamma$ (see \cite{caffarelli2007extension} and also an extension to higher powers by A. Chang and co-authors in \cite{chang2017class}).  Respectively, the mapping properties of the Dirichlet-to-Neumann map become the mapping properties of the fractional Laplacian. By the same token, one could view the Dirichlet-to-Neumann map of our operators as an embodiment of a new concept of differentiation or integration on rough lower-dimensional sets, and in this vein the appropriate estimates correspond exactly to the solution of the Regularity and Neumann problems. This paper is the first step in the direction.

Let us now turn to definitions and statements of the main results. Let $0<d<n$ be two integers. If $d=n-1$, the domain $\Omega$ is the half-space $\R^{n}_+ := \{(x,t) \in \R^{d} \times (0,\infty)\}$ and if $d<n-1$, then $\Omega := \R^n \setminus \R^d := \{(x,t) \in \R^d \times (\R^{n-d}\setminus \{0\}) \}$. In the rest of the article, $t$ will be seen as a horizontal vector, and thence $t^T$ will correspond to the vertical vector. It is technically simpler and more transparent to work in $\R^{n}_+$ and $\R^n \setminus \R^d$ rather than a more general graph domain, but the goal is to treat the class of coefficients which would automatically cover the setting of Lipschitz domains via a change of variables -- see Corollary~\ref{cLip}. 

We take an operator $L:=-{\diver}|t|^{d+1-n}\mathcal{A}\nabla$ and the first condition that we impose is of course the ellipticity and boundedness of $\mathcal A$: there exists $\lambda>0$ such that for $\xi, \zeta\in \mathbb{R}^n$, and $(x,t) \in \Omega$,
\begin{align}\label{ELLIP}
\lambda|\xi|^2\leq \mathcal{A}(x,t)\xi\cdot \xi\ \ \text{and}\ \
|\mathcal{A}(x,t)\xi\cdot \zeta|\leq \lambda^{-1}|\xi||\zeta|.
\end{align}
We write \eqref{ELLIP}$_\lambda$ when we want to refer to the constant in \eqref{ELLIP}. Then, we say that $u\in W^{1,2}_{loc}(\Omega)$ is a weak solution to $Lu=0$ if  for any $\varphi\in C^\infty_0(\Omega)$, we have
\begin{align}\label{WSOL}
\iint_{\Omega}\mathcal{A} \nabla u\cdot \nabla \varphi\frac{dt}{|t|^{n-d-1}}dx=0.
\end{align} 

When $d=n-1$ these are the classical elliptic operators and when $d<n-1$ the weight given by the power of distance to the boundary is necessary and natural: if the coefficients are not degenerate, the solutions do not see the lower dimensional sets. For instance, a harmonic function in $\R^n\setminus \R^d$ is the same as a harmonic function in $\R^n$  for sufficiently small $d$. All this is discussed in detail in \cite{david2017elliptic} where we develop the elliptic theory for the operators at hand.  In particular, in the aforementioned work we construct the elliptic measure $\omega_L^X$ associated to $L$ so that for any continuous and compactly supported boundary data $g$, the function
\begin{equation} \label{defug}
u(X) := \int_{\R^d} g(y) \, d\omega^X(y)
\end{equation}
is a weak solution to $Lu=0$, which continuously extends to $\overline{\Omega}$ by taking the values $u= g$ on $\partial \Omega = \R^d$.
 
With this at hand, we turn to the definition of the Regularity problem. The averaged non-tangential maximal function $\wt{N}$ is defined for any function $u\in L^{2}_{loc}(\Omega)$ as
\begin{align}\label{DEFNT}
\wt{N}(u)(x)=\sup_{(z,r)\in\Gamma(x)}u_{W}(z,r),
\end{align}
where $\Gamma(x)$ is the cone $\{(z,r)\in \R^{d+1}_+, \, |z-x|< r\}$,  and $u_{W}(z,r)$ is the $L^2$-average
\begin{align*}
u_{W}(z,r):=\bigg (\fiint_{W(z,r)}|u(y,s)|^2 dy\, ds\bigg )^{\frac{1}{2}}, 
\end{align*}
over the Whitney box
\begin{align}\label{DEFWHIT}
    W(z,r):=\{(y,s)\in \Omega, \, |y-z| < r/2,\,  r/2\leq |s|\leq 2r\}.
\end{align} 
Observe that when $d<n-1$, a Whitney cube is a bounded, annular region, so in particular, the higher co-dimensional Whitney cubes $W(z,r)$ are invariant under rotation around the boundary. We say that the {\bf Regularity problem is solvable in $\mathbf{L^p}$} if for any $g\in C^\infty_0(\R^d)$, the solution given by \eqref{defug} verifies 
\begin{equation} \label{Rp}
\|\wt N(\nabla u)\|_{L^p(\R^d)} \leq C \|\nabla g\|_{L^p(\R^d)}
\end{equation}
with a constant $C>0$ that is independent of $g$. If the Regularity problem is solvable in $L^p$, then we deduce by density that for any $g\in L^1_{loc}(\R^d)$ such that $\|\nabla g\|_{L^p(\R^d)} < \infty$, there exists a solution to $Lu=0$ 
subject to \eqref{Rp} which converges non-tangentially to $g$. The proof of this fact is non-trivial, but classical. See for instance Theorem 3.2 of \cite{kenig1993neumann} for the proof of the non-tangential convergence from the bound \eqref{Rp}, and since the space $\{g\in L^1_{loc}(\R^d), \, \|\nabla g\|_{L^p(\R^d)} < \infty\}$ is homogenous and only equipped of a semi-norm, we need density results analogous to Lemma 5.7, Remark 5.10, Lemma 5.11 in \cite{david2017elliptic}. 

Going further, we say that a function $f$ satisfies {\bf the Carleson measure condition} if $\sup_{W(z,s)}|f|^2\frac{dsdz}{s}$ is a Carleson measure on $\Omega$, that is,  there exists a constant $M\geq 0$ such that
\begin{equation} \label{defCMM}
\sup_{x\in \mathbb{R}^d,r>0} \dashint_{z\in B(x,r)}\int_0^{r} \sup_{W(z,s)}|f|^2\frac{dsdz}{s}\leq  M.
\end{equation}
We write $f\in CM$, or $f\in CM(M)$ when we want to refer to the constant in \eqref{defCMM}.  It is fairly easy to check that $f\in L^\infty(\Omega)$, and we even have
\begin{equation} \label{PpCAL}
\|f\|_{L^\infty(\Omega)}\leq CM^{1/2} \qquad \text{ whenever } f \in CM(M),
\end{equation}
with a constant that depends only on $d$ and $n$. 

The main result of the present paper is as follows.

\begin{Th}\label{THRE3MA} Let $0 \leq d<n$ be two integers.  
For any $\lambda >0$, there exists a small parameter $\kappa >0$ and a large constant $C$,  both depending only on $\lambda$, $d$, and $n$, with the following property. Consider an elliptic operator $L:=-\diver [|t|^{d+1-n} \A \nabla]$ that satisfies \eqref{ELLIP}$_\lambda$ and such that $\A$ can be decomposed as $\A = \B + \C$, $\B$ is a block matrix
\begin{equation} \label{formofB}
\B = \begin{pmatrix} B_1 & B_2 \frac{t}{|t|} \\ \frac{t^T}{|t|}B_3 & b_4 I\end{pmatrix},
\end{equation}
 where $B_1$, $B_2$, $B_3$, and $b_4$ are respectively a $d\times d$ matrix, a $d$-dimensional vertical vector\footnote{Since $t$ is a horizontal vector, $B_2\frac{t}{|t|}$ is seen as a matrix product giving a $d\times (n-d)$ matrix.}, a $d$-dimensional horizontal vector\footnote{That is $\frac{t^T}{|t|}B_3$ is a $(n-d)\times d$ matrix.}, a scalar function, and 
\begin{equation} \label{NB+CareCM}
|t||\nabla B_1| + |t||\nabla B_2| + |t||\nabla B_3| + |t||\nabla b_4| + |\C| \in CM(\kappa).
\end{equation}
Then the Regularity problem is solvable in $L^2(\R^d)$, that is
\begin{equation} \label{N<Tr}
\|\wt N(\nabla u_g)\|_{L^2(\R^d)} \leq C \|\nabla g\|_{L^2(\R^d)}
\end{equation}
whenever $g \in C^\infty_0(\R^d)$ and $u_g$ is a solution to $Lu=0$ given by in \eqref{defug}.
\end{Th}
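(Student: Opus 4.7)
The plan is to follow the classical Dahlberg--Kenig--Pipher strategy: establish matching bounds between the non-tangential maximal function $\wt N(\nabla u)$ and an appropriate square function of $|t|\nabla^2 u$, and then use the equation together with integration by parts to extract $\|\nabla g\|_{L^2(\R^d)}$ from the resulting boundary integrals. The novelty, as flagged in the introduction, is that in codimension $n-d>1$ the gradient contains ``angular'' components that need fresh treatment, and the block structure \eqref{formofB} is precisely what makes such a treatment feasible.

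First I would pass to cylindrical coordinates $(x,r,\theta)\in \R^d\times (0,\infty)\times \S^{n-d-1}$ via $t=r\theta$, so that
$$\nabla u = \bigl(\nabla_x u,\; \theta\,\partial_r u + r^{-1}\nabla_\theta u\bigr).$$
In these coordinates the quadratic form $\B\nabla u\cdot\nabla u$ decouples into a $(d+1)\times(d+1)$ principal part acting on $(\nabla_x u,\partial_r u)$, driven by $B_1, B_2, B_3$, plus the rotationally invariant piece $b_4\, r^{-2}|\nabla_\theta u|^2$. This puts the equation in close analogy with the half-space DKP framework in the $(x,r)$ variables while isolating the angular contribution; every off-diagonal coupling falls into $\C$, which by \eqref{NB+CareCM} is small in Carleson norm.

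Next I would prove two complementary inequalities. The square function bound
$$\int_{\R^d}S(|t|\nabla^2 u)^2\,dx \;\lesssim\; \int_{\R^d}\wt N(\nabla u)^2\,dx$$
follows by testing the differentiated equation $Lu=0$ against a well-chosen cutoff of $|\nabla u|^2$, where the small constant $\kappa$ in \eqref{NB+CareCM} absorbs the error terms involving $|t|\nabla \B$ and $\C$ via \eqref{PpCAL}. The reverse $N$-by-$S$ estimate
$$\int_{\R^d}\wt N(\nabla u)^2\,dx \;\lesssim\; \int_{\R^d}|\nabla g|^2\,dx + \int_{\R^d}S(|t|\nabla^2 u)^2\,dx$$
is proved by a stopping-time/Dahlberg argument: at a Whitney point, write $|\nabla u|^2$ as its boundary value $|\nabla g|^2$ plus a radial integral of $\partial_r|\nabla u|^2$, and expand the latter using \eqref{WSOL} to obtain a solid integral bounded by $\|S(|t|\nabla^2 u)\|_{L^2}\,\|\wt N(\nabla u)\|_{L^2}$ through Cauchy--Schwarz. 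Combining the two inequalities and using $\kappa\ll 1$ to hide $\|\wt N(\nabla u)\|_{L^2}^2$ on the left yields \eqref{N<Tr}.

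The step I expect to be genuinely hard is the square function estimate once the angular second derivatives enter the picture. In codimension one, differentiating $Lu=0$ in each tangential direction $x_j$ produces a divergence-form equation for $\partial_{x_j}u$ whose ellipticity constants match the original. In the present setting, $\partial_r u$ alone does not capture the full normal component, and the commutators $[L,\partial_r]$ and $[L,\nabla_\theta]$ generate lower-order terms, some of which can only be tamed by exploiting the precise block form \eqref{formofB} together with an integration by parts on the sphere $\S^{n-d-1}$; the rotational invariance of the Whitney boxes $W(z,r)$ noted after \eqref{DEFWHIT} is essential here. Establishing this square function bound for the mixed tangential--angular and purely angular second derivatives is exactly the new ``angular derivatives'' estimate promised in the abstract, and it is what makes the entire program go through.
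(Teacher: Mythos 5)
There is a genuine structural gap in the way you propose to close the argument. You state the two inequalities as $\|S(|t|\nabla^2u)\|_{L^2}^2\lesssim\|\wt N(\nabla u)\|_{L^2}^2$ (with $\kappa$ only absorbing the error terms, so the overall constant is of the size of the ellipticity constants, not small) and $\|\wt N(\nabla u)\|_{L^2}^2\lesssim\|\nabla g\|_{L^2}^2+\|S(|t|\nabla^2u)\|_{L^2}^2$. Chaining them gives $\|\wt N\|_2^2\lesssim\|\nabla g\|_2^2+C\|\wt N\|_2^2$ with a non-small $C$, so there is nothing for $\kappa\ll1$ to hide; the smallness must sit in front of the $\wt N$ term of the \emph{square-function} estimate. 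That is exactly the paper's split \eqref{S<Nz}--\eqref{N<Sz}: $S\lesssim \mathrm{trace}+\kappa\,\wt N$ and $\wt N\lesssim S$ (the latter via a good-lambda argument with no boundary term at all), and crucially the trace term that survives the integrations by parts is only $\iint|\nabla_x u|^2\,\partial_r\Psi^3$, i.e.\ tangential derivatives. Your version of the $N$-estimate, obtained by writing $|\nabla u|^2$ at a Whitney point as its boundary value plus a radial integral, needs the boundary value of the \emph{full} gradient, including $\partial_r u$ on $\partial\Omega$ (the Neumann data), which is not controlled by $\|\nabla g\|_2$ — controlling it is essentially the problem being solved. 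In addition, "hiding $\|\wt N\|_2^2$ on the left" presupposes it is finite, which is not known a priori; the paper spends Sections 6--8 on localized estimates with cut-offs, the limiting argument of Theorem \ref{THMAIN1a}, approximation of the coefficients by Lipschitz ones in the spirit of Kenig--Pipher, and convergence of the corresponding solutions, precisely to make this step legitimate and to identify the limit of the trace term with $\|\nabla g\|_2$.

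A second gap concerns the reduction of the full class \eqref{formofB} with the perturbation $\C$. The blocks $B_3$ and $b_4$ are bounded but \emph{not} small, so the coupling $\frac{t^T}{|t|}B_3$ cannot be "thrown into $\C$" as you suggest — only its gradient is Carleson-small by \eqref{NB+CareCM}. Moreover $\C$ is merely bounded measurable, so solutions of the full operator need not lie in $W^{2,2}_{loc}$, and the differentiated equation you propose to test against a cut-off of $|\nabla u|^2$ does not make sense for $L$ itself. The paper handles this by proving the estimates only for the reduced class \HH{} (upper-triangular $\B$, identity lower-right block), removing $B_3$ and normalizing $b_4$ by an explicit bi-Lipschitz change of variables (Proposition \ref{PPLL1J}) which generates new Carleson-small errors, and then invoking the Carleson-perturbation theorem for the Regularity problem (Theorem \ref{ThCarlPert}) to absorb $\C$ and those errors. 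Your cylindrical-derivative/commutator program and the zero-mean-on-spheres idea for the angular terms do match the paper's Sections 2--5, but without the correct placement of the trace and smallness, the finiteness/approximation machinery, and the change-of-variables-plus-perturbation reduction, the outline does not yield Theorem \ref{THRE3MA}.
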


Note that when $d=n-1$ our result corresponds to the main result in  \cite{dindovs2017boundary} by Dindo{\v s}, Pipher, and Rule. In this case, the coefficients of $\B$ satisfy the so-called Dahlberg-Kenig-Pipher (DKP) condition with a small constant and the addition of $\C$ is made possible by the perturbation results \cite{kenig1995neumann, dai2021carleson}. The DKP condition is sharp, that is, its failure could result in the failure of solvability of the Dirichlet problem \cite{fefferman1991theory} and hence, a failure of solvability of the Regularity problem by \cite{fefferman1991theory}. 

In the setting of the domains with lower dimensional boundaries the special structure \eqref{formofB} is new. It is dictated by 
the aforementioned need to control the ``torsion" of the coefficients, that is, not only to control the oscillations of the coefficients in the transversal direction to the boundary, but also to make sure that they are well-behaved, in a very peculiar sense, in the angular coordinate in cylindrical coordinates naturally induced by $\R^n\setminus \R^d$. Roughly speaking, we want to have an almost isometry to some constant coefficient matrix as far as the $t$ direction is concerned. 

One good test for whether our class of coefficients is sound structure-wise is whether it allows for a change of variables that would yield the results on rougher, e.g., Lipschitz, domains. After all, this was an initial motivation for the DKP Carleson conditions on the coefficients in half-space back when Dahlberg suggested them. To this end, consider $d<n-1$ and take a Lipschitz function $\varphi :\, \R^d \mapsto \R^{n-d}$. Let $\Omega_\varphi:= \{(x,t)\in \R^n, \, t \neq \varphi(x)\}$. We set $\sigma:= \mathcal H^{d}|_{\partial \Omega_\varphi}$ to be the $d$-dimensional Hausdorff measure on the graph of $\varphi$, which is the boundary of $\Omega_\varphi$, and we construct the ``smooth distance'' 
\[D_\varphi(X):= \left( \int_{\partial \Omega_\varphi} |X-y|^{-d-\alpha} \, d\sigma(y) \right)^{-\frac1\alpha}, \quad \alpha >0.\]
The quantity $D_\varphi(X)$ is equivalent to $\dist(X,\partial \Omega_\varphi)$, see Lemma 5.1 in \cite{david2019dahlberg}, so the operator $L_\varphi := -\diver [D_\varphi^{d+1-n} \nabla]$ falls under the elliptic theory developed in \cite{david2017elliptic}. Moreover, it was proved that the Dirichlet problem for such an operator $L_\varphi$ is solvable in $L^p$ in a complement of a small Lipschitz graph \cite{feneuil2018dirichlet} and much more generally, in a complement of a uniformly rectifiable set \cite{david2020harmonic, feneuil2020absolute}. It is also explained in the aforementioned works why $D_\varphi$ as opposed to the Euclidean distance has to be used in this context. Using the results from \cite{feneuil2018dirichlet}, one can prove  solvability of the Dirichlet problem in $L^2$.  Here we establish solvability of Regularity problem.

\begin{Cor}\label{cLip}
Let $\varphi: \mathbb{R}^d\rightarrow \mathbb{R}^{n-d}$ be a Lipschitz function, and set $\Omega_\varphi$ and $L_\varphi$ as above. There exists $\kappa>0$ such that if $\|\nabla\varphi\|_{L^\infty(\mathbb{R}^d)}\leq \kappa$, then the Regularity problem is solvable in $L^2(\partial \Omega_\varphi)$.
\end{Cor}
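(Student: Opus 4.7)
The plan is to straighten the boundary of $\Omega_\varphi$ to $\R^d$ via a bi-Lipschitz change of variables $\rho:\R^n\setminus\R^d\to\Omega_\varphi$, and then to verify that the pulled-back operator satisfies the hypotheses of Theorem~\ref{THRE3MA}. Since $\varphi$ is only Lipschitz, the translation $(x,t)\mapsto(x,t+\varphi(x))$ produces off-diagonal Jacobian entries with no scale decay, and for $d<n-1$ the rigid rank-one radial form of the off-diagonal blocks in~\eqref{formofB} prevents absorbing $\nabla\varphi$-terms directly into $\B$. (For $d=n-1$ the radial factor $t/|t|$ degenerates to $\text{sign}(t)=\pm 1$, so the rank-one constraint is vacuous and a mollified translation $\rho(x,t)=(x,t+\varphi*\psi_{|t|}(x))$ suffices, as in Dindo\v s-Pipher-Rule.) For codimension at least two, I would therefore use a \emph{mollified normal-coordinate} parametrization: with $\varphi_s:=\varphi*\psi_s$ for a smooth radial bump $\psi$ of integral one and $Q_s(x):=(I_{n-d}+(\nabla\varphi_s(x))^T\nabla\varphi_s(x))^{-1/2}$, set
\[\rho(x,t):=\bigl(x-(\nabla\varphi_{|t|}(x))\,Q_{|t|}(x)\,t,\;\varphi_{|t|}(x)+Q_{|t|}(x)\,t\bigr).\]
For $\|\nabla\varphi\|_{L^\infty}$ small this is a bi-Lipschitz homeomorphism onto $\Omega_\varphi$ extending to $x\mapsto(x,\varphi(x))$, and the columns of $\partial_t\rho|_{t=0}$ form an orthonormal frame for the normal space of the graph of $\varphi$; hence the metric $J^TJ$ is block-diagonal at $t=0$ with blocks $I_d+(\nabla\varphi)(\nabla\varphi)^T$ and $I_{n-d}$. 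The pullback $\tilde u:=u\circ\rho$ solves $-\diver[|t|^{d+1-n}\A\nabla\tilde u]=0$ with
\[\A=(D_\varphi(\rho)/|t|)^{d+1-n}\,|\det J|\,J^{-1}J^{-T}.\]

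The decomposition $\A=\B+\C$ is built by spherical decomposition in $\omega:=t/|t|$: take $B_1(x,r)$ and $b_4(x,r)$ to be the spherical averages over $|t|=r$ of the top-left $d\times d$ block of $\A$ and of $\frac{1}{n-d}\Tr(\A_{\text{bot-right}})$ respectively; set $B_2=B_3=0$; and let $\C$ absorb the off-diagonal blocks of $\A$, the traceless part of $\A_{\text{bot-right}}$, and the angular residuals of $\A_{\text{top-left}}$ and $\Tr(\A_{\text{bot-right}})$. The block-diagonality of $J^TJ$ at $t=0$ coming from the normal-coordinate choice of $\rho$ forces every residual entry of $\C$ to vanish at $t=0$ and to be pointwise controlled by $|t|\cdot|\nabla^2\varphi_{|t|}|$ plus quadratic-in-$\nabla\varphi$ corrections. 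Combined with the Dahlberg-type Carleson inequality
\[\iint_{B(x_0,r)\times(0,r)}s\,|\nabla^2\varphi_s|^2\,dx\,ds\;\lesssim\;\|\nabla\varphi\|_{L^\infty}^2\,r^d,\]
this yields $|\C|\in CM(C\|\nabla\varphi\|_{L^\infty}^2)$. The derivatives $|t||\nabla B_1|$ and $|t||\nabla b_4|$ involve only $|t|\cdot|\nabla^2\varphi_{|t|}|$-type terms together with derivatives of $D_\varphi(\rho)/|t|$, which are controlled by the smoothness and comparability of $D_\varphi$ to $\dist(\cdot,\partial\Omega_\varphi)$; they also belong to $CM(C\|\nabla\varphi\|_{L^\infty}^2)$. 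Ellipticity of $\A$ with constant depending only on $\|\nabla\varphi\|_{L^\infty}$ is immediate from $J=I+O(\|\nabla\varphi\|_{L^\infty})$. Choosing $\kappa$ in the statement small enough brings the Carleson constant below the threshold required by Theorem~\ref{THRE3MA}.

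Applying Theorem~\ref{THRE3MA} to the pullback yields $\|\wt N(\nabla\tilde u)\|_{L^2(\R^d)}\leq C\|\nabla\tilde g\|_{L^2(\R^d)}$ with $\tilde g(x):=g(x,\varphi(x))$. Since $\rho$ is bi-Lipschitz, Whitney boxes and non-tangential cones transfer between $\Omega_\varphi$ and $\R^n\setminus\R^d$ with comparable constants, so $\wt N(\nabla u)(x,\varphi(x))\sim\wt N(\nabla\tilde u)(x)$; and the tangential gradient of $g$ on $\partial\Omega_\varphi$ is $L^2$-comparable to $\nabla\tilde g$ on $\R^d$ via the graph parametrization of Jacobian $\sqrt{1+|\nabla\varphi|^2}\approx 1$. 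This upgrades the flat-boundary estimate to Corollary~\ref{cLip}. The main obstacle is the decomposition step: the rigid block structure~\eqref{formofB} demands that all first-order Lipschitz contributions of $\nabla\varphi$ be confined to the diagonal blocks, which the naive translation change of variables fails. The normal-coordinate choice of $\rho$ is tailored precisely to make $J^TJ$ block-diagonal at the boundary, so that every off-block deviation picks up a factor of $|t|$ and becomes Carleson via Dahlberg's estimate.
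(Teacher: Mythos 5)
Your overall route is the same as the paper's: flatten $\Omega_\varphi$ by a bi-Lipschitz, boundary-preserving change of variables adapted at scale $|t|$ (a mollified graph/normal-coordinate map), check that the conjugated operator $-\diver[|t|^{d+1-n}\A\nabla]$ with $\A=(D_\varphi(\rho)/|t|)^{d+1-n}|\det J|J^{-1}J^{-T}$ falls into the class of Theorem~\ref{THRE3MA}, and transfer the estimate back using the bi-Lipschitz invariance of Whitney regions, cones (up to aperture, cf.\ Stein), and of the boundary gradient norm. The paper does exactly this, except that it does not re-derive the structural and Carleson properties of the conjugated operator: it cites \cite{david2019dahlberg}, whose main purpose is precisely the construction of $\rho$ and these estimates. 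Your treatment of the Jacobian part is in the right spirit: the terms $\partial_s\varphi_s$, $s\nabla^2\varphi_s$ coming from the mollification are pointwise $O(\|\nabla\varphi\|_\infty)$ and Carleson with norm $O(\|\nabla\varphi\|_\infty^2)$ by the Littlewood--Paley/Dahlberg estimate you quote, and dumping the resulting off-diagonal and traceless residuals into $\C$ (taking $B_2=B_3=0$) is permitted by the form \eqref{formofB}.

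The genuine gap is the weight factor. You dismiss $(D_\varphi(\rho)/|t|)^{d+1-n}$ with ``controlled by the smoothness and comparability of $D_\varphi$ to $\dist(\cdot,\partial\Omega_\varphi)$,'' but comparability only gives two-sided bounds, and boundedness (or even pointwise smallness) is never enough for a Carleson measure condition: a nonzero constant $f\equiv\epsilon$ fails \eqref{defCMM} because $\int_0^r\epsilon^2\,ds/s$ diverges. What your decomposition actually needs is that the deviation of $D_\varphi(\rho(x,t))/|t|$ from a constant --- in particular its angular oscillation, which you plan to put into $\C$, and the quantity $|t|\nabla[(D_\varphi(\rho)/|t|)^{d+1-n}|\det J|]$, which enters $|t||\nabla B_1|+|t||\nabla b_4|$ --- satisfies $CM(C\|\nabla\varphi\|_\infty^2)$. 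This is the heart of the matter, it is the reason the regularized distance $D_\varphi$ (rather than the Euclidean distance, for which the statement fails) appears in $L_\varphi$ at all, and it requires genuine estimates on $D_\varphi$ and its interaction with the mollified map $\rho$; it is exactly the content of \cite{david2019dahlberg} that the paper invokes. As written, your proof asserts this key Carleson bound without an argument, so either supply those estimates (essentially reproving the main results of \cite{david2019dahlberg} for your particular $\rho$) or cite them as the paper does; with that reference in place the rest of your argument goes through and coincides with the paper's proof.
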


The reader can consult Section~\ref{sLip} for the proof and the detailed definitions. 

\subsection{Remarks on the proof of Theorem \ref{THRE3MA}.}
At this point let us return to the Main result, Theorem \ref{THRE3MA}, and discuss some highlights of the proof along with the particular challenges of the higher co-dimensional setting.

Similarly to the strategy used in codimension 1, we want to prove that for any $g$ smooth enough and $u_g$ constructed as in \eqref{defug}, we have
\begin{equation} \label{S<Nz}
\|S(\nabla u_g) \|_{L^2(\R^d)} \leq C \|g\|_{L^2(\R^d)} + C \kappa \|\wt N(\nabla u_g)\|_{L^2(\R^d)}
\end{equation}
and
\begin{equation} \label{N<Sz}
\|\tilde N(\nabla u_g)\|_{L^2(\R^d)} \leq C \|S(\nabla u_g) \|_{L^2(\R^d)},
\end{equation}
for some $C>0$. Here, $S$ is a square function that will be defined in \eqref{DEFSQ} below. We can see that when $\kappa$ is small the two estimates above would formally imply the bound \eqref{N<Tr}. They are the crux of the matter and the core of the argument. 
However, even in this passage there are considerable additional difficulties. Nothing guarantees that $\|\wt N(\nabla u)\|_{L^2(\R^d)}$ is finite, and if we do not know {\em a priori} whether $\|\wt N(\nabla u)\|_{L^2(\R^d)}$ is finite, we cannot use \eqref{S<Nz}--\eqref{N<Sz} to deduce that $\|\tilde N(\nabla u)\|_{L^2(\R^d)} \leq C \|g\|_{L^2(\R^d)}$. For that reason we cannot simply concentrate on  \eqref{S<Nz}--\eqref{N<Sz}, but rather have to prove  local versions of those estimates, where all the terms are guaranteed to be finite, and we then carefully take a limit to directly establish 
\begin{equation} \label{Nfinite}
\|\tilde N(\nabla u)\|_{L^2(\R^d)} \leq C \|\nabla g\|_{L^2(\R^d)} < +\infty.
\end{equation}
Unfortunately, taking the limit is already far from trivial, because the term $\|\nabla g\|_{2}$ is obtained roughly by taking the limit of $\|\nabla u(x,\epsilon)\|_{2}$, and to ensure convergence, we had to assume that $\|\nabla \mathcal A\|_\infty <+\infty$ as in \cite{kenig1993neumann}, and then obtain \eqref{Nfinite} for all $\mathcal A$ by interchanging two limits. In the classical  case of codimension 1, the situation is considerably easier because more tools are available to us (for instance layer potential representations). 

The principal issue though are still the estimates on the quantity $S(\nabla u)$, \eqref{DEFSQ}. Clearly, it involves two derivatives, and in principle we do not have enough regularity of the coefficients ($\C$ is not necessarily continuous) to be able to directly bound the second derivatives of the solution, not to mention the actual refined estimates that we are targeting. This led us to a separate paper devoted to the Carleson perturbation theory for the Regularity problem \cite{dai2021carleson} (cf. \cite{kenig1995neumann} when $d=n-1$). However, even with that and even for $\A=\B$ we could not follow the route paved for $d=n-1$ in \cite{dindovs2017boundary}. We finally realized that these arguments are not well adapted to the cylindrical structure of our space and the additional, quite involved, structural considerations are necessary. Let us try to give some ideas here.

\subsubsection{Cylindrical Coordinate Derivatives} \label{SSCCS}
As we mentioned, we shall use $S(\nabla u)$ as an intermediate quantity in our computations, and so we will need to estimate second derivatives. However, taking the second derivatives in the cartesian system of coordinates will not be adapted to our context, and we prefer to consider ``cylindrical derivatives'' defined below.

We notice that there are three difference types of directions. One is the tangential direction, which goes alone the boundary $\mathbb{R}^d\times \{t=0\}$. The second one is the angular direction, which rotates around the boundary, and the last one is the radial direction that moves away from the boundary. We write $\nabla_x=(\partial_{1}, \partial_{2},...,\partial_{d})$ and $\nabla_t=(\partial_{d+1}, \partial_{d+2},...,\partial_n)$, where $\partial_i=\vec e_i\cdot \nabla$ and $\vec e_i\in \mathbb{R}^n$ denotes the vector with a $1$ in the $i$-th coordinate and $0$'s elsewhere.

\begin{Def}\label{DEFVRA}
The radial directional derivative $\partial_r$ is defined as:
\begin{align}\label{DEFNU}
\partial_r:=\sum_{\alpha=d+1}^n\frac{t_\alpha}{|t|}\partial_\alpha.
\end{align}
For each $d+1\leq i, j\leq n$, the directional derivative $\partial_{\varphi_{ij}}$  is defined as:
\begin{align}\label{DEFVAR}
\partial_{\varphi_{ij}}:=-\frac{t_i}{|t|}\partial_j+\frac{t_j}{|t|}\partial_i.
\end{align}
\end{Def}

The important property of $\partial_\varphi$ is that
\begin{equation} \label{dphit=0}
\partial_\varphi |t| = 0
\end{equation}
To lighten the notation, we write $\partial_\varphi$ for any angular directional derivative. We will mention $i,j$ explicitly when it is necessary. Furthermore we define the angular gradient $\nabla_\varphi$ as a vector derivative whose components are all angular directional derivatives $(\partial_{\varphi_{ij}})_{d+1\leq i,j\leq n}$ and
\[|\nabla_\varphi u|^2= \frac12 \sum_{i,j=d+1}^n |\partial_{\varphi_{ij}} u|^2.\]

Note that $\partial_{\varphi_{ii}}=0$ for all $d+1\leq i\leq n$ and $\partial_{\varphi_{ij}}=\partial_{\varphi_{ij}}$ for all $d+1\leq i,j\leq n$. Also, we can easily check that the tangential, angular, and radial directions are perpendicular to each other. More importantly, for any $u\in W^{1,2}_{loc}$, we have the identity that $|\nabla_t u|^2=|\partial_r u|^2+|\nabla_\varphi u|^2$ almost everywhere (see Proposition \ref{PFGEV}).  Consequently, it suffices to establish estimates for the average non-tangential maximal functions of $\nabla_x$, $\nabla_\varphi$ and $\partial_r$. In the rest of the article, we will write
\begin{align}\label{DEFNAB}
\overline \nabla=(\nabla_x, \nabla_\varphi, \partial_r).
\end{align}

One of the main reasons for using the cylindrical coordinate system is that the operator $L=-\diver[|t|^{d+1-n}\mathcal{A}\nabla]$ can be written in terms of $\partial_x, \partial_\varphi,$ and $\partial_r$ (see Proposition \ref{PpOPER}) when the coefficient matrix $\mathcal{A}$ is in the form of (\ref{coe.afor}). The expression (\ref{eqPOPER}) not only simplifies the computations, but also helps us to better understand the geometric structure of the operator $L$.

\begin{Rem} \label{Rdrrdrphi}
The notation $\partial_r$, $\partial_\varphi$, ... might be confusing at first, as these are {\bf not} derivatives in a new system of coordinates. We will not use a change of variable to turn our system of coordinates from a cartesian to a cylindrical one. Instead, $\partial_r$ and $\partial_\varphi$ denote linear combinations of derivatives in cartesian coordinates, or derivatives along some curves (i.e., $r$ and $\varphi$ are not ``new variables''). They are used for properly grouping the derivatives. In particular, we do not need to properly define a bijection $(x,t) \mapsto (x,r,\varphi)$ or its Jacobian.
\end{Rem}

\subsubsection{Commutators}
The common point between $\partial_x$ and $\partial_\varphi$ is that they both cancel out the weight $|t|^{d+1-n}$, so they will be handled in a similar manner by commuting them with the operator $L$; the estimates on the last derivative $\partial_r$ will then be obtained by using the equation (Proposition \ref{PpOPER}). The difference between the two differential operators $\partial_x$ and $\partial_\varphi$  is that $\partial_x$ commute with $\nabla$ and $\overline{\nabla}$, and $\partial_\varphi$ do not commute with the radial and other angular derivatives, but fortunately, everything will work out at the end because the commutators have zero average on $W(z,r)$.
The computations pertaining to commutators are performed in Section \ref{Sangder}, for instance Proposition \ref{PPLL1} gives that
\[ [\partial_r, \partial_\varphi]:= \partial_r \partial_\varphi - \partial_\varphi \partial_r = -\frac{\partial_\varphi}{|t|}.\]

\subsubsection{Local bounds} 
We want to prove local versions of \eqref{S<Nz}--\eqref{N<Sz}. Before introducing the notation, let us mention that a weak solution is in $W^{2,2}_{loc}$ whenever $\nabla A \in L^\infty_{loc}$, this is a well known fact which we proved again in Proposition \ref{LW22loc}.

We have already defined the non-tangential maximal function in \eqref{DEFNT}, and the square function of $v\in W^{1,2}_{loc}(\Omega)$ is defined as:
\begin{align}\label{DEFSQ}
S(v)(x):=\bigg (\iint_{\widehat {\Gamma}_a(x)}|\nabla v(y,s)|^2 \,\frac{dyds}{|s|^{n-2}}\bigg )^{\frac{1}{2}},
\end{align}
where 
\[\widehat {\Gamma}(x)=\{(y,s)\in \mathbb{R}^n\setminus \mathbb{R}^d: |y-x|\leq |s|\}\]
is a higher-codimension cone with vertex $x\in \R^d$.  We write
\begin{align}\label{eq.sqsum}
    S(\overline\nabla u)^2:= \sum_{i=1}^d S(\partial_{x_i} u)^2+ \sum_{d<i,j\leq n} S(\partial_{\varphi_{ij}} u)^2 + S(\partial_r u)^2,
\end{align}
and the square functions of $\nabla_x u$ and $\nabla_\varphi u$ are defined in a similar manner. 

For a function $0 \leq \Psi \leq 1$, the definitions of the localized square functions and the non-tangential maximal functions are 
\begin{align}\label{DEFLSFG}
    S(v|\Psi)(x):=\bigg (\iint_{\widehat \Gamma(x)}|\nabla v|^2\Psi \frac{dsdy}{|s|^{n-d}}\bigg )^{1/2}
\end{align}
and 
\begin{align*}
    \wt N(v|\Psi)(x)=\sup_{(z,r)\in \Gamma(x)}(v|\Psi)_{W,a}(z,r)
\end{align*}
where $(v|\Psi)_{W,a}$ is defined on $\mathbb{R}^{d+1}_+$ by
\begin{align*}
    (v|\Psi)_{W,a}(z,r):=\bigg (\frac{1}{|W_a(z,r)|}\iint_{W_a(z,r)}|v|^2\Psi dyds\bigg )^{1/2}.
\end{align*}

``Good'' cut-off functions will satisfy the following hypothesis.
\begin{DefCO}
We say that a function $\Psi$ satisfies \COF \hcount{H2}  if $\Psi$ is a cut-off function, that is if $\Psi \in C^\infty(\overline{\Omega})$, $0 \leq \Psi \leq 1$, $\Psi$ is radial - i.e. there exists $\psi \in C^\infty(\R^{d+1}_+)$ such that $\Psi(x,t) = \psi(x,|t|)$ - and we have the bound
\[ |t||\nabla \Psi| \leq K \quad \text{ and } \quad \1_{\supp \nabla \Psi} \in CM(K).\]
We write \COF$_K$ when we want to refer to a constant for which $|t|\nabla \Psi| \leq K$ and $\1_{\supp \nabla \Psi} \in CM(K)$, and $K$ will always be chosen $\geq 1$.
\end{DefCO}
We show that if $\Psi$ is a ``good'' cut-off function, then for any weak solution $u\in W^{2,2}_{loc}(\Omega)$ to the equation $Lu=0$, we have
\[\|S(\overline\nabla u|\Psi)\|^2_{2}
\leq C_1\kappa \|\wt{N}(\nabla u|\Psi)\|^2_{2}+\|\Tr_{\Psi} (\nabla_x u)\|^2_2+\text{``error terms''},
\]
where $\Tr_{\Psi} (\nabla_x u)$ is an approximation of trace of $\nabla_x u$ that depends on how far is $\supp \Psi$ to $\partial \Omega$. The precise statement can be found in Lemma \ref{LSLNF}. In addition, for a reduced class of ``good'' cut-off function we will obtain the local $N\leq S$
\[\|\wt{N}(\nabla u|\Psi^{3})\|^2_2\lesssim \|S(\overline\nabla u|\Psi)\|^2_{2} + \text{``error terms''},\]
where an exact estimate is given in Lemma \ref{LENSTP}. The ``error terms'' that we mentioned above go to zero once we extend local estimates to global ones. The careful definitions of the ``good" cutoffs, a delicate splitting of the derivatives, and an enhanced structure of the operator are all important for the algebra of the computations.
Afterwards, when $\kappa$ is small, by taking $\Psi \uparrow 1$, we are able obtain the estimate 
\begin{equation} \label{N<limTr}
\|\wt N(\nabla u)\|_2\lesssim \lim_{\epsilon \to 0} \|\Tr_\epsilon (\nabla_x u)\|_2
\end{equation}
 whenever $u$ is an energy solution (see Theorem \ref{THMAIN1a}). Finally, with this at hand, two natural questions now arise. Does the limit $\lim_{\epsilon} \|\Tr_\epsilon (\nabla_x u_g)\|_2$ exists and does it converge to $\|\nabla g\|_2$?

\subsubsection{Approximation Results}
We want to follow the strategy that Kenig and Pipher used in \cite{kenig1993neumann}. The idea is to construct a sequence of coefficients $\{\A^j\}_{j\in \mathbb{N}}$ such that $\A^j \equiv \A$ on $\{|t|>1/j\}$ and $\A^j$ is Lipschitz up to the boundary. In particular $\A_j$ converges pointwise to $\A$, which guarantees the convergence of the solution $u^j_g$ to $u_g$ (see Theorem \ref{PaprRe}). Meanwhile, since $\A^j$ is continuous up to the boundary, $\|\Tr_\epsilon(\nabla_x u^j_g)\|_2$ converges indeed to $\|\nabla g\|_2$ because $\nabla_x u_j$ is continuous/smooth up to the boundary. 
We can swap the two limits (in $\epsilon$ and in $j$), because \eqref{N<limTr} entails 
a uniform convergence of the traces in $j$. 

However, the construction of the $\A^j$ used by Kenig and Pipher does not immediately transfer to our higher codimensional setting. In addition, we only succeeded to obtain global bounds on $\nabla \nabla_x u$ (and not on all the second derivatives, like we could do in the codimension 1 setting), and this forced us to prove Theorem \ref{THMAIN1a} before doing the approximation. For that reason, even if we globally follow the spirit of Kenig and Pipher's method, we cannot say that our argument is a simple adaptation of \cite{kenig1993neumann}.

\subsubsection{Self-improvement} All the arguments that we presented will allow us to prove the $L^2$-solvability of the Regularity problem  for a reduced class of operators, and then we will ``self improve'' it to Theorem \ref{THRE3MA}. The reduced class of operators on which most of intermediate results will be written is given as follows.

\begin{Def*}
We say that the operator $L:= - \diver(|t|^{d+1-n} \A\nabla)$ satisfies the assumption \HH \hcount{H1} if 
\begin{itemize}
\item $L$ is uniformly elliptic, that is there exists $\lambda\in (0,1)$ such that 
\begin{equation} \label{defellip} 
\lambda|\xi|^2\leq \mathcal{A}(x,t)\xi\cdot \xi\ \ \text{and}\ \
|\mathcal{A}(x, t)\xi\cdot \zeta|\leq \lambda^{-1} |\xi||\zeta| \qquad \text{ for } (x,t) \in \Omega, \, \xi,\zeta \in \R^n;
\end{equation}
\item the matrix $\A$ can be written as
\begin{align}\label{coe.afor}
\mathcal{A}(x,t)=
  \left( {\begin{array}{cc}
   \mathcal{A}_1(x,t)& \mathcal{A}_2(x,t) \frac{t}{|t|}\\
0 & Id_{(n-d)\times (n-d)} \\
  \end{array} } \right),
\end{align}
where $\A_1$ is a $d\times d$-matrix function, and $\A_2$ is vertical vector of length $d$\footnote{That is, $\mathcal{A}_2(x,t) \frac{t}{|t|}$ is a matrix operation which gives a $d\times (n-d)$ matrix};
\item There exists $\kappa>0$ such that 
\begin{equation} \label{defCMAx} 
|t| |\nabla \A_1| + |t| |\nabla \A_2| \in CM(\kappa).
\end{equation}
\end{itemize}
We write \HH$_{\lambda,\kappa}$ when we want to refer to the constants in \eqref{defellip}, and \eqref{defCMAx}.  The constant $\kappa$ will ultimately be small. 
\end{Def*}

Keep in mind that we consider the operators satisfying \HH{} at first, partially because some of our intermediate results can not be stated with the assumptions from Theorem \ref{THRE3MA} (for instance we need $u\in W^{2,2}_{loc}$ for Lemma \ref{LlastS<N}, and so cannot consider Carleson perturbation $\C$ for this result), but also because we want to simplify the proofs (for instance, our proofs would work with $\A$ in the form \eqref{formofB} instead of \eqref{coe.afor}, but many extra computations would be needed in Sections \ref{SN<S} and \ref{SS<N}). That is, we sacrificed the optimality of the intermediate results in order to shorten our proof.

We prove in Section \ref{SThred} the following result, which seems at first glance weaker than Theorem \ref{THRE3MA}.

\begin{Th}\label{THRE2MA}
Take $\lambda, M>0$. There exists $\kappa \in (0,1)$ small enough (depending only on $\lambda$, $d$, and $n$) such that if $L:=-\diver (|t|^{d+1-n}\mathcal{A}\nabla)$ is an elliptic operator satisfying \HH$_{\lambda,\kappa}$, then for any boundary data $g\in C^\infty_0(\R^d)$, the solution $u$ to $Lu=0$ constructed as in \eqref{defug} or equivalently by using Lax-Milgram theorem (see Lemma \ref{LaxMilgram}) verifies
\begin{align}\label{eqTHM00z}
\|\wt{N}(\nabla u)\|_{L^2(\R^d)} \leq C\|\nabla g\|_{L^2(\R^d)},
\end{align}
where $C>0$ depends only on $\lambda$, $d$, and $n$. 
\end{Th}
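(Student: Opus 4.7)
The plan is to prove the two localized inequalities sketched after Remark \ref{Rdrrdrphi}, combine them with $\kappa$ sufficiently small, pass to the limit $\Psi \uparrow 1$, and then remove the ``Lipschitz-up-to-the-boundary'' hypothesis by an approximation. One first reduces to the case $\nabla \A \in L^\infty_{loc}(\Omega)$, so that every energy solution of $Lu=0$ (built by Lax--Milgram, Lemma \ref{LaxMilgram}, or equivalently by \eqref{defug}) lies in $W^{2,2}_{loc}(\Omega)$ by Proposition \ref{LW22loc}. This regularity is what makes the second-derivative manipulations rigorous and forces the ``approximate trace'' $\|\Tr_\Psi(\nabla_x u)\|_2$ appearing below to be finite, so the $\kappa$-absorption step is justified.

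\textbf{Local $S \lesssim N$ (Lemma \ref{LSLNF}).} Using the cylindrical decomposition $\overline\nabla = (\nabla_x, \nabla_\varphi, \partial_r)$ together with $|\nabla_t u|^2 = |\partial_r u|^2 + |\nabla_\varphi u|^2$, it suffices to control the three pieces separately. For $\tau \in \{x_i, \varphi_{ij}\}$ the derivative $\partial_\tau$ kills the weight $|t|^{d+1-n}$ (trivially in the tangential case; by \eqref{dphit=0} in the angular case), so $\partial_\tau$ nearly commutes with $L$. I would differentiate $Lu = 0$ in $\partial_\tau$, test the resulting identity against a function built from $\Psi$ and $\partial_\tau u$, and integrate by parts. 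Four kinds of terms arise: a principal coercive term comparable to $\|S(\partial_\tau u|\Psi)\|_2^2$ by ellipticity; lower-order terms created by $\nabla\A$, bounded by $\kappa\,\|\wt N(\nabla u|\Psi)\|_2^2$ via $|t||\nabla\A| \in CM(\kappa)$ and a standard Carleson--Whitney argument; commutators coming from $[\partial_r,\partial_\varphi] = -\partial_\varphi/|t|$ and its siblings (Proposition \ref{PPLL1} and Section \ref{Sangder}), controlled via the fact that they have zero Whitney-box average thanks to rotational invariance of $W(z,r)$; and boundary-type terms producing $\|\Tr_\Psi(\nabla_x u)\|_2^2$ in the tangential case. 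The radial piece $S(\partial_r u|\Psi)$ is \emph{not} obtained by testing; instead the cylindrical expression of $L$ (Proposition \ref{PpOPER}) writes $\partial_r\partial_r u$ as a combination of $\partial_x$- and $\partial_\varphi$-derivatives of $u$ plus Carleson error, which closes the square-function bound.

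\textbf{Local $N \lesssim S$ (Lemma \ref{LENSTP}) and globalization.} For this step I would decompose $|\nabla u|^2$ on a Whitney box as its average over a slightly enlarged box plus an oscillation, and bound the oscillation by an integral of $|\nabla^2 u|\cdot|\nabla u|$ using a Poincaré/telescoping argument; Cauchy--Schwarz and the $W^{2,2}_{loc}$-regularity then produce a self-improving inequality of the form $\|\wt N(\nabla u|\Psi^3)\|_2^2 \lesssim \|S(\overline\nabla u|\Psi)\|_2\cdot \|\wt N(\nabla u|\Psi^3)\|_2 + (\text{boundary error})$. The extra cut-off power $\Psi^3$ vs.\ $\Psi$ is reserved to absorb errors from $\nabla\Psi$. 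Adding the two local inequalities, choosing $\kappa$ so small that the $C_1\kappa\|\wt N\|_2^2$ term can be absorbed, and sending $\Psi \uparrow 1$ gives Theorem \ref{THMAIN1a}:
\[\|\wt N(\nabla u)\|_2 \lesssim \lim_{\epsilon \to 0}\|\Tr_\epsilon(\nabla_x u)\|_2\]
whenever $u$ is an energy solution with $\nabla\A \in L^\infty_{loc}$.

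\textbf{Approximation and the main obstacle.} To finish, I would approximate a general $\A$ satisfying \HH$_{\lambda,\kappa}$ by a sequence $\A^j$ which satisfies \HH{} with constants independent of $j$, coincides with $\A$ on $\{|t|>1/j\}$, and is Lipschitz up to $\partial\Omega$. For each $j$, continuity of $\nabla_x u^j_g$ near the boundary gives $\|\Tr_\epsilon(\nabla_x u^j_g)\|_2 \to \|\nabla g\|_2$ as $\epsilon \to 0$, so the previous step yields $\|\wt N(\nabla u^j_g)\|_2 \le C\|\nabla g\|_2$ uniformly in $j$. Passing $j \to \infty$ then uses $u^j_g \to u_g$ locally (Theorem \ref{PaprRe}) together with pointwise a.e.\ convergence of $\nabla u^j_g$ on Whitney boxes and Fatou's lemma applied to $\wt N$. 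The \textbf{main obstacle} is the interchange of the two limits $\epsilon \to 0$ and $j \to \infty$: the $\epsilon$-limit of $\|\Tr_\epsilon(\nabla_x u_g)\|_2$ is not \emph{a priori} $\|\nabla g\|_2$ when $\A$ is only Carleson-regular, and the uniform-in-$j$ control of $\wt N$ furnished by Theorem \ref{THMAIN1a} is precisely what is needed to justify the swap. A secondary technical hurdle is that $\A^j$ must preserve the block structure \eqref{coe.afor} together with the Carleson smallness $|t||\nabla\A^j| \in CM(C\kappa)$, which in the higher-codimensional setting means respecting the cylindrical geometry rather than simply mollifying in a half-space; this is the point where the Kenig--Pipher approximation of \cite{kenig1993neumann} must be genuinely adapted rather than imported.
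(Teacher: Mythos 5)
Your overall architecture is the paper's: cylindrical derivatives, a local $S\lesssim N$ obtained by commuting $\partial_x$ and $\partial_\varphi$ with $L$ and recovering $\partial_r$ from the equation, approximation by coefficients that are Lipschitz up to the boundary, and the uniform non-tangential bound as the justification for swapping the limits in $\epsilon$ and $j$ (this is exactly Sections \ref{SN<S}--\ref{SThred}). The genuine gap is your local $N\lesssim S$ step. Telescoping the Whitney averages of $|\nabla u|^2$ across scales and applying Cauchy--Schwarz does not produce $\|\wt N(\nabla u|\Psi^3)\|_2^2\lesssim \|S(\overline\nabla u|\Psi)\|_2\,\|\wt N(\nabla u|\Psi^3)\|_2+\text{(boundary error)}$: differentiating the Whitney average in the scale variable creates, besides $|\nabla u||\nabla\overline\nabla u|$, a term of size $|\nabla u|^2/s$, and after integrating over scales and over $x$ you are left with $\iint_\Omega |\nabla u|^2\Psi\,\frac{dtdx}{|t|^{n-d}}$ -- a square function of $\nabla u$ with no extra derivative -- which is controlled neither by $\|S\|_2\|\wt N\|_2$ nor by $\|\wt N\|_2^2$ (alternatively, Cauchy--Schwarz across scales costs a factor $\log(l/r)$ which blows up as the cone is descended). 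This is precisely why the paper does not argue this way: it subtracts a Whitney average $\vec\alpha_i$ of $\nabla u$ so that the Poincar\'e inequality converts $|\nabla u-\vec\alpha_i|^2$ into $|t|^2|\nabla\overline\nabla u|^2$, it introduces the stopping time $h_\beta$ and the cut-off $\chi_\beta$ of Lemma \ref{LEMA22} so that the radial integration is confined to a single shell adapted to the level set $\{\wt N>\beta\}$, and it runs the good-lambda distributional inequality of Lemma \ref{GLAM}, fed by the integration-by-parts estimates of Lemma \ref{INLSFT}, to obtain Lemma \ref{LENSTP}. Without this machinery (or a substitute for it), the $N\lesssim S$ half of your argument does not close.

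Two further steps are treated as routine but are not. First, even with both local estimates in hand, ``choose $\kappa$ small and send $\Psi\uparrow 1$'' does not work as stated: in the local $N\leq S$ the non-tangential term on the right carries a strictly smaller cut-off power than the one on the left ($\Psi$ versus $\Psi^3$, and $\Psi^3$ versus $\Psi^9$ after globalizing), and all quantities are a priori only finite with a bound of order $\epsilon^{-1}\|u\|_W^2$, so the $\kappa$-term can neither be absorbed directly nor passed to the limit; moreover the bilinear terms $\mathcal B(\cdot,\Psi^3)$ may have either sign. The proof of Theorem \ref{THMAIN1a} -- the monotonicity/contradiction argument on $\omega(\epsilon)$ and the sets $I_{\mathcal B}$, $I_N$, producing a sequence $\epsilon_k$ along which $\mathcal B_+$ vanishes and the mismatched $N$-terms can be absorbed -- exists exactly to fix this, and your write-up omits it. Second, ``continuity of $\nabla_x u^j_g$ up to the boundary'' is not off the shelf for these degenerate operators; the paper replaces it by Proposition \ref{WT01} (if $\|\nabla\A^j\|_\infty<\infty$ then $\nabla_x u_j\in W$) and Proposition \ref{APRP03} (identification $\Tr(\nabla_x u_j)=\nabla g$ and convergence of the averaged traces). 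These last two points are repairable by citing the paper's lemmas, but the missing good-lambda argument in the $N\lesssim S$ direction is an essential gap.
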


Then, using the theory of Carleson perturbations for the Regularity problem \cite{dai2021carleson, kenig1995neumann} we improve the above result in Section \ref{Sselfimpro}  and we get Theorem \ref{THRE3MA}, as desired.

\section{Equation in Cylindrical Coordinates} \label{Sangder}

In Subsection \ref{SSCCS}, we introduced a set of directional derivatives adapted to the cylindrical structure of $\Omega$ (when $d<n-1$). The gradient $\overline \nabla = (\nabla_{x}, \nabla_\varphi, \partial_r)$ in cylindrical coordinate has a norm equivalent to the one of the classical gradient (see Proposition \ref{PFGEV}), which makes $\overline \nabla$ equivalent to $\nabla$ for estimates on first order derivatives. 
We compute the expression of our elliptic operator in the cylindrical system of derivatives (see Proposition \ref{PpOPER}).

For the second order derivatives in cylindrical coordinates, we will need to know the commutators between $\nabla_{x}$, $\nabla_\varphi$, and $\partial_r$, which we compute in Proposition \ref{PPLL1} and Proposition \ref{CorDAR}. We observe that the non trivial commutators will always involve the angular derivative $\nabla_\varphi$. In order to deal with them, we shall crucially rely on Proposition \ref{PDECC}, which uses the fact that the angular directional derivative $\partial_{\varphi} u(x, r\theta)$ has zero mean on the unit sphere for almost every $(x,r)\in \mathbb{R}^{d+1}_+$. From there, we will be able to use the Poincar\'e inequality and recover second order derivatives (that will eventually be controlled). 

Recall that, as mentioned in Remark \ref{Rdrrdrphi},  $r$ and $\varphi$ are not ``new variables in a cylindrical system'', and $\partial_r$ and $\partial_{\varphi_{ij}}$ are just a linear combination of Euclidean derivatives.

\begin{Prop}\label{PFGEV}
Let $\partial_{\varphi_{ij}}$ and $\partial_r$ be directional derivatives defined in Definition \ref{DEFVRA}, and let $\overline \nabla u$ be the cylindrical gradient defined in \eqref{DEFNAB}. We have 
\[\nabla u \cdot \nabla v = \nabla_x u \cdot \nabla_x v + (\partial_r u)(\partial_r v) +\frac{1}{2}\sum_{i,j=d+1}^n (\partial_{\varphi_{ij}} u)(\partial_{\varphi_{ij}} v) = \overline \nabla u \cdot \overline \nabla v\] 
whenever it makes sense (for instance for $u\in W^{1,2}_{loc}(\Omega)$ and  almost every $(x,t) \in \Omega$). In particular, we have $|\nabla u|^2 = |\overline \nabla u|^2$.
\end{Prop}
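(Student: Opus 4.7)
The identity is purely algebraic once the tangential part is set aside, so the proof plan is computational. Since $\nabla_x u\cdot \nabla_x v = \sum_{i=1}^d \partial_i u\,\partial_i v$ matches on both sides by definition, the task reduces to showing the transversal identity
\[
\sum_{\alpha=d+1}^n \partial_\alpha u\,\partial_\alpha v \;=\; (\partial_r u)(\partial_r v) + \frac12 \sum_{i,j=d+1}^n (\partial_{\varphi_{ij}} u)(\partial_{\varphi_{ij}} v).
\]
I would introduce the shorthand $\omega_\alpha := t_\alpha/|t|$ for $d+1 \leq \alpha \leq n$, so that $\sum_\alpha \omega_\alpha^2 = 1$, and then expand both right-hand terms directly from Definition~\ref{DEFVRA}.

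The radial term is immediate: $(\partial_r u)(\partial_r v) = \sum_{\alpha,\beta=d+1}^n \omega_\alpha \omega_\beta\, \partial_\alpha u\, \partial_\beta v$. For the angular term, expanding
\[
(\partial_{\varphi_{ij}} u)(\partial_{\varphi_{ij}} v) = \omega_i^2\, \partial_j u\,\partial_j v + \omega_j^2\, \partial_i u\,\partial_i v - \omega_i\omega_j\,\partial_i u\,\partial_j v - \omega_i\omega_j\,\partial_j u\,\partial_i v
\]
and summing over $i,j$ from $d+1$ to $n$, the two ``diagonal'' groups each collapse to $\sum_\alpha \partial_\alpha u\,\partial_\alpha v$ using $\sum_i \omega_i^2 = 1$, while the two ``cross'' groups each collapse to $(\partial_r u)(\partial_r v)$. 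The net result is
\[
\sum_{i,j=d+1}^n (\partial_{\varphi_{ij}} u)(\partial_{\varphi_{ij}} v) = 2\sum_{\alpha=d+1}^n \partial_\alpha u\,\partial_\alpha v - 2\,(\partial_r u)(\partial_r v),
\]
which rearranges to the desired equality. The statement $|\nabla u|^2 = |\overline\nabla u|^2$ is then the specialization $v=u$.

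For the regularity justification, I would only need to observe that $\partial_r$ and $\partial_{\varphi_{ij}}$ are linear combinations of the cartesian partials $\partial_1,\dots,\partial_n$ with coefficients that are smooth and uniformly bounded on $\Omega$ (they depend only on $t/|t|$). Hence for $u \in W^{1,2}_{loc}(\Omega)$, the components of $\overline\nabla u$ are well-defined locally integrable functions, and the identity above is a pointwise algebraic consequence of the expressions for $\partial_\alpha u$, valid almost everywhere on $\Omega$. I do not expect a serious obstacle in this proposition; the only care required is the bookkeeping of the summation indices, in particular the factor of $1/2$ which accounts for the fact that $\partial_{\varphi_{ij}} = -\partial_{\varphi_{ji}}$ so that each unordered pair $\{i,j\}$ is counted twice in the sum over $i,j=d+1,\dots,n$.
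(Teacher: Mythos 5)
Your proof is correct and follows essentially the same computation as the paper: expand $(\partial_{\varphi_{ij}}u)(\partial_{\varphi_{ij}}v)$ from the definitions, sum over $i,j$, use $\sum_\alpha t_\alpha^2/|t|^2 = 1$ for the diagonal pieces and factor the cross pieces into $(\partial_r u)(\partial_r v)$. The bookkeeping, including the factor of $1/2$ and the closing regularity remark, matches the paper's argument.
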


\begin{proof}
We just need to prove 
\[\nabla_t u \cdot \nabla_t v := \sum_{\alpha=d+1}^n (\partial_{t_\alpha} u)(\partial_{t_\alpha} v) =  (\partial_r u)(\partial_r v) +\frac{1}{2}\sum_{i,j=d+1}^n (\partial_{\varphi_{ij}} u)(\partial_{\varphi_{ij}} v).\] 
According to the definition of $\partial_{\varphi_{ij}}$ in (\ref{DEFVAR}), we have
\begin{align*}
    \sum_{i,j=d+1}^n(\partial_{\varphi_{ij}} u)(\partial_{\varphi_{ij}} v) =\sum_{i,j=d+1}^n\Big \{\frac{t_i^2}{|t|^2}(\partial_{t_j} u)(\partial_{t_j} v) -2\frac{t_it_j}{|t|^2}(\partial_{t_i} u)(\partial_{t_j} v)+\frac{t^2_j}{|t|^2}(\partial_{t_i} u)(\partial_{t_i} v)\Big \}.
\end{align*}
The first term on the righthand side equals $\nabla_t u \cdot \nabla_t v$ since $\sum_{i=d+1}^n t_i^2/|t|^2=1$. For the same reason, the last term is also $\nabla_t u \cdot \nabla_t v$.  We can factorize the second term of the righthand side into the product of a sum in $i$ and a sum in $j$, and we easily observe from the definition (\ref{DEFNU}) that the middle term is indeed $-2(\partial_r u)(\partial_r v)$. The proposition follows. 
\end{proof}

The second proposition establishes an integration by parts for the angular and radial derivatives. 

\begin{Prop}\label{IBBdrdphi}
Let $u,v\in \C^\infty(\R^n)$ be such that either $u$ or $v$ is compactly supported in $\Omega$. We have the identities
\[\iint_\Omega (\partial_r u) \, v \, |t|^{d+1-n} \, dt\, dx = - \iint_\Omega u \, (\partial_r v) \, |t|^{d+1-n} \, dt\, dx\] 
and
\[\iint_\Omega (\partial_{\varphi} u) \, v \, |t|^{d+1-n} \, dt\, dx = - \iint_\Omega u \, (\partial_\varphi v) \, |t|^{d+1-n} \, dt\, dx\] 
where $\partial_\varphi$ stands for any of the $\partial_{\varphi_{ij}}$, $d+1\leq i,j\leq n$.
\end{Prop}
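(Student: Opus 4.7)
The plan is to reduce both identities to the statement that $\iint_\Omega \partial_r(uv)\, |t|^{d+1-n}\, dt\, dx = 0$ and $\iint_\Omega \partial_{\varphi_{ij}}(uv)\, |t|^{d+1-n}\, dt\, dx = 0$. Since both $\partial_r$ and $\partial_{\varphi_{ij}}$ are first-order linear combinations of cartesian derivatives, the Leibniz rule applies and reduces each identity to the above. Compact support of $uv$ inside $\Omega$ means $uv$ stays away both from $\{t=0\}$ and from infinity, so all boundary terms produced by any integration by parts will vanish automatically; this removes every analytic subtlety from the proof.

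For the radial identity, I would expand $\partial_r = \sum_{\alpha=d+1}^{n} \frac{t_\alpha}{|t|}\partial_\alpha$, so that
\[ (\partial_r w)\,|t|^{d+1-n} = \sum_{\alpha=d+1}^{n} t_\alpha\, |t|^{d-n}\, \partial_\alpha w = \nabla_t w \cdot \bigl( t\, |t|^{d-n}\bigr). \]
Integrating by parts in each $t_\alpha$, the required vanishing reduces to the computation
\[ \operatorname{div}_t\bigl(t\,|t|^{d-n}\bigr) = \sum_{\alpha=d+1}^{n}\partial_{t_\alpha}\bigl(t_\alpha |t|^{d-n}\bigr) = (n-d)\,|t|^{d-n} + (d-n)\,|t|^{d-n-2}\sum_{\alpha=d+1}^{n} t_\alpha^{2} = 0. \]
In other words, the weight $|t|^{d+1-n}$ is precisely the one that makes the radial vector field $t\,|t|^{d-n}$ divergence-free in the $t$ variables, and this is the whole content of the first identity.

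For the angular identity, the case $i=j$ is trivial since $\partial_{\varphi_{ii}}\equiv 0$, so assume $i\neq j$. Writing
\[ (\partial_{\varphi_{ij}} w)\,|t|^{d+1-n} = -\, t_i\,|t|^{d-n}\,\partial_j w \;+\; t_j\,|t|^{d-n}\,\partial_i w, \]
integration by parts in $t_j$ and $t_i$ respectively reduces the question to
\[ \partial_{t_j}\bigl(t_i\,|t|^{d-n}\bigr) = (d-n)\,t_i t_j\,|t|^{d-n-2} = \partial_{t_i}\bigl(t_j\,|t|^{d-n}\bigr),\]
so the two resulting integrals cancel exactly.

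There is no real obstacle: the whole point of the proposition is that the weight $|t|^{d+1-n}$ is designed so that $\partial_r$ and $\partial_{\varphi_{ij}}$ are formally skew-adjoint with respect to the measure $|t|^{d+1-n}\,dt\,dx$. The only mild care required is justifying the integration by parts at $\{t=0\}$ (in the case $d<n-1$) and at infinity, both of which are immediate from the compact support assumption. One should also note that, although the result is stated for $u,v \in C^\infty(\R^n)$, the smoothness of $|t|^{d-n}$ on $\Omega$ and the compact support inside $\Omega$ make every integrand smooth and compactly supported, so classical Fubini/integration by parts apply directly.
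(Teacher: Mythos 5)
Your proof is correct. For the angular identity it is in fact the same argument as the paper's: write $\partial_{\varphi_{ij}}$ out in cartesian derivatives, integrate by parts in $t_i$ and $t_j$, and observe that the extra terms cancel because $\partial_{t_i}\big(t_j|t|^{d-n}\big)=\partial_{t_j}\big(t_i|t|^{d-n}\big)=(d-n)\,t_it_j|t|^{d-n-2}$ for $i\neq j$. For the radial identity you diverge from the paper: there the authors simply pass to cylindrical coordinates and note that the boundary term at $r=0$ vanishes by the compact support of $uv$ in $\Omega$, whereas you stay entirely in cartesian coordinates and reduce the statement to $\operatorname{div}_t\big(t\,|t|^{d-n}\big)=0$, i.e.\ to the fact that the weight $|t|^{d+1-n}$ makes the radial vector field divergence-free in the $t$ variables. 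Your route has the advantage of treating both identities by one and the same mechanism (skew-adjointness of a first-order operator $b\cdot\nabla_t$ with $\operatorname{div}_t(|t|^{d+1-n}b)=0$) and of never invoking a cylindrical change of variables, which is consistent with the paper's own Remark~\ref{Rdrrdrphi} that $r$ and $\varphi$ are not genuine new coordinates; the paper's route for the radial part is equally short but leaves the cylindrical-coordinate bookkeeping implicit. The support discussion you give (compact support in $\Omega$ keeps the integrand away from $\{t=0\}$ and from infinity, so no boundary terms arise) is exactly the justification needed in either version.
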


\begin{proof}
If one writes the integrals in cylindrical coordinates, the integration by parts for $\partial_r$ is immediate once you notice that we imposed the boundary condition $uv = 0$ when $r=0$. 

The second identity is also expected, but let us write is formally. Take $d+1\leq i,j\leq n$ and we have by definition of $\partial_{\varphi_{ij}}$ that
\[I := \iint_\Omega (\partial_{\varphi_{ij}} u) \, v \, |t|^{d+1-n} \, dt\, dx 
=  \iint_\Omega v \Big[ (\partial_iu) \, t_j |t|^{d-n} - (\partial_ju) \, t_i |t|^{d-n}  \Big] \, dt\, dx \]
We use the integration by part to remove $\partial_i$ and $\partial_j$ from $u$, and we get
\[\begin{split}
I = - \iint_\Omega u (\partial_{\varphi_{ij}} v)\,  |t|^{d+1-n} \, dt\, dx - \iint_\Omega uv \Big[\, \partial_i(t_j |t|^{d-n}) - \, \partial_j (t_i |t|^{d-n})  \Big] \, dt\, dx.
\end{split}\]
It is easy to check that $\partial_i(t_j |t|^{d-n}) - \, \partial_j (t_i |t|^{d-n}) = 0$ in $\Omega$, thus the proposition follows.
\end{proof}

The following proposition rearranges the derivatives, in order to use $\partial_r$ and $\partial_\varphi$ instead of the $t$-derivatives in the expression of $L$.
 
\begin{Prop}\label{PpOPER}
Let $L=-\diver(|t|^{d+1-n} \mathcal{A} \nabla)$ be such that
\begin{align*}
\mathcal{A}(x,t)=
  \left( {\begin{array}{cc}
   \mathcal{A}_1(x,t)& \mathcal{A}_2(x,t) \frac{t}{|t|}\\
   \frac{t^T}{|t|} \mathcal{A}_3(x,t) & b(x,t)Id_{(n-d)\times (n-d)} \\
  \end{array} } \right),
\end{align*} 
where $t\in \mathbb{R}^{n-d}$ is seen as a $d$-dimensional horizontal vector, and where $\A_1$, $\A_2$, $\A_3$, and $b$ are respectively a $d\times d$ matrix, a $d$-dimensional vertical vector, a $d$-dimensional horizontal vector, and a scalar function. 

Then:
\[L=-|t|^{d+1-n}\Big [\diver _x(\mathcal{A}_1\nabla_x)+\diver _x(\mathcal{A}_2\partial_r) + \partial_r (\mathcal A_3 \nabla_x)
    +\partial_r(b\partial_r)+\frac{1}{2}\sum_{i,j=d+1}^n\partial_{\varphi_{ij}}(b\partial_{\varphi_{ij}}) \Big ].\]
        
In particular, if $L$ satisfies \HH, then
\begin{equation}\label{eqPOPER}
    L=-|t|^{d+1-n}\Big [\diver _x(\mathcal{A}_1\nabla_x)+\diver _x(\mathcal{A}_2\partial_r) +\partial^2_r +\frac{1}{2}\sum_{i,j=d+1}^n\partial^2_{\varphi_{ij}} \Big ].
\end{equation} 
\end{Prop}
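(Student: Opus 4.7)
The plan is to compute $Lu = -\diver(\rho \mathcal{A}\nabla u)$ with $\rho = |t|^{d+1-n}$ directly, by first splitting it according to the block structure of $\mathcal{A}$:
\begin{align*}
Lu = -\diver_x\!\Bigl(\rho\bigl[\mathcal{A}_1 \nabla_x u + \mathcal{A}_2 \partial_r u\bigr]\Bigr) - \diver_t\!\Bigl(\rho\bigl[(\mathcal{A}_3 \nabla_x u)\tfrac{t}{|t|} + b\,\nabla_t u\bigr]\Bigr),
\end{align*}
where I use that $(\mathcal{A}_2\tfrac{t}{|t|})\cdot \nabla_t u = \mathcal{A}_2 \partial_r u$ and $(\tfrac{t^T}{|t|}\mathcal{A}_3 \nabla_x u)$, read as a vertical vector, equals $(\mathcal{A}_3 \nabla_x u)\tfrac{t}{|t|}$. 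Since $\rho$ is independent of $x$, the first divergence immediately yields $\rho[\diver_x(\mathcal{A}_1\nabla_x u) + \diver_x(\mathcal{A}_2 \partial_r u)]$, which already matches two of the five terms in the claim.

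For the remaining $t$-divergence I treat the two pieces separately. For the ``cross'' term with $\mathcal{A}_3$, writing $f := \mathcal{A}_3\nabla_x u$ and applying the product rule,
\[
\diver_t\!\Bigl(\rho f \tfrac{t}{|t|}\Bigr) = \rho \partial_r f + f \sum_{\alpha=d+1}^n \partial_{t_\alpha}\!\bigl(|t|^{d-n} t_\alpha\bigr),
\]
and a short direct computation shows the last sum vanishes identically, because $\sum_\alpha \partial_{t_\alpha}(|t|^{d-n}t_\alpha) = (d-n)|t|^{d-n}+(n-d)|t|^{d-n}=0$. So this contributes $\rho \partial_r(\mathcal{A}_3 \nabla_x u)$, which is the third term in the claim.

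The last piece $\diver_t(\rho b \nabla_t u)$ is the only mildly delicate one, and rather than expand it in cartesian coordinates I use duality. Testing against a smooth compactly supported $\varphi$, a standard integration by parts gives $\iint \varphi \,\diver_t(\rho b \nabla_t u)\, dt\, dx = - \iint \rho b\, \nabla_t u \cdot \nabla_t \varphi\, dt\, dx$. Proposition \ref{PFGEV} now lets me replace $\nabla_t u \cdot \nabla_t \varphi$ by $(\partial_r u)(\partial_r \varphi) + \tfrac12 \sum_{i,j}(\partial_{\varphi_{ij}} u)(\partial_{\varphi_{ij}} \varphi)$, and Proposition \ref{IBBdrdphi} moves $\partial_r$ and $\partial_{\varphi_{ij}}$ off $\varphi$ (the weight $\rho=|t|^{d+1-n}$ is exactly the one for which those integrations by parts have no boundary or extra terms). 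The identity, valid against all test functions and hence pointwise under the regularity assumed on $u$, reads $\diver_t(\rho b \nabla_t u) = \rho\bigl[\partial_r(b\partial_r u) + \tfrac12 \sum_{i,j}\partial_{\varphi_{ij}}(b \partial_{\varphi_{ij}} u)\bigr]$. Collecting the four contributions yields the stated formula for $L$.

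The special case \eqref{eqPOPER} is then immediate: under \HH{} the matrix \eqref{coe.afor} has $\mathcal{A}_3 \equiv 0$ and $b \equiv 1$, so the term $\partial_r(\mathcal{A}_3 \nabla_x\,\cdot\,)$ drops out and $\partial_r(b \partial_r)$, $\partial_{\varphi_{ij}}(b \partial_{\varphi_{ij}})$ collapse to $\partial_r^2$ and $\partial_{\varphi_{ij}}^2$. The only genuine calculational obstacle is the $b\nabla_t u$ piece, which one could alternatively handle by a brute-force product-rule expansion in cartesian coordinates; the duality detour is what lets Propositions \ref{PFGEV}--\ref{IBBdrdphi} do the algebraic work and keeps the argument transparent.
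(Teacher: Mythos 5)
Your proposal is correct and follows essentially the same route as the paper: the same block decomposition, the same direct computation showing $\sum_\alpha \partial_{t_\alpha}(|t|^{d-n}t_\alpha)=0$ for the $\mathcal{A}_3$ cross term, and the same duality argument for the $b\nabla_t$ piece via Propositions \ref{PFGEV} and \ref{IBBdrdphi}. No gaps to report.
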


\begin{proof}
We first decompose as
\begin{multline}\label{eqLL17}
    L=-\diver _x(|t|^{d+1-n}\mathcal{A}_1\nabla_x )
    -\diver _x(|t|^{d-n-1}\mathcal{A}_2 \frac{t}{|t|}\nabla_t)
    -\diver _t(|t|^{d-n-1} \frac{t^T}{|t|} \mathcal{A}_3\nabla_x)\\-\diver _t(|t|^{d+1-n}b\nabla_t)=:L_1+L_2+L_3+L_4.
\end{multline}
Since the weight $|t|^{d+1-n}$ is independent of $x$, one has $L_1=-|t|^{d+1-n}\diver _x(\mathcal{A}_1\nabla_x)$ and $L_2 = -|t|^{d+1-n}\diver _x(\mathcal{A}_2 \frac{t}{|t|}\nabla_t) = -|t|^{d+1-n}\diver _x(\mathcal{A}_2\partial_r)$, since by definition $\partial_r$ is $\frac{t}{|t|}\nabla_t$. 

Recall that $\A_3$ is a horizontal vector and $\nabla_x$ is a vertical vector differential operator, so $\A_3\nabla_x$ is a scalar (differential operator). In conclusion,
\[L_3 = -\diver_t(|t|^{d-n}t^T) \A_3\nabla_x - |t|^{d-n-1} \frac{t}{|t|} \cdot \nabla_t( \A_3\nabla_x) \\
= 0 - |t|^{d-n-1} \partial_r ( \A_3\nabla_x).\]

At this point, it remains to treat $L_4$. The integration by parts entails that, for $u,v\in C^\infty_0(\Omega)$,
\[\begin{split}
\iint_\Omega (L_4 u) v \, dt\,dx & = \iint_\Omega b \nabla_t u \cdot \nabla_t v\, |t|^{d+1-n} \, dt\, dx \\
& = \iint_\Omega b (\partial_r u)(\partial_r v) \, |t|^{d+1-n} \, dt\, dx + \frac{1}{2}\sum_{i,j=d+1}^n \iint_\Omega b (\partial_{\varphi_{ij}} u) (\partial_{\varphi_{ij}} v) \, |t|^{d+1-n} \, dt\, dx
\end{split}\]
by Proposition \ref{PFGEV}. Using the integration by part for $\partial_r$ and $\partial_{\varphi_{ij}}$ given by Proposition \ref{IBBdrdphi}, we deduce
\[\iint_\Omega (L_4 u) v \, dt\,dx = - \iint_\Omega \partial_r(b\partial_r u) \, v \, |t|^{d+1-n} \, dt\, dx -  \frac{1}{2}\sum_{i,j=d+1}^n \iint_\Omega b \partial_{\varphi_{ij}}(b\partial_{\varphi_{ij}} u)  v \, |t|^{d+1-n} \, dt\, dx\]
Since the above equality is true for all $u,v \in C^\infty_0(\Omega)$, we conclude
\[L_4 = -|t|^{d+1-n}\Big [\partial_r(b\partial_r)+\frac{1}{2}\sum_{i,j=d+1}^n\partial_{\varphi_{ij}}(b\partial_{\varphi_{ij}}) \Big ].\]
The proposition follows.
\end{proof}

In the next results, we want to compute commutators. We immediately have that $[\partial_x,\partial_r] = 0$ and $[\partial_x,\partial_\varphi] = 0$. The normal derivative $\partial_r$ and the angular directional derivative $\partial_{\varphi}$ do not commute, therefore we want to compute their commutator. 

\begin{Prop}\label{PPLL1}
Let $\partial_{\varphi}$ and $\partial_r$ be the derivatives defined in Definition \ref{DEFVRA}. Then we have 
\[[\partial_r, \partial_{\varphi}]:= \partial_r \partial_{\varphi} - \partial_\varphi \partial_r = -\frac{\partial_{\varphi}}{|t|}.\]
\end{Prop}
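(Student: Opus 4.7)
The plan is a direct computation starting from the definitions
\[\partial_r = \sum_{\alpha=d+1}^n \frac{t_\alpha}{|t|}\partial_\alpha, \qquad \partial_{\varphi_{ij}} = \frac{t_j}{|t|}\partial_i - \frac{t_i}{|t|}\partial_j,\]
applied to a smooth test function $u$, and then noting that the identity extends by density to general $u$ on which both sides make sense. The arithmetic is routine; the key structural observation is that the ``radial part'' of derivatives of homogeneous-of-degree-zero quantities vanishes, which will make all but one term disappear.

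First I would expand $\partial_r(\partial_{\varphi_{ij}} u)$ by the Leibniz rule. The terms involving first-order derivatives of $u$ are of the form $\sum_\alpha \frac{t_\alpha}{|t|}\partial_\alpha\bigl(\frac{t_j}{|t|}\bigr)\partial_i u$ and its twin; using
\[\partial_\alpha\!\left(\frac{t_j}{|t|}\right) = \frac{\delta_{\alpha j}}{|t|} - \frac{t_\alpha t_j}{|t|^3}, \qquad \sum_\alpha \frac{t_\alpha}{|t|}\!\left(\frac{\delta_{\alpha j}}{|t|} - \frac{t_\alpha t_j}{|t|^3}\right)=0,\]
these vanish. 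What remains is purely second order:
\[\partial_r(\partial_{\varphi_{ij}} u) = \frac{t_j}{|t|}\sum_\alpha \frac{t_\alpha}{|t|}\partial_\alpha\partial_i u - \frac{t_i}{|t|}\sum_\alpha \frac{t_\alpha}{|t|}\partial_\alpha\partial_j u.\]

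Next I would expand $\partial_{\varphi_{ij}}(\partial_r u)$ the same way. Here the radial identity fails: using $\sum_\alpha t_\alpha \partial_\alpha u = |t|\partial_r u$ one finds
\[\sum_\alpha \partial_i\!\left(\frac{t_\alpha}{|t|}\right)\partial_\alpha u = \frac{\partial_i u}{|t|} - \frac{t_i}{|t|^2}\partial_r u,\]
and symmetrically for $j$. After multiplying by $t_j/|t|$ and $-t_i/|t|$ respectively and adding, the two $\partial_r u$ contributions cancel (they are both $\pm t_i t_j/|t|^3 \cdot \partial_r u$), while the $\partial_i u$, $\partial_j u$ contributions combine to exactly $\partial_{\varphi_{ij}} u / |t|$. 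Thus
\[\partial_{\varphi_{ij}}(\partial_r u) = \frac{1}{|t|}\partial_{\varphi_{ij}} u + \frac{t_j}{|t|}\sum_\alpha \frac{t_\alpha}{|t|}\partial_i\partial_\alpha u - \frac{t_i}{|t|}\sum_\alpha \frac{t_\alpha}{|t|}\partial_j\partial_\alpha u.\]

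Subtracting the two expressions, the second-order terms match (by equality of mixed partials) and cancel, leaving $[\partial_r,\partial_{\varphi_{ij}}]u = -\partial_{\varphi_{ij}} u / |t|$. There is no genuine obstacle; the only thing to watch is the bookkeeping of the two ``first-order-in-$u$'' batches, and in particular to check that the seemingly asymmetric treatment of $\partial_r(\partial_\varphi u)$ versus $\partial_\varphi(\partial_r u)$ comes from the fact that $\partial_i(t_\alpha/|t|)$ is not killed by summation against $t_\alpha/|t|$, whereas $\partial_\alpha(t_j/|t|)$ is.
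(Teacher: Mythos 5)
Your computation is correct, and it is essentially the same direct expansion as the paper's: both proofs rely on the facts that $\sum_\alpha \frac{t_\alpha}{|t|}\partial_\alpha(t_k/|t|)=0$ (so the first-order terms vanish in $\partial_r\partial_\varphi u$) while $\sum_\alpha \partial_i(t_\alpha/|t|)\partial_\alpha u$ produces the $\partial_\varphi u/|t|$ term, and then cancel the mixed second-order derivatives. The only difference is organizational — you expand both $\partial_r\partial_\varphi$ and $\partial_\varphi\partial_r$ symmetrically and subtract, whereas the paper expands $\partial_r\partial_\varphi$ and manipulates to pull out $\partial_\varphi\partial_r$ — but the underlying calculation is identical.
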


\begin{proof}
Fix a angular directional derivative $\partial_{\varphi_{ij}}$.
We use the expressions of $\partial_{\varphi_{ij}}$ and $\partial_r$ given in Definition \ref{DEFVRA} to write
\begin{multline}\label{eqLL02}
\partial_r \partial_{\varphi_{ij}}=\sum_{\alpha=d+1}^n\frac{t_\alpha}{|t|}\partial_\alpha \partial_{\varphi_{ij}}=\sum_{\alpha=d+1}^n \frac{t_\alpha}{|t|} \Big [   \partial_{\varphi_{ij}}\partial_\alpha- \partial_\alpha\Big (\frac{t_i}{|t|}\Big )\partial_j
+ \partial_\alpha\Big (\frac{t_j}{|t|}\Big )\partial_i\Big ]\\
=\sum_{\alpha=d+1}^n\Big [\partial_{\varphi_{ij}}\Big (\frac{t_\alpha}{|t|}\partial_\alpha\Big )-\partial_{\varphi_{ij}}\Big (\frac{t_\alpha}{|t|}\Big )\partial_\alpha-\frac{t_\alpha}{|t|}\partial_\alpha\Big (\frac{t_i}{|t|}\Big )\partial_j
+\frac{t_\alpha}{|t|}\partial_\alpha\Big (\frac{t_j}{|t|}\Big )\partial_i\Big ].
\end{multline}
We notice that the first term on the last line of (\ref{eqLL02}) is exactly $\partial_{\varphi_{ij}}\partial_r$ after summing over all $d+1\leq \alpha\leq n$.
The third and forth terms of (\ref{eqLL02}) are similar, and are both zero. Indeed,  
\[-\sum_{\alpha=d+1}^n\frac{t_\alpha}{|t|}\partial_\alpha\Big (\frac{t_i}{|t|}\Big )\partial_j=-\sum_{\alpha=d+1}^n
    \frac{t_\alpha}{|t|}\Big (\frac{\delta_{i\alpha}}{|t|}
    -\frac{t_it_\alpha}{|t|^3}\Big )\partial_j = - \frac{t_i}{|t|} \partial_j  +  \frac{t_i}{|t|} \sum_{\alpha=d+1}^n \frac{t_\alpha}{|t|^2} \partial_j   =0.\]

The second term on the last line of (\ref{eqLL02}) can be handled as follows:
\begin{multline}\label{eqLL03}
    -\sum_{\alpha=d+1}^n\partial_{\varphi_{ij}}\Big (\frac{t_\alpha}{|t|}\Big )\partial_\alpha=-\sum_{\alpha=d+1}^n\Big [-\frac{t_i}{|t|}\partial_j\Big (\frac{t_\alpha}{|t|}\Big )+\frac{t_j}{|t|}\partial_i\Big (\frac{t_\alpha}{|t|}\Big )\Big ]\partial_\alpha\\
    =\sum_{\alpha=d+1}^n\Big [\frac{t_i}{|t|}\Big (\frac{\delta_{j\alpha}}{|t|}-\frac{t_jt_\alpha}{|t|^3}\Big )-\frac{t_j}{|t|}\Big (\frac{\delta_{i\alpha}}{|t|}-\frac{t_it_\alpha}{|t|^3}\Big )\Big ]\partial_\alpha=\frac{t_i}{|t|^2}\partial_j-\frac{t_j}{|t|^2}\partial_i=-\frac{\partial_{\varphi_{ij}}}{|t|}.
\end{multline}
By combining our observations all together, the proposition follows. 
\end{proof}

Different angular derivatives do not commute either, and we give their commutator below.

\begin{Prop}\label{prop.vaco}
We trivially have $[\partial_{\varphi_{ij}},\partial_{\varphi_{\alpha\beta}}] = 0$ when $i,j,\alpha,\beta$ are all different. If $i,j,k$ are all different, we have
\[    [\partial_{\varphi_{ij}},\partial_{\varphi_{ik}}]= - [\partial_{\varphi_{ji}},\partial_{\varphi_{ik}}] = \frac{1}{|t|} \partial_{\varphi_{jk}}.\]
\end{Prop}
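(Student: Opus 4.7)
The plan is to introduce the auxiliary ``un-weighted'' operators $L_{ij} := |t|\,\partial_{\varphi_{ij}} = -t_i\partial_j + t_j\partial_i$, which reduces the problem to computing commutators of polynomial first-order operators acting on $\R^n$. Using \eqref{dphit=0}, which says $\partial_{\varphi_{ij}}|t|=0$, we get $L_{ij}(|t|^{-1}) = 0$; consequently, writing $\partial_{\varphi_{ij}} = |t|^{-1}L_{ij}$, a direct expansion shows
\[\partial_{\varphi_{ij}}\partial_{\varphi_{\alpha\beta}} = |t|^{-1}L_{ij}\bigl(|t|^{-1}\bigr)L_{\alpha\beta} + |t|^{-2}L_{ij}L_{\alpha\beta} = |t|^{-2}L_{ij}L_{\alpha\beta},\]
and hence $[\partial_{\varphi_{ij}}, \partial_{\varphi_{\alpha\beta}}] = |t|^{-2}[L_{ij}, L_{\alpha\beta}]$. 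It therefore suffices to compute the bracket $[L_{ij}, L_{\alpha\beta}]$ in each case.

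For the case where $i,j,\alpha,\beta$ are all distinct, I would expand the composition $L_{ij}L_{\alpha\beta}$ via the Leibniz rule. Every first-order contribution is of the form $\pm t_* \partial_*(t_*)\partial_*$ with the inner partial falling on a $t_*$ carrying a different index, and thus vanishes. The four surviving second-order terms $t_it_\alpha\partial_j\partial_\beta$, $-t_it_\beta\partial_j\partial_\alpha$, $-t_jt_\alpha\partial_i\partial_\beta$, $t_jt_\beta\partial_i\partial_\alpha$ are each manifestly symmetric under the exchange $(i,j)\leftrightarrow(\alpha,\beta)$, so they cancel in the commutator. This gives $[L_{ij},L_{\alpha\beta}]=0$.

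For the three-indices case, I would again follow the template of the proof of Proposition~\ref{PPLL1} and expand $L_{ij}L_{ik}f$. The four second-order pieces $t_i^2\partial_j\partial_k f$, $-t_it_k\partial_i\partial_jf$, $-t_it_j\partial_i\partial_k f$, $t_jt_k\partial_i^2 f$ are each symmetric in $j\leftrightarrow k$ and therefore drop out of the commutator. The only first-order piece comes from $\partial_i(t_i)=1$, yielding $-t_j\partial_k f$ from $L_{ij}L_{ik}$ and $-t_k\partial_j f$ from $L_{ik}L_{ij}$. Subtracting,
\[[L_{ij},L_{ik}] = -t_j\partial_k + t_k\partial_j = L_{jk},\]
so $[\partial_{\varphi_{ij}},\partial_{\varphi_{ik}}] = |t|^{-2}L_{jk} = |t|^{-1}\partial_{\varphi_{jk}}$. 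The companion identity $[\partial_{\varphi_{ji}},\partial_{\varphi_{ik}}] = -|t|^{-1}\partial_{\varphi_{jk}}$ is then immediate from the antisymmetry $\partial_{\varphi_{ji}} = -\partial_{\varphi_{ij}}$ noted after Definition~\ref{DEFVRA}.

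I do not anticipate any serious obstacle: the computation is routine once one recognizes that $L_{ij}(|t|^{-1})=0$, which decouples the rotation-algebra structure from the distance weight and lets us work with the clean polynomial operators $L_{ij}$ (which are, up to sign, the standard generators of $\mathfrak{so}(n-d)$, so the bracket relations we obtain are exactly the expected Lie-algebra ones, merely divided by $|t|$).
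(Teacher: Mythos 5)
Your proposal is correct: the identity $L_{ij}(|t|^{-1})=0$ does follow from \eqref{dphit=0}, the reduction $[\partial_{\varphi_{ij}},\partial_{\varphi_{\alpha\beta}}]=|t|^{-2}[L_{ij},L_{\alpha\beta}]$ is valid, and your bracket computations ($[L_{ij},L_{\alpha\beta}]=0$ for four distinct indices, $[L_{ij},L_{ik}]=-t_j\partial_k+t_k\partial_j=L_{jk}$ from the single surviving first-order term $\partial_i(t_i)=1$) are accurate, as is the final appeal to $\partial_{\varphi_{ji}}=-\partial_{\varphi_{ij}}$. The paper's proof is also a direct expansion, but it works with the weighted operators throughout: it develops $[\partial_{\varphi_{ij}},\partial_{\varphi_{ik}}]$ into eight terms, pairs them into four groups $T_1,\dots,T_4$, and checks via the product rule that $T_1=T_4=0$ while $T_2+T_3=\frac{1}{|t|}\partial_{\varphi_{jk}}$; the case of four distinct indices is dismissed as trivial. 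Your variant first conjugates away the weight, so the computation takes place among the polynomial generators $L_{ij}$ of $\mathfrak{so}(n-d)$; this buys a cleaner bookkeeping (no $\partial(t_\ast/|t|)$ terms to expand), makes the all-distinct case transparent rather than merely asserted, and exhibits the Lie-algebra structure explicitly, at the modest cost of introducing the auxiliary operators and the preliminary observation that the factor $|t|^{-1}$ passes through $L_{ij}$. Either route is complete and rigorous.
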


\begin{proof}
The identity $[\partial_{\varphi_{ij}},\partial_{\varphi_{ik}}]= - [\partial_{\varphi_{ji}},\partial_{\varphi_{ik}}]$ comes from the fact that $\partial_{\varphi_{ij}} = - \partial_{\varphi_{ji}}$. For the second identity, we brutally compute. We use the definitions of the angular derivatives, and develop the expressions to obtain 8 terms that we pair as follows: 
\begin{multline*}
[\partial_{\varphi_{ij}},\partial_{\varphi_{ik}}]  = \left[\frac{t_i}{|t|} \partial_j \Big( \frac{t_i}{|t|} \partial_k \Big) -  \frac{t_i}{|t|} \partial_k \Big( \frac{t_i}{|t|} \partial_j \Big)  \right] - \left[\frac{t_i}{|t|} \partial_j \Big( \frac{t_k}{|t|} \partial_i \Big) -  \frac{t_k}{|t|} \partial_i \Big( \frac{t_i}{|t|} \partial_j \Big)  \right] \\
 - \left[\frac{t_j}{|t|} \partial_i \Big( \frac{t_i}{|t|} \partial_k \Big) -  \frac{t_i}{|t|} \partial_k \Big( \frac{t_j}{|t|} \partial_i \Big)  \right] + \left[\frac{t_j}{|t|} \partial_i \Big( \frac{t_k}{|t|} \partial_i \Big) -  \frac{t_k}{|t|} \partial_i \Big( \frac{t_j}{|t|} \partial_i \Big)  \right] \\
 := T_1 + T_2 + T_3 + T_4.
\end{multline*}
By using the product rule for every term and the fact that $i,j,k$ are pairwise different, we easily get that $T_1 = T_4 = 0$ and
\[ T_2 = \frac{t_k}{|t|^2} \partial_j \quad \text{ and } T_3 = -\frac{t_j}{|t|^2} \partial_k.\]
We conclude that 
\[[\partial_{\varphi_{ij}},\partial_{\varphi_{ik}}]  = -\frac{t_j}{|t|^2} \partial_k + \frac{t_k}{|t|^2} \partial_j = \frac1{|t|} \partial_{\varphi_{jk}} \]
as desired.
\end{proof}

Now it is time to compute the commutator $[L, \partial_{\varphi}]$, which is a crucial step for establishing local bounds between the square functions and the non-tangential maximal functions. We will explain more when we start building up these estimates. We compute the commutator when $L$ satisfies \HH; we could compute the commutator for general elliptic operator $L$, but we do not need it, so we spare ourselves the extra complications.

\begin{Prop}\label{CorDAR}
Let $\mathcal{A}$ be a $n\times n$ matrix in the form of (\ref{coe.afor}), then for any $v \in W^{2,2}_{loc}(\Omega)$
\[
[L,\partial_{\varphi}](v) = |t|^{d-n}\Big[ \diver_x (\A_2 \partial_\varphi) + 2 \partial_r\partial_{\varphi} v \Big]  + \diver _x(|t|^{d+1-n}(\partial_{\varphi}\mathcal{A})\nabla v) .
\]
Here we identity $\partial_\varphi \mathcal A$ with its non-trivial submatrix, that is the first $d$ rows.
\end{Prop}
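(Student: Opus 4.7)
By Proposition~\ref{PpOPER}, under \HH{} we can write $L = -|t|^{d+1-n} M$ with
\[ M := \diver_x(\A_1 \nabla_x) + \diver_x(\A_2 \partial_r) + \partial_r^2 + \tfrac{1}{2}\sum_{i,j=d+1}^n \partial^2_{\varphi_{ij}}.\]
The key preliminary observation is that, by \eqref{dphit=0}, $\partial_\varphi$ annihilates any function of $|t|$; hence $\partial_\varphi$ commutes with multiplication by $|t|^{d+1-n}$ and $[L,\partial_\varphi] = |t|^{d+1-n}[\partial_\varphi,M]$. It then suffices to compute $[\partial_\varphi,\cdot]$ applied to each of the four summands in $M$ and collect terms.

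First, since $\partial_\varphi$ commutes with every $\partial_{x_j}$ and with itself, while $[\partial_\varphi,\partial_r] = \partial_\varphi/|t|$ by Proposition~\ref{PPLL1}, a direct Leibniz computation gives
\[ [\partial_\varphi, \diver_x(\A_1\nabla_x)]v = \diver_x\bigl((\partial_\varphi \A_1)\nabla_x v\bigr), \]
\[ [\partial_\varphi, \diver_x(\A_2 \partial_r)]v = \diver_x\bigl((\partial_\varphi \A_2)\partial_r v\bigr) + \diver_x\!\Bigl(\A_2\,\frac{\partial_\varphi v}{|t|}\Bigr) = \diver_x\bigl((\partial_\varphi \A_2)\partial_r v\bigr) + \frac{1}{|t|}\diver_x\bigl(\A_2\,\partial_\varphi v\bigr), \]
where in the last step I pull $|t|^{-1}$ outside $\diver_x$ because it is $x$-independent.

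For the radial Laplacian piece I use the identity $[\partial_\varphi,B^2] = B[\partial_\varphi,B] + [\partial_\varphi,B]B$ with $B=\partial_r$, Proposition~\ref{PPLL1}, and the computation $\partial_r(1/|t|) = -1/|t|^2$. The terms involving $\partial_\varphi v/|t|^2$ from the two contributions cancel after rewriting $\partial_\varphi\partial_r = \partial_r\partial_\varphi + \partial_\varphi/|t|$, and one finds $[\partial_\varphi,\partial_r^2]v = 2\partial_r\partial_\varphi v/|t|$.

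The crucial step, and the main obstacle, is to show
\[ \Bigl[\partial_{\varphi_{\alpha\beta}},\; \tfrac{1}{2}\sum_{i,j=d+1}^n \partial^2_{\varphi_{ij}}\Bigr] = 0, \]
which is morally the rotational invariance of the spherical Laplacian. I would prove it directly from Proposition~\ref{prop.vaco}: $[\partial_{\varphi_{\alpha\beta}},\partial_{\varphi_{ij}}]$ vanishes unless $\{\alpha,\beta\}\cap\{i,j\}$ has exactly one element. Expanding $[\partial_{\varphi_{\alpha\beta}},\partial^2_{\varphi_{ij}}] = \partial_{\varphi_{ij}}[\partial_{\varphi_{\alpha\beta}},\partial_{\varphi_{ij}}] + [\partial_{\varphi_{\alpha\beta}},\partial_{\varphi_{ij}}]\partial_{\varphi_{ij}}$, using that $1/|t|$ commutes with every $\partial_{\varphi_{ij}}$ (because $\partial_{\varphi_{ij}}|t|=0$), pairing the terms with $k=\alpha$ against those with $k=\beta$ and exploiting the antisymmetry $\partial_{\varphi_{ij}} = -\partial_{\varphi_{ji}}$, all the anticommutator contributions cancel in pairs.

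Gathering the four commutators and multiplying by $|t|^{d+1-n}$ gives
\[ [L,\partial_\varphi]v = |t|^{d+1-n}\diver_x\bigl((\partial_\varphi \A_1)\nabla_x v + (\partial_\varphi \A_2)\partial_r v\bigr) + |t|^{d-n}\bigl[\diver_x(\A_2\,\partial_\varphi v) + 2\partial_r\partial_\varphi v\bigr]. \]
The first term is precisely $\diver_x(|t|^{d+1-n}(\partial_\varphi \A)\nabla v)$ once $\partial_\varphi \A$ is identified, following the convention of the statement, with its first $d$ rows $((\partial_\varphi \A_1),\,(\partial_\varphi \A_2)\tfrac{t}{|t|})$ acting on $\nabla v = (\nabla_x v,\nabla_t v)$ (noting that $\tfrac{t}{|t|}\cdot\nabla_t = \partial_r$). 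This matches the desired formula.
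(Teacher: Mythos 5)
Your argument is correct and follows essentially the same route as the paper: decompose $L$ via Proposition~\ref{PpOPER}, commute $\partial_\varphi$ with each of the four pieces using Proposition~\ref{PPLL1} and Proposition~\ref{prop.vaco}, and observe that the commutator with the angular Laplacian vanishes by the pairwise cancellation you describe (the only cosmetic difference is that you factor out the weight $|t|^{d+1-n}$ before commuting, while the paper keeps it inside each $L_\alpha$). Your signs and the interpretation of $\partial_\varphi\A$ as $\big((\partial_\varphi\A_1),(\partial_\varphi\A_2)\tfrac{t}{|t|}\big)$ agree with the statement and with how the formula is used later.
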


\begin{proof}
Fix an angular directional derivative $\partial_\varphi$. We rearrange the derivatives to avoid using any $t$-derivatives, and Proposition \ref{PpOPER} entails that
\[
    L=-|t|^{d+1-n}\Big [\diver _x(\mathcal{A}_1\nabla_x)+\diver _x(\mathcal{A}_2\partial_r)+ \partial_r^2 +\frac{1}{2}\sum_{i,j=d+1}^n\partial_{\varphi_{ij}}^2 \Big ] =:L_1+L_2+L_3 + L_4.\]

We note that $[L,\partial_\varphi]=\sum_{\alpha=1}^4 [L_\alpha,\partial_\varphi].$ So we will compute each $[L_\alpha, \partial_\varphi]$ individually. Let us start from the easiest one $[L_1, \partial_\varphi]$. Since $\nabla_x$ and $\partial_\varphi$ commute and $\partial_\varphi |t| = 0$, we have 
\begin{align}\label{cod.eq00}
    [L_1, \partial_\varphi]=\diver _x(|t|^{d+1-n}(\partial_\varphi \mathcal{A}_1)\nabla_x).
\end{align}

We turn to the operator $L_2$. By product rule, one has
\begin{multline}\label{cod.eq01}
    L_2\partial_\varphi=-\diver _x(|t|^{d+1-n}\mathcal{A}_2\partial_r \partial_\varphi)
    =-\diver _x(|t|^{d+1-n}\mathcal{A}_2\partial_\varphi \partial_r) - \diver _x(|t|^{d+1-n}\mathcal{A}_2[\partial_r,\partial_\varphi])\\
    =-\diver _x(|t|^{d+1-n}\partial_\varphi(\mathcal{A}_2\partial_r))+\diver_x(|t|^{d+1-n}(\partial_\varphi\mathcal{A}_2)\partial_r) + |t|^{d-n} \diver _x(\mathcal{A}_2 \partial_\varphi),
\end{multline}
where we used Proposition \ref{PPLL1} to compute the commutator.
The first term on the last line of (\ref{cod.eq01}) is exactly $\partial_\varphi L_2$ because $\partial_\varphi |t|^{d+1-n}\equiv 0$ and $\partial_x$ and $\partial_\varphi$ commute. Thus, (\ref{cod.eq01}) becomes
\begin{align}\label{cod.eq02}
    [L_2, \partial_\varphi]=\diver_x(|t|^{d+1-n}(\partial_\varphi\mathcal{A}_2)\partial_r) - |t|^{d-n} \diver _x(\mathcal{A}_2\partial_\varphi).
\end{align}
For simplicity, we group $ [L_1, \partial_\varphi]$ and  $[L_2, \partial_\varphi]$. We have for any $v\in W^{2,2}_{loc}(\Omega)$ that
\[[L_1, \partial_\varphi](v) +  [L_2, \partial_\varphi](v) =  \diver _x\Big(|t|^{d+1-n}(\partial_{\varphi}\mathcal{A})\nabla  v \Big) + |t|^{d-n} |\diver _x(\mathcal{A}_2\partial_\varphi)|.
\]

As for the commutator between $L_3$ and $\partial_\varphi$, we use Proposition \ref{PPLL1} multiple times to write
\[\partial_r^2 \partial_\varphi = \partial_r \partial_\varphi \partial_r - \partial_r \Big( \frac{\partial_\varphi}{|t|} \Big) = \partial_\varphi \partial_r^2 - \frac1{|t|}\partial_\varphi \partial_r - \frac1{|t|} \partial_r \partial_\varphi + \frac1{|t|^2} \partial_\varphi = \partial_\varphi \partial_r^2 - 2 \partial_r \partial_\varphi\]
and thus deduce 
\[[L_3,\partial_\varphi] = 2 |t|^{d+1-n} \partial_r \partial_\varphi. \]

It remains to establish that $[L_4,\partial_\varphi] = 0$. We take $d+1\leq \alpha,\beta\leq n$ so that $\partial_\varphi = \partial_{\varphi_{\alpha\beta}}$. We invoke the fact that $\partial_{\varphi_{ij}} = - \partial_{\varphi_{ij}}$ and then Proposition \ref{prop.vaco}  to obtain 
\[\begin{split}
- 2|t|^{n-d-1}[L_4,\partial_\varphi] & = - \sum_{i \neq \alpha} \Big(\partial_{\varphi_{\beta i}}^2 \partial_{\varphi_{\beta\alpha}} - \partial_{\varphi_{\beta\alpha}}\partial_{\varphi_{\beta i}}^2  \Big) + \sum_{j \neq \beta} \Big( \partial_{\varphi_{\alpha j}}^2 \partial_{\varphi_{\alpha\beta}} -  \partial_{\varphi_{\alpha\beta}} \partial_{\varphi_{\alpha j}}^2 \Big) \\
& = - \sum_{i \neq \alpha} \Big(\partial_{\varphi_{\beta i}}  \partial_{\varphi_{i\alpha}} - \partial_{\varphi_{\alpha i}} \partial_{\varphi_{\beta i}}  \Big) + \sum_{j \neq \beta} \Big( \partial_{\varphi_{\alpha j}} \partial_{\varphi_{j\beta}} -  \partial_{\varphi_{\beta j}} \partial_{\varphi_{\alpha j}} \Big) \\
& =  - \sum_{i \neq \alpha,\beta} \Big(\partial_{\varphi_{\beta i}}  \partial_{\varphi_{i\alpha}} - \partial_{\varphi_{\alpha i}} \partial_{\varphi_{\beta i}}  \Big) + \sum_{j \neq \alpha,\beta} \Big( \partial_{\varphi_{\alpha j}} \partial_{\varphi_{j\beta}} -  \partial_{\varphi_{\beta j}} \partial_{\varphi_{\alpha j}} \Big).
\end{split}\]
because $\partial_{\varphi_{kk}} = 0$.  We can freely change $j$ in $i$ in the second sum, and after recalling again that $\partial_{\varphi_{ij}} = - \partial_{\varphi_{ij}}$, we observe that two sums in the right-hand side above cancel with each other. We conclude that $[L_4,\partial_\varphi] = 0$, which finishes the proof of the proposition.
\end{proof}

Finally, we will need the following version of the Poincar\'e inequality.

\begin{Prop}\label{PDECC}
Let $u\in W^{1,2}_{loc}(\Omega)$ and let $\Phi \in C^\infty_0(\Omega,\R^+)$ be a radial function. Then   $\partial_\varphi u$ has zero mean on sphere, that is, for almost every $(x,r) \in \R^{d+1}_+$, we have 
\begin{align*}
    (\partial_{\varphi} u)_{\mathbb{S}^{n-d}}(x, r):=\dashint_{\mathbb{S}^{n-d}}\partial_{\varphi} u(x, r\theta)d\sigma(\theta)=0,
\end{align*}
where $\sigma$ is the surface measure on the unit sphere $\mathbb{S}^{n-d}$.

As a result, we have 
\[\iint_\Omega |\partial_\varphi u|^2 \Phi \, dt \, dx \leq C \iint_\Omega |t|^2|\partial_\varphi^2 u|^2 \Phi \, dt\, dx,\]
where $C>0$ is a universal constant. 
\end{Prop}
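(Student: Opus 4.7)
The proof proceeds by working in polar-type coordinates $t = r\theta$ in $\R^{n-d}$ and exploiting the identity $\partial_{\varphi_{ij}} = |t|^{-1} V_{ij}$, where $V_{ij} := -t_i \partial_{t_j} + t_j \partial_{t_i}$ is the Killing vector field generating rotations in the $(i,j)$-plane. The crucial point is that $V_{ij}$ is tangent to every sphere $\{|t|=r\}$ and that the rotation group preserves the surface measure $d\sigma$. I will first work with smooth $u$ and extend by mollification to $u \in W^{1,2}_{loc}(\Omega)$; for the second inequality I may further assume $\partial_{\varphi_{ij}}^2 u \in L^2(\Phi\,dt\,dx)$, since otherwise the right-hand side is infinite and nothing is to prove.

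\textbf{Zero-mean identity.} Let $R_s$ denote the one-parameter family of rotations in the $(i,j)$-plane. For smooth $u$ one has $V_{ij}u(x,r\theta) = \frac{d}{ds}\big|_{s=0} u(x, rR_s\theta)$. Pulling the derivative outside the integral and using rotation invariance of $d\sigma$,
\[
\int_{\mathbb{S}^{n-d}} V_{ij}u(x,r\theta)\,d\sigma(\theta) \;=\; \frac{d}{ds}\bigg|_{s=0}\!\int_{\mathbb{S}^{n-d}} u(x, r R_s\theta)\,d\sigma(\theta) \;=\; 0,
\]
and dividing by $r$ gives the stated identity for $\partial_{\varphi_{ij}} u$. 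A standard Fubini plus mollification argument extends it to a.e.\ $(x,r)$ for general $u\in W^{1,2}_{loc}(\Omega)$.

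\textbf{Poincar\'e inequality.} I will actually use a stronger form of the zero-mean property: the restriction of $\partial_{\varphi_{ij}} u$ to any single orbit of $V_{ij}$ (a circle obtained by fixing every spherical coordinate except the angle $\alpha$ in the $(i,j)$-plane) already has zero integral, since along such an orbit $V_{ij}$ coincides with $\partial_\alpha$ and $\int_0^{2\pi}\partial_\alpha u\,d\alpha = 0$ by periodicity. Foliate the sphere by these orbit circles and decompose $d\sigma$ by coarea as $d\sigma = d\alpha\,d\mu(\omega)$ for an appropriate transverse measure $d\mu$. The classical 1D Wirtinger inequality on each orbit gives
\[
\int_0^{2\pi}|\partial_{\varphi_{ij}} u|^2\,d\alpha \;\leq\; \int_0^{2\pi}|\partial_\alpha(\partial_{\varphi_{ij}} u)|^2\,d\alpha \;=\; \int_0^{2\pi}|V_{ij}\partial_{\varphi_{ij}} u|^2\,d\alpha.
\]
Since $\partial_{\varphi_{ij}}|t|=0$ and $V_{ij} = r\,\partial_{\varphi_{ij}}$ on functions of $\theta$ at fixed $r$, one computes $V_{ij}\partial_{\varphi_{ij}} = r\,\partial_{\varphi_{ij}}^2$, so the right-hand side equals $r^2\int_0^{2\pi}|\partial_{\varphi_{ij}}^2 u|^2\,d\alpha$.

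Integrating this pointwise-in-fiber estimate first against $d\mu$, which yields $\int_{\mathbb{S}^{n-d}}|\partial_{\varphi_{ij}} u|^2\,d\sigma \leq r^2 \int_{\mathbb{S}^{n-d}}|\partial_{\varphi_{ij}}^2 u|^2\,d\sigma$, then in $r$ against $r^{n-d-1}\wt{\Phi}(x,r)\,dr$, and finally in $x$, produces the desired
\[
\iint_\Omega |\partial_\varphi u|^2\,\Phi\,dt\,dx \;\leq\; C \iint_\Omega |t|^2\,|\partial_\varphi^2 u|^2\,\Phi\,dt\,dx.
\]
The radiality of $\Phi$ is essential: it guarantees that $\Phi$ is constant along each orbit circle and may be factored out of the $\alpha$-integral before applying Wirtinger. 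The main technical obstacles I anticipate are a clean coarea decomposition of $d\sigma$ along the orbits of $V_{ij}$, and a careful mollification so that the pointwise-on-fiber computation is valid almost everywhere under only the $W^{1,2}_{loc}$ hypothesis; both steps are routine but require some bookkeeping.
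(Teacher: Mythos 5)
Your proposal is correct and follows essentially the same route as the paper: both proofs rest on the zero mean of $\partial_\varphi u$ over rotation orbits and the one-dimensional Wirtinger/Poincar\'e inequality on those orbit circles, with $V_{ij}\partial_{\varphi_{ij}}u = r\,\partial_{\varphi_{ij}}^2 u$ and the radiality of $\Phi$ letting you integrate the fiberwise estimate. The only differences are presentational: the paper obtains the zero-mean identity by testing against radial functions and the integration by parts of Proposition \ref{IBBdrdphi} (rather than rotation invariance plus mollification), and it slices $\R^{n-d}$ as $\R^{n-d-2}\times\R^2$ with polar coordinates in the last two variables instead of disintegrating $d\sigma$ on the sphere along the orbits.
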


\begin{proof} 
Let $\phi(x, r)\in C_0^\infty(\mathbb{R}^{d+1}_+)$ and set $\Phi(x,t):=\phi(x, |t|)$. Observe that 
\[ \iint_{\R^{d+1}_+} \left| \int_{\mathbb S^{n-d}} \partial_\varphi u \, d\theta \right| |\phi| \, r^{n-d-1} \, dr \, dx \leq  \iint_{\R^n} |\partial_\varphi u| |\Phi| \, dx \, dt \leq C_\phi \]
which proves by Fubini's theorem that $\int_{\mathbb S^{n-d}} \partial_\varphi u \, d\theta$ and thus $(\partial_\varphi u)_{\mathbb S^{n-d}}$ exists for almost every $(x,r) \in \R^{d+1}_+$. Notice now that $\partial_{\varphi} \Phi = (\partial_\varphi |t|)(\partial_r \phi)  \equiv 0$ because $\partial_{\varphi}|t|\equiv 0$ and $|\partial_r \phi|<\infty$. Therefore the integration by parts (see Proposition \ref{IBBdrdphi}) entails that
\begin{multline}\label{ECC.eq02}
    \iint_{\mathbb{R}^{d+1}_+}(\partial_{\varphi} u)_{\mathbb{S}^{n-d-1}}\, \phi \, r^{n-d-1} \, dr\, dx=\frac{1}{\sigma(\mathbb{S}^{n-d-1})}\iint_{\Omega}(\partial_{\varphi} u)\Phi\, dt\, dx \\
    =-\frac{1}{\sigma(\mathbb{S}^{n-d-1})} 
    \iint_{\Omega} u \partial_{\varphi} \Phi\,   \, dt\, dx =0.
\end{multline}
Since the identity (\ref{ECC.eq02}) holds for every $\phi\in C_0^\infty(\mathbb{R}^{d+1}_+)$, it is enough to conclude that $(\partial_{\varphi} u)_{\mathbb{S}^{n-d-1}}(x ,r)=0$ for almost every $(x,r)\in \mathbb{R}^{d+1}_+$. 

\medskip
 
Let us turn to the second part of the Proposition. Without loss of generality, we can assume that $d\leq n-2$ (because otherwise angular derivatives do not exist) and  $\partial_\varphi = \partial_{\varphi_{ij}}$ with $i=n$ and $j=n-1$. Write a running point of $\R^n$ as $(x,t',t_{n-1},t_n) \in \R^d \times \R^{n-d-2} \times \R \times \R$. We consider a function $\psi \in \R^{n-1}_+ := \{(x,t',r)\in \R^{n-2} \times (0,\infty)\}$, and then $\Psi(x,t) := \psi(x,t', |(t_{n-1},t_n)|)$.
The same argument as before shows that for almost every $(x,t',r) \in \R^{n-1}_+$, the function $\theta \to \partial_\varphi u(x,t', r\theta)$ lies in $L^2(\S^1,d\sigma)$ and
\[(\partial_\varphi u)_{\mathbb S^1}(x,t',r) := \fint_{\S_1} \partial_\varphi u(x,t', r\theta) \, d\sigma(\theta) = 0.\]
However, $\S^1$ is just the unit circle, so we have the bijection
\[\rho:\, z\in [0,2\pi) \mapsto \theta = (\cos(z),\sin(z)) \in \S^1\]
and we even have $d\sigma(\theta) = dz$. Moreover,
\[\begin{split}
\frac{\partial}{\partial z} [u(x,t',r\theta)] & =  -r\sin(\theta) \frac{\partial}{\partial t_{n-1}} u(x,t',r\theta) + r\cos(\theta) \frac{\partial}{\partial t_{n}} u(x,t',r\theta) = r \partial_\varphi u(x,t',r\theta)
\end{split}\]
and similarly
\[\frac{\partial^2}{\partial z^2} [u(x,t',r\theta)] = r^2 \partial_\varphi^2 u(x,t',r\theta).\]
We deduce that, for almost every $(x,t',r) \in \R^{n-1}_+$,
\[\fint_0^{2\pi} \frac{\partial}{\partial z}[ u(x,t', r\rho(z))]  \, dz = (\partial_\varphi u)_{\mathbb S^1}(x,t',r)  = 0\]
and then, by the Poincar\'e inequality on $[0,2\pi]$, 
\[\begin{split}
\int_{\S^1} |\partial_\varphi u(x,t', r\theta)|^2 \, d\sigma(\theta) & = r^{-2} \int_0^{2\pi} \Big| \frac{\partial}{\partial z} [u(x,t',r\rho(z))] \Big|^2 dz \\
& \leq C r^2 \int_0^{2\pi} \Big| \frac{\partial^2}{\partial z^2} [u(x,t',r\rho(z))] \Big|^2 dz = Cr^2 \int_{\S^1} |\partial_\varphi^2 u(x,t', r\theta)|^2 \, d\sigma(\theta).
\end{split}\]
We conclude by integrating over $(x,t',r) \in \R^{n-1}_+$. Since a radial function $\Phi$ depends on $t_{n-1}$ and $t_n$ only via the norm $|(t_{n-1},t_n)|$, we get
\[\begin{split}
\iint_{\Omega} |\partial_\varphi u|^2 \Phi \, dt\, dx & = \int_{\R^{n-2}} \int_{0}^\infty \Phi \left( \int_{\S^1}  |\partial_\varphi u(x,t', r\theta)|^2 \, d\sigma(\theta) \right) \, r \, dr\, dt'\, dx \\
& \lesssim \int_{\R^{n-2}} \int_{0}^\infty \Phi \left( \int_{\S^1}  |\partial_\varphi^2 u(x,t', r\theta)|^2 \, d\sigma(\theta) \right) \, r^3 dr\, dt'\, dx \\
& \quad = \iint_{\Omega} |t|^2 |\partial_\varphi^2 u|^2 \Phi \, dt\, dx.
\end{split}\]
The lemma follows.
\end{proof}

\section{\texorpdfstring{$N\leq S$}{TEXT} Local Estimates, Part 1: Integration by Parts} 

\label{SN<S}

We want to bound of the non-tangential maximal function by the square functional. In this section, we prove preliminary estimates that will be improved to the desired $N<S$ estimate in the next section by using a ``good $\lambda$'' argument.

\medskip

We observe first that if $v\in L^2_{loc}(\Omega)$ and $\Psi$ is a cut-off function, we have by a simple application of Fubini's theorem that
\begin{equation} \label{S=int1}
\iint_\Omega |v|^2 \Psi \, \frac{dt}{|t|^{n-d-2}} \, dx \approx \int_{\R^d} \left(\iint_{(y,t) \in \widehat \Gamma(x)} |v(y,t)|^2 \Psi(y,t) \, \frac{dt}{|t|^{n-2}} \, dy \right) \, dx
\end{equation}
so in particular, for any $v\in W^{1,2}_{loc}(\Omega)$
\begin{equation} \label{S=int2}
\iint_\Omega |\nabla v|^2 \Psi \, \frac{dt}{|t|^{n-d-2}} \, dx \approx \|S(v|\Psi)\|^2_{L^2(\R^d)}.
\end{equation}
The constants in \eqref{S=int1} and \eqref{S=int2} depends only on $d$ and $n$.

Moreover, the Carleson measure condition is well adapted to the averaged non-tangential maximal function, in that we have
\begin{equation} \label{Carleson}
\iint_\Omega |v|^2 |f|^2 \Psi \, \frac{dt}{|t|^{n-d}} \, dx \leq C M \|\wt N(u|\Psi)\|^2_{L^2(\R^d)}
\end{equation}
whenever $f\in CM(M)$. The statement in this particular context can be found as Proposition 4.3 in \cite{feneuil2018dirichlet}, but the proof is an easy consequence of the classical Carleson inequality.

\begin{Lemma}\label{lmGNSE}
In this lemma, $\partial_v$ stands for either a tangential derivative $\partial_x$ or an angular derivative $\partial_\varphi$.
For any function $u\in W^{2,2}_{loc}(\Omega)$, any cut-off function $\Psi\in C^\infty_0(\Omega,[0,1])$ satisfying \COF$_K$, any real constant $\alpha$, and any $\delta \in (0,1)$, we have
\begin{equation}\label{GNSE.eq00}
    \left| \iint_\Omega |\partial_v u-\alpha|^2\partial_r(\Psi^3) \frac{dtdx}{|t|^{n-d-1}} \right| \leq  \delta \|\wt N(\partial_v u-\alpha|\Psi^3)\|^2_2 + C(1+\delta^{-1}K) \|S(\overline{\nabla} u|\Psi)\|^2_2,
\end{equation}
where $C>0$ depends only on $n$.
\end{Lemma}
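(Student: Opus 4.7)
My strategy is a single integration by parts in $\partial_r$ to move the derivative off the cutoff $\Psi^3$, followed by a weighted Cauchy--Schwarz/AM--GM that splits the result into a ``non-tangential'' piece and a ``square-function'' piece. Because the weight $|t|^{-(n-d-1)} = |t|^{d+1-n}$ is precisely the one rendering $\partial_r$ antisymmetric in Proposition \ref{IBBdrdphi}, the integration by parts is clean and yields
\[
I := \iint_\Omega |\partial_v u - \alpha|^2 \partial_r(\Psi^3) \frac{dt\,dx}{|t|^{n-d-1}} = -2\iint_\Omega (\partial_v u - \alpha)(\partial_r \partial_v u) \Psi^3 \frac{dt\,dx}{|t|^{n-d-1}},
\]
after using $\partial_r(|\partial_v u - \alpha|^2) = 2(\partial_v u - \alpha)(\partial_r \partial_v u)$. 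When $\partial_v = \partial_{x_i}$ the derivatives $\partial_r$ and $\partial_v$ commute, so $\partial_r \partial_v u$ is simply a component of $\nabla(\partial_v u)$. When $\partial_v = \partial_{\varphi_{ij}}$, Proposition \ref{PPLL1} gives $\partial_r \partial_\varphi = \partial_\varphi \partial_r - \partial_\varphi/|t|$, producing an extra term $\partial_\varphi u/|t|$ that has zero mean on spheres and so will be reabsorbed using the Poincar\'e-type estimate of Proposition \ref{PDECC}.

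Next I split the weight as $|t|^{-(n-d-1)} = |t|^{-(n-d)/2} \cdot |t|^{-(n-d-2)/2}$ and apply $2|ab| \leq \delta a^2 + \delta^{-1} b^2$ to obtain
\[
|I| \leq \delta A + \delta^{-1} B, \qquad A := \iint_\Omega |\partial_v u - \alpha|^2 \Psi^3 \frac{dt\,dx}{|t|^{n-d}}, \qquad B := \iint_\Omega |\partial_r \partial_v u|^2 \Psi^3 \frac{dt\,dx}{|t|^{n-d-2}}.
\]
The square-function term $B$ is bounded by $C\|S(\overline\nabla u|\Psi)\|_2^2$: using $\Psi^3 \leq \Psi$, the identification \eqref{S=int2}, and the decomposition of $\partial_r \partial_\varphi u$ from Step~2, the only genuinely nontrivial contribution is $\iint |\partial_\varphi u|^2 \Psi^3 |t|^{-(n-d)} dt\,dx$, which Proposition \ref{PDECC} (applied to the smooth, radial, compactly-supported-in-$\Omega$ weight $\Phi := \Psi^3 |t|^{-(n-d)}$) controls by $C \iint |\partial_\varphi^2 u|^2 \Psi^3 |t|^{-(n-d-2)} dt\,dx \leq C\|S(\overline\nabla u|\Psi)\|_2^2$.

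The main obstacle is the bound $A \leq C \|\wt N(\partial_v u - \alpha|\Psi^3)\|_2^2$ (with a constant that may depend on $K$ and that is then absorbed into the factor $C(1+\delta^{-1}K)$ in front of the $\|S\|_2^2$ term). My plan is a Whitney-type decomposition: cover $\Omega$ by a bounded-overlap collection $\{W(z_j, r_j)\}_j$ on which $|t| \approx r_j$, so
\[
\iint_{W(z_j,r_j)} |\partial_v u - \alpha|^2 \Psi^3 \frac{dt\,dx}{|t|^{n-d}} \lesssim r_j^d\,(\partial_v u - \alpha|\Psi^3)_W(z_j, r_j)^2 \lesssim \int_{B(z_j, r_j/2)} \wt N(\partial_v u - \alpha|\Psi^3)^2\, dx,
\]
and then sum, exploiting the Carleson hypothesis $\1_{\supp\nabla\Psi}\in CM(K)$ in \COF$_K$ to limit the effective number of scales on which $\Psi^3$ is not constant. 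Combining this with the estimate on $B$ and the AM--GM split above yields the desired inequality.
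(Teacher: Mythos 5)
Your first integration by parts and your treatment of the square-function piece $B$ (including the commutator $[\partial_r,\partial_\varphi]=-\partial_\varphi/|t|$ and the use of Proposition \ref{PDECC} for $\iint|\partial_\varphi u|^2\Psi^3|t|^{d-n}\,dt\,dx$) are fine, but the argument breaks at the crucial step: the claimed bound
\[
A:=\iint_\Omega|\partial_v u-\alpha|^2\,\Psi^3\,\frac{dt\,dx}{|t|^{n-d}}\;\leq\;C\,\|\wt N(\partial_v u-\alpha|\Psi^3)\|_2^2
\]
is false with a constant depending only on $n,K$. The measure $\Psi^3\,|t|^{d-n}\,dt\,dx$ is not a Carleson measure: on the set where $\Psi\equiv 1$ (which is where \COF$_K$ gives you nothing — the hypothesis $\1_{\supp\nabla\Psi}\in CM(K)$ only controls the transition region) your Whitney sum $\sum_j\int_{B(z_j,r_j/2)}\wt N(\cdot)^2\,dx$ counts every dyadic scale inside $\supp\Psi$, and these contributions pile up over the same boundary points. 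Concretely, take $\partial_v u-\alpha\equiv 1$ on the support of $\Psi=\Psi_{B,l,\epsilon}$ (e.g.\ $u=x_1$, $\partial_v=\partial_{x_1}$, $\alpha=0$): then $A\approx |B|\log(l/\epsilon)\to\infty$ as $\epsilon\to 0$, while $\|\wt N(\partial_v u-\alpha|\Psi^3)\|_2^2\approx|B|$ and the left-hand side $\iint|\partial_v u-\alpha|^2\partial_r(\Psi^3)|t|^{d+1-n}dt\,dx$ stays of size $|B|$. So your AM--GM split $|I|\leq\delta A+\delta^{-1}B$ is a true inequality but it destroys exactly the cancellation the lemma needs; no choice of constants can recover \eqref{GNSE.eq00} from it, and the constant would necessarily degenerate as $\Psi\uparrow 1$, which defeats the purpose of these local estimates.

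The paper avoids this by never letting $|\partial_v u-\alpha|^2$ meet the non-Carleson weight $|t|^{d-n}$ over the bulk of $\supp\Psi$: after the first integration by parts it inserts $1=\partial_r|t|$ and integrates by parts a second time. This converts the integral into (i) a pure square-function term $\iint|\partial_r\partial_v u|^2\Psi^3|t|^{d+2-n}dt\,dx$, (ii) terms in which $\partial_v u-\alpha$ is paired with $\partial_r(\Psi^3)$ or $\partial_v(\Psi^3)$ — these are the only places $\wt N$ enters, and since $|t||\nabla\Psi|\in CM(K)$ the Carleson inequality \eqref{Carleson} together with $ab\leq \delta a^2+\delta^{-1}b^2$ yields $\delta\|\wt N(\partial_v u-\alpha|\Psi^3)\|_2^2+C\delta^{-1}K\|S(\overline\nabla u|\Psi)\|_2^2$ — and (iii) a term with $\partial_r^2\partial_v u$, which after commuting and one more integration by parts produces either second-derivative terms (square function via \eqref{S=int2}) or first-order angular terms with weight $|t|^{d-n}$, which are admissible only because $\partial_\varphi u$ has zero spherical mean (Proposition \ref{PDECC}, i.e.\ \eqref{AphiSphi}) — a property that $\partial_v u-\alpha$ itself does not have. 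If you want to salvage your approach you must reorganize so that every occurrence of $\partial_v u-\alpha$ carries either a derivative of $\Psi$ or a second derivative of $u$; that is precisely the paper's double integration by parts.
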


\begin{proof}
To lighten the notation, we write $V$ for $\partial_v u$. First, by the integration by parts (Proposition \ref{IBBdrdphi}), we have
\[\mathcal T: = \iint_\Omega |V-\alpha|^2\partial_r (\Psi^3) \frac{dtdx}{|t|^{n-d-1}}
= -2\iint_\Omega (V-\alpha)(\partial_rV)\Psi^3\frac{dtdx}{|t|^{n-d-1}}\]
We introduce $1 = \partial_r |t|$, and we proceed to another integration by parts in order to write
\begin{multline*}
    \mathcal{T} =-2\iint_\Omega (V-\alpha)(\partial_r V)\Psi^3 (\partial_r|t|) \frac{dtdx}{|t|^{n-d-1}}
    = 2\iint_\Omega |\partial_r V|^2\Psi^3\frac{dtdx}{|t|^{n-d-2}} \\
    +2\iint_\Omega (V-\alpha)(\partial_r V) \partial_r(\Psi^3)\frac{dtdx}{|t|^{n-d-2}}
    +2\iint_\Omega (V-\alpha)(\partial_r^2 V)\Psi^3\frac{dtdx}{|t|^{n-d-2}}
    :=\RN{1}+\RN{2}+\RN{3}.
\end{multline*}
Thanks to \eqref{S=int2}, the term $\RN{1}$ is bounded by the square function $\|S(V|\Psi^3)\|^2_2$. 
Since $\Psi$ satisfies \COF$_K$, the cut-off function $|t|\nabla \Psi \in CM(K)$ and so the Cauchy-Schwarz inequality and the Carleson inequality \eqref{Carleson} imply
\begin{align*}
    \RN{2}\leq C K^\frac12 \|\wt N(V-\alpha|\Psi^3)\|_2 \|S(V|\Psi)\|_2 \leq \frac{\delta}3 \|\wt N(V-\alpha|\Psi^3)\|^2_2+ C\delta^{-1} K \|S(\overline\nabla u|\Psi)\|^2_2
\end{align*}
for any $\delta \in (0,1)$. As for the term $\RN{3}$, we have
\begin{multline*}
    \RN{3}= 2 \iint_\Omega (V-\alpha)\Big ([\partial_r^2, \partial_v]u\Big )\Psi^3\frac{dtdx}{|t|^{n-d-2}} + 2 \iint_\Omega (V-\alpha)(\partial_v \partial^2_r u) \Psi^3\frac{dtdx}{|t|^{n-d-2}} :=\RN{3}_1+\RN{3}_2.
\end{multline*}
Since $\partial_v |t| = 0$ whenever $\partial_v = \partial_x$ or $\partial_v = \partial_\varphi$, an integration by parts yields that
\[  \RN{3}_2= - 2 \iint_\Omega (\partial_v^2 u) (\partial^2_r u) \Psi^3\frac{dtdx}{|t|^{n-d-2}} - 2 \iint_\Omega (V - \alpha) (\partial^2_r u) \partial_v (\Psi^3) \frac{dtdx}{|t|^{n-d-2}} := \RN{3}_{21} + \RN{3}_{22}.\]
The term $\RN{3}_{21}$ is easily bounded by $C\|S(\overline \nabla u|\Psi^3)\|_2^2$, and similarly to $\RN{2}$, since $|t||\partial_v \Psi| \in  CM(K)$, we have that
\[|\RN{3}_{22}| \leq  \frac{\delta}3 \|\wt N(V-\alpha|\Psi^3)\|^2_2+ C\delta^{-1} K \|S(\overline\nabla u|\Psi)\|_2^2.\]  

It remains to bound $\RN{3}_1$. Since $\partial_x$ and $\partial_r$ commute, the commutator $[\partial^2_r,\partial_x]$ is zero, and hence - when $\partial_v = \partial_x$ - we have $\RN{3}_1 = 0$. Using Proposition \ref{PPLL1} multiple times gives that 
\[[\partial_r^2, \partial_\varphi] = \partial_r [\partial_r, \partial_\varphi] + [\partial_r,\partial_\varphi]\partial_r = -\frac{2}{|t|}\partial_r \partial_\varphi.\]
So when $\partial_v = \partial_\varphi$, we have 
\begin{multline*}
\RN{3}_1 = - 4 \iint_\Omega (\partial_\varphi u - \alpha) (\partial_r \partial_\varphi u) \Psi^3 \frac{dtdx}{|t|^{n-d-1}}  \\
= 4 \iint_\Omega (\partial_r \partial_\varphi u )  (\partial_\varphi u) \Psi^3 \frac{dtdx}{|t|^{n-d-1}} + 4 \iint_\Omega (\partial_\varphi u -\alpha)  (\partial_\varphi u) \partial_\varphi (\Psi^3) \frac{dtdx}{|t|^{n-d-1}} := \RN{3}_{11} + \RN{3}_{12}
\end{multline*}
by the integration by part given in Proposition \ref{IBBdrdphi}. Observe that Proposition \ref{PDECC} and \eqref{S=int2} infer that
\begin{equation} \label{AphiSphi}
\iint_\Omega |\partial_\varphi u|^2 \Phi \frac{dt\, dx}{|t|^{n-d}} \leq C \|S(\partial_\varphi u |\Phi)\|_{2}^2
\end{equation}
So by the Cauchy-Schwarz inequality, we have
\[\RN{3}_{11} \leq \|S(\partial_\varphi u |\Psi^3)\|_{2} \left( \iint_\Omega |\partial_\varphi u|^2 \Psi^3 \frac{dt\, dx}{|t|^{n-d}} \right)^\frac{1}{2}  \lesssim \|S(\overline\nabla  u |\Psi^3)\|_{2}^2\]
and similarly to $\RN{2}$ and $\RN{3}_{22}$, 
\begin{multline*}
\RN{3}_{12} \leq  \delta \|\wt N(\partial_\varphi u-\alpha|\Psi^3)\|^2_2+ C(\delta K)^{-1} \left( \iint_\Omega |\partial_\varphi u|^2 \Psi \frac{dt\, dx}{|t|^{n-d}} \right)^\frac{1}{2}  \\ 
\leq  \frac{\delta}3 \|\wt N(\partial_\varphi u-\alpha|\Psi^3)\|^2_2+ C\delta^{-1}  K\|S(\overline\nabla  u |\Psi)\|_{2}^2.
\end{multline*}
The lemma follows.
\end{proof}

Now, we prove the analogue of the previous lemma for the radial derivative, and we shall use that $u$ is solution to $Lu=0$.

\begin{Lemma}\label{LLRAD}
Let $L$ be an elliptic operator satisfying \HH$_{\lambda,\kappa}$. 
For any weak solution $u\in W^{1,2}_{loc}(\Omega)$ to $Lu=0$, any cut-off function $\Psi\in C^\infty_0(\Omega,[0,1])$ satisfying \COF$_K$, any real constant $\alpha$, and any $\delta \in (0,1)$, we have
\begin{multline*}
\Big |\iint_{\Omega}|\partial_r u-\alpha|^2\partial_r(\Psi^3)\frac{dtdx}{|t|^{n-d-1}}\Big |\leq \delta \|\wt N(\partial_r u-\alpha|\Psi^3)\|^2_2 + C(1+\delta^{-1}K^2) \kappa \|\wt N(\nabla u|\Psi)\|^2_2 \\ + C(1+\delta^{-1}K^2) \|S(\overline{\nabla} u|\Psi)\|^2_2,
\end{multline*}
and  
\begin{equation} \label{bdBr1}
\Big |\iint_{\Omega}|\partial_r u|^2\partial_r(\Psi^3)\frac{dtdx}{|t|^{n-d-1}}\Big |\leq (\delta+\delta^{-1}K^2\kappa) \|\wt N(\partial_r u|\Psi)\|^2_2 + C(1+\delta^{-1}K^2) \|S(\overline{\nabla} u|\Psi^3)\|^2_2,
\end{equation}
$C$ depends only on $\lambda$, $d$, and $n$. 
\end{Lemma}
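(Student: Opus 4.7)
The plan is to follow the blueprint of Lemma~\ref{lmGNSE}, with the essential new input being that the second radial derivative can be eliminated using the equation $Lu=0$ via Proposition~\ref{PpOPER}. Denoting the left-hand side by $\mathcal T$ and integrating by parts in $\partial_r$ via Proposition~\ref{IBBdrdphi} converts
\[\mathcal T = -2\iint_{\Omega} (\partial_r u - \alpha)(\partial_r^2 u)\Psi^3 \frac{dt\,dx}{|t|^{n-d-1}}.\]
Replacing $\partial_r^2 u$ by $-\diver_x(\mathcal A_1 \nabla_x u) - \diver_x(\mathcal A_2 \partial_r u) - \tfrac12\sum_{i,j}\partial_{\varphi_{ij}}^2 u$ by means of Proposition~\ref{PpOPER} and the hypothesis $Lu=0$ splits $\mathcal T$ into three pieces $\mathcal T^{(1)} + \mathcal T^{(2)} + \mathcal T^{(3)}$ carrying respectively the $\mathcal A_1$, $\mathcal A_2$, and angular contributions.

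For $\mathcal T^{(1)}$ and $\mathcal T^{(2)}$, I would integrate by parts in $x$; the weight $|t|^{d+1-n}$ is $x$-independent, so no extra weight-derivative terms appear. This yields two families of terms. The first, with $\nabla_x$ landing on $\partial_r u$, produces second-order quantities $\mathcal A_i \nabla u \cdot \nabla_x\partial_r u$, which are bounded either directly by Cauchy--Schwarz and \eqref{S=int2} against $\|S(\overline\nabla u|\Psi)\|^2_2$, or, where the weight $|t|^{-(n-d-1)}$ does not match $\|S\|^2_2$, by a further $\partial_r$ integration by parts (Proposition~\ref{IBBdrdphi}) that transfers a derivative either onto $\mathcal A_i$ --- whereupon the Carleson bound $|t||\nabla \mathcal A_i| \in CM(\kappa)$ combined with \eqref{Carleson} produces the $\kappa\|\wt N(\nabla u|\Psi)\|^2_2$ contribution --- or onto $\Psi^3$, where $|t||\nabla \Psi| \leq K$ and $\mathbf 1_{\supp\nabla\Psi}\in CM(K)$ supply the $K$-factors. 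The second family, with $\nabla_x$ falling on $\Psi^3$, is treated by Cauchy--Schwarz with parameter $\delta$ together with \eqref{Carleson} applied to $\mathbf 1_{\supp\nabla\Psi}$, producing the $\delta\|\wt N(\partial_r u-\alpha|\Psi^3)\|^2_2$ term and the $\delta^{-1}K^2\|S(\overline\nabla u|\Psi)\|^2_2$ contribution.

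For the angular piece $\mathcal T^{(3)}$, I integrate by parts in each $\partial_{\varphi_{ij}}$ using Proposition~\ref{IBBdrdphi}. The crucial simplification is that $\Psi$ is radial, so $\partial_{\varphi_{ij}} \Psi = 0$ and the ``cut-off'' piece vanishes, leaving only $-\sum_{i,j}\iint(\partial_{\varphi_{ij}}\partial_r u)(\partial_{\varphi_{ij}}u)\Psi^3 \frac{dt\,dx}{|t|^{n-d-1}}$. Commuting $\partial_r$ and $\partial_{\varphi_{ij}}$ via Proposition~\ref{PPLL1} yields a regular piece $(\partial_r\partial_{\varphi_{ij}}u)(\partial_{\varphi_{ij}}u)$ controlled by Cauchy--Schwarz together with the Poincar\'e-on-the-sphere inequality \eqref{AphiSphi} coming from Proposition~\ref{PDECC}, plus a commutator piece $|\partial_{\varphi_{ij}}u|^2/|t|$ that is also absorbed into $\|S(\overline\nabla u|\Psi^3)\|^2_2$ via \eqref{AphiSphi}.

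The bound \eqref{bdBr1}, corresponding to $\alpha=0$, is obtained by the same computation, except that the term which in the first estimate contributed $\delta\|\wt N(\partial_r u-\alpha|\Psi^3)\|^2_2 + \delta^{-1}K^2\|S\|^2_2$ can now be paired with a Carleson function: since $\partial_r u$ appears on its own and $|t||\nabla \mathcal A| \in CM(\kappa)$, one applies \eqref{Carleson} to obtain the combined $(\delta + \delta^{-1}K^2\kappa)\|\wt N(\partial_r u|\Psi)\|^2_2$ coefficient in place of a separate $\kappa\|\wt N(\nabla u)\|^2_2$ contribution. The main obstacle throughout is the weight bookkeeping: the factor $|t|^{-(n-d-1)}$ that appears after the first integration by parts sits between the $|t|^{-(n-d-2)}$ natural for $\|S\|^2_2$ and the $|t|^{-(n-d)}$ natural for Carleson-controlled $\|\wt N\|^2_2$, so each integration by parts and each Cauchy--Schwarz must be arranged so that every resulting term lands either on a Carleson-admissible function (one of $|t||\nabla \mathcal A|$ or $\mathbf 1_{\supp\nabla\Psi}$) or on a square-function-compatible weight.
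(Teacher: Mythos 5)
Your skeleton --- integrate by parts in $\partial_r$ via Proposition \ref{IBBdrdphi}, substitute $\partial_r^2 u$ from \eqref{eqPOPER}, use radiality of $\Psi$ to kill the angular cut-off terms, and invoke Proposition \ref{PDECC} for $\partial_\varphi u$ --- is indeed the paper's, and your treatment of the angular piece is fine (the extra commutation via Proposition \ref{PPLL1} is harmless). The gap is in the $\A_1,\A_2$ pieces, where the \emph{order} of the integrations by parts is not bookkeeping but the crux. If you integrate by parts in $x$ first, at weight $|t|^{-(n-d-1)}$, two things go wrong. First, the cut-off term $\iint(\partial_r u-\alpha)\,\A_{1,2}\nabla_{x,r}u\cdot\nabla_x(\Psi^3)\,|t|^{d+1-n}dt\,dx$ contains no derivative of $\A$ and no second derivative of $u$; after $|\nabla_x\Psi^3|\lesssim K\1_{\supp\nabla\Psi}/|t|$ and the $\delta$-split, its non-$\delta$ half is $\delta^{-1}K^2\iint|\nabla u|^2\1_{\supp\nabla\Psi}\Psi\,|t|^{d-n}dt\,dx\lesssim \delta^{-1}K^3\|\wt N(\nabla u|\Psi)\|_2^2$: a non-tangential term \emph{without} the factor $\kappa$, which you mislabel as an $S$-contribution and which is not admissible in the statement (the $\kappa$ in front of $\|\wt N(\nabla u|\Psi)\|_2^2$ is exactly what allows later absorption). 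Second, the main term $\iint\A_{1,2}\nabla_{x,r}u\cdot\nabla_x\partial_r u\,\Psi^3|t|^{d+1-n}dt\,dx$ cannot be bounded ``directly by Cauchy--Schwarz against $\|S\|_2^2$'' at this weight (the partner factor is $\iint|\nabla u|^2\Psi\,|t|^{d-n}$, which is not controlled), and your proposed fix --- a further $\partial_r$ integration by parts sending the derivative ``onto $\A_i$ or onto $\Psi^3$'' --- ignores that the derivative also lands on the solution factors: moving $\partial_r$ off $\nabla_x u$ produces $\iint(\partial_r\A_{1,2})\nabla_{x,r}u\cdot\nabla_x u\,\Psi^3|t|^{d+1-n}$, where the Carleson function $|t||\nabla\A|$ appears only to the \emph{first} power against $|\nabla u|^2$ at the critical weight $|t|^{-(n-d)}$ (inequality \eqref{Carleson} needs its square), together with $\iint\A_{1,2}(\partial_r\nabla_{x,r}u)\cdot\nabla_x u\,\Psi^3|t|^{d+1-n}$, whose Cauchy--Schwarz partner is again the uncontrolled $\iint|\nabla_x u|^2\Psi|t|^{d-n}$ (Proposition \ref{PDECC} only helps for angular derivatives); if instead you insert $1=\partial_r|t|$ at this stage, you create $\nabla_x\partial_r^2 u$, a third derivative not dominated by $S(\overline\nabla u)$.

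The paper closes the argument precisely by performing the $\partial_r$ integration by parts (via $1=\partial_r|t|$) \emph{before} touching the $x$-divergence: starting from $\iint(\partial_r u-\alpha)(\diver_x\A_{1,2}\nabla_{x,r}u)\,\Psi^3\,\partial_r(|t|)\,|t|^{d+1-n}$, the derivative falls on $(\partial_r u-\alpha)$ (giving $\partial_r^2u$, a second-order factor), on $\diver_x\A_{1,2}\nabla_{x,r}u$ (then a subsequent $x$-integration by parts moves $\diver_x$ onto $(\partial_r u-\alpha)$, i.e. onto $\nabla_x\partial_r u$, or onto $\Psi^3$), or on $\Psi^3$; in every resulting term each $\nabla\A$ or $\nabla\Psi$ factor is paired with at least one genuinely second-order factor, so the $\delta$-splits land either on $(|t||\nabla\A|)^2|\nabla u|^2|t|^{d-n}$ (Carleson, producing the $\kappa\|\wt N(\nabla u|\Psi)\|_2^2$ term) or on $|\nabla\overline\nabla u|^2|t|^{d+2-n}$ (an $S$-term), and no third derivatives ever appear. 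You would need to redo your treatment of $\mathcal T^{(1)},\mathcal T^{(2)}$ in this order; also, \eqref{bdBr1} then follows from the same computation with the roles of $\Psi$ and $\Psi^3$ swapped in the cut-off terms, rather than from the Carleson pairing you describe.
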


\begin{proof}
We only prove the first bound, since \eqref{bdBr1} is established with the same computations, by simply shifting switching the place of $\Psi$ and $\Psi^3$ when we bound $|\I_3| + |\I_5|$ below. By integration by parts (see Proposition \ref{IBBdrdphi}), we have
\[\mathcal T:= \iint_{\Omega}|\partial_r u -\alpha|^2\partial_r(\Psi^3)\frac{dtdx}{|t|^{n-d-1}} = - 2 \iint_{\Omega} (\partial_r u - \alpha)(\partial_r^2 u ) \Psi^3\frac{dtdx}{|t|^{n-d-1}}\]
But now, we can use the equation in cylindrical coordinate, that is \eqref{eqPOPER}, to obtain
\[\partial^2_r u = - \diver_x \A_1 \nabla_x u + \diver_x \A_2 \partial_r u - \frac12 \sum_{i,j=d+1}^n \partial_{\varphi_{ij}}^2 u\]
and then
\begin{multline*}
\mathcal T = 2 \iint_{\Omega} (\partial_r u - \alpha)(\diver_x \A_1 \nabla_x u) \Psi^3\frac{dtdx}{|t|^{n-d-1}} + 2 \iint_{\Omega} (\partial_r u - \alpha)(\diver_x \A_2 \partial_r u) \Psi^3\frac{dtdx}{|t|^{n-d-1}} \\
+ \sum_{i,j=d+1}^n  \iint_{\Omega} (\partial_r u - \alpha)(\partial_{\varphi_{ij}}^2 u) \Psi^3\frac{dtdx}{|t|^{n-d-1}} := \RN{1} + \RN{2} + \RN{3}.
\end{multline*}
We first deal with $\III$, which is easier. Since $\Psi$ and $|t|$ are radial, $\partial_{\varphi_{ij}} (|t|^{d+1-n} \Psi^3) = 0$ and thus, thanks to integration by parts, $\III$ becomes
\[\III = - \sum_{i,j=d+1}^n  \iint_{\Omega} (\partial_{\varphi_{ij}} \partial_r u)  (\partial_{\varphi_{ij}} u) \Psi^3\frac{dtdx}{|t|^{n-d-1}} \lesssim  \|S(\overline{\nabla} u|\Psi^3)\|_2^2\]
by the Cauchy-Schwarz inequality and then \eqref{AphiSphi}. The terms $\I$ and $\II$ are similar. We write $\A_{1,2} \nabla_{x,r} u$ for $\A_{1} \nabla_x u + \A_2 \partial_r u$, and by using the fact that $\partial_r |t| = 1$, we get
\[\I + \II = 2 \iint_{\Omega}   (\partial_r u - \alpha) (\diver_x \A_{1,2} \nabla_{x,r} u) \, \Psi^3 \, \partial_r (|t|) \frac{dtdx}{|t|^{n-d-1}}.\]
So with an integration by parts to move the derivative $\partial_r$ away from $|t|$, we have
\begin{multline*}
\I + \II =  - 2 \iint_{\Omega}   (\partial_r u -\alpha) (\partial_r \diver_x \A_{1,2} \nabla_{x,r} u) \, \Psi^3 \, \frac{dtdx}{|t|^{n-d-2}} \\ 
- 2 \iint_{\Omega}   (\partial^2_r u) (\diver_x \A_{1,2} \nabla_{x,r} u) \, \Psi^3 \, \frac{dtdx}{|t|^{n-d-2}} \\ 
- 2 \iint_{\Omega}   (\partial_r u - \alpha) (\diver_x \A_{1,2} \nabla_{x,r} u) \, \partial_r (\Psi^3) \, \frac{dtdx}{|t|^{n-d-2}} = \I_1 + \I_2 + \I_3.
\end{multline*}
The integrate further by parts  in $\I_1$ to move the $\diver_x$ away from $\partial_r \A_{1,2} \nabla_{x,r} u$ (note beforehand that $\partial_r$ and $\diver_x$ commute), and we obtain
\begin{multline*}
\I_1 = 2 \iint_{\Omega}   (\nabla_x \partial_r u) \cdot  (\partial_r \A_{1,2} \nabla_{x,r} u) \, \Psi^3\frac{dtdx}{|t|^{n-d-2}} \\ + 2 \iint_{\Omega}  (\partial_r u - \alpha) \, (\partial_r \A_{1,2} \nabla_{x,r} u) \cdot \nabla_x (\Psi^3)  \, \frac{dtdx}{|t|^{n-d-2}}
:= \I_4 + \I_5.
\end{multline*}
So it remains to bounds $\I_2$, $\I_3$, $\I_4$, and $\I_5$. The terms $\I_2$ and $\I_4$ are similar, in that 
 \[\begin{split}
|\I_2| + |\I_4| & \lesssim \iint_{\Omega}   |\nabla \overline{\nabla} u|  |\nabla \A_{1,2} \nabla_{x,r} u| \, \Psi^3\frac{dtdx}{|t|^{n-d-2}}  \\
& \lesssim \iint_{\Omega}   |\nabla \overline{\nabla} u|  |\nabla \A_{1,2}| |\nabla_{x,r} u| \, \Psi^3\frac{dtdx}{|t|^{n-d-1}}  + \iint_{\Omega} |\A_{1,2}| |\nabla \overline{\nabla} u|^2 \, \Psi^3\frac{dtdx}{|t|^{n-d-2}}  \\
\end{split}\]
We use the boundedness of $\A_{1,2}$ and \eqref{S=int2} to get that the last term in the right-hand side is bounded by $\|S(\overline{\nabla} u|\Psi)\|_2^2$. As the first term in the right-hand side above, we use the inequality $ab \leq a^2 + b^2/4$, the fact that $\nabla \A_{1,2} \in CM(\kappa)$, and the Carleson inequality \eqref{Carleson} to bound it by $C\kappa \|\wt N(\nabla u|\Psi^3)\|_2^2 + C \|S(\overline \nabla u | \Psi)\|_2^2$. Altogether,
\[|\I_2| + |\I_4| \lesssim \kappa \|\wt N(\nabla u|\Psi^3)\|_2^2 + \|S(\overline \nabla u | \Psi^3)\|_2^2.\]
The terms $\I_3$ and $\I_5$ are also similar, in that they are bounded as follows
 \[\begin{split}
|\I_3| + |\I_5| & \lesssim \iint_{\Omega}   |\partial_r u- \alpha|  |\nabla \A_{1,2} \nabla_{x,r} u| \, |\nabla \Psi^3|\frac{dtdx}{|t|^{n-d-2}}  \\
& \lesssim \iint_{\Omega}   |\partial_r u- \alpha| |\nabla \A_{1,2}| |\nabla_{x,r} u| \,  |\nabla \Psi^3| \frac{dtdx}{|t|^{n-d-2}}  + \iint_{\Omega} |\A_{1,2}|  |\partial_r u- \alpha| |\nabla \overline{\nabla} u| \, |\nabla \Psi^3| \frac{dtdx}{|t|^{n-d-1}}  \\
& \lesssim \delta \|\wt N(\partial_r u- \alpha|\Psi^3)\|_2^2 + \delta^{-1}K^2 \kappa \|\wt N(\nabla u|\Psi)\|_2^2 + \delta^{-1} K^2 \|S(\overline{\nabla} u|\Psi)\|_2^2
\end{split}\]
by using the inequality $ab \leq \delta a^2 + b^2/4\delta$, the Carleson inequality \eqref{Carleson}, the fact that $\Psi$ satisfies $|\nabla \Psi| \leq K/|t|$ and $\1_{\supp \nabla \Psi} \in CM(K)$, and the fact that $|\nabla \A_{1,2}|^2 \leq \kappa$ by \eqref{PpCAL}. The lemma follows.
\end{proof}

In the following, we summarize the results from Lemma \ref{lmGNSE} and Lemma \ref{LLRAD}. Before stating the precise result, we should introduce a notation first. We write $|\nabla u-\vec \alpha|^2$ for a sum of $|\nabla_x u-\vec \alpha_\parallel|^2$, $|\nabla_\varphi u-\vec \alpha_\varphi|^2$, and $|\partial_r u-\vec \alpha_r|^2$, where $\vec \alpha_x$, $\vec \alpha_\varphi$, and $\vec \alpha_r$ are different components of constant vector $\vec \alpha$ corresponding to $\nabla_x$, $\nabla_\varphi$, and $\partial_r$ respectively.

\begin{Lemma}\label{INLSFT}
Let $L$ be an elliptic operator satisfying \HH$_{\lambda,\kappa}$. 
For any weak solution $u\in W^{1,2}_{loc}(\Omega)$ to $Lu=0$, any cut-off function $\Psi\in C^\infty_0(\Omega,[0,1])$ satisfying \COF$_K$, any real constant $\alpha$, and any $\delta \in (0,1)$, we have
\begin{multline}\label{eqINLST01}
\Big |\iint_{\Omega}|\nabla u-\vec \alpha|^2\partial_r\Psi^3\frac{dtdx}{|t|^{n-d-1}}\Big |\leq \delta \|\wt N(\partial_r u-\alpha|\Psi^3)\|^2_2 + C(1+\delta^{-1}K^2) \kappa \|\wt N(\nabla u|\Psi)\|^2_2 \\ + C(1+\delta^{-1}K^2) \|S(\overline{\nabla} u|\Psi)\|^2_2,
\end{multline}
$C$ depends only on $\lambda$, $d$, and $n$.
\end{Lemma}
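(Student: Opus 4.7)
The plan is to combine Lemma \ref{lmGNSE} and Lemma \ref{LLRAD} by decomposing $|\nabla u-\vec\alpha|^2$ into its cylindrical components and summing the resulting estimates. By Proposition \ref{PFGEV}, the cylindrical gradient $\overline\nabla$ is an isometric rearrangement of $\nabla$ on first-order derivatives, so writing $\vec\alpha$ in cylindrical coordinates $(\vec\alpha_\parallel,\vec\alpha_\varphi,\alpha_r)$ I get the pointwise identity
\[|\nabla u-\vec\alpha|^2 \;=\; \sum_{i=1}^{d} |\partial_{x_i}u-\alpha_{\parallel,i}|^2 \;+\; \tfrac{1}{2}\!\!\sum_{i,j=d+1}^{n}\!\! |\partial_{\varphi_{ij}}u-\alpha_{\varphi,ij}|^2 \;+\; |\partial_r u-\alpha_r|^2.\]
Consequently the integral on the left-hand side of \eqref{eqINLST01} splits as a finite sum (with cardinality bounded by a constant depending only on $n$ and $d$) of integrals of the form $\iint_{\Omega}|\partial_v u-\alpha_v|^2\,\partial_r(\Psi^3)\,|t|^{d+1-n}dt\,dx$, one for each component derivative.

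For every tangential direction $\partial_v=\partial_{x_i}$ and every angular direction $\partial_v=\partial_{\varphi_{ij}}$, I would invoke Lemma \ref{lmGNSE} with $\alpha$ chosen to be the corresponding component of $\vec\alpha$, which produces a bound by $\delta\|\wt N(\partial_v u-\alpha_v|\Psi^3)\|_2^2 + C(1+\delta^{-1}K)\|S(\overline\nabla u|\Psi)\|_2^2$. For the radial component I would instead invoke Lemma \ref{LLRAD}, which is the only place that uses the equation $Lu=0$ (in order to trade $\partial_r^2 u$ against tangential-divergence and angular-Laplacian terms via Proposition \ref{PpOPER}) and which therefore brings in the extra term $C(1+\delta^{-1}K^2)\kappa\|\wt N(\nabla u|\Psi)\|_2^2$ reflecting the smallness of the Carleson norm of $\nabla\A$.

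Summing these $\mathcal O(n^2)$ inequalities and absorbing constants yields \eqref{eqINLST01}. More precisely, the square-function contributions collapse into the single term $C(1+\delta^{-1}K^2)\|S(\overline\nabla u|\Psi)\|_2^2$ (the worse of the two coefficients $C(1+\delta^{-1}K)$ and $C(1+\delta^{-1}K^2)$); the $\kappa$-term is inherited verbatim from Lemma \ref{LLRAD}; and the component-wise non-tangential terms $\|\wt N(\partial_v u-\alpha_v|\Psi^3)\|_2^2$ are lumped together on the right using the trivial pointwise inequality $|\partial_v u-\alpha_v|\le|\overline\nabla u-\vec\alpha|$. Since both ingredient estimates have already been established, no new obstacle appears here: the lemma is essentially a clean repackaging of Lemmas \ref{lmGNSE}--\ref{LLRAD}, and the only real care needed is the bookkeeping of the constants $K$ and $\delta$, together with keeping track of which derivatives need the equation and which do not.
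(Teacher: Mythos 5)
Your proposal is correct and is exactly the paper's argument: the paper's proof of this lemma is the one-line statement that it is immediate from Lemma \ref{lmGNSE} (applied to each tangential and angular derivative) and Lemma \ref{LLRAD} (for the radial one), using the convention, stated just before the lemma, that $|\nabla u-\vec\alpha|^2$ denotes the sum of the cylindrical component differences. Your extra bookkeeping of the constants and of which component needs the equation matches the intended reading, so there is nothing to correct.
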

\begin{proof}
Immediate from Lemma \ref{lmGNSE} and Lemma \ref{LLRAD}.
\end{proof}

\section{\texorpdfstring{$N\leq S$}{TEXT} Local Estimates, Part 2: the Good Lambda Argument} \label{Sgoodlambda}

The main goal of this section is to establish the ``good-lambda'' distributional inequality, that will give the desired $N<S$ estimate. 

\medskip

In this section, a boundary ball (a ball in $\R^d$) with center $x$ and radius $l$ will be written $B_l(x)$. First, we recall several results from \cite{feneuil2018dirichlet}. 
Let $h_{\beta}:\, \mathbb{R}^d\rightarrow \mathbb{R}$ be a function such that for any compactly supported and continuous function $v$,
\begin{align*}
h_{\beta}(v)(x):=\inf\Big\{r>0,\sup_{(y,s)\in \Gamma(x,r)}|v(y,s)|<\beta\Big\},
\end{align*}
where $\Gamma(x,r)\subset \mathbb{R}^{d+1}_+$ is defined as the translation of the cone $\Gamma(0)$ with vertex at $(x,r)$. 

\begin{Lemma}[Lemma 6.1 in \cite{feneuil2018dirichlet}] \label{LEMA21}
For any $v$ such that $h_{\beta}(v)<\infty$, the map $h_{\beta}(v)$ is a $1$-Lipschitz function. 
\end{Lemma}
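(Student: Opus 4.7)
The plan is to exploit the monotonicity of the cones $\Gamma(x,r)$ in $r$ and a simple geometric containment between cones whose apexes lie at different base points.

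First I would unpack the definition: $\Gamma(x,r)$ is the translate of $\Gamma(0)$ with apex $(x,r)$, so $\Gamma(x,r) = \{(y,s) \in \mathbb R^{d+1}_+ : |y-x| < s-r\}$. Observe that if $r' > r$ then $\Gamma(x,r') \subset \Gamma(x,r)$, hence the set
\[\mathcal R_\beta(x) := \{r > 0 : \sup_{\Gamma(x,r)} |v| < \beta\}\]
is upward-closed in $(0,\infty)$. In particular, $h_\beta(v)(x) = \inf \mathcal R_\beta(x)$ implies that every $r > h_\beta(v)(x)$ belongs to $\mathcal R_\beta(x)$.

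Next, I would prove the key geometric inclusion: for all $x_1,x_2 \in \mathbb R^d$, all $r > 0$, and $\rho := |x_1 - x_2|$,
\[\Gamma(x_2, r+\rho) \subset \Gamma(x_1, r).\]
Indeed, if $(y,s) \in \Gamma(x_2, r+\rho)$ then $|y-x_2| < s - r - \rho$, and the triangle inequality gives $|y-x_1| \leq |y-x_2| + \rho < s - r$, so $(y,s) \in \Gamma(x_1,r)$. Consequently $\sup_{\Gamma(x_2,r+\rho)} |v| \leq \sup_{\Gamma(x_1,r)} |v|$.

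Now I would combine these two observations. Fix $x_1, x_2$ with $\rho = |x_1-x_2|$, and let $r_1 = h_\beta(v)(x_1)$ (finite by assumption). For any $r > r_1$ we have $\sup_{\Gamma(x_1,r)} |v| < \beta$, and therefore by the inclusion applied at the level $r$,
\[\sup_{\Gamma(x_2, r+\rho)} |v| \leq \sup_{\Gamma(x_1,r)} |v| < \beta,\]
so $r+\rho \in \mathcal R_\beta(x_2)$. Taking the infimum over such $r$ yields $h_\beta(v)(x_2) \leq r_1 + \rho = h_\beta(v)(x_1) + |x_1-x_2|$. Swapping the roles of $x_1$ and $x_2$ gives the reverse bound, so $|h_\beta(v)(x_1) - h_\beta(v)(x_2)| \leq |x_1-x_2|$, as desired.

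No real obstacle is expected here; the entire argument is the geometric inclusion $\Gamma(x_2,r+\rho) \subset \Gamma(x_1,r)$ together with the elementary properties of the infimum. The only mild subtlety is making sure the infimum is interpreted correctly so that $r > h_\beta(v)(x_1)$ actually yields $\sup_{\Gamma(x_1,r)} |v| < \beta$, which follows from the upward-closedness noted above.
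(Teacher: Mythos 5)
Your proof is correct. The argument is exactly the natural one: the translated cones nest downward as the apex rises, and the triangle inequality gives the containment $\Gamma(x_2, r+|x_1-x_2|) \subset \Gamma(x_1,r)$, from which the Lipschitz bound on the infimum follows by symmetry. The paper itself does not supply a proof — it merely cites Lemma 6.1 of the reference — but this geometric-containment argument is the standard (and essentially the only) route, and it is complete as you have written it.
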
 

\begin{Lemma}[Lemma 6.2 in \cite{feneuil2018dirichlet}] \label{LEMA22}
Let $v\in L^2_{loc}(\Omega)$ and $\Psi$ be a smooth function which satisfies $0\leq \Psi\leq 1$. Set $h_\beta:= h_\beta((v|\Psi^3)_W)$. There exists a small constant $c>0$ depending only on $d$ and $n$ such that for any $\beta>0$ and $\wt{N}(v|\Psi^3)(x)>\beta$, we have:
\begin{align*}
\mathcal{M}\Big [\Big (\dashint_{y\in B_{h_{\beta}(.)/2}(.)}\int_{s\in \mathbb{R}^{n-d}}|v|^2\Psi^3\partial_r[\chi_{\beta}^3]\frac{ds}{|s|^{n-d-1}}dy\Big )^{\frac{1}{2}}\Big ](x)\geq c\beta,
\end{align*}
where $\chi_{\beta}$ is a cut-off function defined as $\chi_{\beta}(y,.)\equiv 0$ if $h_\beta (y)=0$ and
\begin{align*}
\chi_{\beta}(y,s):=\phi\Big (\frac{|s|}{h_{\beta}(y)}\Big ),\ \ \text{ with }\ \ \phi(r)&:=
\begin{cases}
0        & \text{if } 0\leq r<1/5, \\
(25-5r)/24 & \text{ if } 1/5 \leq r \leq 5, \\
1       & \text{if }  r>5
\end{cases}
\end{align*}
otherwise. 
\end{Lemma}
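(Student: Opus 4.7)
The proof converts the pointwise hypothesis $\wt N(v|\Psi^3)(x)>\beta$ into a lower bound for the integrand of the Hardy-Littlewood maximal function. The key observation is that $\partial_r[\chi_\beta^3]=3\chi_\beta^2\,\phi'(|s|/h_\beta(y))/h_\beta(y)$ is supported in the annulus $\{|s|/h_\beta(y)\in[1/5,5]\}$ with size $\sim 1/h_\beta(y)$ there; passing to spherical coordinates in $s$ shows that the quantity
\[ J(x'):=\dashint_{y\in B_{h_\beta(x')/2}(x')}\int|v|^2\Psi^3\,\partial_r[\chi_\beta^3]\,\frac{ds}{|s|^{n-d-1}}\,dy \]
is, up to dimensional constants, the $L^2$-Whitney average of $|v|^2\Psi^3$ over the region $W^*(x'):=\{(y,s):|y-x'|<h_\beta(x')/2,\ |s|/h_\beta(y)\in[1/5,5]\}$, which has scale $h_\beta(x')$ around $x'$. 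The proof therefore reduces to exhibiting a subset of a ball around $x$, of proportional measure, on which $J(x')^{1/2}\gtrsim\beta$, after which Hardy-Littlewood concludes.

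From the hypothesis, select $(z_0,r_0)\in\Gamma(x)$ with $(v|\Psi^3)_W(z_0,r_0)>\beta$. By the definition of $h_\beta$ applied at $z_0$, any $r<r_0$ puts $(z_0,r_0)$ in $\Gamma(z_0,r)$, so $h_\beta(z_0)\geq r_0$; a complementary ``extremality'' choice of $(z_0,r_0)$ (picking $r_0$ close to the critical scale attached to $h_\beta(x)$) arranges $h_\beta(z_0)\sim r_0$. The $1$-Lipschitz property of $h_\beta$ (Lemma~\ref{LEMA21}) then keeps $h_\beta$ comparable to $r_0$ throughout $B_{r_0/8}(z_0)$, and on this small ball the sampling region $W^*(x')$ contains the original box $W(z_0,r_0)$: $|z_0-x'|<r_0/8<h_\beta(x')/2$ places $z_0$ in the spatial averaging ball, and the $s$-slab $[r_0/2,2r_0]$ sits inside $[h_\beta(y)/5,5h_\beta(y)]$ by scale comparability. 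Consequently $J(x')\gtrsim\beta^2$ on $B_{r_0/8}(z_0)$, and averaging over a ball $B_{2r_0}(x)\supset B_{r_0/8}(z_0)$ (which contains $B_{r_0/8}(z_0)$ as a fixed proportion of its volume) yields $\mathcal M[J^{1/2}](x)\geq c\beta$.

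\textbf{Main obstacle.} The delicate point is arranging the upper bound $h_\beta(z_0)\lesssim r_0$; without it, the annular sampling built into $\chi_\beta$ could center at a much larger scale and miss the bad box of scale $r_0$ entirely. The lower bound $h_\beta(z_0)\geq r_0$ is automatic, but the matching upper bound forces one to choose $(z_0,r_0)$ close to the ``extremal'' bad pair whose scale $r_0$ is near $h_\beta(x)$ itself, rather than an arbitrary bad box in $\Gamma(x)$. The generous width $[1/5,5]$ in the definition of $\phi$ is precisely the geometric slack that absorbs the bounded Lipschitz constants of $h_\beta$ across $B_{r_0/8}(z_0)$, ensuring the Whitney box $W(z_0,r_0)$ is captured by $W^*(x')$.
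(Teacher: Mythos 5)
The paper does not actually prove this lemma --- it is imported verbatim from Lemma~6.2 of \cite{feneuil2018dirichlet} --- so there is no in-paper proof to compare against, and I assess the attempt on its own. Your diagnosis of the key obstacle (an upper bound on $h_\beta(z_0)$) is correct, and the extremality device is the right idea; in fact it yields more than you state: picking $(z_*,\rho_*)$ to maximize $\rho$ over the compact set $E:=\{(z,\rho)\in\overline{\Gamma(x)}:(v|\Psi^3)_W(z,\rho)\geq\beta\}$ forces the exact identity $h_\beta(z_*)=\rho_*$. Indeed, if $h_\beta(z_*)>\rho_*$ then for any $r\in(\rho_*,h_\beta(z_*))$ one finds a bad pair $(\hat z,\hat\rho)\in\overline{\Gamma(z_*,r)}$, and the triangle inequality $|\hat z-x|\leq|\hat z-z_*|+|z_*-x|\leq(\hat\rho-r)+\rho_*<\hat\rho$ puts it in $\Gamma(x)$ with $\hat\rho\geq r>\rho_*$, contradicting maximality.

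The genuine gap is the claim that $W^*(x')\supset W(z_0,r_0)$ for $x'\in B_{r_0/8}(z_0)$. This is false at the level of constants, not an $\epsilon$ you can tweak: with $h_\beta(z_0)=r_0$ and $h_\beta$ being $1$-Lipschitz, the spatial averaging ball $B_{h_\beta(x')/2}(x')$ has radius at most $\tfrac12(r_0+|x'-z_0|)$, while containing $B_{r_0/2}(z_0)$ would require radius at least $\tfrac{r_0}{2}+|x'-z_0|$; these are simultaneously possible only when $x'=z_0$. What you actually verify is only $z_0\in B_{h_\beta(x')/2}(x')$, which is strictly weaker. There is a parallel failure in $s$: for $y$ near the far edge of the averaging ball one only knows $h_\beta(y)\geq r_0-|y-z_0|$, so $5h_\beta(y)$ can drop below $2r_0$ and $\partial_r[\chi_\beta^3]$ vanishes on part of $\{r_0/2\leq|s|\leq 2r_0\}$. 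Since $|v|^2\Psi^3$ may be concentrated exactly on the portion of $W(z_0,r_0)$ that the sampling region misses, the inequality $J(x')\gtrsim\beta^2$ does not follow for $x'\neq z_0$, and a pointwise lower bound on a single point is not enough for the maximal function. The missing ingredient is a pigeonhole step: split $B_{r_0/2}(z_0)$ into $N=N(d)$ sub-balls of radius $\sim r_0/40$ and select one, $B_1$, carrying at least a fraction $1/N$ of $\iint_{W(z_0,r_0)}|v|^2\Psi^3\,dy\,ds$. The set $B_1\times\{r_0/2\leq|s|\leq2r_0\}$ then sits comfortably inside $W^*(x')$ for every $x'$ in a proportional ball around the center of $B_1$ (the generous interval $[1/5,5]$ absorbs the Lipschitz drift of $h_\beta$ over this \emph{small} ball), which yields $J(x')\gtrsim\beta^2/N$ on a set of proportional measure; Hardy--Littlewood over a ball of radius $\sim r_0$ containing both $x$ and $B_1$ then closes the argument.
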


The two above lemmas are analogues to results from \cite{kenig2000new} and \cite{dindovs2019regularity} adapted to our setting and to the use of cut-off functions $\Psi$. Let us first introduce some specific cut-off functions.

\begin{Def} \label{DECUTO}
Let $\phi\in C_0^\infty(\mathbb{R})$ be a non-increasing function such that $\phi\equiv 1$ on $[0,1]$ and $\phi\equiv 0$ on $[2,\infty)$. We define the cut-off functions on $\Omega$ as 
\[\Psi_e(y,t) :=  \phi\Big (\frac{e(x)}{|t|}\Big ) \1_\Omega(x,t) \]
 if $x\to e(x)\geq0$ is a 1-Lipschitz function, in particular,
\[\Psi_\epsilon(y,t) :=  \phi\Big (\frac{\epsilon}{|t|}\Big ) \1_\Omega(x,t) \]
if $\epsilon>0$. Also, let us denote 
\[\Psi_B(y,t) := \phi\Big (\frac{\dist(y,B)}{100|t|}\Big ) \1_\Omega(x,t)\]
if $B$ is a boundary ball.
Moreover, we write $\Psi_{B,l,\epsilon}$ for the product $\Psi_B(1-\Psi_{2l})\Psi_\epsilon$.  \end{Def}

Note that from the fact that $\phi$ is non-increasing, for any (non-negative) $1$-Lipschitz function $e$, we have
\begin{equation} \label{drPsie>0}
\partial_r \Psi_{e} \geq 0.
\end{equation}
The proof of next lemma is easy but can nevertheless be found after Lemma 4.5 in \cite{feneuil2018dirichlet}.

\begin{Lemma}[\cite{feneuil2018dirichlet}] \label{LECUTO}
There exists a uniform $K$ that depends only on $d$ and $n$ such that the functions $\Psi_e$ and their ``complements'' $1-\Psi_e$ satisfy \COF$_K$. 
Since $\Psi_\epsilon$ and $\Psi_B$ are particular cases of $\Psi_e$, then (of course) they also satisfy \COF$_K$ with the same uniform constant $K$. In addition,  the property \COF$_K$ is stable under the product, in the sense that if $\Psi$ satisfies \COF$_{K_1}$ and $\Phi$ satisfies \COF$_{K_2}$, then $\Psi\Phi$ satisfies \COF$_{K_1+K_2}$.
\end{Lemma}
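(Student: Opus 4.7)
My plan is to verify the three assertions of the lemma in turn: the first -- that $\Psi_e$ and $1-\Psi_e$ satisfy \COF$_K$ for a uniform $K$ -- is where the real work lies; the second -- that $\Psi_\epsilon$ and $\Psi_B$ are special cases -- follows by inspection; and the third -- stability under products -- is a routine product-rule computation.

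For the first assertion, I would verify each condition of \COF{} in turn. Smoothness $\Psi_e \in C^\infty(\overline\Omega)$ is a minor technicality when $e$ is merely Lipschitz: one passes to a smooth approximation of $e$, with all the forthcoming estimates uniform in the approximation parameter. The positivity $0 \leq \Psi_e \leq 1$ and the radiality $\Psi_e(y,t) = \phi(e(y)/|t|)$ are immediate from the definition. For the pointwise bound $|t||\nabla \Psi_e| \leq K$, the chain rule gives
\[\nabla_y \Psi_e = \frac{\phi'(e/|t|)}{|t|}\nabla e, \qquad \nabla_t \Psi_e = -\phi'(e/|t|)\frac{e \, t}{|t|^3},\]
and using the 1-Lipschitz bound $|\nabla e|\leq 1$ together with $e/|t| \leq 2$ on $\supp \phi'(e/|t|)$ yields $|t||\nabla \Psi_e| \leq 3 \|\phi'\|_\infty$, uniformly in $e$.

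The key step is the Carleson bound $\1_{\supp \nabla \Psi_e} \in CM(K)$. The support is exactly $\{(y,t): e(y)/2 \leq |t| \leq e(y)\}$, so $\sup_{W(z,s)} \1_{\supp \nabla \Psi_e} = 1$ if and only if there exists $y \in B(z,s/2)$ with $e(y) \in [s/2, 4s]$. First I would dispatch the analogous un-supped Carleson mass: by Fubini,
\[\int_{B(x,r)} \int_0^r \1_{\sigma \in [e(y)/2, e(y)]} \, \frac{d\sigma\, dy}{\sigma} \leq \log 2 \cdot |B(x,r)|,\]
since for every $y$ the inner $\sigma$-integral is at most $\log 2$. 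To handle the supremum one promotes the un-supped to the supped Carleson norm by a standard enlargement argument: the set $\{(z,s): \sup_{W(z,s)}=1\}$ lies in a fixed hyperbolic neighborhood of $\supp \nabla \Psi_e$, and Whitney cubes have bounded overlap, so the Carleson norm is increased only by a constant depending on $d$ and $n$. This enlargement step is what I expect to be the main obstacle, though it is by now a classical tool. Finally, $1-\Psi_e$ satisfies \COF$_K$ with the same $K$ because $\nabla(1-\Psi_e) = -\nabla \Psi_e$ has the same support and the same pointwise bound.

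The second assertion is by inspection: $\Psi_\epsilon = \Psi_e$ with $e\equiv \epsilon$ (constant, hence $0$-Lipschitz), and $\Psi_B = \Psi_{e_B}$ with $e_B(y) = \dist(y,B)/100$ (which is $1/100$-Lipschitz). For the stability under products, when $\Psi$ satisfies \COF$_{K_1}$ and $\Phi$ satisfies \COF$_{K_2}$, the product rule together with $|\Psi|,|\Phi|\leq 1$ yields
\[|t||\nabla(\Psi\Phi)| \leq |\Phi|\cdot|t||\nabla\Psi| + |\Psi|\cdot|t||\nabla\Phi| \leq K_1 + K_2,\]
while $\supp \nabla(\Psi\Phi) \subset \supp \nabla\Psi \cup \supp \nabla\Phi$ gives $\1_{\supp \nabla(\Psi\Phi)} \leq \1_{\supp \nabla\Psi} + \1_{\supp \nabla\Phi} \in CM(K_1+K_2)$; smoothness, radiality, and the $[0,1]$ range are clearly preserved under products.
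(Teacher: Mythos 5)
The paper itself does not prove this lemma (it defers to the computation after Lemma 4.5 in \cite{feneuil2018dirichlet}), and your overall route is the same direct verification one finds there: chain rule for the pointwise bound $|t||\nabla\Psi_e|\lesssim\|\phi'\|_\infty$, identification of the support with the region $\{e(y)/2\le|t|\le e(y)\}$, the special cases $e\equiv\epsilon$ and $e=\dist(\cdot,B)/100$, and the product rule for the last assertion. All of those parts, as well as the mollification remark for the smoothness of $e$, are fine.

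The gap is precisely in the step you yourself flag as the main obstacle: the passage from the un-supped bound to the supped Carleson condition \eqref{defCMM}. The justification ``the bad set lies in a fixed hyperbolic neighborhood of $\supp\nabla\Psi_e$, and Whitney cubes have bounded overlap, so the Carleson norm increases only by a constant'' is not a valid general principle: the un-supped Carleson content of a set does not control that of its Whitney-scale neighborhood (take $E=\{|t|=e(y)\}$, a Lipschitz graph: its un-supped content is zero, while any hyperbolic neighborhood has positive content). Moreover the pointwise-in-$z$ bound really can fail in the borderline $1$-Lipschitz case allowed by Definition \ref{DECUTO}: for $e(y)=|y-y_0|$ one checks that $\sup_{W(z,s)}\1_{\supp\nabla\Psi_e}=1$ for \emph{every} $s\gtrsim e(z)$, so the inner $ds/s$-integral is of size $\log(r/e(z))$ and one must use that this logarithm is integrable in $z$. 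A correct promotion must therefore use the structure of the support region, not just bounded overlap. Two standard fixes: (i) (thickness/averaging, works for any $\mathrm{Lip}(e)\le1$) if $W(z,s)$ meets $E:=\{e/2\le|t|\le e\}$ at $(y_0,t_0)$ with $\sigma_0=|t_0|$, then since $e$ is Lipschitz the enlarged region $\widetilde E:=\{e/4\le|t|\le2e\}$ contains $B(y_0,\sigma_0/100)\times\{\sigma_0/2\le|t|\le 2\sigma_0\}$ up to constants, i.e. a fixed proportion of a dilated Whitney box $\widetilde W(z,s)$; hence $\sup_{W(z,s)}\1_{E}\lesssim\fiint_{\widetilde W(z,s)}\1_{\widetilde E}$, and Fubini plus your one-line $\log$-computation applied to $\widetilde E$ gives the supped bound with a constant depending only on $d,n$; (ii) (direct, when $\mathrm{Lip}(e)=\ell<1$, which covers every cut-off actually used in the paper: constants, $\dist(\cdot,B)/100$, $\dist(\cdot,F^i)/10$) the existence of $y\in B(z,s/2)$ with $e(y)\in[s/2,4s]$ forces $\tfrac{1-\ell}{2}s\le e(z)\le(4+\tfrac{\ell}{2})s$, so the bad $s$-set is an interval of bounded logarithmic length around $e(z)$ and the supped condition follows pointwise in $z$. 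With either of these replacing the bounded-overlap sentence, your proof is complete.
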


We state the precise statement of the ``good-lambda'' distributional inequality that we will need in the following. 

\begin{Lemma}\label{GLAM}
Let $L$ be an elliptic operator satisfying \HH$_{\lambda,M,\kappa}$. There exists $\eta\in (0,1)$ that depends only on $d$ and $n$ and $C>0$ that depends on $\lambda$, $d$ and $n$  such that the following holds.

For any a weak solution $u\in W^{1,2}_{loc}(\Omega)$ to $Lu=0$, any cut-off function in the form $\Psi:=\Psi_{B,l,\epsilon}$ for some $\epsilon>0$, some $l>100\epsilon$, and some boundary ball $B$ of radius $l$, and for any triplet $\beta>0$, $\gamma>0$, $\delta\in (0,1)$, we have
\begin{align}\label{eqGLMN1}
|\{x\in \mathbb{R}^d, \wt{N}(\nabla u|\Psi^{3})(x)>\beta\}\cap E_{\beta,\gamma,\delta}|
\leq C\gamma^2|\{x\in \mathbb{R}^d,\mathcal{M}[\wt{N}(\nabla u|\Psi^{3})](x) >\eta \beta\}|,
\end{align}
where
\begin{multline*}
E_{\beta,\gamma,\delta}:=\bigg \{x\in \mathbb{R}^d: \mathcal M\Big[\Big (\dashint_{B_{l}(.)}\dashint_{l\leq |s|\leq 2l}|\nabla u|^2 \Psi_B^3 \, ds\, dy\Big )^{1/2}\Big](x) + \delta^{1/2} \mathcal{M}[\wt{N}(\nabla u|\Psi^3)](x) \\
+ \delta^{-1/2} \kappa^{1/2}  \mathcal{M}[\wt{N}(\nabla u|\Psi^3)](x) 
+ \delta^{-1/2} \mathcal{M}[S(\overline\nabla u|\Psi)](x)
\leq \gamma\beta\bigg \}.
\end{multline*}

\end{Lemma}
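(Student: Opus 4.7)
The proof follows the classical good-$\lambda$ strategy. Set
$$O := \{\wt N(\nabla u|\Psi^3) > \beta\} \cap E_{\beta,\gamma,\delta} \quad \text{and} \quad O^* := \{\mathcal M[\wt N(\nabla u|\Psi^3)] > \eta\beta\}.$$
By Lebesgue differentiation, $\{h_\beta > 0\} = \{\wt N(\nabla u|\Psi^3) > \beta\} \subseteq O^*$. The core ingredients are the lower bound from Lemma \ref{LEMA22}, which reduces the measure of $O$ to an integral of $|\nabla u|^2$, and the integration-by-parts Lemma \ref{INLSFT}, which controls that integral in terms of the quantities appearing in $E$.

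I would first apply Lemma \ref{LEMA22} at every $x\in O$, giving $c\beta \leq \mathcal M[G](x)$ with
$$G(y)^2 := \dashint_{B_{h_\beta(y)/2}(y)} \int_{\R^{n-d}} |\nabla u|^2 \Psi^3 \partial_r[\chi_\beta^3] \frac{ds\,dy'}{|s|^{n-d-1}}.$$
Squaring, integrating over $O$, using the $L^2$-boundedness of $\mathcal M$, and exchanging the order of integration via Fubini (whose multiplicity is bounded thanks to the $1$-Lipschitz property of $h_\beta$ from Lemma \ref{LEMA21}) yields
$$\beta^2 |O| \lesssim \iint_\Omega |\nabla u|^2 \Psi^3 \partial_r[\chi_\beta^3] \frac{ds\,dy}{|s|^{n-d-1}}.$$
Since $\chi_\beta \equiv 0$ outside $\{h_\beta > 0\} \times \R^{n-d}$, the integrand is already supported above $O^*$.

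Next, I introduce the constant vector $\vec\alpha := \dashint \overline\nabla u$ over the Whitney box $\{y \in B_l(x_B),\ l \leq |s| \leq 2l\}$ (where $B_l(x_B)$ is the ball defining $\Psi_{B,l,\epsilon}$); the first condition in $E$ gives $|\vec\alpha| \leq C\gamma\beta$. Splitting $|\nabla u|^2 \lesssim |\nabla u - \vec\alpha|^2 + |\vec\alpha|^2$, the $|\vec\alpha|^2$ piece contributes at most $C\gamma^2\beta^2|O^*|$ after the direct computation $\iint \Psi^3 \partial_r[\chi_\beta^3] \frac{dsdy}{|s|^{n-d-1}} \lesssim |\{h_\beta > 0\}| \leq |O^*|$ (using $|\partial_r\chi_\beta^3| \lesssim h_\beta(y)^{-1}$ on its support $|s| \sim h_\beta(y)$). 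For the main $|\nabla u - \vec\alpha|^2$ piece, I decompose $\Psi^3 \partial_r\chi_\beta^3 = \partial_r(\Psi^3\chi_\beta^3) - \chi_\beta^3 \partial_r\Psi^3$ and apply Lemma \ref{INLSFT} to the first summand with cut-off $\Psi\chi_\beta$, which satisfies \COF{} with a uniform constant by a routine extension of Lemma \ref{LECUTO}; the second summand is handled directly by the Carleson inequality \eqref{Carleson}, using the bounds $|t||\nabla\Psi| \leq K$ and $\mathbf 1_{\supp \nabla \Psi} \in CM(K)$ from \COF$_K$.

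The main obstacle is the last step: bounding the three resulting $L^2$ norms $\|\wt N(\partial_r u - \alpha_r|\Psi^3\chi_\beta^3)\|_2^2$, $\|\wt N(\nabla u|\Psi\chi_\beta)\|_2^2$, and $\|S(\overline\nabla u|\Psi\chi_\beta)\|_2^2$ by $C\gamma^2\beta^2|O^*|$, matching the three remaining conditions in $E$. The key observation is that every cut-off of the form $\Psi\chi_\beta$ vanishes outside $\{h_\beta > 0\}$; combined with the cone geometry and the $1$-Lipschitz property of $h_\beta$, a short argument shows that the non-tangential maximal and square functions with such cut-off vanish at every $x$ outside a controlled neighborhood of $\{h_\beta > 0\}$, which lies in $O^*$ once $\eta$ is chosen small (depending only on $d,n$). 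Each $L^2$ norm is therefore an integral over $O^*$, controlled by the corresponding pointwise bound in $E$: the $\delta^{\pm 1/2}$ and $\kappa^{1/2}$ scalings in $E$ are engineered to exactly absorb the $(1+\delta^{-1}K^2)$ and $\kappa$ coefficients from Lemma \ref{INLSFT}, respectively. The delicate point remaining is the treatment of the portion of $O^*$ outside $E$, where the pointwise bounds from $E$ do not apply; this will be handled by a Whitney decomposition of $O^*$ together with a Vitali sub-covering inside each Whitney cube, replacing the global $L^2$ argument by a scale-by-scale version. Summing everything delivers $\beta^2|O| \lesssim \gamma^2\beta^2|O^*|$, which is the claim.
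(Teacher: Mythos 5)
Your outline assembles the right ingredients (Lemmas \ref{LEMA21}, \ref{LEMA22}, \ref{INLSFT}, subtraction of an average controlled through $E_{\beta,\gamma,\delta}$, and the splitting of $\Psi^3\partial_r[\chi_\beta^3]$), but the step you defer at the end is not a routine completion: it is the heart of the proof, and without it the argument does not close. After you apply Lemma \ref{INLSFT} with the cut-off $\Psi\chi_\beta$ (and after the Carleson inequality on the $\chi_\beta^3\partial_r\Psi^3$ term), you are left with global quantities such as $\delta\|\wt N(\nabla u|\Psi^3\chi_\beta^3)\|_2^2$, $\delta^{-1}\kappa\|\wt N(\nabla u|\Psi\chi_\beta)\|_2^2$ and $\delta^{-1}\|S(\overline\nabla u|\Psi\chi_\beta)\|_2^2$, and these must be bounded by $C\gamma^2\beta^2|O^*|$. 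Your justification --- that the supports lie in a ``controlled neighborhood of $\{h_\beta>0\}$ which lies in $O^*$'' and that the pointwise bounds from $E$ apply there --- fails on both counts. First, $\chi_\beta$ only cuts off \emph{below} height $h_\beta(y)/5$; the product $\Psi\chi_\beta$ still lives at all heights up to $\sim 2l$, so $\wt N(\cdot|\Psi\chi_\beta)$ and $S(\cdot|\Psi\chi_\beta)$ are supported in an $l$-scale neighborhood of $\{h_\beta>0\}$, which in general is neither contained in $O^*$ nor of measure $\lesssim|O^*|$. Second, the bounds $\delta^{1/2}\mathcal M[\wt N]\le\gamma\beta$, $\delta^{-1/2}\kappa^{1/2}\mathcal M[\wt N]\le\gamma\beta$, $\delta^{-1/2}\mathcal M[S]\le\gamma\beta$ hold only at points of $E$, and $\{h_\beta>0\}=\{\wt N(\nabla u|\Psi^3)>\beta\}$ is not contained in $E$; so on the relevant supports you have no usable pointwise control. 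The same problem reappears in your treatment of the $\chi_\beta^3\partial_r\Psi^3$ term: the Carleson inequality converts it into yet another global $\|\wt N\|_2^2$, which is again not dominated by $\gamma^2\beta^2|O^*|$ (the top-strip piece $\partial_r\Psi_{2l}^3$ must instead be matched against the first entry of $E$).

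The paper resolves exactly this by making the decomposition you postpone the \emph{first} step: it Whitney-decomposes $O^*=\{\mathcal M[\wt N(\nabla u|\Psi^3)]>\eta\beta\}$ via Vitali into balls $B_i$ of radius $r_i$, reduces the claim to $|F^i|\lesssim\gamma^2|B_i|$, and --- crucially --- shows that on $F^i$ the supremum defining $\wt N$ is attained at scales $r\le r_i$ (this is where $\eta$ and the point $y_i\in\mathcal E^c$ enter), which permits an additional cut-off $\Phi_i$ truncating vertically at height $K_ir_i$ and horizontally to a sawtooth over $F^i$. Only then are the supports of all the error terms contained in $CB_i$, and every supremum can be compared to maximal functions evaluated at a point $x_i\in F^i\subset E$, yielding the factor $\gamma^2\beta^2|B_i|$ which sums to $\gamma^2\beta^2|O^*|$. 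This localization also forces Whitney-scale averages $\vec\alpha_i$ over the annular region $S_i$ at height $\sim K_ir_i$ (chosen so that $\Psi$ is identically $0$ or $1$ there), rather than your single average over the top strip of $B$: the term generated by $\partial_r\Psi^3_{K_ir_i}$ is controlled by a Poincar\'e inequality on $S_i$, which requires $\vec\alpha_i$ to be the mean of $\nabla u$ on $S_i$ itself. So while your plan points in the right direction, the announced ``Whitney decomposition of $O^*$ with a scale-by-scale version of the $L^2$ argument'' is precisely the missing proof, and carrying it out changes the structure of the earlier steps (choice of $\vec\alpha$, the extra cut-off $\Phi_i$, and the separate treatment of the $K_ir_i$- and $2l$-level terms) rather than being appended at the end.
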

\begin{proof}
\textbf{Step 1: The Whitney decomposition.} We fix $\beta,\delta>0$ and we take a ball $B\subset \mathbb{R}^d$ with radius $l>0$. Define
\[\mathcal{E}:=\{x\in \mathbb{R}^d,\mathcal{M}[\wt{N}(\nabla u|\Psi^3)](x)>\eta\beta\}.\]
We notice that $(\nabla u|\Psi^3)_{W,a}$ is continuous and $\Psi$ is compactly supported. Hence $\mathcal{E}$ is open and bounded. We pick a ball $B_r(x)$ of radius $r:=\text{dist}(x,\mathcal{E}^c)/10$ centered at $x\in \mathcal{E}$. Under this construction, $\mathcal{E}=\bigcup_{i\in J}B_{r_i}(x_i)$ and $\sup_{i\in J}r_i<\infty$. By Vitali covering lemma, there exists a countable subcollection of balls $\{B_{r_i}(x_i)\}_{i\in I}$, which are disjoint and satisfy that $\mathcal{E}\subseteq \bigcup_{i\in I}B_{5r_i}(x_i)$. For each $i\in I$, we set $B_i:=B_{10r_i}(x_i)$ and thus there exists a 
\begin{align}\label{eqGLM00}
\text{$y_i\in \overline{B_i}\cap \mathcal{E}^c$, in particular $\mathcal{M}[\wt{N}_a(\nabla u|\Psi^{3})](y_i)\leq\eta \beta$.}
\end{align}
We define the set $F_\beta^i$ such that
\begin{align*}
F^i = F^i_{\beta,\gamma,\delta}:=\{x\in \overline{B_i}, \, \wt{N}_a(\nabla u|\Psi^{3})(x)>\beta\}\cap E_{\beta,\gamma,\delta}.
\end{align*}
It suffices to prove that for each $i\in I$, 
\begin{align}\label{eqGLMAIN}
|F^i|\lesssim C\gamma^2|B_i|
\end{align}
because $\sum_{i\in I}|B_i|\leq 10^d\sum_{i\in I}|B_{r_i}(x_i)|\leq 10^d|\mathcal{E}|. $
The inequality (\ref{eqGLMAIN}) is trivial when $F_\beta^i=\emptyset$. Hence we assume that $F_\beta^i \supset \{x_i\}$ is non-empty in the sequel of the proof. 

\medskip

\textbf{Step 2: Localization of $\wt N(\nabla u|\Psi^3)$ in $B_i$.} In this step, we show that if $x\in F^i$, then $\wt{N}(\nabla u|\Psi^{3})(x)$ has to reach its maximum value at a point $(z,r) \in \Gamma(x)$ verifying $r\leq r_i$. 
Indeed, take $x\in F^i$ and then $(z,r) \in \Gamma(x)$ such that $r> r_i$. Notice that $(z,r) \in \bigcup_{y\in B_{r}(z)} \Gamma(y)$, so
\begin{align}\label{step2.eq00}
(\nabla u|\Psi^{3})_{W}(z, r) \leq \wt{N}(\nabla u|\Psi^{3})(y)\ \ \text{for all $y\in B_{r}(z) \subset B_{20r}(y_i)$}. 
\end{align}
Therefore, for a constant $C$ that depends only on $d$,
\[(\nabla u|\Psi^{3})_{W}(z, r) \leq \mathcal M[\wt{N}(\nabla u|\Psi^{3})](y_i) \leq C\eta\beta < \beta\] 
by \eqref{eqGLM00}, if $\eta$ is small enough (depending only on $d$). So it means that for any $x\in F^i$, we have
 \begin{equation} \label{NlocBi}
 \beta < \wt{N}(\nabla u|\Psi^{3})(x) = \sup_{(z,r) \in \Gamma(x)} \1_{r\leq r_i} (\nabla u|\Psi^{3})_{W}(z, r) \qquad \text{ for } x\in F^i.
 \end{equation} 
We construct the cut-off function $\Phi_i(y,s):=(1-\Psi_{K_ir_i}) \Psi_{F^i}$ where $\Psi_{K_ir_i}:= \Psi_{\epsilon_i}$ for $\epsilon_i:= K_ir_i$ and $\Psi_{F^i} := \Psi_{e^i}$ for the $1$-Lipschitz function $e^i(x):= \dist(y,F^i)/M_i$. The constants $10$ and $K_i$ in the construction of $\Phi_i$ are large enough so that $\Phi_i(y,t) = 1$ whenever $(y,t) \in W(z,r)$ for $(z,r) \in \Gamma(x) \cap \{r\leq r_i\}$ and $x\in F_i$. With such a choice and by \eqref{NlocBi}, we have that
 \begin{equation} \label{NlocBi2}
 \wt{N}(\nabla u|\Psi^{3}\Phi_i^3)(x) = \wt{N}(\nabla u|\Psi^{3})(x) > \beta \qquad \text{ for } x\in F^i.
 \end{equation} 
We let a little bit of freedom on the choice of $K_i$ to avoid some future complication. Notice that 
 \begin{equation} \label{defSi}
\supp (\nabla\Psi_{K_ir_i}) \cap \supp (\Psi_{F_i}) \subset S_i:= (K_i+1)B_i \times \{s\in \R^{n-d}, \, K_ir_i/2 < |s| < K_ir_i\}.
\end{equation}
We first try $K_i = 4$, which is large enough for \eqref{NlocBi2} to be satisfied. If $S_i$ intersects $\{\Psi \neq 0\} \cap \{\Psi \neq 1\}$, then we test $K_i = 8$ instead. If $S_i$ still intersects $\{\Psi \neq 0\} \cap \{\Psi \neq 1\}$, we multiply $K_i$ by 2 and we stop at the first time when
 \begin{equation} \label{prSi}
S_i \cap \{\Psi \neq 0\} \cap \{\Psi \neq 1\} = \emptyset, \ \text{ i.e. either } \ S_i \subset \{\Psi \equiv 1\} \text{ or } S_i \subset \{\Psi \equiv 0\}.
\end{equation}
Since $\Psi = \Psi_{B,\epsilon}$ is constructed from the product of three cut-off function $\Psi_e$ where $e$ is either constant or a a slowly growing $1/100$-Lipschitz function,  while $\Psi_{F_i}$ is constructed with a faster growing $1/10$-Lipschitz function, $K_i$ can only take a uniformly finite number of values (i.e. we think that $K_i \leq 2^{7}$ and we say that $K_i \leq 2^{10}$ to have some error margin).

\medskip

\textbf{Step 3: Catching the level sets of $\wt N(\nabla u|\Psi^3)$.} 
Let $h_\beta:=h_\beta((\nabla u|\Psi^3\Phi_i^3)_{W})$. Lemma \ref{LEMA22} and \eqref{NlocBi2} entails that
\begin{align} \label{step3lambda1}
c\beta\leq \mathcal{M}\bigg [\bigg (\dashint_{y\in B_{h_{\beta}(.)/2}(.)}\int_{s\in \mathbb{R}^{n-d}}|\nabla  u|^2\Psi^3\Phi^3_i\partial_r[\chi_{\beta}^3]\frac{dsdy}{|s|^{n-d-1}}\bigg )^{\frac{1}{2}}\bigg ](x) \qquad \text{ for } x\in F^i.
\end{align}
We know from \eqref{prSi} that either $S_i \subset \{\Psi \equiv 1\}$ or $S_i \subset \{\Psi \equiv 0\}$. We set
\begin{align}\label{eqGL04}
\vec \alpha_i:= \fiint_{S_i} (\nabla u) \Psi^{3/2} \, dy\, ds = \left\{\begin{array}{l} \fiint_{S_i} (\nabla u) \, dy\, ds \quad \text{ if } S_i \subset \{\Psi \equiv 1\} \\ 0 \quad \text{ otherwise}  \end{array} \right.
\end{align}
and we want to show that $|\vec \alpha_i|$ is smaller than $c\beta/2$, where $c$ is the constant in \eqref{step3lambda1}. We select $N$ points $\{z_j\}_{j=1}^{N} \in 2K_iB_i$ such that $S_i\subset \bigcup_{j=1}^{N} W(z_j, K_ir_i)$. We can always to so with a uniformly bounded number $N$ of points, because $K_i$ is itself uniformly bounded (between 4 and $2^{10}$). 
So we easily have by simply using the definition of $\vec \alpha_i$, $(\nabla u|\Psi^3)_W$, $\wt N(\nabla u|\Psi^3)$ and then \eqref{eqGLM00} that
\begin{multline}\label{eqGL06}
|\vec \alpha_i| \leq C \sum_{j=1}^N (\nabla u|\Psi^3)_{W}(z_j, K_ir_i)\leq C \sum_{j=1}^N \dashint_{B_{K_ir_i}(z_j)}\wt{N}(\nabla u|\Psi^3) dx \\
\leq C' \dashint_{30K_iB_i}\wt{N}_a(\nabla u|\Psi^3)dx \leq C' \mathcal{M}[\wt{N}(\nabla u|\Psi^3)](y_i)\leq C' \eta\beta \leq c\beta/2,
\end{multline}
if $\eta$ is small enough (depending only on $d$). The combination of \eqref{step3lambda1} and \eqref{eqGL06} infers that
\begin{align} \label{step3lambda}
c\beta/2 \leq \mathcal{M}\bigg [\bigg (\dashint_{y\in B_{h_{\beta}(.)/2}(.)}\int_{s\in \mathbb{R}^{n-d}}|\nabla  u - \vec \alpha_i|^2\Psi^3\Phi^3_i\partial_r[\chi_{\beta}^3]\frac{dsdy}{|s|^{n-d-1}}\bigg )^{\frac{1}{2}}\bigg ](x) \ \text{ for } x\in F^i.
\end{align}
\medskip

\textbf{Step 4: From a pointwise estimate to integral estimates.}
The result \eqref{step3lambda} from the previous step implies that
\begin{multline*}
|F^i|\lesssim 
\frac{1}{\beta^2}\bigg \|\mathcal{M}\bigg [\bigg (\dashint_{y\in B_{h_{\beta}(.)/2}(.)}\int_{s\in \mathbb{R}^{n-d}}|\nabla u-\vec \alpha_i|^2\Psi^3\Phi^3_i\partial_r[\chi_{\beta}^3]\frac{dsdy}{|s|^{n-d-1}}\bigg )^{\frac{1}{2}}\bigg ]\bigg \|^2_{2}\\
\lesssim \frac{1}{\beta^2}\int_{\mathbb{R}^d}\dashint_{y\in B_{h_{\beta}(x)/2}(x)}\int_{s\in \mathbb{R}^{n-d}}|\nabla u-\vec \alpha_i|^2\Psi^3\Phi^3_i\partial_r[\chi_{\beta}^3]\frac{dsdy}{|s|^{n-d-1}}dx
\end{multline*}
thanks to $L^2$ boundedness of the Hardy-Littlewood maximal operator $\mathcal M$. According to Lemma \ref{LEMA21}, the function $h_\beta$ is 1-Lipschitz , that is, $|h_\beta(x)-h_{\beta}(y)|\leq |x-y|$. If $y\in B_{h_{\beta}(x)/2}(x)$, then the Lipschitz condition implies that $|h_{\beta}(x)-h_{\beta}(y)|\leq |x-y|\leq h_{\beta}(x)/2$ and thus $h_{\beta}(x)/2\leq h_{\beta}(y)\leq 3h_\beta(x)/2$. Consequently, by Fubini's theorem,
\begin{multline*}
|F^i|\lesssim \frac{1}{\beta^2}\iint_{\Omega}|\nabla u  -\vec \alpha_i|^2\Psi^3\Phi^3_i\partial_r[\chi_{\beta}^3]\frac{dsdy}{|s|^{n-d-1}}\bigg (\int_{x\in B_{h_{\beta}(y)}(y)} h_\beta(x)^{-d} dx\bigg )\\
\lesssim \frac{1}{\beta^2}\iint_{\Omega}|\nabla u-\vec \alpha_i|^2\Psi^3\Phi^3_i\partial_r[\chi_{\beta}^3]\frac{dsdy}{|s|^{n-d-1}}.
\end{multline*}
Recall that
\[\Psi\Phi_i = \Psi_B \Psi_{F^i} \Psi_\epsilon (1-\Psi_{K_ir_i}) (1-\Psi_{2l}).\]
By \eqref{drPsie>0}, $\partial_r [\Psi_e \Psi_{F^i} \Psi_\epsilon] \geq 0$ and thus the product rule implies 
\begin{align*}
\Psi^3\Phi^3_i\partial_r[\chi_{\beta}^3]\leq \partial_r[\Psi^3\Phi^3_i\chi_{\beta}^3]
+\partial_r[\Psi^3_{K_ir_i}] \Psi^3_{F^i}  \Psi^3 \chi^3_\beta
+ \partial_r[\Psi^3_{2l}] \Psi_B^3 \Psi^3_\epsilon \Phi_i^3 \chi^3_\beta.
\end{align*}
It follows that:
\begin{multline*}
|F^i|\lesssim \frac{1}{\beta^2}\iint_{\Omega} |\nabla u-\vec \alpha_i|^2\partial_r[\Psi^3_{K_ir_i}] \Psi^3_{F^i}  \Psi^3 \chi^3_\beta \frac{dsdy}{|s|^{n-d-1}}\\
+ \frac{1}{\beta^2}\iint_{\Omega} |\nabla u-\vec \alpha_i|^2\partial_r[\Psi^3_{2l}] \Psi_B^3 \Psi^3_\epsilon \Phi_i^3 \chi^3_\beta\frac{dsdy}{|s|^{n-d-1}}\\
+\frac{1}{\beta^2}\iint_{\Omega} |\nabla u-\vec \alpha_i|^3\partial_r[\Psi^3\Phi^3_i\chi_{\beta}^3]\frac{dsdy}{|s|^{n-d-1}}:=\RN{1}+\RN{2}+\RN{3}.
\end{multline*}
In order to prove the claim (\ref{eqGLMAIN}), and hence the lemma, it suffices to show $\RN{1}+\RN{2}+\RN{3}\leq C \gamma^2|B_i|$ with a constant $C$ that depends only on $\lambda$, $d$, and $n$.

\medskip

\textbf{Step 5: We treat $\I$.} 
We recall that $S_i \supset \supp(\partial_r \Psi_{K_ir_i}) \cap \supp(\Psi_{F_i})$, see \eqref{defSi}, and $|\nabla \Psi_{K_ir_i}| \lesssim |t|$ since $\Psi$ satisfies \COF. Therefore, 
\[\RN{1}\lesssim \frac{|B_{i}|}{\beta^2}\fiint_{S_i}|\nabla u-\vec \alpha_i|^2 \Psi^3 \, ds\, dy =  \frac{|B_{i}|}{\beta^2} \1_{S_i \cap \supp \Psi} \fiint_{S_i}|\overline \nabla u-\vec \alpha_i|^2 ds\, dy\]
since we chose $K_i$ so that $\Psi$ is either constant equal to 0 or constant equal to 1 in $S_i$, see \eqref{prSi}, and since changing $\nabla$ to $\overline \nabla$ is just rewriting a vector with a different system of coordinates (and of course we rewrite $\vec \alpha_i$ in this system of coordinates too). 
If $\Psi\equiv 0$ on $S_i$, the bound $I=0 \leq C\gamma^2|B_i|$ is trivial. So we assume for the rest of the step that $\Psi \equiv 1$ on $S_i$. In this case,  since $\vec \alpha_i$ is the average of $\nabla u$ on $S_i$, the Poincar\'e inequality yields that:
\begin{equation}\label{eqGL05}
\RN{1} \lesssim \frac{r_i^2|B_{i}|}{\beta^2} \fiint_{S_i}|\nabla \overline \nabla u |^2 dsdy \lesssim \frac{|B_i|}{\beta^2} \iint_{S_i}|\nabla \overline \nabla u|^2\Psi^3 \frac{ds\, dy}{|s|^{n-2}}
\end{equation}
because $\Psi \equiv 1$ on $S_i$. We adapt the argument that we used to establish \eqref{eqGL06}. We pick a collection of points $\{z_j\}_{j=1}^N \in 2K_iB_i$ such that 
\[S_i \subset \bigcup_{j=1}^N B_{K_ir_i/4}(z_j)\times \{K_ir_i/2< |s| <K_ir_i\}.\] We can choose the collection so that $N$ is uniformly bounded. Since 
\[B_{K_ir_i/4}(z_j)\times \{K_ir_i/2< |s| <K_ir_i\} \subset \widehat \Gamma(x) \ \text{ for } x\in B_{K_ir_i/4}(z_j),\]
we have
\begin{multline} \label{eqGL06z}
\I \lesssim \frac{|B_i|}{\beta^2} \sum_{j=1}^N \iint_{B_{K_ir_i/4}(z_j) \times \{K_ir_i/2< |s| <K_ir_i\}} |\nabla \overline \nabla u|^2\Psi^3 \frac{ds\, dy}{|s|^{n-d-2}}  \\  \lesssim \frac{|B_i|}{\beta^2} \sum_{j=1}^N \left( \fint_{B_{K_ir_i/4}(z_j)} S(\overline \nabla u|\Psi^3)(x) \, dx \right)^2 
\lesssim \frac{|B_i|}{\beta^2} \left( \dashint_{30K_iB_i} S(\overline \nabla u|\Psi^3)(x) \, dx \right)^2 \\ \leq \frac{|B_i|}{\beta^2} \left(\mathcal{M}[S(\overline \nabla u|\Psi)](x_i) \right)^2 \leq \gamma^2 |B_i|,
\end{multline}
where $x_i$ is any point of the non-emptyset $F^i \subset E_{\beta,\gamma,\delta}$ and the last inequality comes from the fact that $x_i \in E_{\beta,\gamma,\delta}$ (we could even have $\I \lesssim \gamma^2 \delta^2 |B_i|$).

\medskip

\textbf{Step 6: We deal with $\II$.} 
Observe that
\[ \supp \{\Psi_B \partial_r \Psi_{2l}\}\subset 500B \times \{ \, l \leq |s|\leq 2l\} \ \text{and } \ \supp \{\Phi_i\} \subset\{\dist(y,B_i)/20 \leq |s|\leq K_ir_i\}\]
since we know that $K_i \leq 2^{10}$. The integral $\II$ is non-zero only if the (interior of the)  two above supports intersect, and in this case, we necessarily have
\begin{equation} \label{prxil}
l < K_ir_i \leq 2^{10}r_i \ \text{ and } \ \dist(500B,F_i) < 40l
\end{equation}
which we know assume. So $500B \subset 2^{20}B_i$ and we can find a boundary point $x_i\in \R^d$ such that 
\begin{equation} \label{defxiz}
x_i \in F_i \cap 550B.
\end{equation}
 By the triangle inequality and the fact that $|\nabla \Psi_{2l}| \lesssim l$ on $\supp (\nabla \Psi_{2l})$, we have
\begin{align*}
\RN{2}\lesssim \frac{|2^{20}B_i|}{\beta^2}\fint_{500B}\fint_{l\leq |s|\leq 2l}|\nabla  u|^2 \Psi_B^3 \, ds\, dy
+\frac{|2^{20}B_i| |\vec \alpha_i|^2}{\beta^2} \Psi_B^3 =\RN{2}_1+\RN{2}_2.
\end{align*}
We want to bound $\II_1$ with the help of the Hardy Littlewood maximal function of $x\to \Big( \fint_{B_l(x)} \fint_{l<|s|<2l} |\nabla y|^2 dsdy \Big)^{1/2}$. So we proceed like we already several times, see around \eqref{eqGL06} and \eqref{eqGL06z}. We take a uniformly finite collection of points $\{z_j\}_{j=1}^N \in 501B$ such that $500B \subset \bigcup B_{l/2}(z_j)$, and since $\fint_{B_{l/2}(z_j)} |\nabla u|^2 \lesssim \fint_{B_l(x)} |\nabla u|^2$ for any $x\in B_{l/2}(z_j) |\nabla u|^2$, we have 
\begin{multline} \label{eqGL06y}
\II_1 \lesssim \frac{|B_i|}{\beta^2} \sum_{j=1}^N \fint_{B_{l/2}(z_j)} \fint_{l<|s|<2l} |\nabla u|^2 \Psi_B^3 ds\, dy  \\ \lesssim \frac{|B_l|}{\beta^2} \sum_{j=1}^N \left( \fint_{B_{l/2}(z_j)} \Big( \fint_{B_{l}(x)} \fint_{l<|s|<2l} |\nabla u|^2 \Psi_B^3 \, ds\, dy \Big)^\frac12 dx \right)^2 \\
\lesssim \frac{|B_i|}{\beta^2} \left( \fint_{B_{2000l}(x_i)} \Big( \fint_{B_{l}(x)} \fint_{l<|s|<2l} |\nabla u|^2 \Psi_B^3 \, ds\, dy \Big)^\frac12 dx \right)^2 \\
 \leq \frac{|B_i|}{\beta^2} \left(\mathcal{M}\Big[\fint_{B_{l}(.)} \fint_{l<|s|<2l} |\nabla u|^2 \Psi_B^3 \, ds\, dy \Big)^\frac12\Big](x_i) \right)^2 \leq \gamma^2 |B_i|,
\end{multline}
because $x_i \in E_{\beta,\gamma,\delta}$. It remains to bound $\II_2$, but that one will be easy. Without loss of generality, we can assume the support of the function $\phi$ used to construct $\Psi_{2l}$ to be exactly $[1,2]$ and hence the support of $\partial_r \Psi_{2l}$ to be exactly $\{l \leq |s| \leq 2l\}$. But the set $S_i$ defined in \eqref{defSi} and used to build $\vec \alpha_i$ has to be included by construction in either $\{\Psi \equiv 1\}$ or $\{\Psi \equiv 0\}$. Combined with \eqref{prxil}, it forces $S_i \subset \{\Psi \equiv 0\}$, and thus $\II_2 = |\vec \alpha_i| = 0$.

\medskip

\textbf{Step 7: We bound $\III$ to conclude.} As discussed at the end of Step 4, we needed to bound $\I$, $\II$, and $\III$ by $C\gamma^2|B_i|$ to finish the proof of the lemma. We already proved the desired estimates of $\I$ and $\II$ in Steps 5 and 6, so it remains to show $\III \lesssim \gamma^2 |B_i|$.

We did not use Section \ref{SN<S} at this point, so as one could expect, it will appear in this last Step. We easily have that
\begin{align}\label{eqGL59b}
    \|\wt{N}(\nabla u-\vec \alpha_i |\Psi^{3}\Phi_i^{3})\|^2_{2}
    \leq \|\wt{N}(\nabla u|\Psi^{3}\Phi^{3}_i)\|^2_{2}+|\vec \alpha_i|^2 \|\wt{N}(1|\Phi_i)\|^2_{2}.
\end{align}
Lemma \ref{LECUTO} shows that $\Psi^3\Phi_i^3\chi_\beta^3$ satisfies \COF with a constant that depends only on $d$ and $n$. Thus we apply Lemma \ref{INLSFT} to the term $\RN{3}$. Together with (\ref{eqGL59b}), we deduce that
\begin{multline*}
\RN{3}\leq \frac{1}{\beta^2}\Big \{\delta \|\wt{N}(\nabla u|\Psi^{3}\Phi_i^{3})\|^2_{2} + \delta |\vec \alpha_i|^2\|\wt{N}(1|\Phi_i)\|^2_{2}  \\
 + C (1+\delta^{-1}K^2) \kappa \|\wt{N}(\nabla u|\Psi\Phi_i)\|^2_{2} +  C(1+\delta^{-1}K^2) \|S(\overline\nabla u|\Psi\Phi_i)\|^2_{2} \Big \}
\end{multline*}
Let $v$ be any function for which $\wt N(v|\Phi_i)$ or $S(v|\Phi_i)$ makes sense, and in this situation, the non-tangential maximal function $\wt N(v|\Phi_i)$ and the square function $S(v|\Phi_i)$ are supported in a ball $CB_i$, where $C$ is universal. Why? Because $\Phi_i$ is supported in a saw-tooth region on top of $F^i\subset B_i$, which is truncated above by $K_ir_i$. Hence the Whitney box $W(z,r)$ for which $(v|\Phi_i)_{W(z,r)} \neq 0$ are such that $r \leq 2K_ir_i$ and then $z\in 10K_iB_i$, which means $\supp N(v|\Phi_i) \subset 10K_iB_i$.
Similarly, a point $(y,t)$ for which $\nabla v(y,t) \neq 0$ are such that $|t|\leq K_ir_i$ and then $y\in 3K_iB_i$, which implies that $\supp S(v|\Phi_i) \subset 3K_iB_i$. Altogether
\begin{equation}\label{eqGL59c}
\supp S(v|\Phi_i) \cup \supp N(v|\Phi_i) \subset 10K_iB_i \subset B^*_i:= 2^{14}B_i.
\end{equation}
With this observation, we have
\begin{multline*}
\RN{3}\lesssim \frac{|B_i|}{\beta^2} \Big \{\delta |\vec \alpha_i|^2 + \delta \big[ \sup_{B^*_i} \wt{N}(\nabla u|\Psi^3\Phi_i^3)\big]^2 + \delta^{-1} \kappa \big[ \sup_{B^*_i} \wt{N}(\nabla u|\Psi\Phi_i)\big]^2 
+ \delta^{-1} \big[\sup_{B^*_i} S(\overline\nabla u|\Psi^{3}\Phi_i^{3})\big] ^2 \Big \} \\
:= \III_1 + \III_2 + \III_3 + \III_4.
\end{multline*}
The three terms above are handled in a similar manner. 
Recall that $\Phi_i$ is supported in a saw-tooth region over $F_i$ truncated at $K_ir_i$. If $(\nabla u|\Psi^3\Phi_i^3)_{W}(z',r')\neq 0$, then $W(z', r')\cap {\rm supp}\{\Phi_i\}\neq \emptyset$ and there exists a $x_i\in F^i \subset B_{i}$ such that $|x_i-z'|\leq 1000 r' \leq 2^{20}r_i$. It follows that for all $(z', r')\in \mathbb{R}^{d+1}_+$,
\begin{multline}\label{eqGLAM1}
(\nabla u|\Psi^3\Phi_i^3)_{W}(z', r')\leq \dashint_{B_{r'}(z')}\wt{N}(\nabla u|\Psi^3\Phi_i^3)(z)dz\\
\lesssim \dashint_{B_{1000r'}(x_i)} \wt{N}(\nabla u|\Psi^3\Phi_i^3)(z)dz
\leq \mathcal{M}[\wt{N}_a(\nabla u|\Psi^3)](x_i).
\end{multline}
Consequently, for each $z\in \mathbb{R}^d$, there exists a $x_i\in F_\beta^i$ such that
\begin{align}\label{eqGLAM22}
\delta \wt{N}(\nabla u|\Psi^3\Phi_i^3)(z)
\lesssim \delta \mathcal{M}[\wt{N}(\nabla u|\Psi^3)](x_i) \leq \gamma \beta
\end{align}
where the last inequality follows from the fact that $x_i \in E_{\beta,\gamma,\delta}$. We easily deduce
\[\III_2:= \frac{\delta|B_i|}{\beta^2} \big[ \sup_{B^*_i} \wt{N}(\nabla u|\Psi^3\Phi_i^3)\big]^2 \lesssim \gamma^2 |B_i|.\]
Similarly, we have
\[\III_3:= \frac{\delta^{-1}\kappa|B_i|}{\beta^2} \big[ \sup_{B^*_i} \wt{N}(\nabla u|\Psi\Phi_i)\big]^2 \lesssim \gamma^2 |B_i|.\]
The term $\III_4$ follows the same lines. If $y\in F^i$, then $S(\overline \nabla u|\Psi\Phi_i)(y) \leq S(\overline \nabla u|\Psi\Phi_i)(y) \leq \gamma \beta$. If $y\notin F_i$, we take $x_i\in F^i$ such that $r_y:= \dist(y,F_i) = |y-x_i|$.
We know from the construction of $\Psi_{F^i}$ that 
\begin{equation} \label{factZZ1}
\Phi_i(z,s) \equiv 0 \ \text{ for $|z-y|< r_i/10$ and $|s| < r_i/400$.}
\end{equation}
We cover $B_{r_i/20}(y)$ by a uniformly finite collection of balls $\{B_{r_i/800}(z_j)\}_{j=1}^N$, and we notice that for any collection $\{w_j\}_{j=1}^N$ of points satisfying $w_j \in B_{r_i/800}(z_j)$, we have
\[\widehat \Gamma(y) \cap \supp \Phi_i \subset  \bigcup_{j=1}^N \widehat \Gamma(w_j).\]
We conclude that
\begin{multline*}
S(\overline \nabla u|\Psi\Phi_i)(y) \leq \sum_{j=1}^N \dashint_{B_{r_i/800}(z_j)} S(\overline \nabla u|\Psi\Phi_i)(w) \, dw \\
\lesssim \dashint_{B_{2r_i}(x_i)} S(\overline \nabla u|\Psi)(w) \, dw \leq \mathcal M \Big[ S(\overline \nabla u|\Psi) \Big](x_i)  \leq \frac{\delta^{1/2}} \gamma\beta.
\end{multline*}
and then $\III_4 \lesssim \gamma^2 |B_i|$ as desired.

It remains to bound $\III_1$, We apply the same argument as of $(\ref{eqGL06})$ using $x_i\in F^i$ instead of $y_i$. So we have
\begin{align}\label{eqGLAM13}
\delta |\vec \alpha_i|^2 \lesssim
\delta \big |\mathcal{M}[\wt{N}(\nabla u|\Psi^{3})](x_i)\big |^2
\lesssim \gamma^2 \beta^2,
\end{align}
because $x_i \in E_{\beta,\gamma,\delta}$, from which we easily deduce $\III_1 \lesssim \gamma^2 |B_i|$. The lemma follows.
\end{proof}

The ``good-lambda'' distributional inequality (\ref{eqGLMN1}) can be used to derive the $L^p-L^p$ boundedness result.

\begin{Lemma}\label{LENSTP}
Let $p>1$ and $L$ be an elliptic operator satisfying \HH$_{\lambda,M,\kappa}$.  For any a weak solution $u\in W^{1,2}_{loc}(\Omega)$ to $Lu=0$, any cut-off function in the form $\Psi:=\Psi_{B,l,\epsilon}$ for some $\epsilon>0$, some $l>100\epsilon$, and some boundary ball $B$ of radius $l$, we have
\begin{multline*}
\|\wt{N}(\nabla u|\Psi^{3})\|^p_{p}\leq C_p \bigg \|\bigg (\dashint_{B_{l}(.)}\dashint_{l\leq |s|\leq 2l}|\nabla u|^2 \Psi_B^3 ds\, dy\bigg )^{1/2}\bigg \|^p_{p}+ C_p \kappa^{p/2} \|\wt{N}(\nabla u|\Psi)\|^p_{p}  \\ + C_p \|S(\overline\nabla  u|\Psi)\|^p_{p}
\end{multline*}
where $C_p>0$ depends only on $\lambda$, $d$, $n$, and $p$. 
\end{Lemma}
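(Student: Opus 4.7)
The plan is a standard good-lambda argument taking Lemma \ref{GLAM} as the black-box input. To set notation, let $F := \wt N(\nabla u|\Psi^3)$, $F_0(x) := \big(\dashint_{B_l(x)}\dashint_{l\leq |s|\leq 2l}|\nabla u|^2 \Psi_B^3\,ds\,dy\big)^{1/2}$, and $\Sigma := S(\overline\nabla u|\Psi)$. The four summands in the definition of $E_{\beta,\gamma,\delta}$ are non-negative, so
\begin{multline*}
E^c_{\beta,\gamma,\delta} \subset \{\mathcal M F_0 > \gamma\beta/4\} \cup \{\mathcal M F > \gamma\beta/(4\delta^{1/2})\} \\
\cup \{\mathcal M F > \gamma\beta\delta^{1/2}/(4\kappa^{1/2})\} \cup \{\mathcal M \Sigma > \gamma\beta\delta^{1/2}/4\}.
\end{multline*}

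Before entering the argument I would verify that $\|F\|_p$ is a priori finite. The factor $\Psi_\epsilon$ forces $|t|\geq \epsilon$ on $\supp\Psi$, the factor $(1-\Psi_{2l})$ forces $|t|\leq 4l$, and $\Psi_B$ forces $\dist(y,B)\lesssim l$; therefore $\Psi$ is compactly supported in $\Omega$, and classical interior regularity yields $\nabla u \in L^\infty(\supp\Psi)$. Hence $F$ is bounded and supported in a bounded subset of $\R^d$, so $\|F\|_p < \infty$. This point is the single genuine prerequisite, and it is what makes the absorbing step below legal.

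Next I would apply the layer cake identity $\|F\|_p^p = p\int_0^\infty \beta^{p-1}|\{F>\beta\}|\,d\beta$, split each level set as $|\{F>\beta\}| \leq |\{F>\beta\}\cap E_{\beta,\gamma,\delta}| + |E^c_{\beta,\gamma,\delta}|$, apply Lemma \ref{GLAM} to the first piece and the inclusion above to the second. A change of variables $\beta \mapsto c\beta$ inside each resulting $\beta$-integral followed by the $L^p$-boundedness of $\mathcal M$ (here $p>1$ enters crucially) produces
\begin{align*}
\|F\|_p^p \leq \; & C_p \gamma^2 \|F\|_p^p + C_p \gamma^{-p}\|F_0\|_p^p + C_p(\delta/\gamma^2)^{p/2}\|F\|_p^p \\
& + C_p(\kappa/(\gamma^2\delta))^{p/2}\|F\|_p^p + C_p(\gamma^2\delta)^{-p/2}\|\Sigma\|_p^p.
\end{align*}

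To finish I would choose parameters in the order $\gamma$ then $\delta$: first pick $\gamma$ small (depending on $p,\lambda,d,n$) so that $C_p\gamma^2 \leq 1/4$, then pick $\delta\in(0,1)$ small (depending on the now-fixed $\gamma$ and on $p,\lambda,d,n$) so that $C_p(\delta/\gamma^2)^{p/2} \leq 1/4$, and absorb both corresponding $\|F\|_p^p$ terms into the left-hand side using Step 1. With $\gamma,\delta$ frozen, $C_p(\kappa/(\gamma^2\delta))^{p/2}$ collapses to $C_p\kappa^{p/2}$ and $C_p(\gamma^2\delta)^{-p/2}$ to $C_p$. Using $\Psi^3\leq \Psi$ to bound $\|F\|_p \leq \|\wt N(\nabla u|\Psi)\|_p$ in the surviving $\kappa$-term yields the lemma. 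The only non-routine part is really the a priori finiteness above; once that is secured, everything else is a textbook layer-cake computation tuned to the four summands of $E_{\beta,\gamma,\delta}$.
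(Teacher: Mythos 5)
Your proposal is correct and follows essentially the same route as the paper: the layer-cake decomposition with the good-lambda inequality of Lemma \ref{GLAM}, the $L^p$-boundedness of $\mathcal M$ applied to the four summands defining $E_{\beta,\gamma,\delta}$ (you split $E^c_{\beta,\gamma,\delta}$ by a union bound where the paper uses the triangle inequality in $L^p$, which is the same estimate), the choice of $\gamma$ then $\delta$, and absorption justified by the a priori finiteness of $\|\wt N(\nabla u|\Psi^3)\|_p$ coming from the compactly supported cut-off. One small remark: your appeal to ``classical interior regularity'' for $\nabla u\in L^\infty(\supp\Psi)$ implicitly uses the local Lipschitz continuity of $\A$ furnished by \HH{} via \eqref{PpCAL} (it would fail for merely bounded measurable coefficients), but this is not even needed, since only Whitney boxes with radii bounded below and contained in a fixed compact subset of $\Omega$ contribute, so $\nabla u\in L^2_{loc}$ already makes $\wt N(\nabla u|\Psi^3)$ bounded and compactly supported.
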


\begin{Rem}
The limitation $p>1$ comes from the fact that we used the maximal function $\mathcal M$ in Lemma \ref{GLAM}. However, with the same arguments, we could prove an analogue of \eqref{eqGLMN1} where we replace $\mathcal M$ by $\mathcal M_{q}$ defined as $\mathcal M_q[f] := \big( \mathcal M[f^q]\big)^{1/q}$ for any $q>0$ (with a constant $C$ depending now also on $q$). Then we could establish Lemma \ref{LENSTP} for any $p>0$ by invoking \eqref{eqGLMN1} that used $\mathcal M_{q}$ with $0<q<p$.
\end{Rem}

\begin{proof}
We apply the distribution inequality (\ref{eqGLMN1}) to obtain that there exists a $\eta>0$ such that for any $\gamma,\delta\in (0,1)$, we have
\[\begin{split}
\|\wt{N}(\nabla u|\Psi^{3})\|^p_{p}& =c_p\int_0^\infty \beta^{p-1}|\{\wt{N}(\nabla u|\Psi^3)>\beta\}|d\beta\\
& \leq c_p\int_0^\infty \beta^{p-1}|\{\wt{N}(\nabla u|\Psi^{3})>\beta\}\cap E_{\beta, \gamma,\delta}|d\beta
+c_p\int_0^\infty \beta^{p-1}|E^c_{\beta,\gamma,\delta}|d\beta\\
& \lesssim c_p \gamma^2 \int_0^\infty \beta^{p-1}|\{\mathcal{M}[\wt{N}(\nabla u|\Psi^{3})]>\eta \beta\}|d\beta
+ c_p\int_0^\infty \beta^{p-1}|E^c_{\beta,\gamma,\delta}|d\beta \\
:= \I + \II
\end{split}\]
where the implicit constant depends only on $p$. But in one had, we have
\begin{equation*}
\I = \frac{\gamma^2}{\eta^{p}}  \|\mathcal M[\wt{N}(\nabla u|\Psi^{3})]\|^p_{p} \lesssim \frac{\gamma^2}{\eta^{p}}  \|\wt{N}(\nabla u|\Psi^{3})\|^p_{p}
\end{equation*}
by the $L^p$-boundedness of the Hardy-Littlewood maximal operator. On the other hand, 
\[\begin{split}
\II& = \gamma^{-p} \Big\| \mathcal M\Big[\Big (\dashint_{B_{l}(.)}\dashint_{l\leq |s|\leq 2l}|\nabla u|^2 \Psi_B^3 \, ds\, dy\Big )^{1/2}\Big] +\delta^{1/2} \mathcal{M}[\wt{N}(\nabla u|\Psi^3)] \\ & \qquad\qquad\qquad\qquad\qquad\qquad + \delta^{-1/2}\kappa^{1/2} \mathcal{M}[\wt{N}(\nabla u|\Psi)]   + \delta^{-1/2} \mathcal{M}[S(\overline\nabla u|\Psi)] \Big\|_p^p \\
& \lesssim \gamma^{-p} \Big\| \Big (\dashint_{B_{l}(.)}\dashint_{l\leq |s|\leq 2l}|\nabla u|^2 \Psi_B^3 \, ds\, dy\Big )^{1/2} +\delta^{1/2} \wt{N}(\nabla u|\Psi^3) + \delta^{-1/2}\kappa^{1/2} \wt{N}(\nabla u|\Psi) \\ & \qquad\qquad\qquad\qquad\qquad\qquad\qquad\qquad\qquad\qquad\qquad\qquad\qquad\qquad + \delta^{-1/2} S(\overline\nabla u|\Psi) \Big\|_p^p
\end{split}\]
again using the $L^p$-boundedness of the Hardy-Littlewood maximal operator. Altogether, we have
\begin{multline*}
\|\wt{N}(\nabla u|\Psi^{3})\|^p_{p} \lesssim \Big(\frac{\gamma^2}{\eta^p} + \frac{\delta^{p/2}}{\gamma^p}\Big) \|\wt{N}(\nabla u|\Psi^{3})\|^p_{p}
 + \gamma^{-p} \Big\| \Big (\dashint_{B_{l}(.)}\dashint_{l\leq |s|\leq 2l}|\nabla u|^2 \Psi_B^3 \, ds\, dy\Big )^{1/2}\Big\|_p^p \\
 + \delta^{-p/2}\kappa^{p/2} \gamma^{-p} \|\wt{N}(\nabla u|\Psi)\|^p_{p}  + \delta^{-p/2} \gamma^{-p} \Big\| S(\overline\nabla u|\Psi)\Big\|_p^p.
\end{multline*}
The lemma follows by taking $\gamma$ and then $\delta$ (depending only on $\lambda$, $d$, $n$, and $p$) such that $(\gamma^2\eta^{-p} + \delta^{p/2}\gamma^{-p})$ is small enough, so that the first term on the right-hand side above can be hidden in the left-hand side (which is allowed because all the terms are finite, due to the use of the compactly supported cut-off function $\Psi$). 
\end{proof}

\section{\texorpdfstring{$S\leq N$}{TEXT} Local Estimates} \label{SS<N}

In this section, we aim to establish that the square function is locally bounded by the non-tangential maximal function, result that is eventually given in Lemma \ref{LSLNF} below.

\medskip

Remember that we have three different directional derivatives to deal with, which are the tangential derivatives $\nabla_x$, angular derivatives $\nabla_\varphi$, and radial derivative $\partial_r$. To prove these estimates, we first bound the square function of the radial derivative by the square functions of the tangential and angular derivatives, and we shall rely on Proposition \ref{PpOPER}, i.e. the expression of the equation in cylindrical coordinates. Then, we treat the tangential and angular directional derivatives, and a key point is the fact that those derivatives verify $\partial_r |t| = \partial_\varphi |t| = 0$. 

\begin{Lemma}\label{LMRRR}
Let $L$ be an elliptic operator satisfying \HH$_{\lambda,\kappa}$.
For any weak solution $u\in W^{1,2}_{loc}(\Omega)$ to $Lu=0$ and any radial cut-off function $\Psi\in C^\infty_0(\Omega,[0,1])$, we have
\begin{equation}\label{eqRRR00}
\|S(\partial_r u|\Psi^3)\|^2_{2}
\leq C \Big ( \kappa \|\wt{N}(\nabla u|\Psi^3)\|^2_{2} +\|S(\nabla_x u|\Psi^3)\|^2_{2}
+\|S(\nabla_{\varphi}u|\Psi^3)\|^2_{2}\Big ),
\end{equation}
where the constant $C>0$ depends only on $\lambda$ and the dimensions $d$ and $n$.
\end{Lemma}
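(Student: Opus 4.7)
The strategy is to convert the square function to a Carleson-type integral using \eqref{S=int2}, decompose the full gradient of $\partial_r u$ into its tangential, angular and radial pieces via Proposition~\ref{PFGEV}, and then handle each piece separately using (a) the commutator relations of Section~\ref{Sangder}, (b) the cylindrical form of the equation, and (c) the zero-spherical-mean Poincar\'e estimate \eqref{AphiSphi}. Concretely, it suffices to bound
\[
\iint_\Omega |\nabla \partial_r u|^2 \Psi^3 \, \frac{dtdx}{|t|^{n-d-2}}
= \iint_\Omega \Big(|\nabla_x \partial_r u|^2 + |\nabla_\varphi \partial_r u|^2 + |\partial_r^2 u|^2\Big) \Psi^3 \, \frac{dtdx}{|t|^{n-d-2}}
\]
by the right-hand side of \eqref{eqRRR00}, and I treat the three summands in turn.

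The tangential piece is immediate: since $[\partial_x,\partial_r]=0$, we have $|\nabla_x \partial_r u|^2 = |\partial_r \nabla_x u|^2 \leq |\nabla \nabla_x u|^2$, so by \eqref{S=int2} applied to $v=\nabla_x u$ the corresponding integral is controlled by $\|S(\nabla_x u|\Psi^3)\|_2^2$. For the angular piece, Proposition~\ref{PPLL1} gives $\partial_\varphi \partial_r u = \partial_r \partial_\varphi u + \partial_\varphi u/|t|$, so
\[
|\nabla_\varphi \partial_r u|^2 \lesssim |\partial_r \nabla_\varphi u|^2 + \frac{|\nabla_\varphi u|^2}{|t|^2}.
\]
The first summand is bounded by $|\nabla \nabla_\varphi u|^2$, contributing $\|S(\nabla_\varphi u|\Psi^3)\|_2^2$. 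The second summand yields
\[
\iint_\Omega \frac{|\nabla_\varphi u|^2}{|t|^2}\Psi^3 \, \frac{dtdx}{|t|^{n-d-2}} = \iint_\Omega |\nabla_\varphi u|^2 \Psi^3 \, \frac{dtdx}{|t|^{n-d}} \lesssim \|S(\nabla_\varphi u|\Psi^3)\|_2^2
\]
by the spherical Poincar\'e estimate \eqref{AphiSphi}, which is itself a consequence of Proposition~\ref{PDECC}.

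For the radial piece, I invoke the equation in cylindrical form (Proposition~\ref{PpOPER}) to trade a second radial derivative for $x$-divergences and angular second derivatives:
\[
\partial_r^2 u = -\diver_x(\mathcal{A}_1 \nabla_x u) - \diver_x(\mathcal{A}_2 \partial_r u) - \tfrac{1}{2}\sum_{i,j=d+1}^n \partial_{\varphi_{ij}}^2 u.
\]
Expanding each divergence with the product rule produces three types of contributions. Terms of the form $|\mathcal{A}_k|^2|\nabla_x \nabla_x u|^2$ and $|\mathcal{A}_k|^2|\nabla_x \partial_r u|^2 = |\mathcal{A}_k|^2|\partial_r \nabla_x u|^2$ are bounded by $|\nabla \nabla_x u|^2$, contributing $\|S(\nabla_x u|\Psi^3)\|_2^2$. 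Terms of the form $|\nabla_x \mathcal{A}_k|^2|\nabla u|^2$ are handled by the Carleson inequality \eqref{Carleson} applied with $f = |t||\nabla \mathcal{A}_k|\in CM(\kappa)$ (coming from \eqref{defCMAx}):
\[
\iint_\Omega |\nabla_x \mathcal{A}_k|^2|\nabla u|^2 \Psi^3 \, \frac{dtdx}{|t|^{n-d-2}}
= \iint_\Omega |t|^2|\nabla_x \mathcal{A}_k|^2|\nabla u|^2 \Psi^3 \, \frac{dtdx}{|t|^{n-d}} \lesssim \kappa \|\wt N(\nabla u|\Psi^3)\|_2^2.
\]
Finally, each term $|\partial_{\varphi_{ij}}^2 u|^2$ is dominated by $|\nabla_\varphi \nabla_\varphi u|^2 \leq |\nabla \nabla_\varphi u|^2$, contributing $\|S(\nabla_\varphi u|\Psi^3)\|_2^2$. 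Summing everything produces \eqref{eqRRR00}. The only step that requires more than routine bookkeeping is the second summand of the angular piece: the extra factor of $|t|^2$ needed to convert $|\nabla_\varphi u|^2/|t|^2$ into a square-function quantity is precisely the gain from Proposition~\ref{PDECC}, and without this spherical-mean-zero improvement the commutator between $\partial_r$ and $\partial_\varphi$ would obstruct the estimate.
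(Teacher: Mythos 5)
Your proof is correct and follows essentially the same route as the paper's: the commutation relation $[\partial_r,\partial_\varphi]=-\partial_\varphi/|t|$, the equation in cylindrical form \eqref{eqPOPER} to express $\partial_r^2 u$, the Carleson inequality \eqref{Carleson} with $|t||\nabla_x\A|\in CM(\kappa)$, and Proposition \ref{PDECC} to absorb the $|\nabla_\varphi u|^2/|t|^2$ term. The only difference is cosmetic (you square the pointwise decomposition where the paper works with first powers), so nothing further is needed.
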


\begin{proof} This is basically an outcome of the equation: some derivatives can be represented in terms of others.
Observe that
\[|\nabla \partial_r u| \leq |\nabla_x \partial_r u| +  |\nabla_\varphi \partial_r u| + |\partial_r^2 u| \leq  |\nabla \nabla_x u| + |\nabla \nabla_\varphi u| + \frac{1}{|t|}|\nabla_\varphi u| + |\partial_r^2 u|\]
because $\nabla_x$ and $\partial_r$ commute and the commutator $[\partial_r,\partial_\varphi]$ is $- \frac1{|t|} \partial_\varphi$ (see Proposition \ref{PPLL1}). But since $u$ is a weak solution to $Lu = 0$, \ref{eqPOPER} implies that
\[\begin{split}
|\partial_r^2 u|  & = \Big|- \diver_x(\mathcal{A}_1\nabla_x u ) - \diver_x(\mathcal{A}_2\partial_r u ) - \frac{1}{2}\sum_{i,j=d+1}^n\partial_{\varphi_{ij}}^2 u \Big|\\
& \lesssim |\nabla_x \A| |\nabla u| + \lambda^{-1} |\nabla_x \partial_r u| + \lambda^{-1} |\nabla_x \nabla_x u| + |\nabla_\varphi^2 u| \leq |\nabla_x \A| |\nabla u| + \lambda^{-1} |\nabla \nabla_x  u| + |\nabla \nabla_\varphi u|
\end{split}\]
by using again the fact that $\nabla_x$ and $\partial_r$ commute. 
By combining the two inequalities above, we obtain
\[|\nabla \partial_r u| \lesssim  |\nabla \nabla_x u| + |\nabla \nabla_\varphi u| + \frac{1}{|t|}|\nabla_\varphi u| + |\nabla_x \A| |\nabla u|\]
Now, \eqref{S=int2} entails that
\begin{multline*}
\|S(\partial_r u|\Psi^3)\|_{L^2(\R^d)}^2 \lesssim \|S(\nabla_x u|\Psi^3)\|_{L^2(\R^d)}^2 + \|S(\nabla_\varphi u|\Psi^3)\|_{L^2(\R^d)}^2 \\
+ \iint_\Omega |\nabla_\varphi u|^2 \Psi^3 \frac{dtdx}{|t|^{n-d}} + \iint_\Omega |t|^2|\nabla_x \A|^2 |\nabla u|^2 \Psi^3 \frac{dtdx}{|t|^{n-d}}.
\end{multline*}
However, since $|t||\nabla_x \A| \in CM(\kappa)$, the Carleson inequality \eqref{Carleson} implies that
\[\iint_\Omega |t|^2|\nabla_x \A|^2 |\nabla u|^2 \Psi^3 \frac{dtdx}{|t|^{n-d}} \lesssim \kappa \|\wt N(u|\Psi^3)\|^2_{L^2}.\]
In addition, Proposition \ref{PDECC} applied with $\Phi = |t|^{d-n} \Psi^3 $ infers that 
\[\iint_\Omega |\nabla_\varphi u|^2 \Psi^3\frac{dtdx}{|t|^{n-d}} \lesssim \iint_\Omega |\nabla \nabla_\varphi u|^2 \Psi^3\frac{dtdx}{|t|^{n-d-2}} \lesssim \|S(\nabla_\varphi u|\Psi^3)\|^2_{L^2(\R^d)} \]
by \eqref{S=int2}. The lemma follows.
\end{proof}

In order to deal with the tangential and angular directional derivatives, we will first prove a generalized result that works for both of them. 
Let us write $\partial_v$ for either a tangential derivative $\partial_{x_i}$, an angular derivative $\partial_{\varphi_{ij}}$, or the radial derivative $\partial_r$. The key step is to use the equation $Lu=0$ properly. Since we want to estimate the gradient of solutions, we should study the commutators $[L,\partial_v]$ and try to bound them in a clever way. In the next lemma, we will estimate the square function of $\partial_v u$ and we are able to see how the commutator $[L,\partial_v]$ plays an important role in the estimates.

It will be convenient to introduce the bilinear form $\mathcal{B}(\cdot,\cdot)$ defined for $ f \in L^1_{loc}(\Omega)$ and $\Psi\in C_0^\infty(\Omega)$, 
\begin{align}\label{DEFBL}
    \mathcal{B}(f, \Psi):=-\frac{1}{2}\iint_\Omega \partial_r[|t| f]\, \partial_r\Psi \frac{dtdx}{|t|^{n-d-1}}.
\end{align}
Beware that $\mathcal{B}(f, \Psi)$ {\bf may be negative} even when the function $f$ is positive. We are now ready for our next lemma.

\begin{Lemma}\label{lm.GSN}
Let $L:=-\diver(|t|^{d+1-n}\A \nabla)$ be an elliptic operator satisfying \eqref{defellip} and \eqref{coe.afor}. For any weak solution $u\in W^{1,2}_{loc}(\Omega)$ to $Lu=0$ and any radial cut-off function $\Psi\in C^\infty_0(\Omega,[0,1])$, we have
\begin{multline}\label{eqGSN00}
    \frac78 \lambda \|S(\partial_v u|\Psi^3)\|_{2}^2 \leq \iint_\Omega |\partial_v u|^2\partial_r\Psi^3 \frac{dtdx}{|t|^{n-d-1}}  + C \iint_\Omega |\partial_v u|^2|\nabla_x\Psi|^2\Psi \frac{dtdx}{|t|^{n-d-2}}\\
 + \mathcal{B}(|\partial_v u|^2, \Psi^3) + \iint_\Omega \Big ([L,\partial_v]u\Big )\Big (\Psi^3 |t|\partial_v u \Big )dtdx,   
\end{multline}
where $C>0$ depends only on the ellipticity constant $\lambda$ and the dimensions $d$ and $n$.
\end{Lemma}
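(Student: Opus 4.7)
The plan is to apply $\partial_v$ to the equation $Lu=0$ and to test the resulting equation for $\partial_v u$ against the function $\varphi = |t|\Psi^3 \partial_v u$. Since $\partial_v(Lu)=0$ and $L\partial_v u = \partial_v Lu + [L,\partial_v]u$, the function $\partial_v u$ satisfies $L(\partial_v u) = [L,\partial_v]u$ in the weak sense, i.e.
\begin{equation*}
\iint_\Omega \A \nabla(\partial_v u) \cdot \nabla\varphi \, \frac{dt\, dx}{|t|^{n-d-1}} = \iint_\Omega ([L,\partial_v]u)\, \varphi \, dt\, dx;
\end{equation*}
the $W^{2,2}_{loc}$ regularity needed to make this rigorous is provided by Proposition \ref{LW22loc}. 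Plugging $\varphi = |t|\Psi^3 \partial_v u$ and applying the product rule to $\nabla(|t|\Psi^3 \partial_v u)$, I obtain
\begin{equation*}
\iint_\Omega \A \nabla(\partial_v u) \cdot \nabla(\partial_v u) \, \Psi^3 \, \frac{dt\, dx}{|t|^{n-d-2}} = -\iint_\Omega \partial_v u\, \A \nabla(\partial_v u) \cdot \nabla(|t|\Psi^3) \, \frac{dt\, dx}{|t|^{n-d-1}} + \iint_\Omega ([L,\partial_v]u)\, |t|\Psi^3 \partial_v u \, dt\, dx.
\end{equation*}
Ellipticity bounds the left-hand side below by $\lambda \iint_\Omega |\nabla \partial_v u|^2 \Psi^3 \frac{dt\,dx}{|t|^{n-d-2}}$, which up to a constant is $\lambda \|S(\partial_v u|\Psi^3)\|_2^2$ via \eqref{S=int2}; the commutator term on the right already matches the last term of the claim; what remains is to massage the cross term.

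To do so I exploit that $\Psi$ is radial, so $\nabla\Psi^3 = (\nabla_x\Psi^3,\,(t/|t|)\partial_r\Psi^3)$ and $\nabla|t| = (0, t/|t|)$, together with the key algebraic fact flowing from the block form \eqref{coe.afor}: since the bottom row of $\A$ equals $(0, \Id)$, one has $\A \nabla w \cdot (0, t/|t|) = \partial_r w$ for any smooth $w$, while the top $d$ components of $\A\nabla w$ are $(\A_1\nabla_x + \A_2 \partial_r)w$. Combining,
\begin{equation*}
\A \nabla(\partial_v u) \cdot \nabla(|t|\Psi^3) = |t|(\A_1 \nabla_x + \A_2 \partial_r)(\partial_v u) \cdot \nabla_x\Psi^3 + (|t|\partial_r\Psi^3 + \Psi^3)\, \partial_r \partial_v u.
\end{equation*}

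The cross term thus splits into three pieces. The piece containing $\Psi^3 \partial_r \partial_v u \cdot \partial_v u$ equals $-\tfrac{1}{2}\iint \Psi^3 \partial_r(|\partial_v u|^2) \frac{dt\,dx}{|t|^{n-d-1}}$, which by the $\partial_r$-integration-by-parts of Proposition \ref{IBBdrdphi} becomes $+\tfrac{1}{2}\iint |\partial_v u|^2 \partial_r\Psi^3 \frac{dt\,dx}{|t|^{n-d-1}}$. The piece containing $|t|\partial_r\Psi^3 \partial_r \partial_v u \cdot \partial_v u$ equals $-\tfrac{1}{2}\iint |t|\partial_r\Psi^3 \partial_r(|\partial_v u|^2) \frac{dt\,dx}{|t|^{n-d-1}}$; adding it to the previous piece, and using $\partial_r|t|=1$ to expand $\partial_r[|t||\partial_v u|^2] = |\partial_v u|^2 + |t|\partial_r(|\partial_v u|^2)$, I recover exactly
\begin{equation*}
\iint_\Omega |\partial_v u|^2 \partial_r\Psi^3 \frac{dt\, dx}{|t|^{n-d-1}} + \mathcal{B}(|\partial_v u|^2, \Psi^3)
\end{equation*}
by the definition \eqref{DEFBL} of $\mathcal{B}$. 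The remaining third piece is the error
\begin{equation*}
-\iint_\Omega (\A_1\nabla_x + \A_2\partial_r)(\partial_v u) \cdot \nabla_x\Psi^3 \; \partial_v u \; \frac{dt\, dx}{|t|^{n-d-2}},
\end{equation*}
which I bound using $|\nabla_x\Psi^3| \leq 3\Psi^2|\nabla_x\Psi|$, the bound $|\A_1|+|\A_2|\lesssim \lambda^{-1}$ from \eqref{defellip}, and Young's inequality on the product $|\nabla \partial_v u|\,\Psi^{3/2} \cdot |\partial_v u|\,\Psi^{1/2}|\nabla_x\Psi|$ with a small parameter: this absorbs $\tfrac{1}{8}\lambda \|S(\partial_v u|\Psi^3)\|_2^2$ into the left-hand side and leaves $C\iint |\partial_v u|^2 |\nabla_x\Psi|^2 \Psi \frac{dt\, dx}{|t|^{n-d-2}}$ on the right, yielding the claimed inequality with the prefactor $\tfrac{7}{8}\lambda$.

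The main obstacle is the bookkeeping of the paragraph above: the delicate cancellation that reconstitutes the bilinear form $\mathcal{B}$ depends crucially on the block form \eqref{coe.afor} (so that $\A\nabla w \cdot \nabla|t|$ collapses neatly to $\partial_r w$) and on the radiality of $\Psi$ (so that $\nabla\Psi^3$ splits cleanly into a tangential and a purely radial part). The argument is uniform in $\partial_v \in \{\partial_{x_i},\partial_{\varphi_{ij}},\partial_r\}$: the identity $\partial_v|t|=0$, which fails when $\partial_v = \partial_r$, is never invoked, and the commutator $[L,\partial_v]u$ is simply carried untouched to the right-hand side, to be estimated elsewhere.
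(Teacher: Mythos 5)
Your proposal is correct and follows essentially the same route as the paper: both test the commuted equation $L(\partial_v u)=[L,\partial_v]u$ against $|t|\Psi^3\partial_v u$, use ellipticity together with the block form \eqref{coe.afor} and the radiality of $\Psi$ to reduce the cross term, integrate by parts in $r$ (Proposition \ref{IBBdrdphi}) to produce the trace term and $\mathcal{B}(|\partial_v u|^2,\Psi^3)$, and absorb $\tfrac18\lambda\|S(\partial_v u|\Psi^3)\|_2^2$ via Young's inequality, leaving the $|\nabla_x\Psi|^2$ error. The only difference is cosmetic bookkeeping in how the splitting of $\nabla(|t|\Psi^3)$ is organized, so no further comment is needed.
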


The bound \eqref{eqGSN00} may look a bit cryptic. The last term of (\ref{eqGSN00}) is the one that contains the commutator $[L,\partial_v]$, and will be removed in the next lemmas. The first term in the right-hand side is the ``trace'' term, that is the term that will become $\Tr(\partial_v u)$ when we take $\Psi \uparrow 1$. The two other quantities are ``error'' terms that contain derivatives of the cut-off function $\Psi$, and that will eventually disappear when we take $\Psi \uparrow 1$.

\begin{proof}
To lighten the notation, we write $V$ for $\partial_v u$. First of all, since $\mathcal{A}$ satisfies the uniform ellipticity condition \eqref{defellip}, we have
\begin{align*}
    \lambda\|S(V|\Psi^3)\|^2_{2}=\lambda\iint_\Omega |\nabla V|^2\Psi^3\frac{dtdx}{|t|^{n-d-2}}\leq \iint_\Omega \mathcal{A}\nabla V \cdot \nabla V \Psi^3\frac{dtdx}{|t|^{n-d-2}}
\end{align*}
By product rule, 
\begin{multline*}
    \iint_\Omega \mathcal{A}\nabla V \cdot \nabla V \, \Psi^3\frac{dtdx}{|t|^{n-d-2}}=  \iint_\Omega \mathcal{A}\nabla V \cdot \nabla \Big (V \Psi^3|t|\Big )\frac{dtdx}{|t|^{n-d-1}}\\
    -\iint_\Omega \mathcal{A}\nabla V\cdot \nabla \Psi^3 \, V \frac{dtdx}{|t|^{n-d-2}}
    -\iint_\Omega \mathcal{A}\nabla V\cdot \nabla (|t|) \, V \Psi^3\, \frac{dtdx}{|t|^{n-d-1}}=\RN{1}+\RN{2}+\RN{3}.
\end{multline*}

We start from the term $\RN{1}$. Recall that $u$ is a weak solution to the equation $Lu=0$. It follows that:
\begin{align*}
    LV=L(\partial_v u)=\partial_v (Lu)+[L,\partial_v]=[L,\partial_v]\ \text{a.e. in $\Omega$.}
\end{align*}
Consequently, 
\begin{align*}
    \RN{1}=\iint_\Omega \Big ([L,\partial_v]u\Big )\Big (V\Psi^3|t|\Big )\, dtdx,
\end{align*}
which is one of the term from the right-hand side of (\ref{eqGSN00}). 
For the term $\RN{2}$, since matrix is in the form of (\ref{coe.afor}),
\begin{multline*}
    \RN{2}=-\iint_\Omega \mathcal{A}_1\nabla_x V\cdot \nabla_x \Psi^3 \, V\frac{dtdx}{|t|^{n-d-2}}-\iint_\Omega \mathcal{A}_2\frac{t}{|t|}\nabla_t V\cdot \nabla_x \Psi^3 \, V\frac{dtdx}{|t|^{n-d-2}}\\
   -\iint_\Omega \nabla_t V\cdot \nabla_t\Psi^3 \, V\frac{dtdx}{|t|^{n-d-2}}=:\RN{2}_1+\RN{2}_2+\RN{2}_3.
\end{multline*}

The terms $\RN{2}_1$ and $\RN{2}_2$ are estimated together by
\begin{align*}
    \RN{2}_1+\RN{2}_2\leq \frac18 \lambda \|S(V|\Psi^3)\|^2_2+ C_\lambda \iint_\Omega |V|^2|\nabla_x \Psi|^2\Psi \frac{dtdx}{|t|^{n-d-2}}.
\end{align*}
We hide the term $\frac18 \lambda \|S(V|\Psi^3)\|^2_2$ in the left-hand side of (\ref{eqGSN00}), and the second term in the right-hand side above stays on the right-hand side of (\ref{eqGSN00}).

As for $\RN{2}_3$, since $\Psi$ is radial, we have that 
\[\nabla_t V \cdot \nabla_t \Psi^3 = (\nabla_t V \cdot \nabla_t |t|) \, \partial_r \Psi^3 = (\partial_r V)  (\partial_r\Psi^3)\]
 and thus, 
\[\RN{2}_3=-\frac{1}{2} \iint_\Omega (\partial_r V^2) (\partial_r\Psi^3) \, \frac{dtdx}{|t|^{n-d-2}} = \mathcal{B}(V^2,\Psi^k) + \frac{1}{2}\iint_\Omega V^2 \partial_r\Psi^3\, \frac{dtdx}{|t|^{n-d-1}}\]
by definition of $\mathcal{B}(.,.)$, see (\ref{DEFBL}). The last term $\RN{3}$ is similar, because we have 
\[\RN{3}=-\iint_\Omega \partial_r V \, V\Psi^3\, \frac{dtdx}{|t|^{n-d-1}} = - \frac12 \iint_\Omega (\partial_r V^2) \, \Psi^3\, \frac{dtdx}{|t|^{n-d-1}} = \frac12 \iint_\Omega V^2 \, \partial_r \Psi^3\, \frac{dtdx}{|t|^{n-d-1}}\]
thanks to the integration by parts given in Proposition \ref{IBBdrdphi}.
The lemma follows. 
\end{proof}

Now we bound the square function of the tangential derivatives by applying Lemma \ref{lm.GSN} with $\partial_v = \partial_{x}$. Recall that we write $\partial_x$ for any tangential directional derivative $\partial_i:=\vec e_i\cdot \nabla$ where $i\leq d$. As we have discussed in the previous paragraphs, the commutator $[L,\partial_x]$ plays an important role in computing the square function of $\partial_x$. In our particular case, an easy computation shows that
\begin{equation} \label{defCLdx}
    [L, \partial_x]=\diver_x (|t|^{d+1-n} \partial_x \A) \nabla 
\end{equation}
because $\partial_x |t| =0$.  

\begin{Cor}\label{LEMMA1}
Let $L$ be an elliptic operator satisfying \HH$_{\lambda,\kappa}$. 
For any weak solution $u\in W^{1,2}_{loc}(\Omega)$ to $Lu=0$ and any radial cut-off function $\Psi\in C^\infty_0(\Omega,[0,1])$, we have
\begin{multline}\label{SNQP}
\frac34 \lambda \|S(\nabla_x u|\Psi^3)\|_{2}^2\leq \iint_\Omega |\nabla_x u|^2\partial_r\Psi^3\frac{dtdx}{|t|^{n-d-1}} + C\kappa \|\wt N(\nabla u|\Psi^3)\|^2_2 \\
    + C \iint_\Omega |\nabla_x u|^2|\nabla_x\Psi|^2\Psi \frac{dtdx}{|t|^{n-d-2}}
    +\mathcal{B}(|\nabla_x u|^2, \Psi^3),
\end{multline}
where $C>0$ depends only on $\lambda$, $d$, and $n$.
\end{Cor}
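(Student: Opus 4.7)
The plan is to apply Lemma~\ref{lm.GSN} with $\partial_v = \partial_{x_i}$ for $i=1,\dots,d$, sum over $i$ (using $|\nabla_x u|^2 = \sum_i |\partial_{x_i} u|^2$ and linearity of $\mathcal{B}$ in its first slot), and then absorb the commutator term produced in the sum by exploiting the Carleson condition $|t||\nabla_x \A| \in CM(\kappa)$. After summation, the inequality we get differs from the desired \eqref{SNQP} only in (i) having coefficient $\tfrac78\lambda$ instead of $\tfrac34\lambda$ on the left, and (ii) carrying the extra term
\[
\mathcal{T} := \sum_{i=1}^d \iint_\Omega \big([L,\partial_{x_i}]u\big)\,\Psi^3\,|t|\,\partial_{x_i} u\,dt\,dx
\]
on the right. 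It therefore suffices to prove $|\mathcal{T}| \leq \tfrac\lambda 8\|S(\nabla_x u|\Psi^3)\|_2^2 + C\kappa\|\wt N(\nabla u|\Psi^3)\|_2^2 + C\iint_\Omega |\nabla_x u|^2|\nabla_x\Psi|^2\Psi\,\tfrac{dt\,dx}{|t|^{n-d-2}}$, the first of which absorbs into the LHS (turning $\tfrac78\lambda$ into $\tfrac34\lambda$) and the other two match the form of \eqref{SNQP}.

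By formula \eqref{defCLdx}, the commutator is $[L,\partial_{x_i}]u = \diver_x\big(|t|^{d+1-n}(\partial_{x_i}\A)\nabla u\big)$. Because $\Psi$ is compactly supported in $\Omega$, integrating by parts in the $x$-variables produces no boundary terms, and because $\nabla_x|t| = 0$, I obtain
\[
\iint_\Omega \big([L,\partial_{x_i}]u\big)\,\Psi^3 |t|\,\partial_{x_i}u\,dt\,dx = -\iint_\Omega (\partial_{x_i}\A)\nabla u \cdot \big[(\nabla_x\Psi^3)\,\partial_{x_i}u + \Psi^3\,\nabla_x\partial_{x_i}u\big]\frac{dt\,dx}{|t|^{n-d-2}}.
\]
For the first piece I use the splitting $|t||\partial_{x_i}\A||\nabla u|\Psi^{3/2}$ versus $|\nabla_x\Psi|\Psi^{1/2}|\partial_{x_i}u|$ with weight $|t|^{-(n-d-1)}$, and apply the elementary inequality $2ab\leq a^2+b^2$; the ``$a$'' piece becomes $\iint |t|^2|\nabla_x\A|^2|\nabla u|^2\Psi^3\,|t|^{d-n}\,dt\,dx$, which is bounded by $C\kappa\|\wt N(\nabla u|\Psi^3)\|_2^2$ via the Carleson inequality \eqref{Carleson} applied to $|t||\nabla_x\A|\in CM(\kappa)$, while the ``$b$'' piece is exactly the error term appearing in \eqref{SNQP}.

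For the second piece I apply Young's inequality $ab \leq (4\epsilon)^{-1}a^2 + \epsilon b^2$ with $a = |t||\partial_{x_i}\A||\nabla u|\Psi^{3/2}$ and $b = |\nabla_x\partial_{x_i}u|\Psi^{3/2}$, both against the weight $|t|^{-(n-d-1)}$. The $a^2$ integrand is again $|t|^2|\nabla_x\A|^2|\nabla u|^2\Psi^3\,|t|^{d-n}$, controlled by $C_\epsilon\kappa\|\wt N(\nabla u|\Psi^3)\|_2^2$; the $b^2$ integrand is $|\nabla_x\partial_{x_i}u|^2\Psi^3\,|t|^{d+2-n}$, i.e.\ $\epsilon\|S(\partial_{x_i}u|\Psi^3)\|_2^2$ after using \eqref{S=int2}. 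Choosing $\epsilon = \lambda/(8d)$ and summing over $i=1,\dots,d$, this contribution totals $\tfrac\lambda8\|S(\nabla_x u|\Psi^3)\|_2^2$, which is absorbed into the LHS to yield the advertised constant $\tfrac34\lambda = \tfrac78\lambda - \tfrac18\lambda$. The only subtlety in the calculation is verifying that the IBP produces no $\partial_r\Psi$ or $\partial_\varphi\Psi$ terms (so that the error term has exactly the form in \eqref{SNQP}); this is automatic because the commutator $[L,\partial_{x_i}]$ is a pure $\diver_x$ and $\partial_{x_i}|t| = 0$.
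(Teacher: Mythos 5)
Your proposal is correct and follows essentially the same route as the paper: apply Lemma~\ref{lm.GSN} with $\partial_v=\partial_x$, use \eqref{defCLdx} and an integration by parts in $x$ (noting $\nabla_x|t|=0$, so no radial derivatives of $\Psi$ appear), control the $\partial_x\A$ terms by the Carleson inequality \eqref{Carleson}, and absorb a small multiple of $\|S(\nabla_x u|\Psi^3)\|_2^2$ into the left-hand side to pass from $\tfrac78\lambda$ to $\tfrac34\lambda$. The only cosmetic point is the weight bookkeeping in your Young-inequality steps (the factor of $|t|$ must be split unevenly between the two factors so that the $a^2$ piece carries $|t|^{d-n}$ and the $b^2$ piece $|t|^{d+2-n}$), which is exactly what your stated end results implement and what the paper does in \eqref{LEeq1}--\eqref{LEeq2}.
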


\begin{proof}
The bound \eqref{SNQP} is a consequence of the same bound on each of the tangential derivative $\partial_x$, and then summing up. For a given tangential derivative, \eqref{SNQP} is an immediate consequence of Lemma \ref{lm.GSN} and the bound
\begin{multline} \label{claimLE1}
\left| \iint_\Omega \Big ([L,\partial_x]u\Big ) \big(\Psi^3 |t|\partial_x u \big )dtdx \right| \leq \frac18\lambda \|S(\nabla_x u|\Psi^3)\|^2_2 + C \kappa  \|\wt N(\nabla u|\Psi^3)\|^2_2 \\ + C \iint_\Omega |\nabla_\varphi u|^2|\nabla_x\Psi|^2\Psi \frac{dtdx}{|t|^{n-d-2}}.
\end{multline}
for any tangential derivative $\partial_x$. 
So we fix a tangential directional derivative $\partial_x$, and by \eqref{defCLdx} and then integration by parts, we have
\begin{multline}\label{TSLN.eq01}
\iint_\Omega \Big ([L,\partial_x]u\Big )\Big ((\partial_x u)\Psi^3 |t|\Big )dtdx = \iint_\Omega \Big( \diver (|t|^{d+1-n} \partial_x \A) \nabla u \Big) \Big (\Psi^3 |t| \partial_x u \Big )dtdx \\
= - \iint_\Omega (\partial_x \mathcal{A}) \nabla u\cdot \nabla (\partial_x u)\Psi^3\frac{dtdx}{|t|^{n-d-2}}-\iint_\Omega (\partial_x\mathcal{A}) \nabla u\cdot \nabla \big (|t|\Psi^3\big )\partial_x u\frac{dtdx}{|t|^{n-d-1}} \\
    =\RN{1}+\RN{2}.
\end{multline}
Since $|t||\nabla_x \mathcal{A}| \in CM(\kappa)$, the term $\RN{1}$ is bounded as follows
\begin{multline} \label{LEeq1}
    |\RN{1}| \leq \frac18 \lambda \|S(\partial_x u|\Psi^3)\|^2_2 + C_\lambda \int_\Omega |t|^2 |\nabla_x \A|^2 |\nabla u|^2 \Psi^3 \frac{dt\, dx}{|t|^{n-d-1}} \\
    \leq \frac18 \lambda \|S(\partial_x u|\Psi^3)\|^2_2 + C_\lambda \kappa \|\wt N_a(\nabla u|\Psi^3)\|^2_2. 
\end{multline}
For $\RN{2}$, remark that the special structure of $\A$ given in \eqref{coe.afor} implies that the only derivatives that hit $|t|\Psi^3$ are tangential derivative, for which $\nabla_x|t| = 0$. Therefore,
\begin{multline} \label{LEeq2}
   | \RN{2}| = \left| \iint_\Omega (\partial_x\mathcal{A}) \nabla u\cdot (\nabla_x \Psi^3) (\partial_x u) \frac{dtdx}{|t|^{n-d-2}} \right|\\
\leq \iint_\Omega |t|^2 |\partial_x\mathcal{A}|^2 |\nabla u|^2 \Psi^3 \frac{dtdx}{|t|^{n-d}} + \iint_\Omega |\nabla_x u|^2 |\nabla_x\Psi|^2 \Psi \frac{dtdx}{|t|^{n-d-2}} \\
 \leq C\kappa \|\wt N(\nabla u|\Psi^3)\|^2_2 + \iint_\Omega |\nabla_x u|^2 |\nabla_x\Psi|^2 \Psi \frac{dtdx}{|t|^{n-d-2}}.
    \end{multline}
The lemma follows.
\end{proof}

It remains to estimate the square function of the angular directional derivatives.

\begin{Cor}\label{llSVAR}
Let $L$ be an elliptic operator satisfying \HH$_{\lambda,\kappa}$. 
For any weak solution $u\in W^{1,2}_{loc}(\Omega)$ to $Lu=0$ and any radial cut-off function $\Psi\in C^\infty_0(\Omega,[0,1])$, we have
\begin{multline}\label{eqVar25}
    \frac34 \lambda \|S(\nabla_{\varphi} u|\Psi^3)\|_{2}^2\leq C \kappa \|\wt N(\nabla u|\Psi^3)\|^2_2 + C \|S(\nabla_{x} u|\Psi^3)\|^2_{2}\\
    +C \iint_\Omega |\nabla_{\varphi} u|^2|\nabla_x\Psi|^2\Psi\frac{dtdx}{|t|^{n-d-2}}
    +\mathcal{B}(|\nabla_\varphi u|^2,\Psi^3),
\end{multline}
where $C>0$ depends only on $\lambda$, $d$, and $n$.
\end{Cor}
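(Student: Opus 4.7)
The plan is to parallel the proof of Corollary \ref{LEMMA1} by applying Lemma \ref{lm.GSN} with $\partial_v=\partial_{\varphi_{ij}}$ to each pair $d+1\le i,j\le n$, then summing in $(i,j)$ and halving so that $\sum_{i,j}|\partial_{\varphi_{ij}} u|^2 = 2|\nabla_\varphi u|^2$. This produces the same boundary, error and bilinear terms (now with $|\nabla_\varphi u|^2$) plus a commutator contribution $\tfrac12\sum_{i,j}\iint([L,\partial_{\varphi_{ij}}]u)\,\Psi^3|t|\,\partial_{\varphi_{ij}} u\,dt\,dx$, and puts $\tfrac78\lambda\|S(\nabla_\varphi u|\Psi^3)\|_2^2$ on the left. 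The gap $\tfrac78-\tfrac34=\tfrac18$ leaves room to absorb small multiples of $\|S(\nabla_\varphi u|\Psi^3)\|_2^2$ produced later.

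The heart of the matter is the commutator, which (unlike the tangential case) is not simply a Carleson-perturbation error. Using Proposition \ref{CorDAR} we decompose
\[
\iint([L,\partial_{\varphi_{ij}}]u)\Psi^3|t|\partial_{\varphi_{ij}} u\,dt\,dx = T_1^{ij}+T_2^{ij}+T_3^{ij},
\]
where $T_1^{ij}$ comes from $\diver_x(\A_2\partial_{\varphi_{ij}} u)$, $T_2^{ij}$ from $2\partial_r\partial_{\varphi_{ij}} u$, and $T_3^{ij}$ from $\diver_x(|t|^{d+1-n}(\partial_{\varphi_{ij}}\A)\nabla u)$. For $T_2^{ij}$, the identity $2(\partial_r\partial_{\varphi_{ij}} u)(\partial_{\varphi_{ij}} u)=\partial_r|\partial_{\varphi_{ij}} u|^2$ together with the integration-by-parts formula of Proposition \ref{IBBdrdphi} gives
\[
T_2^{ij}=-\iint|\partial_{\varphi_{ij}} u|^2\,\partial_r\Psi^3\,\frac{dt\,dx}{|t|^{n-d-1}},
\]
which after summing in $(i,j)$ exactly cancels the "trace-like" term $\iint|\nabla_\varphi u|^2\partial_r\Psi^3|t|^{-(n-d-1)}\,dt\,dx$. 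For $T_3^{ij}$ we integrate by parts in $x$ (using $\nabla_x|t|=0$), apply Young's inequality, and exploit $|\partial_{\varphi_{ij}}\A|\lesssim|\nabla\A|$ so that $|t|^2|\partial_{\varphi_{ij}}\A|^2\in CM(\kappa)$; the Carleson inequality \eqref{Carleson} yields contributions of the form $C\kappa\|\wt N(\nabla u|\Psi^3)\|_2^2$ and $C\iint|\nabla_\varphi u|^2|\nabla_x\Psi|^2\Psi|t|^{-(n-d-2)}\,dt\,dx$, plus an interior piece $\sum_{i,j}\iint|\nabla_x\partial_{\varphi_{ij}} u|^2\Psi^3|t|^{-(n-d-2)}\,dt\,dx$, which is dominated by $C\|S(\nabla_x u|\Psi^3)\|_2^2$ because $[\partial_x,\partial_\varphi]=0$.

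The main obstacle lies in $T_1^{ij}$. After integration by parts in $x$,
\[
T_1^{ij} = -\iint(\A_2\cdot\nabla_x\Psi^3)|\partial_{\varphi_{ij}} u|^2\frac{dt\,dx}{|t|^{n-d-1}} - \iint(\A_2\cdot\nabla_x\partial_{\varphi_{ij}} u)\,\partial_{\varphi_{ij}} u\,\Psi^3\frac{dt\,dx}{|t|^{n-d-1}}.
\]
The first sub-term is routine: by Young's inequality and the zero-mean-on-spheres inequality \eqref{AphiSphi} it is bounded by $\epsilon\|S(\nabla_\varphi u|\Psi^3)\|_2^2+C_\epsilon\iint|\nabla_\varphi u|^2|\nabla_x\Psi|^2\Psi|t|^{-(n-d-2)}\,dt\,dx$. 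The second sub-term is the delicate one: a naive Cauchy--Schwarz that puts $|\partial_{\varphi_{ij}} u|^2|t|^{-(n-d)}$ on one side gives, via \eqref{AphiSphi}, a multiple of $\|S(\nabla_\varphi u|\Psi^3)\|_2^2$ with a constant of size $C_\lambda$ that cannot be hidden. The remedy is to split with a small parameter $\epsilon$, placing the $|\partial_{\varphi_{ij}} u|^2|t|^{-(n-d)}$ factor on the side with coefficient $\epsilon$ (absorbed on the left after \eqref{AphiSphi}) and the $|\A_2|^2|\nabla_x\partial_{\varphi_{ij}} u|^2|t|^{-(n-d-2)}$ factor on the side with coefficient $1/\epsilon$; after summing in $(i,j)$ the latter is controlled by $C_\lambda\epsilon^{-1}\|S(\nabla_x u|\Psi^3)\|_2^2$, once again because $\partial_x$ commutes with $\partial_\varphi$. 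This is precisely the origin of the term $C\|S(\nabla_x u|\Psi^3)\|_2^2$ on the right of \eqref{eqVar25}.

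Gathering all contributions, the left-hand side $\tfrac78\lambda\|S(\nabla_\varphi u|\Psi^3)\|_2^2$ minus the accumulated $C\epsilon\|S(\nabla_\varphi u|\Psi^3)\|_2^2$ from the commutator bounds becomes at least $\tfrac34\lambda\|S(\nabla_\varphi u|\Psi^3)\|_2^2$ for $\epsilon$ small enough (depending only on $\lambda$, $d$, $n$), while the remaining right-hand side terms are exactly those stated in \eqref{eqVar25}. This proves the corollary.
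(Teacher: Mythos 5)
Your proposal is correct and follows essentially the same route as the paper's proof: Lemma \ref{lm.GSN} applied with $\partial_v=\partial_{\varphi_{ij}}$, the commutator decomposition of Proposition \ref{CorDAR}, the exact cancellation of the $2\partial_r\partial_\varphi$ contribution against the trace term via Proposition \ref{IBBdrdphi}, Proposition \ref{PDECC} (through \eqref{AphiSphi}) to absorb the $|\partial_\varphi u|^2/|t|^2$ pieces, and $[\partial_x,\partial_\varphi]=0$ together with the Carleson condition to send the remaining contributions into $C\|S(\nabla_x u|\Psi^3)\|_2^2$ and $C\kappa\|\wt N(\nabla u|\Psi^3)\|_2^2$. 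Your only deviations — integrating the $\diver_x(\A_2\partial_\varphi)$ term by parts in $x$ instead of expanding the divergence (trading the $\nabla_x\A_2$ Carleson piece for an admissible $|\nabla_x\Psi|$ error), and dominating the $|\nabla_x\partial_\varphi u|$ piece coming from $\partial_\varphi\A$ by $S(\nabla_x u|\Psi^3)$ rather than absorbing a small multiple of $S(\nabla_\varphi u|\Psi^3)$ — are cosmetic and yield exactly the right-hand side terms of \eqref{eqVar25}.
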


\begin{proof}
Fix an angular directional derivative $\partial_\varphi$. Thanks to Lemma \ref{lm.GSN}, it suffices to show that 
\begin{multline} \label{claimLE2}
\left| \iint_\Omega \Big ([L,\partial_\varphi]u\Big ) \big(\Psi^3 |t|\partial_\varphi u \big )dtdx + \iint_\Omega |\partial_\varphi u|^2 \partial_r \Psi^3 \, \frac{dt\, dx}{|t|^{n-d-1}} \right| \\
\leq \frac18\lambda \|S(\nabla_\varphi u|\Psi^3)\|^2_2 + C \kappa \|\wt N(\nabla u|\Psi^3)\|^2_2 + C  \|S(\nabla_xu|\Psi^3)\|^2_2 \\ 
+ C \iint_\Omega |\nabla_x u|^2|\nabla_x\Psi|^2\Psi \frac{dtdx}{|t|^{n-d-2}}.
\end{multline}
It will be important to estimate the two terms in the left-hand side of \eqref{claimLE2} together, because there will be some cancellation.

We invoke Proposition \ref{CorDAR} to say that 
\begin{multline} \label{LEeq3}
\iint_\Omega \Big ([L,\partial_\varphi]u\Big ) \big(\Psi^3 |t|\partial_\varphi u \big )dtdx = \iint_\Omega \Big( \diver_x (|t|^{d+1-n} \partial_\varphi \A) \nabla u \Big) \big(\Psi^3 |t| \partial_x u \big ) dtdx \\
+ 2\iint_\Omega (\partial_r \partial_\varphi u) \Psi^3 ( \partial_\varphi u) \frac{dt\, dx}{|t|^{n-d-1}} + \iint_\Omega \diver_x(\A_2 \partial_\varphi u) \big(\Psi^3 \partial_\varphi u \big ) \frac{dt\, dx}{|t|^{n-d-2}}\\
=:\RN{1}+\RN{2}+\RN{3}.
\end{multline}
 By the product rule,
\[\RN{3} \lesssim \iint_\Omega |\nabla_x \mathcal{A}_2| |\nabla_\varphi u| |\partial_\varphi u| \Psi^3 \, \frac{dtdx}{|t|^{n-d-1}}+  \iint_\Omega  |\mathcal{A}_2| \nabla_\varphi \nabla_x u|  |\partial_\varphi u| \Psi^3  \, \frac{dtdx}{|t|^{n-d-1}}:=\RN{3}_1+\RN{3}_2.\]
Since $|t||\nabla_x \mathcal{A}_2| \in CM(\kappa)$, the term $\RN{3}_1$ can be estimated as follows
\begin{align*}
    \RN{3}_1\leq \epsilon \iint_\Omega |\nabla_\varphi u|^2\Psi^3 \frac{dtdx}{|t|^{n-d}} + C \epsilon^{-1} \kappa \|\wt N(\nabla u|\Psi^3)\|^2_2 
    \leq \frac1{24} \lambda \|S(\partial_\varphi u|\Psi^3)\|_2^2 + C_\lambda \kappa \|\wt N(\nabla u|\Psi^3)\|^2_2,
\end{align*}
by using Proposition \ref{PDECC}, \eqref{S=int2}, and by taking $\epsilon$ small enough (depending only on $\lambda$, $d$ and $n$).  

Based on the same arguments, the term $\RN{3}_2$ is bounded by
\begin{align*}
    \RN{3}_2\leq \epsilon \iint_\Omega |\nabla_\varphi u|^2\Psi^3 \frac{dtdx}{|t|^{n-d}} + C\epsilon^{-1} \|S(\nabla_x u|\Psi^3)\|^2_2 
    \leq \frac1{24} \lambda \|S(\partial_\varphi u|\Psi^3)\|_2^2 + C_\lambda \|S(\nabla_x u|\Psi^3)\|^2_2.
\end{align*} 

The term $\RN{1}$ is analogous to the one obtained from the commutator in the proof of Corollary \ref{LEMMA1}. We repeat quickly the argument. By integration by parts, 
\[\RN{1} = - \iint_\Omega (\partial_\varphi \mathcal{A}) \nabla u \cdot \nabla_x  (\partial_\varphi u) \Psi^3\frac{dtdx}{|t|^{n-d-2}} -\iint_\Omega (\partial_\varphi \mathcal{A}) \nabla u \cdot \nabla_x \Psi^3 \, (\partial_\varphi u) \, \frac{dtdx}{|t|^{n-d-1}}
    :=\RN{1}_1+\RN{1}_2.\] 
The first integral is bounded by using the inequlity $2ab \leq \epsilon a^2 + \epsilon^{-1} b^2$, and the fact that $|t||\nabla_\varphi A| \in CM(\kappa)$ we get similarly to \eqref{LEeq1} that
\[|\RN{1}_1| \leq \frac1{24} \lambda \|S(\partial_\varphi u|\Psi^3)\|_2^2 + C_\lambda \kappa \|\wt N(\nabla u|\Psi^3)\|_2^2.\]
As for the second integral, we proceed as in \eqref{LEeq2} and we obtain
\[|\RN{1}_2| \leq C\kappa \|\wt N(\nabla u|\Psi^3)\|^2_2 + \iint_\Omega |\nabla_\varphi u|^2 |\nabla_x\Psi|^2 \Psi \frac{dtdx}{|t|^{n-d-2}}.\]

The term $\RN{2}$ cancels out with the ``trace'' term. Indeed, we have
\[\RN{2} = \iint_\Omega \partial_r |\partial_\varphi u|^2 \Psi^3 \frac{dt\, dx}{|t|^{n-d-1}} = - \iint_\Omega |\partial_\varphi u|^2 \partial_r \Psi^3 \frac{dt\, dx}{|t|^{n-d-1}}\] 
by the integration by parts (Proposition \ref{IBBdrdphi}). Observe that all our computations proved the claim \eqref{claimLE2}, thus the lemma follows.
\end{proof}

In the following, we combine all the previous results of this section together. We recall that $\overline{\nabla}$ stands for the gradient in cylindrical coordinates. Remember that we write respectively $\|S(\nabla_x u|\Psi^3)\|^2_2$ and $\|S(\nabla_\varphi u|\Psi^3)\|^2_2$ for the sums of the square functions over all tangential derivatives and angular derivatives in $L^2$ norm.

\begin{Lemma}\label{LSLNF}
Let $L$ be an elliptic operator satisfying \HH$_{\lambda,\kappa}$. 
There exists three constants $C_1,C_2,C_3>0$ depending only on $\lambda$, $d$, and $n$ such that for any weak solution $u\in W^{1,2}_{loc}(\Omega)$ to $Lu=0$ and any cut-off radial function $\Psi\in C^\infty_0(\Omega,[0,1])$, we have
\begin{multline}\label{eqSLNFZ}
    \|S(\overline\nabla u|\Psi^3)\|^2_{2}
    \leq C_1\bigg (\iint_\Omega |\nabla_x u|^2\partial_r\Psi^3\frac{dtdx}{|t|^{n-d-1}} + \mathcal{B}(|\nabla_x u|^2, \Psi^3) \bigg ) + C_2\, \mathcal{B}(|\nabla_\varphi u|^2, \Psi^3) \\ 
    + C_3 \bigg (\kappa \|\wt{N}_a(\nabla u|\Psi^{3})\|^2_{2}
    +\iint_\Omega |\nabla u|^2|\nabla_x\Psi|^2\Psi\frac{dtdx}{|t|^{n-d-2}}\bigg )
\end{multline}
In addition, if $\Psi$ satisfies \COF$_K$, we have
\begin{align}\label{eqSLNFF}
    \|S(\overline\nabla u|\Psi^3)\|^2_{2}\leq C_K \|\wt N(\nabla u|\Psi)\|^2_{2},
\end{align}
where $C_K$ depends on $\lambda$, $n$, and $K$.
\end{Lemma}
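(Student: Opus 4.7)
The plan is to chain Lemma \ref{LMRRR}, Corollary \ref{LEMMA1}, and Corollary \ref{llSVAR} to obtain \eqref{eqSLNFZ}, then specialize using the \COF$_K$ hypothesis to deduce \eqref{eqSLNFF}. From Corollary \ref{LEMMA1} (divided by $\tfrac{3}{4}\lambda$) we bound $\|S(\nabla_x u|\Psi^3)\|_2^2$ by the trace $\iint|\nabla_x u|^2\partial_r\Psi^3\,|t|^{d+1-n}dt\,dx$, the form $\mathcal{B}(|\nabla_x u|^2,\Psi^3)$, the Carleson error $\kappa\|\wt N(\nabla u|\Psi^3)\|_2^2$, and a cutoff error in $|\nabla_x\Psi|$. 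Substituting this into Corollary \ref{llSVAR} (whose right-hand side already contains $\|S(\nabla_x u|\Psi^3)\|_2^2$) yields an analogous bound on $\|S(\nabla_\varphi u|\Psi^3)\|_2^2$, the only new contribution being $\mathcal{B}(|\nabla_\varphi u|^2,\Psi^3)$ --- crucially, no $\nabla_\varphi$-trace survives, having already cancelled in the proof of Corollary \ref{llSVAR} against an angular commutator contribution. Finally, Lemma \ref{LMRRR} bounds $\|S(\partial_r u|\Psi^3)\|_2^2$ by the two square functions already controlled plus $\kappa\|\wt N\|_2^2$. Summing the three bounds yields \eqref{eqSLNFZ}.

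For \eqref{eqSLNFF}, every term on the right of \eqref{eqSLNFZ} must be dominated by $C_K\|\wt N(\nabla u|\Psi)\|_2^2$, modulo a small multiple of $\|S(\overline\nabla u|\Psi^3)\|_2^2$ that is reabsorbed on the left. The hypothesis \COF$_K$ provides $|\nabla\Psi|\leq K/|t|$ with $\mathds{1}_{\supp\nabla\Psi}\in CM(K)$, so that $|\partial_r\Psi^3|\leq 3K\Psi^2|t|^{-1}\mathds{1}_{\supp\nabla\Psi}$. The trace term is then controlled via
\begin{align*}
\Bigl|\iint |\nabla_x u|^2 \partial_r\Psi^3 \frac{dt\,dx}{|t|^{n-d-1}}\Bigr| \lesssim K \iint |\nabla_x u|^2 \Psi\, \mathds{1}_{\supp\nabla\Psi} \frac{dt\,dx}{|t|^{n-d}} \lesssim C_K \|\wt N(\nabla u|\Psi)\|_2^2,
\end{align*}
the last step being the Carleson inequality \eqref{Carleson} applied with $|f|^2=K\mathds{1}_{\supp\nabla\Psi}\in CM(K^2)$. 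The cutoff error $\iint|\nabla u|^2|\nabla_x\Psi|^2\Psi|t|^{d+2-n}dt\,dx$ and the term $\kappa\|\wt N(\nabla u|\Psi^3)\|_2^2$ are disposed of by the same recipe (the latter is trivial since $\Psi^3\leq\Psi$).

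The subtle step is the handling of $\mathcal{B}(|\nabla_v u|^2,\Psi^3)$ for $v\in\{x,\varphi\}$. Expanding $\partial_r[|t||\nabla_v u|^2]$ by the product rule,
\begin{align*}
\mathcal{B}(|\nabla_v u|^2,\Psi^3) = -\tfrac{1}{2}\iint|\nabla_v u|^2\partial_r\Psi^3\frac{dt\,dx}{|t|^{n-d-1}} - \iint (\nabla_v u)\cdot(\partial_r\nabla_v u)\,\partial_r\Psi^3\frac{dt\,dx}{|t|^{n-d-2}},
\end{align*}
the first piece is of trace type. For the second, the pointwise estimate $|\partial_r\nabla_v u|\lesssim|\nabla\overline\nabla u|+|\nabla_\varphi u|/|t|$ (the last term arising only when $v=\varphi$, through the commutator $[\partial_r,\partial_\varphi]$ of Proposition \ref{PPLL1}) combined with Cauchy--Schwarz at parameter $\varepsilon>0$ gives
\begin{align*}
\Bigl|\iint (\nabla_v u)\cdot(\partial_r\nabla_v u)\,\partial_r\Psi^3\frac{dt\,dx}{|t|^{n-d-2}}\Bigr| \leq \varepsilon\|S(\overline\nabla u|\Psi^3)\|_2^2 + C_{\varepsilon,K}\|\wt N(\nabla u|\Psi)\|_2^2,
\end{align*}
the second summand being treated as in the trace step (with Proposition \ref{PDECC} absorbing the $|\nabla_\varphi u|^2/|t|^2$ contribution into $\|\wt N\|_2^2$ via \eqref{Carleson}). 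Choosing $\varepsilon$ small enough to balance the constants $C_1,C_2$ from \eqref{eqSLNFZ}, the $\varepsilon\|S\|_2^2$ contribution is absorbed on the left of \eqref{eqSLNFF}.

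The main obstacle is this absorption argument, which is legitimate only because $\|S(\overline\nabla u|\Psi^3)\|_2^2$ is a priori finite --- a consequence of the compact support of $\Psi$ together with the $W^{2,2}_{loc}$-regularity of $u$ guaranteed under \HH{} by Proposition \ref{LW22loc}. Once this qualitative finiteness is secured, the quantitative bound is an intricate but routine combination of the three preceding square-function estimates.
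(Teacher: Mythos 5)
Your proposal is correct and follows essentially the same route as the paper: \eqref{eqSLNFZ} is obtained by chaining Lemma \ref{LMRRR}, Corollary \ref{LEMMA1}, and Corollary \ref{llSVAR}, and \eqref{eqSLNFF} by controlling the trace and cut-off terms through the Carleson inequality \eqref{Carleson} under \COF$_K$ and by expanding $\mathcal B(|V|^2,\Psi^3)$ via the product rule, absorbing a small multiple of $\|S(\overline\nabla u|\Psi^3)\|_2^2$ (legitimately, thanks to the a priori finiteness you correctly justify via Proposition \ref{LW22loc} and the compact support of $\Psi$). The only cosmetic difference is that you invoke the commutator of Proposition \ref{PPLL1} and Proposition \ref{PDECC} when treating $\mathcal B(|\nabla_\varphi u|^2,\Psi^3)$, whereas this is unnecessary since $|\partial_r(\partial_{\varphi_{ij}}u)|\leq |\nabla(\partial_{\varphi_{ij}}u)|$ is already part of the integrand defining $S(\overline\nabla u|\Psi^3)$, which is how the paper handles it with a single generic estimate \eqref{SLNF.eq00}.
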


\begin{Rem}
Remember that the first term in the right-hand side of \eqref{eqSLNFZ} is the ``trace'' term, and all the other terms are meant to disappear when $\Psi \uparrow 1$.

Moreover, we have different constants because the terms that are multiplied by $C_1$ and $C_2$ may be negative. We can say that $1 \leq C_2 \leq C_1 \leq C_3$, but nothing more, in particular taking $C_1=C_2=C_3$ would probably render the inequality false.
\end{Rem}

\begin{Rem}
The result (\ref{eqSLNFF}) tells us that the sum of the square functions of all tangential directional derivatives, angular directional derivatives, and radial direction derivatives can be estimated locally by the non-tangential maximal function of the full gradient. 
\end{Rem}

\begin{proof}
The inequality (\ref{eqSLNFZ}) is an immediate consequence of Lemma \ref{LMRRR}, Corollary \ref{LEMMA1}, and Corollary \ref{llSVAR}. 

We turn to the proof of (\ref{eqSLNFF}). Since $\Psi$ satisfies \COF$_K$, we have
\begin{equation} \label{PsiCMZ}
|t| |\nabla \Psi| \leq K \quad \text{ and } \quad \1_{\supp \nabla \Psi} \in CM(K),\end{equation}
in particular $|t||\nabla \Psi| \in CM(K^3)$. We deduce that
\begin{multline} \label{DPsibyN}
\iint_\Omega |\nabla u|^2 |\nabla_x\Psi|^2\Psi \frac{dtdx}{|t|^{n-d-2}}+\iint_\Omega |\nabla_x u|^2|\partial_r\Psi^3|\frac{dtdx}{|t|^{n-d-1}} \\
\leq C \iint_\Omega |\nabla u|^2 \big[ |t|^2|\nabla \Psi|^2 + |t||\nabla \Psi|\big] \Psi \frac{dtdx}{|t|^{n-d}} \leq CK^3 \|\wt N(\nabla u|\Psi)\|^2_2.
\end{multline}
by \eqref{PsiCMZ} and the Carleson inequality \eqref{Carleson}.

Consequently, it suffices to show for any $V\in L^2_{loc}(\Omega)$ and $\delta\in (0,\infty)$,
\begin{align}\label{SLNF.eq00}
    |\mathcal{B}(|V|^2,\Psi^3)| \leq \delta \|S(V|\Psi^3)\|^2_2+ C\delta^{-1} \|\wt N(V|\Psi)\|^2_2
\end{align}
because then (\ref{eqSLNFF}) follows easily by choosing $\epsilon$ small enough. From the definition of $\mathcal{B}(.,.)$, see (\ref{DEFBL}), and the product rule, we have
\begin{multline}\label{SLNF.eq01}
    |\mathcal{B}(|V|^2,\Psi^3)| \leq \iint_\Omega |V| |\partial_r V| |\partial_r\Psi| \Psi ^2 \frac{dtdx}{|t|^{n-d-2}}  + \frac{1}{2}\iint_\Omega |V|^2 |\partial_r\Psi| \Psi^2 \frac{dtdx}{|t|^{n-d-1}} \\
    \leq \delta \|S(V|\Psi^3)\|^2_2+C\delta^{-1} \iint_\Omega |V|^2 |\partial_r \Psi|^2\Psi \frac{dtdx}{|t|^{n-d-2}} + \frac{1}{2} \iint_\Omega |V|^2 |\partial_r\Psi| \Psi^2\frac{dtdx}{|t|^{n-d-1}}.
\end{multline}
By using again \eqref{PsiCMZ} and the Carleson inequality, the last two terms of (\ref{SLNF.eq01}) are bounded by $K^3 \|\wt N(V|\Psi)\|^2_2$. Hence (\ref{SLNF.eq00}) follows. 
\end{proof}

Let us get a little bit further, since it will help us when we pass from local to global estimates.

\begin{Lemma}\label{LB<SN}
Let $L$ be an elliptic operator satisfying \HH$_{\lambda,\kappa}$. 
For any weak solution $u\in W^{1,2}_{loc}(\Omega)$ to $Lu=0$, any cut-off function $\Psi\in C^\infty_0(\Omega,[0,1])$ satisfying \COF$_K$, any $\delta \in (0,1)$, we have
\begin{multline*}
|\mathcal B(|\partial_r u|^2,\Psi^3)| = \frac12 \left| \iint_\Omega \Big(|\partial_r u|^2 + |t| \partial_r(|\partial_r u|^2) \Big) \partial_r \Psi^3 \frac{dt\, dx}{|t|^{n-d-1}} \right|  \\ 
\leq (\delta+ \delta^{-1}K^2\kappa) \|\wt N(\nabla u|\Psi)\|_2^2  + C\delta^{-1}K^2 \|S(\overline\nabla u|\Psi^3)\|_2^2,
\end{multline*}
where $C$ depends on $\lambda$, $n$, $\delta$ and $K+M+\kappa$.
\end{Lemma}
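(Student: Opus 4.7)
The first equality in the statement is the product rule applied inside the definition (\ref{DEFBL}): since $\partial_r|t|=1$, one has $\partial_r[\,|t|\,|\partial_r u|^2\,] = |\partial_r u|^2 + |t|\,\partial_r(|\partial_r u|^2)$, and substituting into $\mathcal{B}$ yields the displayed identity.

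To prove the bound I split the integral as $2|\mathcal B(|\partial_r u|^2,\Psi^3)| \le |I_A|+|I_B|$, where
\[
I_A := \iint_\Omega |\partial_r u|^2\,\partial_r\Psi^3\,\frac{dt\,dx}{|t|^{n-d-1}}, \qquad
I_B := 2\iint_\Omega (\partial_r u)(\partial_r^2 u)\,\partial_r\Psi^3\,\frac{dt\,dx}{|t|^{n-d-2}},
\]
the second form of $I_B$ coming from $|t|\partial_r(|\partial_r u|^2)=2|t|(\partial_r u)(\partial_r^2 u)$ and absorbing the $|t|$ into the weight. The first piece $I_A$ is exactly the integral controlled in \eqref{bdBr1} of Lemma~\ref{LLRAD}, giving
\[
|I_A| \le (\delta+\delta^{-1}K^2\kappa)\|\wt N(\partial_r u|\Psi)\|_2^2 + C(1+\delta^{-1}K^2)\|S(\overline\nabla u|\Psi^3)\|_2^2,
\]
so it remains to estimate $I_B$.

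For $I_B$, I write $\partial_r\Psi^3=3\Psi^2\partial_r\Psi$ and apply Cauchy--Schwarz with the splitting $(|\partial_r^2 u|\,\Psi^{3/2}\,|t|^{-(n-d-2)/2})\cdot(|\partial_r u|\,\Psi^{1/2}\,|\partial_r\Psi|\,|t|^{-(n-d-2)/2})$ to get
\[
|I_B| \lesssim \|S(\partial_r u|\Psi^3)\|_2 \cdot \Bigl(\iint_\Omega |\partial_r u|^2\,\Psi\,(|t||\partial_r\Psi|)^2\,\frac{dt\,dx}{|t|^{n-d}}\Bigr)^{1/2}.
\]
The first factor is bounded by $\|S(\overline\nabla u|\Psi^3)\|_2$ via \eqref{S=int2}. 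For the second factor I use that $(|t||\partial_r\Psi|)^2 \le K^2\,\mathbf 1_{\supp\nabla\Psi}$ pointwise, combined with $\mathbf 1_{\supp\nabla\Psi}\in CM(K)$ from \COF$_K$, so that the function $|t||\partial_r\Psi|$ lies in $CM(K^3)$; the Carleson inequality \eqref{Carleson} then bounds the second factor by $\lesssim K^{3/2}\|\wt N(\partial_r u|\Psi)\|_2$. A standard AM--GM step $ab \le \delta a^2 + (4\delta)^{-1}b^2$ yields $|I_B|\le \delta\|\wt N(\nabla u|\Psi)\|_2^2 + C\delta^{-1}K^3\|S(\overline\nabla u|\Psi^3)\|_2^2$, and summing with the bound on $I_A$ finishes the proof, the $K^3$ being absorbed into the $K$-dependent constant $C$ allowed by the statement.

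The main technical point is the choice of Cauchy--Schwarz weights in $I_B$: exactly three powers of $\Psi$ must be placed on the $\partial_r^2 u$ factor so that its $L^2$-norm matches the target $\|S(\overline\nabla u|\Psi^3)\|_2$, while leaving the squared CM-quantity $(|t||\partial_r\Psi|)^2$ on the other side so that Carleson applies. Note that the bound on $I_B$ itself only uses $u\in W^{2,2}_{\mathrm{loc}}$; the equation $Lu=0$ enters only through the invocation of Lemma~\ref{LLRAD} for $I_A$.
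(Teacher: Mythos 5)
Your proposal is correct and follows essentially the same route as the paper: the equality by the product rule, the $|\partial_r u|^2\,\partial_r\Psi^3$ piece handled by \eqref{bdBr1} of Lemma~\ref{LLRAD}, and the remaining piece $2\iint(\partial_r u)(\partial_r^2 u)\,\partial_r\Psi^3\,|t|^{-(n-d-2)}\,dt\,dx$ treated by Cauchy--Schwarz, the Carleson inequality \eqref{Carleson} applied to $|t||\partial_r\Psi|$, and $ab\le\delta a^2+b^2/4\delta$. The only difference is bookkeeping of the power of $K$ ($K^3$ versus the paper's $K$), which is immaterial since the stated constant is allowed to depend on $K$.
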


\begin{proof}
The equality is just the product rule and the definition of $\mathcal B(|\partial_r u|^2,\Psi^3)$, see \eqref{DEFBL}. The bound
\[\left| \iint_\Omega |\partial_r u|^2 \partial_r \Psi^3 \frac{dt\, dx}{|t|^{n-d-1}} \right| \leq  (\delta+ \delta^{-1}K^2\kappa) \|\wt N(\nabla u|\Psi)\|_2^2  + C\delta^{-1}K^2 \|S(\overline\nabla u|\Psi^3)\|_2^2 \]
was established in \eqref{bdBr1}. It remains to show a bound on
\[\I := \left| \iint_\Omega \partial_r(|\partial_r u|^2) \partial_r \Psi^3 \, \frac{dt\, dx}{|t|^{n-d-2}} \right| ,\]
but that is similar to \eqref{SLNF.eq01}. Indeed, we use $|t||\nabla \Psi| \in CM(K)$, \eqref{Carleson}, and the inequality $ab \leq \delta a^2 + b^2/4\delta$ to obtain
\[I = 2 \left| \iint_\Omega (\partial_r^2 u) (\partial_r u) \partial_r \Psi^3 \frac{dt\, dx}{|t|^{n-d-2}} \right| \leq \delta \|\wt N(\partial_r u|\Psi)\|_2^2 + CK\delta^{-1} \|S(\partial_r u|\Psi^3)\|_2^2.\]
The lemma follows.
\end{proof}

\begin{Lemma}\label{LlastS<N}
Let $\kappa \in (0,1)$ and let $L$ be an elliptic operator satisfying \HH$_{\lambda,\kappa}$. Take $\Psi \in C^\infty_0(\Omega)$ satisfying \COF$_K$. 
There exist four constants $c_0, C_1,C_2,C_3>0$ depending on $\lambda$ and $n$ - the first one being small and the last three being large - such that, after defining
\[|\nabla u|^2_\kappa := C_1|\nabla_x u|^2 + C_2|\nabla_\varphi u|^2 + \kappa^{1/2} c_0 K^{-2} |\partial_r u|^2,\]
we have, for any weak solution $u\in W^{1,2}_{loc}(\Omega)$ to $Lu=0$, that
\begin{multline}\label{eqSLNF}
    \|S(\overline\nabla u|\Psi^3)\|^2_{2}
    \leq C_1\iint_\Omega |\nabla_x u|^2\partial_r\Psi^3\frac{dtdx}{|t|^{n-d-1}} + \mathcal B(|\nabla u|^2_{\kappa},\Psi^3) \\
    + C_3 \bigg (\kappa \|\wt{N}(\nabla u|\Psi^{3})\|^2_{2}
    +\iint_\Omega |\nabla u|^2|\nabla_x\Psi|^2\Psi\frac{dtdx}{|t|^{n-d-2}}\bigg ).
\end{multline}
\end{Lemma}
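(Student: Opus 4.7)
The plan is to upgrade the conclusion of Lemma \ref{LSLNF} by folding a small multiple of $\mathcal{B}(|\partial_r u|^2,\Psi^3)$ into the principal bilinear term, and to use Lemma \ref{LB<SN} to dispose of what needs to be subtracted. Starting from Lemma \ref{LSLNF},
\[
\|S(\overline\nabla u|\Psi^3)\|^2_{2} \leq C_1 \mathcal{T} + C_1 \mathcal{B}(|\nabla_x u|^2,\Psi^3) + C_2 \mathcal{B}(|\nabla_\varphi u|^2,\Psi^3) + C_3 \mathcal{E},
\]
where $\mathcal{T} := \iint_\Omega |\nabla_x u|^2 \partial_r\Psi^3 \frac{dt\,dx}{|t|^{n-d-1}}$ is the trace term and $\mathcal{E}$ denotes the error terms appearing in \eqref{eqSLNFZ}. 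With the target definition
\[
|\nabla u|^2_\kappa := C_1|\nabla_x u|^2 + C_2|\nabla_\varphi u|^2 + \kappa^{1/2} c_0 K^{-2} |\partial_r u|^2,
\]
the bilinearity of $\mathcal{B}$ yields
\[
C_1 \mathcal{B}(|\nabla_x u|^2,\Psi^3) + C_2 \mathcal{B}(|\nabla_\varphi u|^2,\Psi^3) = \mathcal{B}(|\nabla u|^2_\kappa,\Psi^3) - \kappa^{1/2} c_0 K^{-2}\, \mathcal{B}(|\partial_r u|^2,\Psi^3),
\]
so the problem reduces to controlling the single quantity $\kappa^{1/2} c_0 K^{-2}\,|\mathcal{B}(|\partial_r u|^2,\Psi^3)|$.

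Next, I would invoke Lemma \ref{LB<SN} with the tuned choice $\delta = \kappa^{1/2}$, which gives
\[
|\mathcal{B}(|\partial_r u|^2,\Psi^3)| \leq \kappa^{1/2}(1+K^2)\,\|\wt N(\nabla u|\Psi)\|_2^2 + C\kappa^{-1/2}K^2\,\|S(\overline\nabla u|\Psi^3)\|_2^2.
\]
Multiplying through by the prefactor $\kappa^{1/2} c_0 K^{-2}$, the factors of $K^{\pm 2}$ and $\kappa^{\pm 1/2}$ telescope and one is left with
\[
\kappa^{1/2} c_0 K^{-2}\,|\mathcal{B}(|\partial_r u|^2,\Psi^3)| \leq c_0\,\kappa(1+K^{-2})\,\|\wt N(\nabla u|\Psi)\|_2^2 + C c_0\,\|S(\overline\nabla u|\Psi^3)\|_2^2.
\]
Choosing $c_0$ small (depending only on $\lambda$ and $n$) so that $Cc_0 \leq \tfrac12$, the square-function contribution gets absorbed into the left-hand side; after a harmless doubling of $C_1, C_2, C_3$ and incorporating the $c_0 \kappa(1+K^{-2})\|\wt N(\nabla u|\Psi)\|_2^2$ piece into the $\kappa \|\wt N(\nabla u|\Psi^3)\|_2^2$ slot of $\mathcal{E}$, one arrives at \eqref{eqSLNF}.

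The main obstacle is simply the bookkeeping behind the weight $\epsilon_r := \kappa^{1/2} c_0 K^{-2}$ sitting in front of $|\partial_r u|^2$: it is engineered so that the $\delta^{-1}K^2$ factor produced by Lemma \ref{LB<SN} is exactly neutralized (leaving only the absorbable $c_0$), while the remaining $\|\wt N\|_2^2$ coefficient ends up $O(\kappa)$, matching the error budget of the target estimate. There is no new analytic content beyond Lemmas \ref{LSLNF} and \ref{LB<SN}; the whole argument is a careful algebraic rearrangement that uses the \emph{signed} bilinear form $\mathcal{B}$ to enlarge the good term rather than the bad one.
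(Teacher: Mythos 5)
Your argument is correct and is essentially the paper's own proof: both start from Lemma \ref{LSLNF}, apply Lemma \ref{LB<SN} with $\delta\sim\kappa^{1/2}$, multiply by the weight $\kappa^{1/2}c_0K^{-2}$ so that the $\delta^{-1}K^{2}$ factor is neutralized, and absorb the resulting $Cc_0\|S(\overline\nabla u|\Psi^3)\|_2^2$ (finite because $\Psi$ is compactly supported and $u\in W^{2,2}_{loc}$) into the left-hand side, your bilinearity rearrangement of $\mathcal B$ being the same move as the paper's addition of the auxiliary inequality. The only cosmetic point --- replacing the $\|\wt N(\nabla u|\Psi)\|_2^2$ produced by Lemma \ref{LB<SN} by the $\|\wt N(\nabla u|\Psi^{3})\|_2^2$ appearing in \eqref{eqSLNF} --- is shared with the paper's own write-up and is harmless for how the lemma is used later.
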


\begin{proof}
By Lemma \ref{LSLNF}, we have three constants $C'_1,C'_2,C'_3$ depending only on $\lambda$, $d$, and $n$ such that
\begin{multline}\label{eqSLNFz}
    \|S(\overline\nabla u|\Psi^3)\|^2_{2}
    \leq C'_1\bigg (\iint_\Omega |\nabla_x u|^2\partial_r\Psi^3\frac{dtdx}{|t|^{n-d-1}} + \mathcal{B}(|\nabla_x u|^2, \Psi^3) \bigg ) + C'_2\, \mathcal{B}(|\nabla_\varphi u|^2, \Psi^3) \\ 
    + C'_3 \bigg (\kappa \|\wt{N}(\nabla u|\Psi^{3})\|^2_{2}
    +\iint_\Omega |\nabla u|^2|\nabla_x\Psi|^2\Psi\frac{dtdx}{|t|^{n-d-2}}\bigg ).
\end{multline}
We take $\delta = C'_3\kappa^{1/2}$ in Lemma \ref{LB<SN} and we obtain that
\[- \mathcal B(|\partial_r u|^2,\Psi^3) \leq C'_3 \kappa^{1/2} \|\wt N(\nabla u|\Psi)\|_2^2  + C_0 \kappa^{-1/2} K^2 \|S(\overline\nabla u|\Psi^3)\|_2^2,\]
for some $C_0$ depending on $\lambda$, $n$, that is
\begin{equation} \label{eqSLNFy} 
-  \frac12 \|S(\overline\nabla u|\Psi^3)\|_2^2  \leq  C'_3 \kappa \|\wt N(\nabla u|\Psi)\|_2^2 + \frac{\kappa^{1/2}}{2C_0K^2}  \mathcal B(|\partial_r u|^2,\Psi^3) ,
\end{equation}
Add \eqref{eqSLNFy} to \eqref{eqSLNFz} implies
\begin{multline}\label{eqSLNFzz}
    \frac12 \|S(\overline\nabla u|\Psi^3)\|^2_{2}
    \leq C'_1\iint_\Omega |\nabla_x u|^2\partial_r\Psi^3\frac{dtdx}{|t|^{n-d-1}} \\ +  \mathcal{B}(C'_1|\nabla_x u|^2 + C'_2|\nabla_\varphi u|^2 + (2C_0K^2)^{-1}\kappa^{1/2} |\partial_r u|^2, \Psi^3)  \\ 
    + 2C'_3 \bigg (\kappa \|\wt{N}(\nabla u|\Psi^{3})\|^2_{2}
    +\iint_\Omega |\nabla u|^2|\nabla_x\Psi|^2\Psi\frac{dtdx}{|t|^{n-d-2}}\bigg ).
\end{multline}
The lemma follows by taking $c_0 = (2C_0)^{-1}$, $C_1=2C'_1$, $C_2=2C'_2$, and $C_3 = 4C'_3$. 
\end{proof}

\section{Global Estimates for Energy Solutions} \label{SLtoG}

We define the weighted homogeneous Sobolev space $W$ as
\[W:= \{u\in L^1_{loc}(\Omega), \, \nabla u \in L^2(\Omega,|t|^{d+1-n}\}\]
which is equipped with the semi-norm 
\[\|u\|_W := \left(\iint_\Omega |\nabla u|^2 \frac{dt\, dx}{|t|^{d+1-n}} \right)^\frac12.\]
The space $H:= \dot{W}^{1/2,2}(\R^d)$ is the usual homogeneous space of traces on the boundary $\partial \Omega = \R^d$, equipped with the usual semi-norm
\[\|g\|_H := \left( \iint_{\R^d \times \R^d} \frac{|f(x) - f(y)|^2}{|x-y|^{d+1}} \right)^\frac12.\]
The trace of a function $u\in W$ is defined for any $x\in \R^d$ as 
\[\Tr(u)(x) := \lim_{\epsilon \to 0} \fiint_{B(x,\epsilon) \cap \Omega} u \, dy\, dt\]
if the limit exists and $+\infty$ otherwise. Equivalent definitions of trace exist, and we choose the one constructed in \cite{david2017elliptic} just because we shall refer to this manuscript for basic results.

We know from \cite{david2017elliptic}\footnote{when $d<n-1$, the case $d=n-1$ being general knowledge.} that $\Tr$ is a bounded linear operator from $W$ to $H$. Moreover, $\|.\|_W$ is a norm for the subspace $W_0:= \{u\in W, \, \Tr(u) = 0\}$, and $(W_0,\|.\|_W)$ is complete. We can apply Lax-Milgram's theorem to obtain weak solution to $Lu=0$ with prescribed data $g\in H$.

\begin{Lemma}[Lemma 9.1 and Lemma 9.4 (v)  in \cite{david2017elliptic}] \label{LaxMilgram}
Let $L=-\diver [ |t|^{d+1-n} \A \nabla]$ be an operator satisfying \eqref{ELLIP}. For each $g\in H$, there exists a unique function $u_g\in W$ such that 
\[\iint_\Omega \A \nabla u_g \cdot \nabla v \, \frac{dt\, dx}{|t|^{n-d-1}} = 0 \qquad \text{ for any } v \in W_0\]
and $\Tr(u_g) = g$. Moreover, we have the bound
\[\|u_g\|_W \leq C\|g\|_H\] 
with a constant $C$ that depends only on the elliptic constant $\lambda$ in \eqref{ELLIP}.

When $g\in H$ is continuous and compactly supported, the solution given by this lemma and the solution given by \eqref{defug} are the same. 
\end{Lemma}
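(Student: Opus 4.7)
The statement is essentially a Lax--Milgram application tailored to this degenerate setting, plus an identification with the elliptic-measure solution. My plan is as follows.

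First I would check that the bilinear form
\[a(u,v) := \iint_\Omega \A \nabla u\cdot \nabla v\, \frac{dt\,dx}{|t|^{n-d-1}}\]
is bounded and coercive on the Hilbert space $(W_0,\|\cdot\|_W)$. Boundedness is immediate from the upper bound in \eqref{ELLIP}: $|a(u,v)|\leq \lambda^{-1}\|u\|_W\|v\|_W$. Coercivity follows from the lower bound: $a(u,u)\geq \lambda\|u\|_W^2$. Completeness of $W_0$ under $\|\cdot\|_W$ is part of the framework from \cite{david2017elliptic} that I would quote.

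The second step is to reduce the inhomogeneous boundary data to a homogeneous one by constructing an extension operator $\E\colon H \to W$ such that $\Tr(\E g)=g$ and $\|\E g\|_W\leq C\|g\|_H$, with $C$ depending only on $d$ and $n$. In codimension one this is the classical trace theory; in higher codimension one can take, e.g., a Poisson-type average $\E g(x,t):=\int_{\R^d}g(y)\,\phi_{|t|}(x-y)\,dy$ for a nice mollifier $\phi_r$, and check the desired two-sided bound (which is carried out in \cite{david2017elliptic}; I would simply cite it). Given $g\in H$, set $G:=\E g$ and apply the Lax--Milgram theorem to the functional $v\mapsto -a(G,v)$ on $W_0$: there exists a unique $w\in W_0$ with
\[a(w,v) = -a(G,v) \qquad \text{for all } v\in W_0.\]
Then $u_g:=G+w$ lies in $W$, has $\Tr(u_g)=\Tr(G)=g$, and satisfies $a(u_g,v)=0$ for every $v\in W_0$. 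The norm bound follows by testing against $v=w$: coercivity and boundedness give $\lambda\|w\|_W^2\leq a(w,w)=-a(G,w)\leq \lambda^{-1}\|G\|_W\|w\|_W$, hence $\|u_g\|_W\leq \|G\|_W+\|w\|_W\lesssim \|g\|_H$. Uniqueness is the usual argument: the difference of two such solutions lies in $W_0$, is tested against itself, and coercivity forces it to be zero.

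The main subtle point, and where I expect the actual work to lie, is the last statement: the Lax--Milgram solution $u_g$ and the elliptic-measure solution $\int g\,d\omega^X$ coincide when $g\in C_0^\infty(\R^d)\subset H$. My plan is to show first that the elliptic-measure solution $\tilde u(X):=\int g(y)\,d\omega^X(y)$ is a weak solution in $W_{\mathrm{loc}}^{1,2}(\Omega)$ (this is established in \cite{david2017elliptic}), and that it extends continuously to $\overline\Omega$ with boundary values $g$. The next task is to check that $\tilde u\in W$ globally, not only locally. This uses the compact support of $g$ and Caccioppoli-type inequalities far from the boundary, together with the behavior of the elliptic measure at infinity; again the argument is spelled out in \cite{david2017elliptic}. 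Once $\tilde u\in W$ and $\Tr(\tilde u)=g$, the difference $u_g-\tilde u$ lies in $W_0$ and is a weak solution in the sense of $a(\cdot,v)=0$ for $v\in W_0$, so uniqueness from the previous paragraph forces $u_g=\tilde u$. The hard part here is the global energy control of $\tilde u$, which is why the bulk of the proof in \cite{david2017elliptic} is devoted to it; in the present paper I would just cite the corresponding statements.
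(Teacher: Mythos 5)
The paper does not actually prove this lemma: it is quoted directly from \cite{david2017elliptic} (Lemma 9.1 and Lemma 9.4(v) there), so the only meaningful comparison is with the argument in that memoir. Your first part is exactly that argument and is correct: boundedness and coercivity of the form $a(u,v)$ on $(W_0,\|\cdot\|_W)$ from \eqref{ELLIP}, an extension operator $H\to W$ bounded by the trace semi-norm, Lax--Milgram for the correction $w\in W_0$, the energy bound by testing with $w$, and uniqueness via coercivity (using that $\|\cdot\|_W$ is a genuine norm on $W_0$). Only a cosmetic point: the constant then also depends on $d$ and $n$ through the trace/extension theory, not on $\lambda$ alone.

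Where you genuinely deviate is the identification with \eqref{defug}, and there your plan has a circularity risk. You propose to first show that $\tilde u(X)=\int g\,d\omega^X$ lies in $W$ globally (citing \cite{david2017elliptic} for the energy control) and then conclude by uniqueness. But in \cite{david2017elliptic} the logic runs in the opposite direction: for $g$ continuous, compactly supported and in $H$, the Lax--Milgram solution is shown to satisfy a maximum principle, the map $g\mapsto u_g(X)$ is extended to all continuous compactly supported data by density, and $\omega^X$ is \emph{defined} from these solutions via the Riesz representation theorem; the coincidence asserted in Lemma 9.4(v) is thus essentially built into the construction of the elliptic measure. An energy bound for $\int g\,d\omega^X$ obtained independently of this identification is not available off the shelf in the memoir, so citing it for that step would be circular. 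Your route could be completed, but you would have to supply that global $W$-estimate yourself; the simpler fix is to run the identification in the same order as the cited source.
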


In this section, we assume that $u\in W$, and we observe to which extend we can take non-compact cut-off functions in the local estimates given in Lemma \ref{LSLNF}, Lemma \ref{LlastS<N}, and Lemma \ref{LENSTP}, and thus obtain global estimates.

\begin{Lemma}\label{LN<infty}
Let $u\in W$. For any $\epsilon>0$, we have
\[\|\wt N(u|\Psi_\epsilon)\|_{2}^2 \leq C \epsilon^{-1} \|u\|_W^2\]
where $C$ depends only on $d$ and $n$.
\end{Lemma}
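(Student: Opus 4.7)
The support of $\Psi_\epsilon$ is contained in $\{|t|\ge\epsilon/2\}$, so $(u|\Psi_\epsilon)_{W,a}(z,r)=0$ whenever $r<\epsilon/4$, and hence
\[ \wt N(u|\Psi_\epsilon)(x)^2 \le \sup_{(z,r)\in\Gamma(x),\,r\ge \epsilon/4}\, \fiint_{W(z,r)}|u|^2\,dy\,ds. \]
The plan is to combine a Poincar\'e/Hardy-type argument on Whitney cubes with a Fubini step in $x$, and to extract the factor $\epsilon^{-1}$ from the fact that the Whitney cubes in question are bounded away from the boundary.

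For the Poincar\'e/Hardy step, I would work with the representative of $u$ that vanishes at the boundary (harmless since the lemma is only applied to the Lax-Milgram energy solutions of Lemma \ref{LaxMilgram}, which decay by construction) and write
\[ u(y,s)=\int_0^{|s|}\partial_r u(y,\tau\hat s)\,d\tau, \qquad \hat s=s/|s|. \]
Cauchy-Schwarz with the weight $\tau^{d+1-n}$ gives the pointwise estimate $|u(y,s)|^2\le C|s|^{n-d}\int_0^{|s|}|\partial_r u(y,\tau\hat s)|^2\tau^{d+1-n}\,d\tau$; averaging this over the Whitney cube $W(z,r)$ and passing to polar coordinates in $s$ yields
\[ \fiint_{W(z,r)}|u|^2\,dy\,ds\;\le\; C\,r^{1-d}\iint_{B(z,r/2)\times\{|s|\le 2r\}}|\nabla u|^2\,|s|^{d+1-n}\,dy\,ds. \]
Integrating this pointwise bound in $x\in\R^d$ via Fubini (using $|\{x:(z,r)\in\Gamma(x)\}|\approx r^d$, which combines with $r^{1-d}$ to leave a single factor of $r$) and then decomposing dyadically in $r\approx 2^k\ge \epsilon/4$, the Whitney-cube support restriction together with the trivial bound $1/|s|\le 8/\epsilon$ on $\{|s|\ge\epsilon/8\}$ allow one to reorganize the sum as
\[ \|\wt N(u|\Psi_\epsilon)\|_2^2 \;\le\; C\iint_{\{|s|\ge \epsilon/8\}}|\nabla u|^2\,|s|^{d+1-n}\,\frac{dy\,ds}{|s|}\;\le\;\frac{C}{\epsilon}\,\|u\|_W^2. \]

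The main technical obstacle is the Poincar\'e/Hardy step in codimension higher than one ($d<n-1$), where the angular Jacobian $|s|^{n-d-1}$ arising in the polar substitution creates an extra singular factor that does not match the target weight $|s|^{d+1-n}$; this is handled by combining the radial argument with the zero-spherical-mean property of $\partial_\varphi u$ (Proposition \ref{PDECC}), which upgrades control from $\partial_r u$ alone to the full gradient $\nabla u$ with the correct weight. A secondary subtlety is that $\|\cdot\|_W$ is only a semi-norm, forcing a choice of representative, but this is automatic for the Lax-Milgram solutions relevant later.
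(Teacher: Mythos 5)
There is a genuine gap, and it begins with the reading of the statement. Despite the typo in the display, the quantity being controlled is $\wt N(\nabla u|\Psi_\epsilon)$, not $\wt N(u|\Psi_\epsilon)$: the right-hand side $\|u\|_W^2=\iint_\Omega|\nabla u|^2|t|^{d+1-n}\,dt\,dx$ only sees $\nabla u$, the paper's proof immediately estimates $(\nabla u)_W(z,r)$, and the only way the lemma is used (Corollary \ref{coMAIN1}) is in the form $\|\wt N(\nabla(u-u_j)|\Psi_\epsilon)\|_2\lesssim \epsilon^{-1/2}\|u-u_j\|_W$. Read literally with $u$ inside the maximal function, the inequality is false for general $u\in W$ (a nonzero constant has $\|u\|_W=0$ but $\|\wt N(u|\Psi_\epsilon)\|_2=\infty$), so no argument can close without additional hypotheses. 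Your proposed fix --- working with the representative vanishing on $\R^d$ --- is not available in the intended applications: the Lax--Milgram solutions of Lemma \ref{LaxMilgram} have trace $g\neq 0$ and do not ``decay by construction''; and in any case a bound on $\wt N(u|\Psi_\epsilon)$ does not yield the bound on $\wt N(\nabla(u-u_j)|\Psi_\epsilon)$ that the paper needs, since one cannot apply your statement to $\nabla(u-u_j)$ (its gradient is not known to lie in the weighted $L^2$ space).

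Even granting zero boundary values, two steps of your sketch do not close. First, the Hardy step: integrating your pointwise bound over the Whitney box in polar coordinates produces the weight $|s|^{2(d+1-n)}$ (the Jacobian $\sigma^{n-d-1}$ compensates only one factor), not $|s|^{d+1-n}$; you acknowledge this, but the proposed repair via Proposition \ref{PDECC} does not apply, because that proposition rests on the zero spherical mean of $\partial_\varphi u$, a property that $u$ itself does not enjoy. Second, the final reorganization: your Whitney bound carries the prefactor $r^{1-d}$ and an integral over the full slab $\{|s|\le 2r\}$ reaching down to the boundary; after the Fubini step in $x$ (which contributes $r^d$) each dyadic scale $r\approx 2^k\gtrsim\epsilon$ contributes about $2^k\iint_{\{|s|\lesssim 2^k\}}|\nabla u|^2|s|^{d+1-n}$, and this neither localizes to $\{|s|\ge\epsilon/8\}$ nor sums to $\epsilon^{-1}\|u\|_W^2$, so the displayed conclusion does not follow. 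The intended proof is much simpler and purely local: for $(z,r)\in\Gamma(x)$ with $r\gtrsim\epsilon$ one has $\fiint_{W(z,r)}|\nabla u|^2\,dy\,ds\lesssim \epsilon^{-1}\iint_{W(z,r)}|\nabla u|^2|s|^{1-n}\,dy\,ds$ because $|s|\approx r\gtrsim\epsilon$ on $W(z,r)$; since $W(z,r)\subset\widehat\Gamma^*(x)$ for a slightly larger cone, taking the supremum and integrating in $x$ by Fubini (a variant of \eqref{S=int1}) gives $\|\wt N(\nabla u|\Psi_\epsilon)\|_2^2\lesssim\epsilon^{-1}\|u\|_W^2$, with no Poincar\'e or Hardy inequality and no use of boundary values of $u$.
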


\begin{proof}
Take $x\in \R^d$ and $(z,r) \in \Gamma(x) \cap \{r\geq \epsilon/2\}$. Then
\[
|(\nabla u)_{W}(z,r)|^2 := \fiint_{W(z,r)} |\nabla u|^2 \, dt\, dy \lesssim \epsilon^{-1} \iint_{W(z,r)} |\nabla u|^2 \, \frac{dt\, dy}{|t|^{n-1}} \leq \epsilon^{-1} \iint_{\widehat \Gamma^*(x)} |\nabla u|^2 \, \frac{dt\, dy}{|t|^{n-1}}
\]
where $\widehat \Gamma^*(x) := \{(y,t)\in \Omega, \, |y-x| \leq C^* |t|\}$ and $C^*$ is a large constant that depends only on $d$ and $n$ and is such that $W(z,r) \in \widehat \Gamma^*(x)$ for all $(z,r)\in \Gamma(x)$. We deduce that 
\[\wt N(u|\Psi_\epsilon)(x) \lesssim \epsilon^{-1} \iint_{\widehat \Gamma^*(x)} |\nabla u|^2 \, \frac{dt\, dy}{|t|^{n-1}}\]
and then 
\[\|\wt N(u|\Psi_\epsilon)\|_2^2 \lesssim \epsilon^{-1} \iint_{\Omega} |\nabla u|^2 \, \frac{dt\, dy}{|t|^{n-d-1}} = \epsilon^{-1} \|u\|_W^2\]
by a simple variant of \eqref{S=int1}.
\end{proof}

When $u\in W$, we can see that $\|\wt N(u)\|_2^2$ will explode only when we get close to the boundary. We can use the monotone convergence of $\Psi_{B,l,\epsilon} \uparrow \Psi_\epsilon$ as $B \uparrow \R^d$ and $l\to \infty$, and then take $\Psi = \Psi_\epsilon$ in \eqref{eqSLNFF}, which shows that $\|S(\overline\nabla u|\Psi_\epsilon)\|_2^2$ is also finite and bounded by $C\epsilon^{-1}\|u\|_W^2$. 

Moreover, we can also take $\Psi = \Psi_\epsilon$ in \eqref{eqSLNF}. Indeed, any term containing $\Psi_\epsilon \nabla \Psi_B$ can be bounded by $C\epsilon^{-1} \|\nabla u\|^2_{L^2(\R^d \setminus B,|t|^{d+1-n}dtdx)}$ and any term containing $\nabla \Psi_l$ can be bounded by $Cl^{-1}\|u\|_W^2$, and both those terms converges to 0 as $B \uparrow \R^d$ and $l\to \infty$.  It means that the terms in \eqref{eqSLNF} that contain either $\nabla \Psi_B$ or $\nabla \Psi_l$ with eventually disappear when we take the limit $\Psi_{B,l,\epsilon} \uparrow \Psi_\epsilon$. 
Let us give a bit of details. In the left-hand side of \ref{eqSLNF}, we first have
\begin{multline*}
\iint_\Omega |\nabla_x u|^2\partial_r\Psi_{B,l,\epsilon}^3\frac{dtdx}{|t|^{n-d-1}} \\
 \leq \iint_\Omega |\nabla_x u|^2\partial_r\Psi^3_\epsilon \frac{dtdx}{|t|^{n-d-1}} + \iint_\Omega |\nabla_x u|^2 \Psi_\epsilon |\partial_r\Psi_B| \frac{dtdx}{|t|^{n-d-1}} + \iint_\Omega |\nabla_x u|^2 |\partial_r\Psi_l| \frac{dtdx}{|t|^{n-d-1}} \\
\leq \iint_\Omega |\nabla_x u|^2\partial_r\Psi^3_\epsilon \frac{dtdx}{|t|^{n-d-1}} + C\epsilon^{-1} \|\nabla u\|^2_{L^2(\R^d \setminus B,|t|^{d+1-n}dtdx)} + Cl^{-1}\|u\|_W^2 \\
 \longrightarrow \iint_\Omega |\nabla_x u|^2\partial_r\Psi^3_\epsilon \frac{dtdx}{|t|^{n-d-1}} \quad \text{ as $B \uparrow \R^d$ and $l\to \infty$.} 
\end{multline*}
The terms $\kappa \|\wt N(\nabla u|\Psi_{B,l,\epsilon}^3)\|^2_2$ is easily bounded by $\kappa \|\wt N(\nabla u|\Psi_{\epsilon}^3)\|^2_2$ and 
\[\iint_\Omega |\nabla u|^2|\nabla_x\Psi_{B,l,\epsilon}|^2\Psi_{B,l,\epsilon}\frac{dtdx}{|t|^{n-d-2}} \leq \frac{Cl}{r_B^2}\|\nabla u\|^2_{L^2(\R^d \setminus B,|t|^{d+1-n}dtdx)} \longrightarrow 0 \]
as long as we always take the radius $r_B$ of $B$ bigger than $l$.
The last term in the left-hand side of \ref{eqSLNF} is $\mathcal B(|\nabla u|^2_{\kappa},\Psi_{B,l,\epsilon}^3)$. We have
\begin{multline*}
|\mathcal B(|\nabla u|^2_{\kappa},\Psi_{B,l,\epsilon}^3) - \mathcal B(|\nabla u|^2_{\kappa},\Psi_{\epsilon}^3)| \\
\lesssim \iint_\Omega \big(|\nabla u|^2 + t |\nabla \overline{\nabla} u| |\nabla u|\big) \, \big( \Psi_{\epsilon}^3 |\partial_r \Psi_{B,l}^3| + |1-\Psi_{B,l}^3| |\partial_r \Psi_\epsilon^3|\big) \,  \frac{dtdx}{|t|^{n-d-2}} \\
\lesssim \iint_\Omega \big(|\nabla u|^2 + t^2 |\nabla \overline{\nabla} u|^2\big) \, \big( \Psi_{\epsilon}^3 |\partial_r \Psi_{B,l}^3| + |1-\Psi_{B,l}^3| |\partial_r \Psi_\epsilon^3|\big) \,  \frac{dtdx}{|t|^{n-d-2}} \\
\longrightarrow 0 \quad \text{ as $B \uparrow \R^d$ and $l\to \infty$}
\end{multline*}
since $\|u\|_W^2 + \|S(\overline\nabla u|\Psi_\epsilon)\|_2^2 < +\infty$.
 A similar argument gives that the term 
\[ \bigg \|\bigg (\dashint_{B_{l}(.)}\dashint_{l\leq |s|\leq 2l}|\nabla u|^2 \Psi_B^3 ds\, dy\bigg )^{1/2}\bigg \|^2_{2} \approx \int_{\R^d} \fint_{l\leq |s| \leq 2l} |\nabla u|^2 \Psi_B^3 \, ds \, dy\]
that appears in Lemma \ref{LENSTP} is bounded by $Cl^{-1}\|u\|_W$ and also converges to 0 as $l$ goes to infinity.

From those observations, Lemmas \ref{LlastS<N} and \ref{LENSTP} combined with the fact that $u\in W$ entail the following estimates.

\begin{Lemma} \label{Lglobalest}
Let $\kappa \in (0,1)$ and let $L$ be an elliptic operator satisfying \HH$_{\lambda,\kappa}$. 
There exist four constants $c_0, C_1,C_2,C_3>0$ depending on $\lambda$ and $n$ such that, if 
\[|\nabla u|^2_\kappa := C_1|\nabla_x u|^2 + C_2|\nabla_\varphi u|^2 + c_0 \kappa^{1/2} |\partial_r u|^2,\]
then for any weak solution $u\in W$ to $Lu=0$ and for any $\epsilon>0$, we have that
\begin{equation}\label{eqSLNF2}
    \|S(\overline\nabla u|\Psi_\epsilon^3)\|^2_{2}
    \leq C_1\iint_\Omega |\nabla_x u|^2\partial_r\Psi_\epsilon^3\frac{dtdx}{|t|^{n-d-1}} + \mathcal B(|\nabla u|^2_{\kappa},\Psi_\epsilon^3) \\
    + C_3 \kappa \|\wt{N}(\nabla u|\Psi_\epsilon^{3})\|^2_{2}
\end{equation}
and
\begin{equation}\label{eqSLNF3}
\|\wt N(\nabla u|\Psi_\epsilon^3)\|_2^2 \leq C_3 \|S(\overline\nabla u|\Psi_\epsilon)\|_2^2 + C_3\kappa \|\wt N(\nabla u|\Psi_\epsilon)\|_2^2.
\end{equation}
Moreover, all the quantities that appear in \eqref{eqSLNF2} and \eqref{eqSLNF3} are finite and bounded by $C\epsilon^{-1}\|u\|_W^2$.
\end{Lemma}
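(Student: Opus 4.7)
The plan is to pass to the limit $B\uparrow \R^d$ and $l\to \infty$ in the local estimates of Lemma \ref{LlastS<N} and Lemma \ref{LENSTP}, applied with the compactly supported cut-off $\Psi = \Psi_{B,l,\epsilon}$. By Lemma \ref{LECUTO}, each $\Psi_{B,l,\epsilon}$ satisfies \COF$_K$ with a uniform constant $K$ depending only on $d$ and $n$, so the constants produced by Lemmas \ref{LlastS<N} and \ref{LENSTP} do not depend on $B$ or $l$. The desired inequalities \eqref{eqSLNF2}--\eqref{eqSLNF3} for $\Psi_\epsilon$ will then follow once we verify that all quantities are finite and that the ``error terms'' involving $\nabla \Psi_B$ or $\nabla\Psi_l$ disappear in the limit.

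First I would prove finiteness. By Lemma \ref{LN<infty}, $\|\wt N(\nabla u|\Psi_\epsilon)\|_2^2 \lesssim \epsilon^{-1}\|u\|_W^2 < \infty$ since $u\in W$. To control $\|S(\overline\nabla u|\Psi_\epsilon)\|_2$, I would apply the ``qualitative'' bound \eqref{eqSLNFF} from Lemma \ref{LSLNF} with $\Psi = \Psi_{B,l,\epsilon}$ (which gives $\|S(\overline \nabla u|\Psi_{B,l,\epsilon}^3)\|_2^2 \leq C_K \|\wt N(\nabla u|\Psi_{B,l,\epsilon})\|_2^2 \leq C_K\epsilon^{-1}\|u\|_W^2$) and then invoke monotone convergence as $B\uparrow \R^d$, $l\to \infty$, since $\Psi_{B,l,\epsilon}\uparrow \Psi_\epsilon$ pointwise and $|\nabla u|^2$ is nonnegative. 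This shows $\|S(\overline\nabla u|\Psi_\epsilon)\|_2^2 \lesssim \epsilon^{-1}\|u\|_W^2$ as well, giving the final assertion of the lemma and enabling all subsequent dominated-convergence arguments.

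Next I would show that the error terms in \eqref{eqSLNF} and in Lemma \ref{LENSTP} involving $\nabla\Psi_B$ or $\nabla\Psi_l$ tend to $0$. Since $|t||\nabla \Psi_B| \lesssim \1_{\{\dist(y,B) \geq 100|t|\}}$ and $|t||\nabla \Psi_l| \lesssim \1_{\{l\leq |t|\leq 2l\}}$, using $|\nabla \Psi_{B,l,\epsilon}| \leq |\nabla\Psi_B|\Psi_\epsilon + \Psi_B|\nabla\Psi_l| + \Psi_B|\nabla\Psi_\epsilon|$ and the crude bound $\iint_{E}|\nabla u|^2 \frac{dt\, dx}{|t|^{n-d-1}} \to 0$ on sets $E$ that shrink away from $\supp \Psi_\epsilon \cap \{|t|\leq l\}$, every contribution containing $\nabla\Psi_B$ is dominated by $C\epsilon^{-1}\|\nabla u\|_{L^2(\R^d\setminus B, |t|^{d+1-n}dtdx)}^2 \to 0$ (provided we always choose $r_B \geq l$), and every contribution containing $\nabla\Psi_l$ is dominated by $Cl^{-1}\|u\|_W^2 \to 0$. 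The only place requiring care is the bilinear form $\mathcal B(|\nabla u|^2_\kappa,\Psi^3_{B,l,\epsilon})$, where after the product rule one gets cross terms $t\, \nabla\overline\nabla u \cdot \nabla u \cdot \partial_r \Psi^3$; bounding these by $(|\nabla u|^2 + t^2|\nabla\overline\nabla u|^2)|\partial_r\Psi^3|$ and using the previously established finiteness $\|S(\overline\nabla u|\Psi_\epsilon)\|_2^2 < \infty$ together with $u\in W$ enables a dominated-convergence argument showing $\mathcal B(|\nabla u|^2_\kappa,\Psi^3_{B,l,\epsilon}) \to \mathcal B(|\nabla u|^2_\kappa,\Psi^3_\epsilon)$. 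For the remaining ``trace'' term $\iint_\Omega |\nabla_x u|^2 \partial_r\Psi^3 \, |t|^{d+1-n}dtdx$ in \eqref{eqSLNF}, the positivity $\partial_r \Psi_\epsilon \geq 0$ from \eqref{drPsie>0} combined with splitting $\partial_r\Psi_{B,l,\epsilon}^3$ as a sum of a nonnegative part plus error terms controlled as above gives convergence to its $\Psi_\epsilon$ counterpart. Similarly, the term $\|(\fint_{B_l(\cdot)} \fint_{l\leq |s|\leq 2l} |\nabla u|^2 \Psi_B^3 dsdy)^{1/2}\|_2^2$ appearing in Lemma \ref{LENSTP} is crudely bounded by $Cl^{-1}\|u\|_W^2 \to 0$.

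Combining everything, \eqref{eqSLNF2} follows from \eqref{eqSLNF} by taking $B\uparrow \R^d$ and $l\to\infty$, and \eqref{eqSLNF3} follows from Lemma \ref{LENSTP} with $p=2$ by the same limit procedure (after discarding the term that tends to $0$). The main obstacle is the justification of convergence in $\mathcal B$, since this form need not be positive; the remedy is to prove absolute integrability of $t\,|\nabla\overline\nabla u||\nabla u|$ against the relevant measures, using that both $\|S(\overline\nabla u|\Psi_\epsilon)\|_2$ and $\|u\|_W$ are already finite.
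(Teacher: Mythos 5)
Your proposal follows the paper's own argument essentially step for step: the same monotone passage $\Psi_{B,l,\epsilon}\uparrow\Psi_\epsilon$ in Lemmas \ref{LlastS<N} and \ref{LENSTP}, the same use of Lemma \ref{LN<infty} and \eqref{eqSLNFF} to get finiteness, the same bound $|t||\nabla\overline\nabla u||\nabla u|\leq |\nabla u|^2 + |t|^2|\nabla\overline\nabla u|^2$ to control $\mathcal B$ via dominated convergence, and the same vanishing of the $\nabla\Psi_B$- and $\nabla\Psi_l$-error terms (with the necessary precaution $r_B\geq l$). The proof is correct and matches the paper's approach.
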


Note that the we only assume that $\kappa\in (0,1)$ in Lemma \ref{Lglobalest} for a technical reason (that comes from the fact that $\delta\in (0,1)$ in Lemma \ref{LB<SN}) and that condition can probably be removed. But all this does not really matter because the proof of our next result (and thus the proof of the main result of the article) requires $\kappa$ to be small anyway.
 
\begin{Th}\label{THMAIN1a}
Take $\lambda>0$. There exists $\kappa \in (0,1)$ small enough (depending only on $\lambda$, $d$ and $n$) such that if $L:=-\diver (|t|^{d+1-n}\mathcal{A}\nabla)$ is an elliptic operator satisfying \HH$_{\lambda,M,\kappa}$, then for any weak solution $u\in W$ to $Lu=0$ we have that
\begin{align}\label{eqTHM00}
\|\wt{N}(\nabla u)\|^2_{2}\leq C \limsup_{\epsilon \to 0} \int_{\R^d} \fint_{\epsilon/2 \leq |s| \leq \epsilon} |\nabla_x u|^2 \, ds\, dy,
\end{align}
where $C>0$ depends only on $\lambda$, $d$, and $n$. 

Neither the right-hand side nor the left-hand side of \eqref{eqTHM00} are guaranteed to be finite, but the left-hand side is finite as long as the right-hand side is. More precisely, there exists a sequence $\epsilon_k \to 0$ such that 
\begin{align}\label{eqTHM00b}
\|\wt{N}(\nabla u|\Psi_{\epsilon_k}^9)\|^2_{2}\leq C \int_{\R^d} \fint_{\epsilon_k/2 \leq |s| \leq \epsilon_k} |\nabla_x u|^2 \, ds\, dy.
\end{align}
\end{Th}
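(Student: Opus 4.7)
The strategy is to combine the two local estimates in Lemma \ref{Lglobalest} into a single-scale inequality, iterate it along a geometric sequence of heights, and then extract a favorable subsequence $\epsilon_k \to 0$.

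\emph{Single-scale inequality and telescoping.} For a suitable profile $\phi$ one has $\Psi_\epsilon \leq \Psi_{\epsilon/4}^3$ pointwise, because on $\{|t|\geq \epsilon/2\} \supset \supp \Psi_\epsilon$ one has $\epsilon/(4|t|) \leq 1/2$ so $\Psi_{\epsilon/4} = \phi(\epsilon/(4|t|)) = 1$. Substituting \eqref{eqSLNF2} into \eqref{eqSLNF3} and using the monotonicity of $\wt N$ in its cutoff yields
\[
\|\wt N(\nabla u|\Psi_\epsilon^9)\|_2^2 \leq C\,T_\epsilon + C\,\bigl|\mathcal B(|\nabla u|^2_\kappa, \Psi_\epsilon^3)\bigr| + C\kappa\,\|\wt N(\nabla u|\Psi_{\epsilon/4}^9)\|_2^2,
\]
where $T_\epsilon := \iint_\Omega |\nabla_x u|^2 \partial_r\Psi_\epsilon^3 \,\frac{dt\,dx}{|t|^{n-d-1}}$; since $\supp \partial_r\Psi_\epsilon \subset \{\epsilon/2 \leq |t|\leq \epsilon\}$ and $|\partial_r\Psi_\epsilon^3| \sim 1/\epsilon$ there, one has $T_\epsilon \approx \int_{\R^d}\fint_{\epsilon/2\leq |s|\leq\epsilon} |\nabla_x u|^2 \,ds\,dy$. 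Letting $a_j := \|\wt N(\nabla u|\Psi_{\epsilon/4^j}^9)\|_2^2$ and $b_j := T_{\epsilon/4^j} + |\mathcal B(|\nabla u|^2_\kappa, \Psi_{\epsilon/4^j}^3)|$, the single-scale inequality reads $a_j \leq Cb_j + C\kappa\,a_{j+1}$. By Lemma \ref{LN<infty}, $a_j \leq C\,4^j \epsilon^{-1}\|u\|_W^2 < +\infty$, so provided $\kappa$ is small enough that $4C\kappa < 1$, the tail $(C\kappa)^K a_K \to 0$ as $K\to\infty$ and iteration produces
\[
\|\wt N(\nabla u|\Psi_\epsilon^9)\|_2^2 \leq C \sum_{j=0}^\infty (C\kappa)^j\, b_j.
\]

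\emph{Good subsequence and conclusion.} A direct pointwise estimate on $\mathcal B$, using $|\partial_r \Psi_\epsilon^3|\lesssim 1/\epsilon$ on its support, yields $|\mathcal B(|\nabla u|^2_\kappa, \Psi_\epsilon^3)| \lesssim \int_{\R^d}\fint_{|s|\sim\epsilon} |\nabla u|^2_\kappa \,ds\,dy$. Tonelli plus the change of variable $\eta = \epsilon/4^j$ then gives $\int_0^1 \sum_j (C\kappa)^j |\mathcal B(|\nabla u|_\kappa^2, \Psi_{\epsilon/4^j}^3)| \,d\epsilon \lesssim \sum_j (4C\kappa)^j \|u\|_W^2 < \infty$ whenever $4C\kappa < 1$. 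Hence the series $G(\epsilon) := \sum_j (C\kappa)^j b_j$ is in $L^1(0,1)$, and a dyadic pigeonhole argument on shells $[2^{-k-1},2^{-k}]$ produces a sequence $\epsilon_k \to 0$ along which $G(\epsilon_k)$ stays bounded. If $\limsup_{\epsilon\to 0} T_\epsilon = +\infty$ the theorem is trivial; otherwise, for all sufficiently small $\epsilon$, $T_\epsilon \leq \limsup T + \eta$, which controls the $T_{\epsilon/4^j}$ part of the series by a constant multiple of $T_{\epsilon_k}$. Refining $\epsilon_k$ so that $T_{\epsilon_k}$ also dominates the remaining $\mathcal B$-contribution (exploiting that $G \in L^1$ forces $G$ to be small compared to $T$ along a subsequence) gives $\|\wt N(\nabla u|\Psi_{\epsilon_k}^9)\|_2^2 \leq C\,T_{\epsilon_k}$, which is \eqref{eqTHM00b}. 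The global inequality \eqref{eqTHM00} then follows from monotone convergence $\Psi_{\epsilon_k}^9 \uparrow 1$.

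\emph{Main obstacle.} The delicate point is the last step: $|\nabla u|^2_\kappa = C_1|\nabla_x u|^2 + C_2|\nabla_\varphi u|^2 + c_0\kappa^{1/2}|\partial_r u|^2$ contains angular and radial components that are not directly estimated by the tangential trace $T_\epsilon$. The factor $\kappa^{1/2}$ in front of $|\partial_r u|^2$ is exactly what absorbs the radial contribution for small $\kappa$, and the angular contribution is handled via the zero-mean property of $\nabla_\varphi u$ on spheres (Proposition \ref{PDECC}), which effectively trades a factor of $|t|$ for additional smoothness already controlled by the square function. These are the features that the careful weighting in Lemma \ref{LlastS<N} was designed to preserve.
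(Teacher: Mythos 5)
Your scale-iteration idea for the $\kappa\|\wt N(\nabla u|\Psi_\epsilon)\|_2^2$ term (using $\Psi_\epsilon \leq \Psi_{\epsilon/4}^3 \leq \Psi_{\epsilon/4}^9$, the a priori finiteness from Lemma \ref{LN<infty}, and a geometric series in $C\kappa$) is a legitimate alternative to the way the paper absorbs that term, but the treatment of the $\mathcal B$ term contains a genuine gap, in two places. First, the claimed ``direct pointwise estimate'' $|\mathcal B(|\nabla u|^2_\kappa,\Psi_\epsilon^3)| \lesssim \int_{\R^d}\fint_{|s|\sim\epsilon}|\nabla u|^2_\kappa\,ds\,dy$ is false: by \eqref{DEFBL}, $\mathcal B(f,\Psi_\epsilon^3) = -\tfrac12\iint_\Omega \big(f + |t|\,\partial_r f\big)\,\partial_r\Psi_\epsilon^3\,|t|^{d+1-n}\,dt\,dx$, and with $f=|\nabla u|^2_\kappa$ the term $|t|\,\partial_r f$ involves second derivatives of $u$ on the strip $\{\epsilon/2\leq|t|\leq\epsilon\}$, which are not controlled by strip averages of $|\nabla u|^2$ without an additional Caccioppoli-type inequality for $\nabla\overline\nabla u$ on Whitney strips (which you neither state nor prove; this is exactly why the paper's Lemma \ref{LB<SN} trades $\mathcal B(|\partial_r u|^2,\Psi^3)$ against the square function rather than a trace-type quantity). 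In particular, after Cauchy--Schwarz the second-derivative part of your series integrates in $\epsilon$ to a weighted global square function of $\nabla u$ near the boundary, whose finiteness is essentially what the theorem is trying to establish, so the Tonelli step does not give $G\in L^1(0,1)$ as claimed. Second, even granting $G\in L^1(0,1)$, the dyadic pigeonhole only yields $\epsilon_k$ with $G(\epsilon_k)=o(1/\epsilon_k)$; it does not give $G(\epsilon_k)$ bounded, and certainly not $G(\epsilon_k)\lesssim T_{\epsilon_k}$ (the trace term $T_{\epsilon_k}$ can be arbitrarily small, or the limsup can equal $0$), so the final absorption ``refining $\epsilon_k$ so that $T_{\epsilon_k}$ dominates the $\mathcal B$-contribution'' is unjustified. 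Note also that when $\limsup_\epsilon T_\epsilon=+\infty$ the statement \eqref{eqTHM00b} is not trivial, since its left-hand side is finite for each fixed $\epsilon$ but may grow as $\epsilon\to0$.

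The paper's proof avoids any quantitative bound on $\mathcal B$: it only needs its \emph{sign}. Setting $\omega(\epsilon):=\iint_\Omega |\nabla u|^2_\kappa\,\partial_r\Psi_\epsilon^3\,|t|^{d+2-n}\,dt\,dx\geq0$, one computes $\omega'(\epsilon)=-\tfrac{2}{\epsilon}\,\mathcal B(|\nabla u|^2_\kappa,\Psi_\epsilon^3)$, and $\omega(\epsilon)\to0$ because it is dominated by the tail of $\|u\|_W^2$. If $\mathcal B_+>0$ on an interval then $\omega$ increases as $\epsilon$ decreases there, and if the $\kappa\wt N$-term cannot be absorbed (the set $I_N$) then $\omega$ jumps up at comparable scales; a contradiction with $\omega(\epsilon)\to0$ then produces $\epsilon$ arbitrarily close to $0$ where simultaneously $\mathcal B_+(|\nabla u|^2_\kappa,\Psi_\epsilon^3)=0$ and $2C_4\kappa\|\wt N(\nabla u|\Psi_\epsilon^3)\|_2^2\leq\|\wt N(\nabla u|\Psi_\epsilon^9)\|_2^2$, and \eqref{eqTHM00b} follows by self-improvement of the inequality obtained from Lemma \ref{Lglobalest}. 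To repair your argument you would either have to prove a strip Caccioppoli inequality for all second derivatives (using the equation, Proposition \ref{PpOPER}, and the commutators) \emph{and} a genuinely stronger selection mechanism than $L^1$-integrability in $\epsilon$, or switch to a soft sign/monotonicity argument of the paper's type for the $\mathcal B$ term while keeping your scale iteration only for the $\kappa\wt N$ term.
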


\begin{proof} The bound \eqref{eqTHM00} is an immediate consequence of  \eqref{eqTHM00b}, so we just need to establish the later.

Remember that $\Psi_\epsilon$ is constructed from a smooth function $\phi$. All the constant depend on the fixed $\phi$, but we have a bit of freedom (as long as we do not take $\phi$ so that $\Psi_\epsilon$ satisfies \COF  with a controlled constant $K$. Therefore, we can replace $\Psi_\epsilon$ by $\Psi_\epsilon^3$ in Lemma \ref{Lglobalest} and thus \eqref{eqSLNF3} gives that
\[\|\wt N(\nabla u|\Psi_\epsilon^9)\|_2^2 \leq C_3 \|S(\overline\nabla u|\Psi_\epsilon^3)\|_2^2 + C_3 \kappa \|\wt N(\nabla u|\Psi_\epsilon^3)\|_2^2. \] 
Observe also that $\partial_r \Psi_\epsilon^3$ is non-negative, supported in $\{\epsilon/2\leq |s| \leq s\}$, and is bounded by $C\epsilon^{-1}$, so
\[0 \leq \iint_\Omega |\nabla_x u|^2\partial_r\Psi_\epsilon^3\frac{dtdx}{|t|^{n-d-1}} \lesssim \int_{\R^d} \fint_{\epsilon/2 \leq |s| \leq \epsilon} |\nabla_x u|^2 \, ds\, dy.\]
The two last bounds combined with \eqref{eqSLNF2} entail that
\begin{equation} \label{N<Tr+B+kN}
\|\wt N(\nabla u|\Psi_\epsilon^9)\|_2^2 \leq C_4 \bigg( \int_{\R^d} \fint_{\epsilon/2 \leq |s| \leq \epsilon} |\nabla_x u|^2 \, ds\, dy + \mathcal B_+(|\nabla u|^2_{\kappa},\Psi_\epsilon^3)
   + \kappa \|\wt{N}(\nabla u|\Psi_\epsilon^{3})\|^2_{2} \bigg),
\end{equation}
where $\mathcal B_+(v,\Phi)$ is the positive part of $\mathcal B(v,\Phi)$ and $C_4$ depends only $\lambda$, $n$. The proof consists to say that if $\kappa$ is small enough, there exists $\epsilon>0$ as close as $0$ as we want such that 
\begin{equation} \label{conditionBN}
\mathcal B_+(|\nabla u|^2_{\kappa},\Psi_\epsilon^3)= 0 \quad \text{ and
} \quad 2C_4 \kappa \|\wt{N}(\nabla u|\Psi_\epsilon^{3})\|^2_{2} \leq \|\wt N(\nabla u|\Psi_\epsilon^9)\|_2^2.
\end{equation}
For such values of $\epsilon$, the bound \eqref{N<Tr+B+kN} easily self-improves to \eqref{eqTHM00b}, which is exactly our objective.

\medskip

 The rough strategy of the proof consists in studying the quantity
\begin{align}\label{ltglo06}
    \omega(\epsilon):=\iint_\Omega |\nabla u|^2_\kappa\partial_r\Psi^{3}_\epsilon \frac{dtdx}{|s|^{n-d-2}},
\end{align}
which is non-negative since $\Psi_\epsilon$ is increasing in $r$. Since $\partial_r \Psi_\epsilon$ is supported in the strip $\{\epsilon/2 \leq |s| \leq \epsilon\}$ and since $\partial_r \Psi_\epsilon \lesssim |s|^{-1}$, we deduce that 
\[\omega(\epsilon) \lesssim  \int_{\R^d} \int_{\epsilon/2 \leq |s| \leq \epsilon} |\nabla u|^2_\kappa \, \frac{ds\, dy}{|s|^{n-d-1}} \lesssim \int_{\R^d} \int_{\epsilon/2 \leq |s| \leq \epsilon} |\nabla u|^2 \, \frac{ds\, dy}{|s|^{n-d-1}} \]
with constants that depends only on $\lambda$, $d$ and $n$. But the right-hand side above converges to 0 as $\epsilon$ goes to 0 (because it is the tail of $\|u\|_W^2 < +\infty$). So we necessarily have that
\begin{align}\label{ltglo06b}
  \lim_{\epsilon \to 0}  \omega(\epsilon) = 0.
\end{align}
We shall prove that if \eqref{conditionBN} fails for every $\epsilon$ in a small neighborhood $(0,\epsilon_0]$ of zero, then \eqref{ltglo06b} does not hold. So by contraposition, \eqref{ltglo06b} implies the existence of $\epsilon$ arbitrary close to zero such that \eqref{conditionBN} holds.

\medskip

\textbf{Step 1:} For this step, we look at the implications of the fact that 
\begin{equation} \label{condB1}
\mathcal B_+(|\nabla u|^2_\kappa, \Psi_\epsilon^3) >0.
\end{equation} 
We write $I_{\mathcal B}$ for the values $\epsilon \in (0,\infty)$ for which \eqref{condB1} holds. Due to the fact that $\nabla u \in L^2_{loc}(\Omega)$ and $\partial_r(\Psi_\epsilon^3)$ is smooth (and compactly supported in $\Omega$), we have that the domain $I_{\mathcal B}$ is open. 

Recall that $\Psi_\epsilon:=\phi(\frac{\epsilon}{|t|})$, where $\phi:\mathbb{R^+}\rightarrow [0,1]$ such that $\phi\equiv 1$ on $[0,1]$ and $\phi\equiv 0$ on $[2,+\infty)$. We compute 
\[\partial_r \Psi_\epsilon = - \frac{\epsilon }{|t|^2} \phi', \qquad \partial_r\partial_r\Psi_\epsilon= \frac{2\epsilon}{|t|^3}\phi'+\frac{\epsilon^2}{|t|^4}\phi'',\]
and we notice that
\[\frac{\partial }{\partial_\epsilon}(\partial_r\Psi_\epsilon)= -\frac{1}{|t|^2}\phi'-\frac{\epsilon}{|t|^3} \phi''
= - \frac{|t|}{\epsilon} \partial_r^2\Psi_\epsilon - \frac1\epsilon \partial_r \Psi_\epsilon = - \frac{1}{\epsilon} \partial_r( |t|\partial_r \Psi_\epsilon).\]
The same argument shows that $\frac{\partial }{\partial_\epsilon}(\partial_r\Psi_\epsilon^3) = - \epsilon^{-1} \partial_r( |t|\partial_r \Psi_\epsilon^3)$. So we deduce
\[\begin{split}
    \omega'(\epsilon)& =-\frac{1}{\epsilon} \iint_\Omega |\nabla u|_\kappa^2 \partial_r\Big (|t|\partial_r\Psi^{3}_\epsilon \Big ) \frac{dt \, dy}{|t|^{n-d-2}}
    = \frac{1}{\epsilon}\iint_\Omega \partial_r \Big (|t|\nabla u|_\kappa^2\Big )\partial_r\Psi^{3}_\epsilon\frac{dt\, dy}{|t|^{n-d-2}} \\ & = -\frac2\epsilon \mathcal B(\nabla u|_\kappa^2, \Psi_\epsilon^3).
\end{split} \]
By the integration by parts in $r$ (see Proposition \ref{IBBdrdphi}), the function $\epsilon \mapsto \omega(\epsilon)$ is decreasing on $I_{\mathcal B}$, that is 
\begin{equation} \label{bdd35}
\omega(a) > \omega(b) \text{ whenever } (a,b) \subset I_{\mathcal B}.
\end{equation}

\medskip

\textbf{Step 2:} Now, we look at the implications of the fact that $\|\wt{N}(\nabla u|\Psi_\epsilon^{3})\|_{2} \gg \|\wt N(\nabla u|\Psi_\epsilon^9)\|_2
$. 

We write $\alpha$ for the universal constant $5/4$. The exact value of $\alpha$ does not matter, as long as we have $\alpha^{-1} < 1 < \alpha < \alpha^3 < 2$. We have a bit of freedom on the function $\phi$ that is used to construct $\Psi_\epsilon$ (see Definition \ref{DECUTO}). It is always possible to choose $\phi$ such that $\phi(x) = 2-x$ when 
\[1 < \alpha \leq x \leq \max\{2\alpha^{-1},\alpha^3\} <  2.\]
For the same values of $x$, we have $\phi' = 1$. If $\epsilon' \geq \alpha \epsilon$, we have $\Psi_\epsilon \geq 2-2\alpha^{-1}) >0$ on $\supp \Psi_{\epsilon'}$, hence $\epsilon' \partial_r \Psi_{\epsilon'}^3 \lesssim \Psi_\epsilon^9$ and
\[ (\epsilon')^{-1}\omega(\epsilon') \lesssim \epsilon' \iint_\Omega |\nabla u|^2 \partial_r \Psi_{\epsilon'}^3 \,  \frac{ds \, dy}{|s|^{n-d}} \lesssim  \|\wt N(\nabla u|\Psi_\epsilon^9)\|_2^2.\]
So there exists $C'>0$ depending only on $d$, $n$ (and $\alpha$) such that
\begin{equation} \label{bdd36}
\sup_{\alpha \epsilon\leq \epsilon' \leq \alpha^2\epsilon} \omega(\epsilon') \leq C' \epsilon \|\wt N(\nabla u|\Psi_\epsilon^9)\|_2^2.
\end{equation}
By the triangle inequality, we have
\[\|\wt N(\nabla u|\Psi_\epsilon^3)\|_2 - \|\wt N(\nabla u|\Psi_\epsilon^9)\|_2 \leq \|\wt N(\nabla u|(1-\Psi_\epsilon^6)\Psi^3_\epsilon)\|_2 \]
But since $(1-\Psi_\epsilon^6)\Psi^3_\epsilon$ is supported in the strip $\{\epsilon/2 \leq |s| \leq \epsilon\}$, it is fairly easy to see that
\[\|\wt N(\nabla u|(1-\Psi_\epsilon^6)\Psi^3_\epsilon)\|_2^2 \lesssim \int_{\R^d} \fint_{\epsilon/2 \leq |s| \leq \epsilon} |\nabla u|^2 \, ds\, dy \]
Take now $\epsilon' \in [\alpha^{-2} \epsilon, \alpha^{-1}\epsilon]$. With our choice of $\phi$ and $\alpha$, we have that $\Psi_\epsilon + \epsilon' \partial_r \Psi_{\epsilon'}$ is bounded from below by a uniform constant (i.e. depend only on $\alpha$) on $[\epsilon/2,\epsilon]$. 
It implies that
\begin{multline}
\int_{\R^d} \fint_{\epsilon/2 \leq |s| \leq \epsilon} |\nabla u|^2 \, ds\, dy \lesssim \|\wt N(\nabla u|\Psi_\epsilon^9)\|_2^2 + (\epsilon')^{-1} \iint_\Omega |\nabla u|^2 \partial_r \Psi^3_{\epsilon'} \frac{ds\, dy}{|s|^{n-d-2}} \\
 \lesssim \|\wt N(\nabla u|\Psi_\epsilon^9)\|_2^2 + \kappa^{-1/2} (\epsilon')^{-1}\omega(\epsilon'),
\end{multline}
where, in the last line, the coefficient $\kappa^{-1/2}$ appears because $\omega$ is defined using the $|\nabla u|_\kappa^2$. 
By combining the last three computations, we obtain the existence of a constant $C''>0$ depending on $n$ and $\lambda$ (and $\alpha$) such that 
\begin{equation} \label{bdd37}
\epsilon \|\wt N(\nabla u|\Psi_\epsilon^3)\|_2^2 \leq C'' \epsilon \|\wt N(\nabla u|\Psi_\epsilon^9)\|_2^2 + C''\kappa^{-1/2} \inf_{\epsilon/\alpha^2 \leq \epsilon' \leq \epsilon/\alpha} \omega(\epsilon').
\end{equation}
We say that $\epsilon \in I_{N}$ if 
\begin{equation} \label{bdd38}
\|\wt N(\nabla u|\Psi_\epsilon^3)\|_2^2 > C''(1+\kappa^{-1/2}C')  \|\wt N(\nabla u|\Psi_\epsilon^9)\|_2^2.
\end{equation}
If $\epsilon \in I_N$, then we have by \eqref{bdd36}, and then \eqref{bdd37}--\eqref{bdd38} that 
\begin{equation} \label{bdd39}
\sup_{\alpha \epsilon\leq \epsilon' \leq \alpha^2\epsilon} \omega(\epsilon') \leq C'  \epsilon \|\wt N(\nabla u|\Psi_\epsilon^9)\|_2^2 < \inf_{\epsilon/\alpha^2 \leq \epsilon' \leq \epsilon/\alpha} \omega(\epsilon').
\end{equation}

\medskip

\textbf{Step 3:} We want to prove that the point $0$ is in the closure of 
\begin{equation} \label{bdd40}
(0,+\infty) \setminus (I_\B \cup I_N).
\end{equation}
Indeed, we take $\kappa$ (that depends on $\lambda$, $d$, and $n$) such that $2C_M\kappa C''(1+\kappa^{-1/2}C') \leq 1$, and then we have \eqref{conditionBN} for any value of $\epsilon \in (0,+\infty) \setminus (I_\B \cup I_N)$. If the claim \eqref{bdd40} is true, then we can find values of $\epsilon \in (0,+\infty) \setminus (I_\B \cup I_N)$ arbitrary close to 0, and for those values, \eqref{N<Tr+B+kN} self-improves to the desired bound \eqref{eqTHM00b}.

\medskip

The argument is a bit technical because we have to combine \eqref{bdd35} - which shows that $\omega(\epsilon)$ `continuously increases'  as $\epsilon \to 0$ - with \eqref{bdd39} - which implies that $\omega(\epsilon)$ increases by `jumps' when $\epsilon$ get closer to 0. The quantity $\omega(\epsilon)$ may not be increasing as $\epsilon\to 0$, but a subsequence will be increasing, and that is enough for us.

We decided the write the (simpler) arguments that show that zero is in the closure of both $(0,+\infty) \setminus I_\B$ and $(0,+\infty) \setminus I_N$. These simpler arguments are not necessary for the proof, but we hope it will help the reader understand later what we are doing when we look at $(0,+\infty) \setminus (I_\B \cup I_N)$.

\medskip

\textbf{Step 3(a):} We claim that $0$ is in the closure of $(0,+\infty) \setminus I_\B$. Indeed, if it is not the case, then there exists $\epsilon_0$ such that $(0,\epsilon_0] \in I_\B$. The bound \eqref{bdd35} implies then that $\omega(\epsilon) \geq \omega(\epsilon_0/2) >0$ for all $\epsilon\in (0,\epsilon_0/2)$, which contradicts \eqref{ltglo06b}.

\medskip

\textbf{Step 3(b):} We claim that $0$ is in the closure of $(0,+\infty) \setminus I_N$. Indeed, if it is not the case, then there exists $\epsilon_0$ such that $(0,\epsilon_0] \in I_N$. The bound \eqref{bdd39} implies then that $\omega(\epsilon/\alpha^{2k+1}) \geq \omega(\epsilon_0/\alpha) >0$ for all $k\in \mathbb N$, which contradicts \eqref{ltglo06b}.

\medskip

\textbf{Step 4:} The argument is similar to the one done in the proof of Lemma 7.8 from \cite{feneuil2018dirichlet}, but we try to give a clearer presentation.

\medskip

We assume that $(0,\epsilon_0] \subset I_\B \cup I_N$, and we want to prove that \eqref{ltglo06b} does not hold. Because of Step 3(a), we can also assume that zero is in the closure of $(0,\infty) \setminus I_{\B}$, meaning that for any $0<\epsilon<\epsilon_0$, we have
\begin{equation} \label{bdd41}
\inf\{\epsilon' >0,\, (\epsilon',\epsilon) \subset I_\B\} >0.
\end{equation}
We write 
\[\delta(\epsilon):= \inf_{\epsilon\leq \epsilon' \leq \alpha \epsilon} \omega(\epsilon')\]
and we want to construct $\epsilon_k$ such that that $\epsilon_{k+1} \leq \epsilon_k/\alpha^2$ and $\delta(\epsilon_{k+1}) > \delta(\epsilon_{k})$.

\medskip

{\bf Induction step.} We have $(0,\epsilon_k] \subset I_\B \cup I_N$.
\begin{enumerate}
\item If we have $(\epsilon_k/\alpha^2,\epsilon_k) \subset I_\B$, then by \eqref{bdd41}, we have 
\[0 < \epsilon_{k+1}:= \inf\{\epsilon, \, (\epsilon,\epsilon_k) \subset I_B\} \leq  \epsilon_k/\alpha^2.\] 
Now, thanks to \eqref{bdd35}, we also get
\[\delta(\epsilon_{k+1}) := \inf_{\epsilon_{k+1}\leq \epsilon' \leq \alpha \epsilon_{k+1}} \omega(\epsilon') = \omega(\alpha\epsilon_{k+1}) > \omega(\epsilon_k)\geq \delta(\epsilon_k).\]

\medskip

\item If (1) is false, then take again $\tilde \epsilon:= \inf\{\epsilon, \, (\epsilon,\epsilon_0) \subset I_B\} > \epsilon_0/\alpha^2$.
Since $I_\B$ is open, we necessarily have $\tilde \epsilon \notin I_\B$, which forces $\tilde \epsilon \in I_N$. Since $\epsilon_k/\alpha^2< \tilde \epsilon \leq \epsilon_k$, the intersection $[\alpha \tilde \epsilon_,\alpha^2\tilde \epsilon] \cap [\epsilon_k,\alpha\epsilon_k]$ is not empty and contains $\tilde{\tilde\epsilon}$. So if we choose $\epsilon_{k+1} = \tilde \epsilon/\alpha^2$ we have by \eqref{bdd39} that
\[\delta(\epsilon_{k+1}) > \sup_{\alpha \tilde \epsilon \leq \epsilon' \leq \alpha^2 \tilde \epsilon} \omega(\epsilon') \geq \omega(\tilde{\tilde\epsilon}) \geq \delta(\epsilon_k).\]
\end{enumerate}

By construction, the value of $\omega(\epsilon_k)$ will be bigger than $\delta(\epsilon_1)>0$ for any $k\geq 1$, which means that the convergence \eqref{ltglo06b} does not hold. To summarize, we established that if zero is not in the closure of $(0,+\infty) \setminus (I_\B \cup I_N)$, which means that there exists $\epsilon_0>0$ such that $(0,\epsilon_0] \subset I_\B \cup I_N$, then \eqref{ltglo06b} fails. By contraposition, the convergence \eqref{ltglo06b}  - which holds because $u\in W$ - implies that zero is in the closure of $(0,+\infty) \setminus (I_\B \cup I_N)$. The theorem follows.
\end{proof}

\section{Approximation by operators with Lipschitz coefficients.} \label{Sapprox}

With Theorem \ref{THMAIN1a}, we get closer to Theorem \ref{THRE2MA}, which is our objective. We ``just'' need to prove that if $u_g \in W$ is the solution given by Lemma \ref{LaxMilgram} for $g \in H$ satisfying $\|\nabla g\|_{L^2(\R^d)} < \infty$, we have
\begin{equation} \label{Trepstog}
\limsup_{\epsilon \to 0} \int_{\R^d} \fint_{\epsilon/2 \leq |s| \leq \epsilon} |\nabla_x u|^2 \, ds\, dy \leq C \int_{\R^d} |\nabla g|^2 \, dy.
\end{equation}
However, the above convergence is not a simple fact. In some sense, it is a weaker version of Theorem \ref{THRE2MA} that only consider the values of $u$ as close as the boundary as we want. The strategy consists of smoothing the coefficients of $\A$ in a small tube close to the boundary while satisfying \HH{} with uniform constants. For those operators, the convergence \eqref{Trepstog} hence Theorem \ref{THRE2MA} will hold with uniform constants. But since the coefficients are modified only an a small enough set, the solutions to the modified operators will converge to the solution of the initial operator, and we eventually are able to prove Theorem \ref{THRE2MA}.

\medskip

First, we show that the weak solutions are in $W^{2,2}_{loc}$ (see Proposition \ref{LW22loc}), which means that taking second derivative is allowed, and so the square functional $S$ and its local version make sense. We should have given this argument long ago, but this result is already well known, and we decided to write it just as an introduction for Proposition \ref{WT01}. In Proposition \ref{WT01}, we establish a global bound for the tangential derivatives: we show that if the boundary data and the coefficients of $\A$ are smooth enough, any weak solution satisfies $\nabla_x u \in W$. As a corollary (Proposition \ref{APRP03}) we prove a technical lemma stating that the ``approximation of the trace'' of a solution converges to the actual trace.

In the end of the section (Theorem \ref{PrAPA}), we establish that any elliptic operator satisfying \HH{} can be approximated by operators with smooth enough coefficients (so that the global estimates given in Proposition \ref{WT01} apply) that satisfy \HH{} with constants controlled by the ones of the approximated operator.

In the next section, we establish the convergence of solutions of approximating operators and then we combine Theorem \ref{THMAIN1a}, Theorem \ref{PrAPA}, and Proposition \ref{APRP03} to obtain Theorem \ref{THRE2MA}.

\begin{Prop}\label{LW22loc}
Let $L:=-\diver [A\nabla]$ be an elliptic operator defined on $D$ such that $A$ satisfies \eqref{ELLIP} and $\nabla A \in L^\infty_{loc}(D)$. Then any weak solution $u\in W^{1,2}_{loc}(D)$ to $Lu = 0$ in $D$ also lies in $W^{2,2}_{loc}(D)$.
\end{Prop}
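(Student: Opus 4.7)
The plan is to use the standard Nirenberg difference quotient technique, since this is essentially the classical interior $H^2$-regularity result for divergence-form elliptic equations with Lipschitz coefficients, adapted to our (un-weighted) setting since here $L$ has no distance weight. The key observation is that on any compact subset of $D$ the coefficients are genuinely Lipschitz, so the problem is locally a standard uniformly elliptic equation with Lipschitz coefficients.

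First I would fix balls $B \Subset B' \Subset D$ and a cutoff $\eta \in C_0^\infty(B')$ with $0 \le \eta \le 1$ and $\eta \equiv 1$ on $B$. For $1 \le i \le n$ and $|h|$ smaller than $\tfrac12\dist(B',\partial D)$, introduce the difference quotient
\[
D_i^h v(X) := \frac{v(X+h e_i) - v(X)}{h}.
\]
The plan is to test the weak formulation \eqref{WSOL} (or rather its unweighted analogue, since the statement here has no $|t|^{d+1-n}$ weight) against $\varphi := D_{-i}^h(\eta^2 D_i^h u) \in W^{1,2}_0(B')$. Using the discrete integration-by-parts identity $\iint f\,D_{-i}^h g = -\iint (D_i^h f) g$ for $h$ small enough that translation is admissible, and the discrete product rule $D_i^h(A\nabla u) = A(\cdot+he_i)\,\nabla D_i^h u + (D_i^h A)\,\nabla u$, the equation $\iint A\nabla u \cdot \nabla \varphi = 0$ rearranges into
\[
\iint A(\cdot+he_i)\,\nabla D_i^h u \cdot \nabla(\eta^2 D_i^h u) = - \iint (D_i^h A)\,\nabla u \cdot \nabla(\eta^2 D_i^h u).
\]

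Next I would expand $\nabla(\eta^2 D_i^h u) = \eta^2 \nabla D_i^h u + 2\eta(\nabla\eta) D_i^h u$, apply ellipticity \eqref{ELLIP} to the main term $\eta^2 A(\cdot+he_i)\nabla D_i^h u \cdot \nabla D_i^h u \ge \lambda \eta^2 |\nabla D_i^h u|^2$, and absorb all cross terms via Cauchy–Schwarz with weight $\varepsilon$. Crucially, the bound $|D_i^h A(X)| \le \|\nabla A\|_{L^\infty(B'')}$ on $B''$ slightly larger than $B'$ (valid because $\nabla A \in L^\infty_{\mathrm{loc}}$) controls the right-hand side by $C\|\nabla A\|_{L^\infty(B'')} \|\nabla u\|_{L^2(B')} \|\eta \nabla D_i^h u\|_{L^2} + (\text{lower-order})$. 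This yields the uniform estimate
\[
\|\nabla D_i^h u\|_{L^2(B)} \le C\big(\lambda, \|\nabla A\|_{L^\infty(B'')}, \dist(B,\partial B')\big) \|\nabla u\|_{L^2(B')},
\]
independent of $h$.

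Finally, by the standard characterization (a family uniformly bounded in $L^2$ difference quotients has a weak partial derivative in $L^2$), this gives $\partial_i \nabla u \in L^2(B)$ for every $i$, hence $u \in W^{2,2}(B)$. Since $B \Subset D$ is arbitrary, $u \in W^{2,2}_{\mathrm{loc}}(D)$. The only potential obstacle is purely bookkeeping: making sure $\varphi$ is an admissible test function (supported away from $\partial D$ for $|h|$ small, which forces the two-ball setup $B \Subset B' \Subset D$ and restricting $|h|<\tfrac12\dist(B',\partial D)$) and keeping track of the fact that $A(\cdot+he_i)$ rather than $A$ appears in the ellipticity estimate—this is harmless since ellipticity \eqref{ELLIP} holds pointwise with the same constant $\lambda$.
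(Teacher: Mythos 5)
Your proposal is correct and follows essentially the same route as the paper: a Nirenberg difference-quotient argument with a cutoff, using ellipticity plus the local $L^\infty$ bound on $\nabla A$ to obtain an $h$-independent estimate on $\|\nabla D_i^h u\|_{L^2}$, and then passing to the limit (the paper phrases this via weak $L^2$ compactness, you via the standard difference-quotient characterization, which is the same thing). No gaps.
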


\begin{Rem}
Observe that the quantities $\nabla \partial_x$, $\nabla \partial_\varphi$ and $\nabla \partial_r$ are a linear combination of second order derivatives and first order derivatives, so those derivatives are (locally in $L^2$) well defined for any weak solution to $Lu=0$ for which $L$ satisfies \HH.
\end{Rem}

\begin{proof}
The proof is classical, but we could not pinpoint a good reference, so since the proof is quite simple and will be a good introduction to the global analogue, we decided to write it.

Take $E\Subset D$ and then $\Psi \in C^\infty_0(D)$ such that $\Psi \equiv 1$ on $E$ and $0\leq \Psi \leq 1$. Pick a unit vector $e\in \R^n$. Define when $0 < h < \dist(\supp\Psi,D^c)/2$ the operator $\Delta_e^h$ as
\[\Delta_e^h u := \frac{u(X+he) - u(X)}{h} \in W^{1,2}_{loc}(D)\]
We want to prove that 
\begin{equation} \label{claimW22loc}
I_h = \iint_D |\nabla (\Delta^h_e u)|^2 \Psi^2 \, dX \leq C_{E,u},
\end{equation} 
with a bound $C_{E,u}$ independent of $h$. Indeed, once the claim \eqref{claimW22loc} is established, by the weak compactness of the unit ball in $L^2$ we can extract a sequence $h_m \in (0,1)$ such that  $\nabla (\Delta_e^{h_m} u)$ converges weakly in $L^2(E)$ (and thus in the sense of distribution). But we know that $\nabla (\Delta_e^{h_m} u)$ has to converge to $\nabla \partial_e u$ in the sense of distribution - where of course $\partial_e$ is the derivative in the direction $e$ - so the weak limit of the $\nabla (\Delta_e^{h_m} u)$ is $\nabla \partial_e u$, which is now in $L^2(E)$ and satisfies $\int_E |\nabla \partial_e u|^2 \, dX \leq C_{E,u}$. Since we have the bound for every compact subset $E$ and any direction $e$, we conclude that $W^{2,2}_{loc}(D)$ as desired.

\medskip

So it remains to prove  \eqref{claimW22loc}. Since $u$ is a weak solution to $-\diver A\nabla u = 0$ in $D$, and that $X\to \Psi(X-he)$ still lies in $C^\infty_0(D)$, we deduce
\[\begin{split}
0 & = \iint_D \frac{A(X+he)\nabla u(X+he) - A(X)\nabla u(X)}{h} \cdot \nabla[ \Psi^2 \Delta_h^e u ] \, dX \\
& = \iint_D (\Delta_e^h A)\nabla u(X+he) \cdot \nabla[ \Psi^2 \Delta_h^e u ] \, dX + \iint_D A \nabla \Delta_e^h u \cdot \nabla[ \Psi^2 \Delta_h^e u ] \, dX
\end{split}\]
We use the product rule to write $\nabla[ \Psi^2 \Delta_h^e u ] = 2 \nabla \Psi \,  (\Psi \Delta_h^e u) + \nabla \Delta_h^e u \, (\Psi^2)$ we obtain 
\begin{multline*}
I_h= - 2 \iint_D A \nabla \Delta_e^h u \cdot \nabla \Psi \, (\Psi \Delta_h^e u)  \, dX - 2 \iint_D (\Delta_e^h A)\nabla u(X+he) \cdot \nabla \Psi \,  (\Psi \Delta_h^e u) \, dX \\
 -  \iint_D (\Delta_e^h A)\nabla u(X+he) \cdot \nabla \Delta_h^e u \, (\Psi^2)  \, dX
\end{multline*}
We use the identity $ab \leq \epsilon a^2 + b^2/4\epsilon$ on each of the three terms of the right-hand side above, and we group the similar terms. Afterwards, we get that 
\[I_h \leq \frac12 I_h + C \left( (1+\|A\|_\infty^2) \iint_D |\Delta_h^e u|^2 |\nabla\Psi|^2 \, dX + \|\nabla A\|_{L^\infty(F)}^2 \iint_D |\nabla u(X+he)|^2 \Psi^2 \, dX \right) \]
where $F \Subset D$ is the set of points at distance at most $\dist(\supp\Psi,D^c)/2$ from $\supp \Psi$. Since $I_h$ is finite, we hide the term  $\frac12 I_h$ in the right-hand side, as for the two other terms, we observe that they are easily bounded - up to a constant that depends only on $A$ and $\Psi$ - by $\iint_F |\nabla u|^2 \, dX$. The claim \eqref{claimW22loc} and the proposition follows.
\end{proof}

The global analogue of the previous proposition is the following result.

\begin{Prop}\label{WT01}
Let $L:=-\diver (|t|^{d+1-n}\mathcal{A}\nabla)$ be an elliptic operator that satisfies the uniform ellipticity condition (\ref{ELLIP}). Suppose that $u\in W$ is a weak solution of $Lu=0$ and $\Tr (u)=g\in C^\infty_0(\partial\Omega)$. If $\|\nabla \mathcal{A}_x \|_{L^\infty(\Omega)}<\infty$, then $\nabla_x u \in W$. More precisely, 
\begin{equation} \label{WT01a}
\iint_\Omega |\nabla \nabla_x u|^2\frac{dt}{|t|^{n-d-1}}dx\lesssim 
\int_{\partial\Omega}|\nabla g|^2dx +\int_{\partial\Omega}|\nabla \nabla g|^2dx
+ \iint_{\Omega} |\nabla u|^2\frac{dtdx}{|t|^{n-d-1}},
\end{equation}
where the implicit constant depends on elliptic constant $\lambda$,the  dimensions $d$ and $n$, and $\|\nabla \mathcal{A}\|_{L^\infty(\Omega)}$. 
\end{Prop}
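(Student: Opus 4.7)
The plan is to prove Proposition \ref{WT01} by a difference quotient argument in the tangential directions, mimicking globally what was done locally in Proposition \ref{LW22loc}. Fix a unit tangential direction $e \in \R^d$, and for $h \neq 0$ define $\Delta^h u(x,t) := \frac{u(x+he,t) - u(x,t)}{h}$. The crucial structural fact is that tangential translations preserve the weight $|t|^{d+1-n}$, so $u_h(x,t) := u(x+he,t)$ solves $-\diver(|t|^{d+1-n} A_h \nabla u_h) = 0$ with $A_h(x,t) := A(x+he,t)$, and $\A_h$ still satisfies \eqref{ELLIP} with the same constant. Subtracting the two weak formulations and dividing by $h$ gives, for any $v \in W_0$,
\begin{equation}\label{eqDiffQW}
\iint_{\Omega} A_h \nabla(\Delta^h u) \cdot \nabla v \,\frac{dt\,dx}{|t|^{n-d-1}} = -\iint_{\Omega} (\Delta^h A) \nabla u \cdot \nabla v \,\frac{dt\,dx}{|t|^{n-d-1}}.
\end{equation}

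Next I would construct an extension $G(x,t) := \chi(|t|) g(x)$ of the boundary datum, where $\chi \in C^\infty_c([0,\infty))$ with $\chi \equiv 1$ near $0$. A direct polar-coordinates computation shows that $G \in W$ and more importantly
\begin{equation*}
\iint_\Omega |\nabla \Delta^h G|^2 \,\frac{dt\,dx}{|t|^{n-d-1}} \lesssim \|\nabla g\|_{L^2(\R^d)}^2 + \|\nabla\nabla g\|_{L^2(\R^d)}^2
\end{equation*}
uniformly in $h$ (the weight $|t|^{d+1-n}$ is exactly cancelled by the Jacobian $r^{n-d-1}$, and the $r$-integral converges because $\chi$ is compactly supported). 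Since $\Tr(\Delta^h u) = \Delta^h g = \Tr(\Delta^h G)$, the function $v := \Delta^h u - \Delta^h G$ lies in $W_0$ and is a legitimate test function.

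Plugging $v = \Delta^h u - \Delta^h G$ into \eqref{eqDiffQW}, applying the ellipticity bound \eqref{ELLIP} on the left, and using $ab \leq \epsilon a^2 + \frac{b^2}{4\epsilon}$ on the right with the a priori $L^\infty$ bound $|\Delta^h A| \leq \|\nabla_x A\|_{L^\infty(\Omega)}$, I would obtain
\begin{equation*}
\iint_\Omega |\nabla \Delta^h u|^2 \frac{dt\,dx}{|t|^{n-d-1}} \lesssim \iint_\Omega |\nabla \Delta^h G|^2 \frac{dt\,dx}{|t|^{n-d-1}} + \|\nabla_x A\|_{L^\infty}^2 \iint_\Omega |\nabla u|^2 \frac{dt\,dx}{|t|^{n-d-1}},
\end{equation*}
where the term $\epsilon \iint|\nabla \Delta^h u|^2|t|^{d+1-n}dt\,dx$ produced by the Cauchy--Schwarz split is absorbed into the left side (which is finite a priori for each fixed $h$, since $\Delta^h \nabla u \in L^2(|t|^{d+1-n})$ by a routine translation estimate). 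Combining with the bound on the extension $G$ yields the uniform estimate \eqref{WT01a} for $\Delta^h u$ in place of $\partial_{x_i} u$.

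To conclude, I would extract a weakly convergent subsequence $\Delta^{h_m} \nabla u \rightharpoonup w$ in $L^2(\Omega, |t|^{d+1-n}\,dt\,dx)$. Since $\Delta^{h_m} \nabla u \to \nabla \partial_e u$ in $\mathcal{D}'(\Omega)$, one must have $w = \nabla \partial_e u$, and the weak lower semicontinuity of the $L^2$ norm transfers \eqref{WT01a} to $\nabla \partial_e u$. Summing over the $d$ tangential directions finishes the proof. The main obstacle is ensuring that the test function $v$ is genuinely admissible and that the difference quotient bound uses the right cancellation for the tangential derivatives only; the reason the argument works precisely for $\nabla_x u$ (and not, say, for $\partial_r u$) is that radial or angular translations would distort the weight $|t|^{d+1-n}$ and produce extra uncontrollable terms.
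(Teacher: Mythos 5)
Your argument is correct and essentially coincides with the paper's proof: both use tangential difference quotients, the crucial invariance of the weight $|t|^{d+1-n}$ under tangential translation, a $|t|$-cutoff extension of $g$ (you use $\chi(|t|)g(x)$, the paper uses $e^{-|t|}g(x)$) so that the test function lies in $W_0$, the ellipticity and Cauchy--Schwarz split, absorption of the $\epsilon$-term, and a weak-compactness extraction at the end. The only cosmetic difference is that the paper first subtracts the extension to form $v=u-e^{-|t|}g\in W_0$ and applies $\Delta^h$ to $v$, whereas you first subtract the two translated weak formulations and then choose the test function; these are the same computation rearranged, and your closing remark on why the argument is confined to tangential derivatives matches the paper's own remark after the proposition.
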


\begin{Rem}
In the codimension 1 case, where we do not have angular derivatives, we can deduce a bound on the full set of second derivatives by using the equation (see Proposition \ref{PpOPER}, that allow us to write the second order radial derivative $\partial_r^2$ of a solution as a linear combination of first order derivatives and $\nabla \nabla_x$). However, we did not succeed to bound globally the angular derivatives, so we did not succeed to show that $\nabla u \in W$. 
\end{Rem}

\begin{proof} Let $v(x, t):=u(x, t)-e^{-|t|}g(x)$. We first notice that $e^{-|t|}g\in W$ and $\Tr(e^{-|t|}g) = g$, hence $v\in W_0$. Consider the difference quotient in the tangential direction $e_i\in \mathbb{R}^d$ such that
\begin{align*}
\Delta_i^h v(x,t)=\frac{v(x+he_i,t)-v(x,t)}{h}, \ \ \  h\neq 0.
\end{align*}
We can easily see that $\Delta_i^h v \in W_0$ for each fixed $h\neq 0$. In particular, the quantity $J(v)$ defined as
\begin{align*}
J(v):=\iint_\Omega|\nabla \Delta^h_i v|^2\frac{dt}{|t|^{n-d-1}}dx.
\end{align*}
is finite. We turn to the bound of $J(v)$. By uniform ellipticity of $\mathcal{A}$, 
\begin{align}\label{J121}
J(v)\lesssim \iint_\Omega \mathcal{A}\nabla (\Delta^h_i v)\cdot \nabla (\Delta^h_i v)\frac{dt}{|t|^{n-d-1}}dx.
\end{align}
Since $\Delta_i^h v\in W_0$ and $u$ is a weak solution to the equation $Lu=0$, we have,
\begin{multline*}
0=\iint_{\Omega} \frac{\mathcal{A}(x+he_i, t)\nabla u(x+he_i, t)-\mathcal{A}(x, t)\nabla u(x, t)}{h}\cdot \nabla (\Delta^h_i v)\frac{dt}{|t|^{n-d-1}}dx\\
=\iint_{\Omega} (\Delta_i^h \mathcal{A})\nabla u(x+he_i, t)\cdot \nabla (\Delta^h_i v)\frac{dt}{|t|^{n-d-1}}dx\\
+\iint_{\Omega} \mathcal{A}(x, t)\Big (\Delta_i^h\nabla u(x, t)\Big )\cdot \nabla (\Delta^h_i v)\frac{dt}{|t|^{n-d-1}}dx.
\end{multline*}
Since two operators $\Delta_i^h, \nabla$ commute and $u(x)=v(x)+e^{-|t|}g(x)$, the identity above implies that
\begin{multline*}
J=-\iint_\Omega \mathcal{A}\nabla (e^{-|t|}\Delta_i^h g )\cdot \nabla (\Delta_i^h v)\frac{dt}{|t|^{n-d-1}}dx\\
-\iint_\Omega (\Delta^h_i \mathcal{A})\nabla u(x+he_i, t)\cdot \nabla (\Delta^h_i v)\frac{dt}{|t|^{n-d-1}}dx=J_{1}+J_{2}.
\end{multline*}

The term $J_{1}$ can be bounded by
\[J_{1}
\leq \frac{\|\mathcal{A}\|^2_{L^\infty(\Omega)}}{4\epsilon}\iint_\Omega |\nabla (e^{-|t|} \Delta_i^h g)|^2\frac{dt}{|t|^{n-d-1}}dx+\epsilon J(v).\]
and 
\[J_{2}
\leq \epsilon J(v)+\frac{\|\Delta^h_i \mathcal{A}\|^2_{L^{\infty}(\Omega)}}{4\epsilon}\iint_\Omega |\nabla u(x+he_i, t)|^2\frac{dt}{|t|^{n-d-1}}dx.\]
The term $\epsilon J(v)$ can be hidden to the left-thand side of (\ref{J121}) by choosing $\epsilon$ small enough. Moreover, the mean value inequality infers that $\|\Delta^h_i \mathcal{A}\|_{\infty} \leq \|\nabla_x \A\|_\infty$. Consequently,
\begin{equation}\label{GTJ12}
J(v)\lesssim \iint_\Omega |\nabla (e^{-|t|} \Delta_i^h g)|^2\frac{dtdx}{|t|^{n-d-1}} +  \iint_{\Omega} |\nabla u(x+he_i, t)|^2\frac{dtdx}{|t|^{n-d-1}}:= J_3 + J_4.
\end{equation}
After a change of variable, $J_4$ is just $\|u\|_W^2 = \iint_\Omega |\nabla u|^2 |t|^{d+1-n} dtdx$. The term $J_3$ is bounded brutally by using cylindrical coordinate as follows
\begin{align}\label{eqWT02}
J_3 \lesssim \left( \int_0^\infty e^{-2r} dr\right) \int_{\R^d} \big(|\Delta^h_i g|^2 + |\nabla \Delta^h_i g|^2 \big) dx \lesssim \int_{\R^d} (|\nabla g|^2 + |\nabla \nabla g|^2) \, dx.
\end{align}
We just prove \eqref{WT01a} with the rate of change $\Delta^h_i$ instead of the derivative $\partial_{x_i}$, and with a constant independent of $h$. So by the same compactness argument as the one given in Proposition \ref{LW22loc}, \eqref{WT01a} follows.  
\end{proof}

We study the sufficient conditions to define $\Tr (\nabla_x u)$ in the next proposition.  

\begin{Prop}\label{APRP03}
Let $L:=-\diver (|t|^{d+1-n}\mathcal{A}\nabla)$ be an elliptic operator that satisfies the uniform ellipticity condition (\ref{ELLIP}). Suppose that $u\in W$ is a weak solution of $Lu=0$ and $\Tr (u)=g\in C^\infty_0(\partial\Omega)$. If $\|\nabla_x \mathcal{A}\|_{L^\infty(\Omega)}<\infty$, then $\Tr (\nabla_x u)=\nabla g$ almost everywhere, and in particular
\begin{equation} \label{limTr=g}
\lim_{\epsilon\to 0} \int_{\R^d} \fint_{\epsilon/2 < |t| \leq \epsilon} |\nabla_x u|^2 \, dt \, dx = \int_{\R^d} |\nabla g|^2 \, dx. 
\end{equation}
\end{Prop}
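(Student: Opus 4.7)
By Proposition \ref{WT01}, the hypothesis $\|\nabla_x \A\|_\infty < \infty$ forces $\nabla_x u \in W$, so the trace $\Tr(\nabla_x u) \in H$ is well defined. The plan is to first identify $\Tr(\nabla_x u) = \nabla g$ abstractly and then upgrade this identity to the $L^2$ convergence on annular shells claimed in \eqref{limTr=g}. For the identification, fix $1 \leq i \leq d$; for $h \neq 0$ the translate $u(\cdot + he_i, \cdot)$ belongs to $W$ and has trace $g(\cdot + he_i)$, so by linearity $\Delta^h_i u \in W$ with $\Tr(\Delta^h_i u) = \Delta^h_i g$. Since $\partial_{x_i}\nabla u = \nabla \partial_{x_i}u \in L^2(\Omega,|t|^{d+1-n}dtdx)$ by Proposition \ref{WT01}, the standard weighted-$L^2$ difference-quotient lemma (Jensen + translation invariance in $x$, identical to the argument in the proof of Proposition \ref{LW22loc}) gives $\nabla \Delta^h_i u = \Delta^h_i \nabla u \to \partial_{x_i} \nabla u$ in $L^2(\Omega,|t|^{d+1-n}dtdx)$, i.e. $\Delta^h_i u \to \partial_{x_i} u$ in the semi-norm of $W$. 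The continuity of $\Tr : W \to H$ then yields $\Tr(\partial_{x_i} u) = \partial_{x_i} g$ a.e.\ on $\R^d$.

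To obtain the $L^2$ limit \eqref{limTr=g}, I would pick a smooth compactly supported $\phi:[0,\infty) \to \R$ with $\phi(0)=1$ and set $\tilde g(x,t) := \phi(|t|)\,g(x)$, so that $\tilde g \in W$, $\Tr(\tilde g) = g$, and $\nabla_x \tilde g = \phi(|t|)\nabla g \in W$. Combined with the previous paragraph, $w := \nabla_x u - \phi(|t|)\nabla g$ belongs to $W_0$. A Hardy-type inequality for $W_0$ in this weighted setting --- a routine consequence of the trace theory built in \cite{david2017elliptic} and in the codimension-$1$ case just the classical Hardy inequality --- gives
\[
\iint_\Omega |w|^2 \, \frac{dt\, dx}{|t|^{n-d+1}} \leq C \|w\|_W^2 < \infty.
\]
On the shell $\{\epsilon/2 \leq |t|\leq \epsilon\}$ the weight $|t|^{-(n-d+1)}$ is comparable to $\epsilon^{-(n-d+1)}$ and the shell has $(n{-}d)$-dimensional volume $\approx \epsilon^{n-d}$, so
\[
\int_{\R^d} \fint_{\epsilon/2 \leq |t|\leq \epsilon} |w|^2\, dt\, dx \;\lesssim\; \epsilon \iint_{\{|t|\leq \epsilon\}} |w|^2 \, \frac{dt\,dx}{|t|^{n-d+1}} \;\xrightarrow[\epsilon\to 0]{}\; 0
\]
by dominated convergence. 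Meanwhile $\phi(|t|)^2 \to 1$ uniformly on $\{\epsilon/2\leq |t|\leq \epsilon\}$ as $\epsilon \to 0$, hence $\int_{\R^d} \fint_{\epsilon/2 \leq |t|\leq \epsilon} |\phi(|t|)\nabla g|^2\, dt\, dx \to \int_{\R^d} |\nabla g|^2\, dx$, and the triangle inequality in $L^2(\R^d \times \{\epsilon/2 \leq |t|\leq \epsilon\})$ closes \eqref{limTr=g}.

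The main technical point will be the Hardy-type inequality for $W_0$; although not recalled explicitly in the present paper, in the higher-codimensional weighted setting it is standard and follows from the trace theory constructed in \cite{david2017elliptic}. All the other ingredients --- commuting tangential difference quotients with the trace, the boundedness of $\Tr : W \to H$, and dominated convergence on the shrinking shells --- are routine.
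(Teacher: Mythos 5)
Your overall strategy is genuinely different from the paper's (which identifies $\Tr(\nabla_x u)$ through mollified traces $\Tr_\epsilon$ commuting with $\nabla_x$ plus uniqueness of distributional limits, and then gets \eqref{limTr=g} from a Poincar\'e inequality on Whitney boxes together with the quantitative estimate (3.24) of \cite{david2017elliptic}), and much of it is sound: the difference-quotient convergence $\Delta^h_i u \to \partial_{x_i}u$ in the $W$ semi-norm is correct since the weight $|t|^{d+1-n}$ is $x$-independent, and the shell estimate deducing \eqref{limTr=g} from $w\in W_0$ plus a Hardy inequality is a clean alternative to the paper's argument. But there is a genuine gap at the key identification step. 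The trace operator is bounded from $(W,\|\cdot\|_W)$ to $(H,\|\cdot\|_H)$, and both of these are only \emph{semi-norms}; the kernel of $\|\cdot\|_H$ consists of the a.e.\ constant functions. So from $\|\Delta^h_i g-\Tr(\partial_{x_i}u)\|_H\to 0$ and $\|\Delta^h_i g-\partial_{x_i} g\|_H\to 0$ you only get $\Tr(\partial_{x_i}u)=\partial_{x_i}g+c_i$ for some constant $c_i$, not the pointwise a.e.\ identity you claim. This is not cosmetic: if $c\neq 0$ then $w=\nabla_x u-\phi(|t|)(\nabla g+c)$ is the function with zero trace, $\phi(|t|)c\notin W$, and the right-hand side of \eqref{limTr=g} would have to be $\int|\nabla g+c|^2=\infty$; so ruling out the constant is exactly the content of the statement. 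The natural repair is precisely the step your argument skips: note that $\fiint_{B(x,\epsilon)}\partial_{x_i}u = \partial_{x_i}\big[\fiint_{B(\cdot,\epsilon)}u\big](x)$ (or, as in the paper, $\Tr_\epsilon(\nabla_x u)=\nabla_x\Tr_\epsilon(u)$), combine it with the $L^2$ convergence of these averages to the respective traces, and identify the limits in the sense of distributions.

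A second, smaller issue is the Hardy-type inequality $\iint_\Omega|w|^2|t|^{d-n-1}\,dt\,dx\lesssim\|w\|_W^2$ for $w\in W_0$: it is plausible and can be proved (reduce to the one-dimensional Hardy inequality along rays in $t$ for functions in $C^\infty_c(\Omega)$ and then use the density of such functions in $W_0$ from \cite{david2017elliptic}), but it is stated neither in this paper nor verbatim in \cite{david2017elliptic}, so as written it is an unproven external ingredient rather than a "routine consequence" one can cite. The paper's proof deliberately avoids it, obtaining \eqref{limTr=g} from the $L^2$-Poincar\'e inequality on Whitney balls together with the already-quoted estimate (3.24) of \cite{david2017elliptic}; if you keep your route, you should supply the density-plus-rays proof of the Hardy inequality (or a reference to a precise statement).
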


\begin{proof}
The exact definition of trace is not always the same (but it is well known that the different definitions are equivalent, as we shall show). We prove the result in the case $d<n-1$, which has way less background, with the trace introduced in \cite{david2017elliptic}. We let the reader check that proof in the case $d=n-1$ is analogous with any reasonable notion of trace.

The trace of a function $u$ in $W$ is defined as in \cite{david2017elliptic} by
\[g(x) = \Tr(u)(x) := \lim_{\epsilon \to 0} \fiint_{B(x,\epsilon)} u \, dy \, dt.\]
The definition is valid because, $|B(x,\epsilon)\cap \Omega^c| = 0$ when $d<n-1$, but that does not matter much, because in the case $d=n-1$, we can simply extend $u \in W$ from $\R^n_+$ to $\R^n$ by symmetry. If we set $g_\epsilon(x) := \fiint_{B(x,\epsilon)} u \, dy \, dt$
then (3.24) in \cite{david2017elliptic} shows that 
\begin{equation} \label{gepstog}
\|g_\epsilon - g\|_{L^2(\R^d} \lesssim \epsilon^\alpha \|u\|_W \quad \text{ for any $\alpha \in (0,1/2)$}.
\end{equation}

We pick now a smooth nonnegative function $\theta \in C^\infty_0(\R^{d+1})$ such that $\theta \equiv 0$ outside $B(0,\frac14)$ and $\int_{\R^n} \theta = 1$. We define 
\[\theta_{x,\epsilon}(y,t) := \epsilon^{-n} \theta\Big(\frac{y-x}{\epsilon},\frac{|s|- 3\epsilon/4}{\epsilon}\Big)\]
and then
\[\Tr_\epsilon(u)(x) := \iint_{\R^n} \theta_{x,\epsilon}(y,t) u(y,t) \, dt\, dy.\]
We show that $\Tr_\epsilon(u)$ is a good substitute of $g_\epsilon$. Indeed, since $\fint \theta_{x,\epsilon} = 1$ and $\theta_{x,\epsilon}$ is supported in $B(x,\epsilon)$, we have 
\begin{multline*}|\Tr_\epsilon(u)(x) - g_\epsilon(x)| \leq \iint_{\R^n} \theta_{x,\epsilon} |u(y,t) - g_\epsilon(x)| \, dy\, dt \lesssim \fiint_{B(x,\epsilon)} |u(y,t) - g_\epsilon(x)| \, dy\, dt \\ \lesssim \left(\epsilon^2 \fiint_{B(x,\epsilon)} |\nabla u(y,t) |^2 \, dy\, dt\right)^\frac12 \lesssim \left(\epsilon^{1-d} \iint_{B(x,\epsilon)} |\nabla u(y,t) |^2 \, dy\, \frac{dt}{|t|^{n-d-1}}\right)^\frac12 
\end{multline*}
by using the $L^2$-Poincar\'e inequality and then (2.13) in \cite{david2017elliptic}. So we have by Fubini's theorem
\begin{multline}\|\Tr_\epsilon(u) - g_\epsilon\|_2^2 = \epsilon^{1-d} \int_{\R^d} \iint_{B(x,\epsilon)} |\nabla u(y,t) |^2 \, \frac{dt\, dy}{|t|^{n-d-1}}\, dx \\ \lesssim \epsilon \iint_{\R^n} |\nabla u(y,t) |^2 \, \frac{dt\, dy}{|t|^{n-d-1}} = \epsilon \|u\|_W.\end{multline}
Together with \eqref{gepstog}, we deduce
\begin{equation} \label{gepstog2}
\|\Tr_\epsilon(u) - \Tr(u)\|_{L^2(\R^d} \lesssim \epsilon^\alpha \|u\|_W \quad \text{ for any $\alpha \in (0,1/2)$}.
\end{equation}
The above inequality shows that $\Tr_\epsilon(u)$ converges to $\Tr(u)$ in $L^2$, so also in the sense of distribution. Therefore $\nabla_x \Tr_\epsilon(u)$ converges to $\nabla \Tr(u)$ in the sense of distributions. Moreover, since $\nabla_x u \in W$, we similarly have that $\Tr_\epsilon(\nabla_x u)$ converges to $\Tr(\nabla_x u)$ in the sense of distributions. But since we easily have by definition of $\Tr_\epsilon(u)$ that 
\begin{multline}
\nabla_x \Tr_\epsilon(u) = - \iint_{\R^n}  \nabla_x \Big[ \epsilon^{-n} \theta\Big(\frac{y-x}{\epsilon},\frac{|s|- 3\epsilon/4}{\epsilon}\Big) \Big] u(y,t) \, dt \, dy \\ 
= - \iint_{\R^n} \nabla_y \theta_{x,\epsilon} (y,t)  u(y,t) \, dt \, dy = \iint_{\R^n} \theta_{x,\epsilon} (y,t ) \nabla_y u(y,t) \, dt \, dy = \Tr_\epsilon(\nabla_x u),
\end{multline}
then by uniqueness of the limit, we deduce that $\Tr(\nabla_x u) = \nabla \Tr(u) = g$.

It remains to prove \eqref{limTr=g}. Observe that
\[\int_{\R^d} \fint_{\epsilon/2 < |t| \leq \epsilon} |\nabla_x u|^2 \, dt \, dx \leq \int_{\R^d} \fiint_{W(x,\epsilon)} |\nabla_x u|^2 \, dt\, dy \, dx.\]
The function $h_\epsilon(x) := \fiint_{W(x,\epsilon)} \nabla_x u \, dt\, dy$ is similar to $\Tr_\epsilon(\nabla_x u)$, so we can repeat the argument used to obtain \eqref{gepstog2}, and we have
\[ \lim_{\epsilon \to 0} \int_{\R^d} \left| \fiint_{W(x,\epsilon)} \nabla_x u \, dt\, dy - \nabla g  \right|^2  \, dx = \lim_{\epsilon \to 0} \int_{\R^d} \left| \fiint_{W(x,\epsilon)} \nabla_x u \, dt\, dy - \Tr(\nabla_x u)  \right|^2  \, dx =  0 \]
The combination of the two last computations easily implies \eqref{limTr=g}, which ends the proof of the proposition.
\end{proof}

\begin{Th}\label{PrAPA} Let $L = - \diver (|t|^{d+1-n} \A \nabla)$ be an elliptic operator that satisfies \HH$_{\lambda,\kappa}$. Then there exists a sequence of $\{\mathcal{A}^j\}_{j\in \mathbb{N}}$ such that 
\begin{enumerate}[label=(\alph*)]
\item the convergence $\A^j \to \A$ holds uniformly on compact sets of $\Omega$;
\item $\|\nabla \A^j\|_{L^\infty(\Omega)} \leq Cj\kappa^\frac12$;
\item the operator $L_j:= -\diver (|t|^{d+1-n} \A^j \nabla )$ satisfies \HH$_{\lambda,C\kappa}$.
\end{enumerate}
where in both cases, $C>0$ is a constant that depends only on $d$ and $n$.
\end{Th}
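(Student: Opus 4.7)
The approach is to construct $\A^j$ by freezing the radial dependence of $\A$ near the boundary, while leaving it unchanged far from the boundary. The requirement \eqref{defCMAx} gives, by \eqref{PpCAL}, the pointwise bound $|\nabla \A_i(x,t)| \leq C\kappa^{1/2}/|t|$ a.e.~for $i=1,2$, so that $\A_1$ and $\A_2$ admit locally Lipschitz representatives on $\Omega$. The idea is then to keep $\A_i^j = \A_i$ on $\{|t| \geq 2/j\}$ (where this pointwise bound already yields $|\nabla \A_i| \leq Cj\kappa^{1/2}$), and to replace $\A_i$ on $\{|t| \leq 1/j\}$ by a spherical average at a fixed height $\sim 1/j$ that depends only on $x$, with a smooth cut-off interpolating between the two regimes. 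This construction is performed on $\A_1, \A_2$ separately, so that the block structure \eqref{coe.afor} is automatically preserved in $\A^j$.

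More precisely, fix $\phi\in C^\infty([0,\infty),[0,1])$ with $\phi\equiv 0$ on $[0,1]$ and $\phi\equiv 1$ on $[2,\infty)$, set $\phi_j(t):=\phi(j|t|)$, and for $i=1,2$ define the boundary average
\[\bar\A_i^j(x):=\fint_{|s|=3/(2j)} \A_i(x,s)\,d\sigma(s),\]
(interpreted as $\A_i(x,3/(2j))$ when $d=n-1$). Then set $\A_i^j := \phi_j\A_i + (1-\phi_j)\bar\A_i^j$ and take $\A^j$ to be the matrix defined by \eqref{coe.afor} with $\A_1^j,\A_2^j$ in place of $\A_1,\A_2$.

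For (a), on any compact $K\subset \Omega$ we have $|t|\geq\delta$ on $K$, hence $\phi_j\equiv 1$ and $\A^j=\A$ on $K$ as soon as $j\geq 2/\delta$. For (b), I will verify $|\nabla \A_i^j|\leq Cj\kappa^{1/2}$ region-by-region: the bound is immediate on $\{|t|\geq 2/j\}$ from $|\nabla \A_i|\leq C\kappa^{1/2}/|t|$; on $\{|t|\leq 1/j\}$, $\A_i^j=\bar\A_i^j(x)$ has no $t$-dependence and its $x$-gradient is controlled by averaging the same pointwise bound on the sphere $|s|=3/(2j)$; in the transition strip $\{1/j\leq |t|\leq 2/j\}$, the key ingredient is the estimate $|\A_i(x,t)-\bar\A_i^j(x)|\leq C\kappa^{1/2}$, obtained by integrating $|\nabla \A_i|\leq Cj\kappa^{1/2}$ along paths of length $O(1/j)$ within $\{|s|\geq 1/(2j)\}$, which compensates the factor $|\nabla_t\phi_j|\leq Cj$.

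For (c), ellipticity is preserved with the same constant $\lambda$ since $\A_i^j$ is a convex combination of pointwise values of $\A_i$, each of which makes $\A$ elliptic, and the block form \eqref{coe.afor} is preserved by construction. The Carleson bound $|t||\nabla\A_i^j|\in CM(C\kappa)$ reduces, for scales $s\geq 2/j$, to the assumption $|t||\nabla\A_i|\in CM(\kappa)$, while for scales $s\leq 2/j$ we use the pointwise bound from (b) to estimate
\[\int_0^{2/j} \sup_{W(z,s)}(|t||\nabla\A_i^j|)^2\,\tfrac{ds}{s} \leq \int_0^{2/j} C(js)^2\kappa\,\tfrac{ds}{s} \leq C\kappa,\]
and the remaining contribution on the transition strip is handled analogously. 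The main obstacle is to make all three conditions compatible in the codimension-higher-than-one regime: the $t/|t|$ factor in \eqref{coe.afor} has no Lipschitz extension through the boundary, so one cannot simply mollify $\A$; instead, the construction must act on the components $\A_1,\A_2$, and the boundary average must be taken over the sphere (rather than a single point, as would work when $d=n-1$) in order to kill all radial oscillations while keeping the tangential oscillations under the sharp control $Cj\kappa^{1/2}$ required by (b).
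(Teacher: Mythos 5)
Your construction is essentially the paper's. The paper also takes a radial cutoff $\psi_j(t)=\psi(j|t|)$ and, below scale $1/j$, replaces $\A$ by an $x$-dependent, $t$-independent average at height $\sim 1/j$ (there the Whitney-box average $\fiint_{W(x,1/j)}\A$ rather than your spherical average of the components), and it verifies (a), (b), (c) exactly as you do: the pointwise bound $|t||\nabla \A|\lesssim \kappa^{1/2}$ from \eqref{PpCAL}, a difference-quotient/oscillation estimate of size $C\kappa^{1/2}$ in the transition strip to absorb the factor $|\nabla\psi_j|\lesssim j$, and the computation $\int_0^{2/j}(js)^2\kappa\,\frac{ds}{s}\lesssim \kappa$ for the Carleson norm at small scales. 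Your componentwise variant (average $\A_1,\A_2$ and reassemble through \eqref{coe.afor}) has the advantage that the block structure of $\A^j$ is preserved by construction, a point the paper treats rather casually, since averaging the full matrix does not literally keep the top-right block of the form $v\,\frac{t}{|t|}$.

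The price of the componentwise route is that your ellipticity argument, as written, is not valid: your $\A^j$ is \emph{not} a convex combination of pointwise values of $\A$, because the averaged $\bar{\A}_2^j$ gets recoupled to $\frac{t}{|t|}$ at the new point, whereas an honest average of $\A$ would carry $\fint \A_2(x,s)\frac{s}{|s|}\,d\sigma(s)$ (which by symmetry can even vanish). Moreover, the information that does survive componentwise averaging, namely $\bar{\A}_1^j\xi_x\cdot\xi_x\geq \lambda|\xi_x|^2$ and $|\bar{\A}_2^j|\leq \lambda^{-1}$, is not enough to conclude: the form $\lambda|\xi_x|^2-\lambda^{-1}|\xi_x||\xi_t|+|\xi_t|^2$ fails to be nonnegative when $\lambda$ is small. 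The conclusion is nevertheless true and the repair is short: for $|s|=3/(2j)$ choose a rotation $R_s$ of $\R^{n-d}$ with $R_s(t/|t|)=s/|s|$ and test \eqref{defellip} for $\A(x,s)$ on the vectors $(\xi_x,R_s\xi_t)$ and $(\zeta_x,R_s\zeta_t)$; since $\frac{s}{|s|}\cdot R_s\xi_t=\frac{t}{|t|}\cdot\xi_t$ and $|R_s\xi_t|=|\xi_t|$, averaging over the sphere reproduces exactly the bilinear form of your assembled matrix, so $\A^j$ satisfies \eqref{defellip} with the same $\lambda$ on $\{|t|\leq 1/j\}$, and on the transition strip it is then a genuine convex combination of $\A(x,t)$ and that matrix. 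With this fix your proof is complete and parallels the paper's.
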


\begin{Rem}\label{REM.APA}
We adapt the construction from Lemma 7.12 in \cite{kenig1993neumann} to the higher co-dimensional boundaries, that is a construction that smoothens the coefficients of $\A$ while preserving the form of the matrix, the (constant of the) Carleson measure conditions on the coefficients, and the ellipticity constant of $\A$. Note that the construction does not rely on the specific structure \eqref{coe.afor}. 
\end{Rem}

\begin{proof}
Suppose that $\psi\in C_0^\infty(\mathbb{R}_+)$ with $0\leq \psi\leq 1$ such that $\psi\equiv 1/2$ on $[2,\infty]$ and $\psi\equiv 0$ on $[0,1]$. For $j\in \mathbb N$, set $\psi_j(t):=\psi(j|t|)$. We construct the matrix $\mathcal{A}^j$ as follows:
\begin{align}\label{eqPrAP10}
    \mathcal{A}^j(x,t)=\psi_j(t)\mathcal{A}(x,t)+(1-\psi_j(t))\fiint_{W(x,\frac{1}{j})}\mathcal{A}(x',t')\, dx'\, dt'.
\end{align}
From the construction above, we observe that $\mathcal{A}^j\rightarrow \mathcal{A}$ uniformly on compact sets of $\Omega$, which is a direct consequence of the uniform convergence $\psi_j\rightarrow 1$ on compact sets. 
The fact that the structure \eqref{coe.afor} is transferred to $\A_j$ and the ellipticity bound \eqref{defellip} on $\A_j$ (with the same constant as $\A$) is an immediate consequence of the fact that each coefficient in $\A_j$ is an average of some value of the same coefficient in $\A$. 

It remains to estimate $\nabla \A^j$, we want to show that $|t||\nabla_{x,\varphi} \A^j| \in CM(C\kappa)$, $|t||\partial_r \A^j| \in CM(CM)$, and  $|\nabla \A^j| \lesssim j$. Since $\psi_j$ is $x$-independent, we have
\[  \nabla_{x,\varphi} \mathcal{A}^j(x,t)=\psi_j(t)\nabla_{x,\varphi}\mathcal{A}(x,t)+(1-\psi_j(t))\nabla_{x} \Big (\fiint_{W(x,\frac{1}{j})}\mathcal{A}(x',t')dx'dt'\Big )\\
 =:\RN{1}_1+\RN{1}_2.\]
According to \eqref{PpCAL} and the fact that $|t||\nabla_{x,\varphi} \A| \in CM(\kappa)$, we have $\|t\nabla_{x,\varphi} \mathcal{A}\|^2_{\infty}\lesssim \kappa$ and thus
\begin{equation}\label{eqPrAP18y}
\|\1_{|t|\geq 1/2j} \nabla_{x,\varphi} \mathcal{A} \|^2_{L^\infty(\Omega)}\lesssim j^2\kappa.
\end{equation}
But for $h$ small enough, we have
\begin{multline}\label{eqPrAP18}
    \frac{1}{|h|}\Big |\fiint_{W_a(x+h,\frac{1}{j})}\mathcal{A}(x',t')dx'dt'-\fiint_{W_a(x,\frac{1}{j})}\mathcal{A}(x',t')dx'dt'\Big |\\
    =\Big |\fiint_{W_a(x,\frac{1}{j})}\frac{\mathcal{A}(x'+h,t')-\mathcal{A}(x',t')}{|h|}dx'dt'\Big |\lesssim \|\1_{|t|\geq j} \nabla_x \mathcal{A}\|_{L^\infty(\Omega)} \lesssim j \kappa^{1/2},
\end{multline}
where the righthand side above is independent of $h$. By taking the limit $h\rightarrow 0$ in (\ref{eqPrAP18}), we obtain that
\[\nabla_x\Big (\fiint_{W_a(x,\frac{1}{j})}\mathcal{A}(x',t')dx'dt'\Big ) \lesssim j \kappa^{1/2} \]
and then
\begin{equation} \label{eqPrAP18z}
|\RN{1}_2| \lesssim (j \kappa^{1/2}) \1_{|t| \leq 2/j}.
\end{equation}
We deduce
\begin{equation} \label{eqPrAP18x}
\|\nabla_{x,\varphi} \A^j\|_\infty \leq \|(\nabla_{x,\varphi} \A) \1_{|t|\geq 1/j} \|_\infty + \|\RN{1}_2\|_\infty \lesssim j\kappa^{1/2}
\end{equation}
by \eqref{eqPrAP18y} and \eqref{eqPrAP18z}, and 
\begin{equation} \label{eqPrAP18w}
|t||\nabla_{x,\varphi} \A^j| \leq |t||\RN{1}| + |t||\RN{1}_2| \lesssim |t||\nabla_{x,\varphi} \A| + \frac{|t|}{j} \kappa^\frac12 \1_{|t| \leq 2/j} \in CM(C\kappa)
\end{equation}
because $|t||\nabla_{x,\varphi} \A| \in CM(\kappa)$ and a simple computation shows that $\frac{|t|}{j} \kappa^\frac12 \1_{|t| \leq 2/j} \in CM(4\kappa c_{n-d})$, where $c_{n-d}$ is the surface of the unit sphere in $\R^{n-d}$. 

We have the two desired bounds \eqref{eqPrAP18x} and \eqref{eqPrAP18w} on $\nabla_{x,\varphi}\A^j$, and it remains to prove the analogue estimates on $\partial_r\A^j$. We have that 
\begin{multline*}  |\partial_r \mathcal{A}^j|= \left| \psi_j \partial_r \mathcal{A} + \partial_r \psi_j \Big ( \A -  \fiint_{W(x,\frac{1}{j})}\mathcal{A}(x',t')dx'dt' \Big ) \right| \\
\leq \1_{|t|\geq 1/j}|\partial_r \A| + j \1_{1\leq j|t| \leq 2} \|\1_{|t|\geq 1/2j}\nabla \A\|. \end{multline*}
And since, similarly to \eqref{eqPrAP18y}, we have $\|\1_{|t|\geq j} \nabla  \mathcal{A} \|^2_{\infty}\lesssim j^2\kappa$. 
So we easily conclude that $\|\partial_r \mathcal{A}^j\|_\infty \leq j\kappa^\frac12$ and 
\[|t||\partial_r \mathcal{A}^j| \lesssim |t||\partial_r \mathcal{A}| + j|t| \1_{1\leq j|t| \leq 2} (M+\kappa)^\frac12 \in CM(C\kappa)\]
because $j|t| \1_{1\leq j|t| \leq 2} \in CM(C)$.
The lemma follows.
\end{proof}

\section{Proof of Theorem \ref{THRE2MA}: The Regularity Problem for a Reduced Class of Operators.}

\label{SThred}

In Theorem \ref{PaprRe}, we study the convergence of the solutions $u_j$ of the approximating operators constructed in Theorem \ref{PrAPA} to the solution $u$ of the initial operator.
Then we solve the Regularity problem for smooth boundary data in Theorem \ref{THRE2MA}, using the bound obtained in Theorem \ref{THMAIN1a}, the convergence of trace provided by Proposition \ref{APRP03}, and of course the convergence of solutions established in Theorem \ref{PaprRe}. 
It is important to understand that we have two convergences (one on the trace given by Proposition \ref{APRP03} and one on the solutions given by Theorem \ref{PaprRe}) and the uniqueness of the double limit is only guaranteed by the uniform convergence of the traces, which is given by \eqref{eqTHM00}. That is, we can prove the identity
\[ \lim_{j\to 0} \int_{\R^d}  |\Tr(\nabla_x u_j)|^2 \, dx = \lim_{\epsilon\to 0}  \int_{\R^d} \fint_{\epsilon/2 < |t| \leq \epsilon} |\nabla_x u|^2 \, dt \, dx\]
only when the assumptions of Theorem \ref{THMAIN1a} are satisfied.

\begin{Th}\label{PaprRe}
Let $L=-\diver (|t|^{d+1-n}\mathcal{A}\nabla)$ be a uniformly elliptic operator 
satisfying (\ref{ELLIP}). Let $\A^j$ be a sequence of matrices that converges pointwise to $\A$ and for which each $\A^j$ satisfies \eqref{ELLIP} with the same constant as $\A$. 
 
If $u$ and $u_j$ to be weak solutions in $W$ of respectively $Lu=0$ and $L_ju_j=0$ with the same trace, i.e. $u_j-u\in W_0$,  then $\|u_j-u\|_W$ converges to 0.
\end{Th}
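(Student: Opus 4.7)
The plan is to set $v_j := u_j - u \in W_0$ and derive a direct energy identity for $v_j$, from which $\|v_j\|_W \to 0$ follows by dominated convergence. Since $u$ and $u_j$ are both energy solutions (given by Lemma \ref{LaxMilgram}) and $v_j \in W_0$ is an admissible test function in both variational formulations, we have
\[\iint_\Omega \A^j \nabla u_j \cdot \nabla v_j \, \frac{dt\, dx}{|t|^{n-d-1}} = 0 = \iint_\Omega \A \nabla u \cdot \nabla v_j \, \frac{dt\, dx}{|t|^{n-d-1}}.\]
Subtracting and writing $\nabla u_j = \nabla v_j + \nabla u$, all the pure $\nabla u$ terms group into a factor of $\A^j - \A$, giving
\[\iint_\Omega \A^j \nabla v_j \cdot \nabla v_j \, \frac{dt\, dx}{|t|^{n-d-1}} = -\iint_\Omega (\A^j - \A) \nabla u \cdot \nabla v_j \, \frac{dt\, dx}{|t|^{n-d-1}}.\]

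Next I would apply the ellipticity of $\A^j$ (valid with constant $\lambda$ independent of $j$, by hypothesis) to bound the left-hand side below by $\lambda \|v_j\|_W^2$, and the Cauchy--Schwarz inequality to bound the right-hand side above. After dividing by $\|v_j\|_W$ (the case $\|v_j\|_W = 0$ being trivial), this yields
\[\|v_j\|_W \leq \lambda^{-1}\left(\iint_\Omega |(\A^j - \A) \nabla u|^2 \, \frac{dt\, dx}{|t|^{n-d-1}}\right)^{1/2}.\]

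Finally, the convergence of the right-hand side to zero follows from the Lebesgue dominated convergence theorem. The pointwise convergence $\A^j \to \A$ gives that the integrand $|(\A^j - \A)\nabla u|^2$ tends to $0$ a.e. in $\Omega$, while the uniform ellipticity \eqref{ELLIP} with common constant $\lambda$ forces $|\A^j - \A| \leq 2\lambda^{-1}$ everywhere, so the integrand is dominated by $(2\lambda^{-1})^2 |\nabla u|^2$; the integrability of this majorant against $|t|^{d+1-n} dt\, dx$ is precisely the hypothesis $u \in W$. Given the structure of the argument, there is no serious obstacle — the proof is essentially a one-line application of Galerkin-type orthogonality once we have the variational formulation of Lemma \ref{LaxMilgram} for test functions in $W_0$. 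The only delicate point worth flagging is that the statement requires no quantitative rate, which is good, because the rate at which $\|v_j\|_W \to 0$ depends on how quickly $\A^j$ approximates $\A$ on regions where $|\nabla u|$ is large, and without additional information on $u$ this cannot be controlled uniformly.
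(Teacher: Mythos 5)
Your proposal is correct and follows essentially the same argument as the paper: testing both variational formulations with $v_j=u_j-u\in W_0$, subtracting to get the energy identity with $\A^j-\A$, using uniform ellipticity and Cauchy--Schwarz, and concluding by dominated convergence with the majorant $C_\lambda|\nabla u|^2|t|^{d+1-n}$ supplied by $u\in W$. No issues to flag.
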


\begin{proof}
According to Lemma \ref{LaxMilgram}, we have
\begin{align}\label{eqAP1}
\|u\|_W + \|u_j\|_{W} \lesssim \|g\|_H
\end{align}
where $g$ is the common trace $g=\Tr(u) = \Tr(u_j)$ and the constant is independent of $j$. Since $u-u_j\in W_0$ and $Lu= L_ju_j = 0$, we have
\begin{align}\label{eqAP11}
\iint_{\Omega}\mathcal{A}\nabla u\cdot \nabla (u-u_j)\frac{dtdx}{|t|^{n-d-1}}=\iint_{\Omega}\mathcal{A}^j\nabla u_j\cdot \nabla (u-u_j)\frac{dtdx}{|t|^{n-d-1}}=0.
\end{align}
By the uniform ellipticity of matrix $\mathcal{A}^j$ and (\ref{eqAP11}), we have
\begin{multline*}
\|u-u_j\|^2_W=\iint_\Omega |\nabla (u-u_j)|^2\frac{dtdx}{|t|^{n-d-1}}\\
\lesssim \iint_{\Omega}\mathcal{A}^j\nabla (u-u_j)\cdot \nabla (u-u_j)\frac{dtdx}{|t|^{n-d-1}}
=\iint_{\Omega}\mathcal{A}^j\nabla u\cdot \nabla (u-u_j)\frac{dtdx}{|t|^{n-d-1}}\\
=\iint_{\Omega}(\mathcal{A}^j-\mathcal{A})\nabla u\cdot \nabla (u-u_j)\frac{dtdx}{|t|^{n-d-1}}.
\end{multline*}
Furthermore, we apply the Cauchy-Schwarz inequality to obtain that 
\begin{multline}\label{eqAPR0}
\|u-u_j\|^2_W\lesssim \Big (\iint_\Omega |\nabla(u-u_j)|^2\frac{dtdx}{|t|^{n-d-1}}\Big )^{1/2}\Big (\iint_\Omega |\mathcal{A}^j-\mathcal{A}|^2|\nabla u|^2\frac{dtdx}{|t|^{n-d-1}}\Big )^{1/2} \\
= \|u-u_j\|_W \Big (\iint_\Omega |\mathcal{A}^j-\mathcal{A}|^2|\nabla u|^2\frac{dtdx}{|t|^{n-d-1}}\Big )^{1/2},
\end{multline}
hence
\[\|u-u_j\|_W^2 \lesssim \iint_\Omega |\mathcal{A}^j-\mathcal{A}|^2|\nabla u|^2\frac{dtdx}{|t|^{n-d-1}}\]
Since $\A$ and $\A_j$ are bounded by a uniform constant, the functions $|\A^j - \A|^2 |\nabla u|^2 |t|^{d+1-n}$ are bounded (uniformly in $j$) by $(2\lambda)^2 |\nabla u|^2 |t|^{d+1-n}$ which is integrable on $\Omega$. 
So by the Lebesgue's dominated convergence theorem, 
\[\lim_{j\to \infty} \|u-u_j\|_W^2 \lesssim \iint_\Omega \lim_{j\to 0} |\mathcal{A}^j-\mathcal{A}|^2|\nabla u|^2\frac{dtdx}{|t|^{n-d-1}} = 0\]
since $\A^j$ converges pointwise to $\A$. The theorem follows. 
\end{proof}

\begin{Cor}\label{coMAIN1}
Under the hypotheses of Theorem \ref{PaprRe}, if $\|\wt{N}(\nabla u_j)\|_{L^2(\mathbb{R}^d)}\lesssim \|\nabla g\|_{L^2(\mathbb{R}^d)}$ for all $j\in \mathbb{N}$, where the implicit constant is independent of $j$, then
\begin{align*}
\|\wt{N}(\nabla u)\|_{L^2(\mathbb{R}^d)}\lesssim \|\nabla g\|_{L^2(\mathbb{R}^d)}.
\end{align*}
\end{Cor}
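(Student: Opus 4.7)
\smallskip

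The plan is to obtain the corollary by combining the strong convergence $u_j \to u$ in $W$ provided by Theorem \ref{PaprRe} with the lower semicontinuity of $\wt N$ under such convergence, and then applying Fatou's lemma.

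First I would observe that, for any fixed Whitney box $W(z,r)$, we have the deterministic bound
\[
\big| (\nabla u_j)_W(z,r) - (\nabla u)_W(z,r) \big| \leq \Big(\fiint_{W(z,r)} |\nabla u_j - \nabla u|^2 \, dy\, ds\Big)^{1/2} \lesssim r^{-(d+1)/2} \|u_j - u\|_W,
\]
where the last inequality uses that $|s| \approx r$ on $W(z,r)$ and the volume of $W(z,r)$ is comparable to $r^n$, so the weighted and unweighted norms are equivalent up to $r$-factors. Since $\|u_j - u\|_W \to 0$ by Theorem \ref{PaprRe}, this yields, for every fixed $(z,r) \in \R^{d+1}_+$,
\[
(\nabla u_j)_W(z,r) \longrightarrow (\nabla u)_W(z,r) \qquad \text{as } j\to \infty.
\]

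Next, I would leverage this pointwise convergence of averages to obtain lower semicontinuity of $\wt N$. Fix $x \in \R^d$ and any $(z,r) \in \Gamma(x)$. Since $(\nabla u_j)_W(z,r) \leq \wt N(\nabla u_j)(x)$ for every $j$, passing to the limit gives
\[
(\nabla u)_W(z,r) \leq \liminf_{j\to\infty} \wt N(\nabla u_j)(x).
\]
Taking the supremum over $(z,r) \in \Gamma(x)$ on the left-hand side, we conclude
\[
\wt N(\nabla u)(x) \leq \liminf_{j\to\infty} \wt N(\nabla u_j)(x) \qquad \text{for every } x\in \R^d.
\]

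Finally, I would apply Fatou's lemma to $\wt N(\nabla u_j)^2 \geq 0$ and use the hypothesis $\|\wt N(\nabla u_j)\|_2 \leq C \|\nabla g\|_2$ with a constant independent of $j$:
\[
\|\wt N(\nabla u)\|_2^2 \leq \int_{\R^d} \liminf_{j\to\infty} \wt N(\nabla u_j)(x)^2 \, dx \leq \liminf_{j\to\infty} \|\wt N(\nabla u_j)\|_2^2 \leq C^2 \|\nabla g\|_2^2,
\]
which is exactly the desired estimate. There is no real obstacle here: the proof is a soft limiting argument, and the only delicate point is the lower semicontinuity step, which is handled by exchanging the supremum over cones with the liminf in $j$ --- an exchange that is always allowed in that direction.
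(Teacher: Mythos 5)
Your proof is correct, but it takes a different route from the paper. The paper's argument stays at the level of $L^2$ norms: it writes, for each fixed $\epsilon>0$, the triangle inequality $\|\wt{N}(\nabla u|\Psi_\epsilon)\|_{2}\leq \|\wt{N}(\nabla u_j|\Psi_\epsilon)\|_{2}+\|\wt{N}(\nabla (u-u_j)|\Psi_\epsilon)\|_{2}$, controls the second term by $C\epsilon^{-1/2}\|u-u_j\|_W$ via Lemma \ref{LN<infty}, sends $j\to\infty$ using Theorem \ref{PaprRe}, and then removes the truncation $\Psi_\epsilon$ by monotone convergence. You instead avoid the cut-offs entirely: since each Whitney box sits at distance $\approx r$ from the boundary, the convergence $\|u_j-u\|_W\to 0$ gives pointwise convergence of the averages $(\nabla u_j)_W(z,r)\to(\nabla u)_W(z,r)$ (your exponent $r^{-(d+1)/2}$ is the right one), from which the lower semicontinuity $\wt N(\nabla u)(x)\leq \liminf_j \wt N(\nabla u_j)(x)$ follows by exchanging the supremum over the cone with the $\liminf$ in $j$, and Fatou finishes. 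Both arguments rest on Theorem \ref{PaprRe}; the paper's version reuses the quantitative Lemma \ref{LN<infty} and the cut-off machinery already in place, and keeps a rate at each truncation level, while yours is softer and more self-contained, giving only a qualitative limit, which is all that is needed here. I see no gap in your argument.
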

\begin{proof}
Let $\epsilon>0$. We first notice that:
\begin{multline}\label{eqCoAP39}
\|\wt{N}(\nabla u|\Psi_\epsilon)\|_{L^2}\leq \|\wt{N}(\nabla u_j|\Psi_\epsilon)\|_{L^2}+\|\wt{N}(\nabla (u-u_j)|\Psi_\epsilon)\|_{L^2} \\ \lesssim \|\nabla g\|_{L^2} + \|\wt{N}(\nabla (u-u_j)|\Psi_\epsilon)\|_{L^2}
\end{multline}
by assumption. Yet, we have $\|\wt{N}(\nabla (u-u_j)|\Psi_\epsilon)\|_2 \leq C\epsilon^{-1/2} \|u-u_j\|_W \rightarrow 0$ by Lemma \ref{LN<infty} and then Theorem \ref{PaprRe}. So by taking the limit as $j$ goes to infinity,  \eqref{eqCoAP39} becomes
\[\|\wt{N}(\nabla u|\Psi_\epsilon)\|_{L^2} \lesssim \|\nabla g\|_{L^2}\]
The corollary follows then from the monotone convergence theorem.
\end{proof}

We conclude our section with the proof of Theorem \ref{THRE2MA}

\medskip

{\noindent \em Proof of Theorem \ref{THRE2MA}.}
Pick $\lambda>0$. Let $C_0$ (that depends only on $\lambda$, $d$ and $n$) be the constant in Theorem \ref{PrAPA}, and then let $\kappa_0<1$ (that depends on $d$, $n$ and $\lambda$) be the ``kappa'' value provided by Theorem \ref{THMAIN1a} for $\lambda$. We pick then $\kappa := \kappa_0/C_0$.

According to Theorem \ref{PrAPA}, there exists a sequence of $\{\mathcal{A}^j\}_{j\in \mathbb{N}}$ such that  $\mathcal{A}^j\rightarrow \mathcal{A}$ pointwise as $j\rightarrow \infty$. Each $\mathcal{A}^j$ satisfies the following conditions,
\begin{enumerate}[label=(\alph*)]
\item $\|\nabla \A^j\|_\infty \leq Cj\kappa^{1/2}$,
\item the operator $L_j := - \diver (|t|^{d+1-n} \A^j\nabla )$ satisfies \HH$_{\lambda,\kappa_0}$.
\end{enumerate}
Let $u_j\in W$ be the solution to $L_ju_j$ with $\Tr(u_j) = \Tr(u) = g \in H$ provided by Lemma \ref{LaxMilgram}. Our choice of $\kappa_0$ is small enough to have the inequality \eqref{eqTHM00} for each $u_j$. So Theorem \ref{THMAIN1a} and then Proposition \ref{APRP03} infer that
\[\|\wt{N}(\nabla u_j)\|^2_{2}\lesssim \limsup_{\epsilon \to 0} \int_{\R^d} \fint_{\epsilon/2 \leq |s| \leq \epsilon} |\nabla_x u_j|^2 \, ds\, dy = \|\nabla g\|_{2}^2,\]
with a constant that depends only on $d$, $n$ and $\lambda$ (in particular is independent of $j$). The theorem follows now from Corollary \ref{coMAIN1}.  
\hfill $\square$

\section{Proof of Theorem \ref{THRE3MA} 
} \label{Sselfimpro}

First, the solvability of the Regularity problem is stable under Carleson perturbations. 

\begin{Th}[Theorem 2.1 in \cite{kenig1993neumann}, Theorem 1.3 in \cite{dai2021carleson}] \label{ThCarlPert}
Let 
$$L_0 = - \diver [|t|^{d+1-n}\A_0 \nabla]\quad \mbox{and}\quad L_1 = - \diver [|t|^{d+1-n}\A_1 \nabla]$$ be two uniformly elliptic operators that satisfy \eqref{ELLIP} with the same constant $\lambda$. 
Assume that: 
\begin{enumerate}
\item the Regularity problem for the operator $L_0$ is solvable in $L^{p_0}$, that is there exists a constant $C_0$ such that for any $g\in C^\infty_0(\Omega)$, the solution $u_0$ to $L_0u_0=0$ constructed as in  \eqref{defug} (or equivalently as in Lemma \ref{LaxMilgram}) verifies
\begin{align}\label{NNu<Ng0}
\|\wt{N}(\nabla u)\|_{L^{p_0}(\R^d)} \leq C_0\|\nabla g\|_{L^{p_0}(\R^d)},
\end{align}
\item the disagreement $\A_1 - \A_0$ satisfies the Carleson measure condition with the constant $M$ - i.e. $|\A_1 - \A_0| \in CM(M)$.
\end{enumerate}
Then the Regularity problem for the operator $L_{1}$ is solvable in $L^{p_1}$ for some $p_1>1$, more precisely there exists $p_1\in (1,p_0]$ and $C_1$ both depending only on $\lambda$, $d$, $n$, $p_0$, $C_0$, and $M$, such that for any $g\in C^\infty_0(\Omega)$, the solution $u_1$ to $L_1u_1=0$ constructed in  \eqref{defug} verifies
\begin{align}\label{NNu<Ng1}
\|\wt{N}(\nabla u_1)\|_{L^{p_1}(\R^d)} \leq C_1 \|\nabla g\|_{L^{p_1}(\R^d)}.
\end{align}

Furthermore, if $M>0$ is small enough (depending on $\lambda$, $n$, $p_0$, and $C_0$), then we can take $p_1 = p_0$ in \eqref{NNu<Ng1}.
\end{Th}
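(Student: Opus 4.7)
The theorem is a Carleson perturbation result for the Regularity problem, extending the classical Kenig-Pipher argument \cite{kenig1993neumann} to the higher-codimensional weighted setting. The plan is to compare the energy solutions $u_0$ and $u_1$ of the two operators with common boundary data $g \in C^\infty_0(\R^d)$, exploit the solvability hypothesis for $L_0$, and absorb the ``perturbation error'' using the Carleson bound $|\A_1 - \A_0| \in CM(M)$.

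First I would fix $g$ and let $u_0, u_1$ be the corresponding solutions produced by Lemma \ref{LaxMilgram}. The difference $w := u_1 - u_0$ lies in $W_0$ and satisfies
\begin{equation*}
L_0 w \;=\; \diver\big[|t|^{d+1-n}(\A_0-\A_1)\nabla u_1\big] \quad\text{in } \Omega, \qquad \Tr w = 0.
\end{equation*}
Representing $\nabla w$ via the Green function of $L_0$ (whose existence and standard kernel estimates follow from the elliptic theory of \cite{david2017elliptic}) and differentiating in the free variable, one obtains an averaged pointwise domination of the form
\begin{equation*}
\wt N(\nabla w)(x) \;\lesssim\; \mathcal K\big[|\A_0-\A_1|\cdot \wt N(\nabla u_1)\big](x),
\end{equation*}
where $\mathcal K$ is a Carleson-type integral operator whose mapping norm is controlled by $M^{1/2}$ when paired with $\wt N(\nabla u_1)$ via the Carleson inequality \eqref{Carleson}.

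Next I would run a good-$\lambda$ distributional inequality in the spirit of Lemma \ref{GLAM}. Set $G_{\beta,\gamma} := \{x : \mathcal M[\wt N(\nabla u_0)](x) + M^{1/2}\mathcal M[\wt N(\nabla u_1)](x) \leq \gamma \beta\}$. A Whitney decomposition of $\{\wt N(\nabla u_1) > \beta\}$, combined with the comparison identity applied on each sawtooth region (where the hypothesis \eqref{NNu<Ng0} provides control of $\wt N(\nabla u_0)$), yields for an appropriate $\eta \in (0,1)$ the bound
\begin{equation*}
|\{\wt N(\nabla u_1) > \beta\} \cap G_{\beta,\gamma}| \;\leq\; C\gamma^{p_0}\, |\{\mathcal M[\wt N(\nabla u_1)] > \eta\beta\}|.
\end{equation*}
Integrating against $\beta^{p-1}d\beta$ and using the $L^p$-boundedness of $\mathcal M$ (hence the restriction $p>1$) gives $\|\wt N(\nabla u_1)\|_{p_1} \leq C(\|\nabla g\|_{p_1} + M^{1/2}\|\wt N(\nabla u_1)\|_{p_1})$. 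When $M$ is sufficiently small the last term is absorbed and one obtains \eqref{NNu<Ng1} at $p_1 = p_0$; for general $M$ a standard extrapolation argument closes the inequality at a smaller exponent $p_1 \in (1, p_0]$ depending quantitatively on $M$, $\lambda$, $p_0$, and $C_0$.

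The main obstacle is the transfer of the \emph{global} solvability hypothesis for $L_0$ to the \emph{localized} estimates on sawtooth regions required by the good-$\lambda$ argument; this self-improvement uses the scale and translation invariance of the setting together with Caccioppoli-type energy bounds from \cite{david2017elliptic}. A secondary, specifically higher-codimensional subtlety is that $\mathcal K$ acts on the full gradient $\nabla u_1$ including the angular and radial components --- the cylindrical decomposition from Section \ref{Sangder}, combined with Lemma \ref{LSLNF} and Lemma \ref{LENSTP}, ensures that $\wt N(\nabla u_1)$ is controlled uniformly across all components, which is what makes the Carleson absorption step effective.
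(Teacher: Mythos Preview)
The paper does not prove this theorem at all; it is quoted with attribution to \cite{kenig1993neumann} (codimension~1) and \cite{dai2021carleson} (the authors' companion paper for the higher-codimensional case) and is used as a black box in Section~\ref{Sselfimpro}. So there is no in-paper proof to compare your proposal against.

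That said, your sketch has a genuine gap. Your final paragraph appeals to the cylindrical decomposition of Section~\ref{Sangder} and to Lemmas~\ref{LSLNF} and~\ref{LENSTP} to control $\wt N(\nabla u_1)$ ``uniformly across all components.'' Those results are proved only for operators satisfying the structural hypothesis \HH{} (matrices of the special form \eqref{coe.afor}), whereas Theorem~\ref{ThCarlPert} is stated for \emph{arbitrary} uniformly elliptic $\A_0,\A_1$ with no structural restriction --- indeed the whole point of invoking it in Section~\ref{Sselfimpro} is to pass from operators satisfying \HH{} to operators that do not. So that part of your argument is circular or inapplicable. The Carleson perturbation theory of \cite{dai2021carleson} must be, and is, developed independently of the $S\leq N$ / $N\leq S$ machinery of this paper.

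A second issue is that the sentence ``representing $\nabla w$ via the Green function and differentiating in the free variable'' followed by a pointwise $\wt N(\nabla w)\lesssim \mathcal K[\,\cdot\,]$ bound compresses the hard part of the proof into an undefined operator $\mathcal K$. In the Kenig--Pipher framework (and its extension in \cite{kenig1995neumann,dai2021carleson}) the perturbation is handled through $H^1$--BMO type duality and localization arguments for the adjoint Dirichlet problem, not by a direct gradient-of-Green-function pointwise estimate; the latter would require regularity of $\nabla_X G$ that is not available for merely bounded measurable coefficients.
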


The second result shows that any operator as in Theorem \ref{THRE3MA} can be compared to a Carleson perturbation of an operator satisfying \HH.

\begin{Prop}\label{PPLL1J}
Let $L_0=-\diver[|t|^{d+1-n}\B \nabla]$ be such that $\B$ satisfies \eqref{ELLIP} and can be written as a block matrix in the form
 \[\B = \begin{pmatrix} B_1 & B_2 \frac{t}{|t|} \\ \frac{t^T}{|t|}B_3 & b_4I \end{pmatrix}\]
 where $b_4I$ is the product of the identity matrix of order $n-d$ with a scalar function, $B_2$ is a $d$-dimensional vertical vertor \footnote{Recall that $t$ is a horizontal vector, so $B_2 \frac{t}{|t|}$ is a valid matrix product.}, $B_3$ is a $d$-dimensional horizontal vector \footnote{Since $t^T$ is a vertical vector, so $\frac{t^T}{|t|}B_3$ is a $(n-d)\times d$-matrix.}, and
 \[|t||\nabla B_1| + |t||\nabla B_2| + |t||\nabla B_3| + |t||\nabla b_4| \in CM(\kappa).\]

 If $\kappa>0$ is small enough, there exists a bi-Lipschitz change of variable $\rho$ from $\overline{\Omega}$ to $\overline{\Omega}$ such that 
\begin{enumerate}
\item $\rho(x) = x$ for any $x\in \R^d = \partial \Omega$;
\item there exists $C_\lambda$ such that for any weak solution $u$ to $L_0u=0$, the function $u\circ \rho$ is a weak solution to $L_\rho (u\circ \rho) = 0$ where the operator $L_\rho$ satisfies \eqref{ELLIP} with constant $C_\lambda$ and can be written as $$L_\rho = -\diver[|t|^{d+1-n}(\B_\rho + \C_\rho) \nabla]$$ 
where $\C_\rho \in CM(2\kappa)$ and $L_{\rho,0}:= -\diver[|t|^{d+1-n}\B_\rho \nabla]$ satisfies \HH$_{\lambda/2,2\kappa}$.
\end{enumerate}
Moreover, if $\B$ is symmetric, then $\B_\rho$ and $\C_\rho$ are symmetric.
\end{Prop}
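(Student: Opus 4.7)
My plan is to construct $\rho$ as a composition $\rho = \rho_2 \circ \rho_1$ of two changes of variables, each preserving the cylindrical symmetry of $\Omega$ (so that the pulled back matrix retains the block structure) and each fixing $\partial\Omega = \R^d$. The first change normalizes the bottom-right block to the identity, and the second kills the bottom-left block; everything that does not fit the \HH{} form will be absorbed into the Carleson perturbation $\C_\rho$.

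\textbf{Step 1: Radial normalization of $b_4$.} Set $\bar b_4(x,r) := \fint_{\mathbb{S}^{n-d-1}} b_4(x,r\theta)\,d\sigma(\theta)$. The spherical Poincar\'e inequality of Proposition~\ref{PDECC} combined with $|t||\nabla b_4|\in CM(\kappa)$ shows that $b_4 - \bar b_4$ is in $CM(C\kappa)$, and can therefore be absorbed into $\C_\rho$ at the end. By ellipticity, $\lambda\leq \bar b_4\leq \lambda^{-1}$, so the ODE $\partial_r \phi(x,r) = 1/\sqrt{\bar b_4(x,\phi(x,r))}$ with $\phi(x,0) = 0$ admits a unique positive solution that is bi-Lipschitz in $r$ with constants depending only on $\lambda$. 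I set $\rho_1(x,t) := (x,\phi(x,|t|)\,t/|t|)$; this fixes $\partial\Omega$, is globally bi-Lipschitz on $\overline{\Omega}$, and a direct computation of the pulled back matrix shows that its bottom-right block becomes $I$ modulo a Carleson error controlled by $|t||\nabla b_4|\in CM(\kappa)$ via \eqref{PpCAL}.

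\textbf{Step 2: Killing the lower-left block.} After the first step the pulled back matrix has the form $\B^{(1)} + \C^{(1)}$, where $\B^{(1)}$ retains the block form of $\B$ with bottom-right equal to $I$ and bottom-left of the form $(t^T/|t|) B_3^{(1)}$ for some $d$-dimensional horizontal vector $B_3^{(1)}$ still satisfying $|t||\nabla B_3^{(1)}|\in CM(C\kappa)$, and $\C^{(1)}\in CM(C\kappa)$. Replacing $B_3^{(1)}$ by its spherical average $\bar B_3^{(1)}(x,r)$ (the difference again going into $\C_\rho$), define $\vec h(x,r) \in \R^d$ by $\partial_r \vec h(x,r) = -\bar B_3^{(1)}(x,r)$ with $\vec h(x,0) = 0$. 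Then $\rho_2(x,t) := (x+\vec h(x,|t|), t)$ fixes $\partial\Omega$; by \eqref{PpCAL}, $|\vec h(x,r)|\lesssim r\kappa^{1/2}$ and $|\nabla_x\vec h|\lesssim \kappa^{1/2}$, so $\rho_2$ is bi-Lipschitz and close to the identity when $\kappa$ is small. A direct computation shows that the bottom-left block of the new matrix vanishes up to a Carleson perturbation arising from the difference $B_3^{(1)}-\bar B_3^{(1)}$ and from quadratic terms in $\vec h$.

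\textbf{Verification and obstacle.} Composing $\rho = \rho_2\circ\rho_1$ and applying the standard pullback law for a divergence-form operator, I identify $\B_\rho$ as the \HH-compatible piece of the resulting matrix and collect everything else into $\C_\rho$. Uniform ellipticity with constant $\lambda/2$ is automatic once $\kappa$ is small enough since $\rho$ is then close to the identity in the Lipschitz sense; preservation of symmetry is automatic from the transformation law; and the Carleson estimates $|t|(|\nabla (\B_\rho)_1|+|\nabla (\B_\rho)_2|)\in CM(2\kappa)$ and $|\C_\rho|\in CM(2\kappa)$ are obtained by tracking each contribution and using repeatedly that the product of a bounded function with a $CM(\kappa)$ function is in $CM(C\kappa)$. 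The main technical obstacle is precisely this Carleson bookkeeping for the composite $\rho_2 \circ \rho_1$: every entry of the pulled back matrix must be split into a piece inheriting the DKP-like regularity of $\B$ (forming $\B_\rho$) and a Carleson-small remainder, and the absolute constants $C$ accumulated in the intermediate steps must be tamed to land below $2\kappa$, which is exactly the role played by the ``$\kappa$ small enough'' hypothesis.
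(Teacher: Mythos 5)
Your overall architecture -- compose a radial rescaling that normalizes the bottom-right block with a horizontal shear that kills the bottom-left block, and dump every non-\HH{} remainder into a Carleson perturbation -- is the same as the paper's, but the concrete maps you build do not do what you claim. First, the normalization ODE is wrong: with the transformation law $\A_\rho=\big(\tfrac{\dist(\rho(X),\R^d)}{|t|}\big)^{d+1-n}\det(\Jac_\rho)\,\Jac_\rho^{-T}(\B\circ\rho)\Jac_\rho^{-1}$, the map $\rho_1(x,t)=(x,\phi(x,|t|)\,t/|t|)$ turns the block $b_4I$ into a block whose radial entry is $(b_4\circ\rho_1)/\partial_r\phi$ and whose angular entries are $\partial_r\phi\,(b_4\circ\rho_1)\,(|t|/\phi)^2$, up to Carleson errors; so one needs $\partial_r\phi\approx \phi/|t|\approx b_4$, not $\partial_r\phi=1/\sqrt{\bar b_4}$. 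With your choice the block becomes roughly $b_4^{3/2}I$, and $b_4^{3/2}-1$ is of size one (only $|t|\nabla b_4$ is small, not $b_4-1$), so it cannot be absorbed into $\C_\rho$. Second, the shear in your Step 2 has the wrong sign: the bottom-left block transforms into $\frac{t^T}{|t|}\big(B_3^{(1)}-b_4^{(1)}v\big)$ when the shear is $x\mapsto x+|t|v^T$, so you need $\vec h\approx +\,r\,\bar B_3^{(1)}$; with $\partial_r\vec h=-\bar B_3^{(1)}$ you double the block instead of cancelling it. Also, $|\vec h(x,r)|\lesssim r\kappa^{1/2}$ is false: $B_3$ is only bounded by $\lambda^{-1}$; it is its gradient, not $B_3$ itself, that is $\kappa^{1/2}$-small.

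The more serious gap is the bi-Lipschitz claim for your ODE-defined maps. You need $|\nabla_x\phi|$ and $|\nabla_x\vec h|$ bounded (and in fact $<1$ for invertibility of the shear in $x$), but the hypothesis $|t||\nabla B_3|+|t||\nabla b_4|\in CM(\kappa)$ only gives, via \eqref{PpCAL}, the pointwise bounds $|\nabla_x b_4|,|\nabla_x B_3|\lesssim \kappa^{1/2}/|t|$; integrating from the boundary then yields $|\nabla_x\vec h(x,r)|\le\int_0^r|\nabla_x\bar B_3^{(1)}(x,s)|\,ds$, which can diverge logarithmically (for instance $|\nabla_x B_3|\sim \kappa^{1/2}/\big(s\log(1/s)\big)$ is compatible with the $CM(\kappa)$ hypothesis yet makes this integral infinite), and the same problem occurs for $\nabla_x\phi$ in Step 1. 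So your $\rho_1,\rho_2$ need not be Lipschitz at all, and no Carleson bookkeeping afterwards can repair that. This is exactly why the paper's proof never integrates in $r$: it uses the pointwise maps $\rho_v(x,t)=(x+|t|\,v(x,t),t)$ with $v=B_3/b_4$ and $\rho_h(x,t)=(x,t\,b_4(x,t))$, whose Jacobians are an explicit constant-in-$t$ block matrix plus $\mathcal O(|t||\nabla B|)$, so bi-Lipschitzness follows from \eqref{PpCAL} for $\kappa$ small, and the only error terms are $|t||\nabla B|$ and $|B\circ\rho-B|$ (the latter controlled by the supremum of $|t||\nabla B|$ over Whitney regions), both Carleson by hypothesis. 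If you replace your ODE definitions by these pointwise choices (or by $\phi(x,r):=r\,\bar b_4(x,r)$ and $\vec h(x,r):=r\,\bar B_3^{(1)}(x,r)$), your two steps reduce essentially to the paper's argument and the remaining bookkeeping goes through.
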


\begin{Rem}
The assumption that $\kappa$ is small can actually be removed, but will make the proof longer. The proposition is a variant of the method presented in \cite{feneuil2021change}, and we refer any reader that wants to remove the condition on the smallness of $\kappa$ to the later article.
\end{Rem}

\begin{proof} As we just said, the proposition is a variant of the result given in \cite{feneuil2021change}. We will try to keep it light and refer to  \cite{feneuil2021change} for the details that we skipped. 

\medskip

{\bf Step 1: Change of variables to cancel the bottom left corner of $\B$.}
We write $v$ for the $d\times n$ matrix function $B_3/b_4$. We define $\rho_v$ as 
\[\rho_v:= \rho(x+ |t|v(x,t),t).\]
Observe that $\rho_v$ maps $\Omega$ to $\Omega$ and is the identity on $\R^d$. Its Jacobian matrix is
\[\Jac_v = \begin{pmatrix} I + |t|\nabla_x v & 0 \\ \frac{t^T}{|t|} v+ |t| \nabla_t v & I\end{pmatrix} = \begin{pmatrix} I & 0 \\ \frac{t^T}{|t|} v & I\end{pmatrix} + \mathcal O(|t||\nabla B_{3,4}|)\]
where $\mathcal O(h)$ denotes a quantity bounded by $Ch$, and $B_{3,4}$ denotes the the couple $(B_3,b_4)$. We have $|t||\nabla B_{3,4}|\in CM(\kappa)$, which implies $|t||\nabla B_{3,4}| \leq C \kappa$ by \eqref{PpCAL}, and since $\kappa$ is small, we deduce that $\Jac_v$ is invertible and $|\Jac_v| + |\Jac^{-1}_v|$ are bounded by $1.1\lambda$. In addition
\[\Jac_v^{-1} = \begin{pmatrix} I & 0 \\ -\frac{t^T}{|t|} v & I\end{pmatrix} + \mathcal O(|t||\nabla B_{3,4}|) \ \text{ and } \ \det(\Jac_v) = 1 + \mathcal O(|t||\nabla B_{3,4}|). \]

We define the conjugate\footnote{By conjugate, we mean that $L_v(u\circ \rho_v) = 0$ whenever $L u = 0$.} operator $L_v = - \diver [|t|^{d+1-n}\A_v \nabla]$ where
\begin{equation} \label{defArho}
\A_v := \left(\frac{\dist(\rho_v(x,t),\R^d)}{|t|}\right)^{d+1-n} \det(\Jac_v) \Jac^{-T}_v (\B\circ \rho_v) \Jac^{-1}_v.
\end{equation}
We check that $\dist(\rho_v(x,t),\R^d) = |t|$ and hence
\begin{multline*}\A_v = \begin{pmatrix} *  &[B_2 \circ \rho_v - (b_4 \circ \rho_v) v^T]\frac{t}{|t|}   \\ \frac{t^T}{|t|}  [B_3\circ \rho_v - (b_4 \circ \rho_v) v] & (b_4 \circ \rho_v) I \end{pmatrix} + \mathcal O(|t||\nabla B_{3/4}| ) \\
= \begin{pmatrix} * & [B_2 - (B_3)^T] \frac{t}{|t|}  \\ 0 & b_4 I \end{pmatrix} + \mathcal O(|t||\nabla B_{1,2,3,4}|+ |B_{1,2,3,4} \circ \rho_v - B_{1,2,3,4}|)
\end{multline*}
because $B_3 - b_4v = 0$ with our choice of $v$. We did not compute the upper left corner in the matrix above to lighten the notation, but we can have $B_1 - B_2B_3/b_4$. We write $\B^v$ for the matrix in the right-hand side above which has 0 in the bottom left corner, and $\C^v$ for $\A^v - \B^v$. We have that 
\[|\C^v| \lesssim |t||\nabla B_{1,2,3,4}| + |B_{1,2,3,4} \circ \rho_v - B_{1,2,3,4}| \in CM(C\kappa)\]
because $|B_{1,2,3,4} \circ \rho_v - B_{1,2,3,4}|(x,t)$ is bounded by the supremum of $|t||\nabla B_{1,2,3,4}|$ in a Whitney region around $(x,t)$, so satisfies Carleson estimate as long as $|t||\nabla B_{1,2,3,4}|$ does. The matrix $\B^v$ has the form
 \[\B^v = \begin{pmatrix} B_1^v & B_2^v \frac{t}{|t|} \\ \frac{t^T}{B_3^v} & b_4^vI  \end{pmatrix}  = \begin{pmatrix} B_1^v & B_2^v \frac{t}{|t|} \\ 0 & b_4I  \end{pmatrix}\]
 and the Carleson bound  $|t||\nabla B^v_{1,2,3,4}|\in CM(C\kappa)$ is consequence of the fact that the coefficients of $\B^v$ are product, quotient, difference, and sums of coefficients of $\B$ (we actually have $|t||\nabla B^v_{1,2,3,4}| \leq C |t||\nabla B_{1,2,3,4}|$).

\medskip

{\bf Step 2: Change of variables to reduce the bottom right corner of $\B_v$ to $I$.} The strategy is very similar to what we did in Step 1. Set $h :=  b_4$ and define $\rho_h$ as 
\[\rho_h:= \rho(x,th(x,t)).\]
As before, observe that $\rho_h$ maps $\Omega$ to $\Omega$ and is the identity on $\R^d$. Its Jacobian matrix is
\[\Jac_h = \begin{pmatrix} I & t\nabla_x h \\ 0 & hI + (\nabla_t h)t\end{pmatrix} = \begin{pmatrix} I & 0 \\ 0 & hI\end{pmatrix} + \mathcal O(|t||\nabla b_4|).\]
Since $\kappa$ is small (depending only on $d$, $n$ and $\lambda$), the matrix $\Jac_h$ is invertible and $|\Jac_h| + |\Jac^{-1}_h|$ are bounded by a constant that depends only on $\lambda$. In this case, we have
\[\Jac_h^{-1} = \begin{pmatrix} I & 0 \\ 0 & h^{-1}\end{pmatrix} + \mathcal O(|t||\nabla b_4|) \ \text{ and } \ \det(\Jac_v) = h^{n-d} + \mathcal O(|t||\nabla b_4|). \]

Note that $\dist(\rho_h(x,t),\R^d) = |t|h(x,t)$. Therefore,  the conjugate operator $L_h = - \diver [|t|^{d+1-n}\A_v \nabla]$ of $L_v$ by $\rho$ is such that
\begin{multline*}\A_h :=  \left(\frac{\dist(\rho_h(x,t),\R^d)}{|t|}\right)^{d+1-n} \det(\Jac_h) \Jac^{-T}_h (\A_v \circ \rho_h) \Jac^{-1}_h \\
= \begin{pmatrix} *  & [(B_2 - B_3^T)\circ \rho_h] \frac{t}{|t|} \\ 0 & h (b_4 \circ \rho_h) I \end{pmatrix} + \mathcal O(|t||\nabla b_4| + |\C_v \circ \rho_h|) \\
= \begin{pmatrix} *  & [B_2 - B_3^T] \frac{t}{|t|} \\ 0 & I \end{pmatrix} + \mathcal O(|t||\nabla b_4| + |\C_v \circ \rho_h| + |\B \circ \rho_h - \B|)
\end{multline*}
with our choice for $h$. We denote the matrix in the right-hand side above as $\B_h$, and $\C_h$ is $\A_h - \B_h$. The matrix $\B_h$ has the desired form, and $L_{\rho,0}:= - \diver[|t|^{d+1-n} \B_\rho \nabla]$ satisfies \HH$_{C_\lambda,C_\lambda\kappa}$. By definition 
\[|\C_h| \lesssim  |t||\nabla b_4| + |\C_v \circ \rho_h| + |\B \circ \rho_h - \B|\]
and the right-hand side above easily satisfies the Carleson measure condition with constant $C\kappa$ for the same reasons as in Step 1 (and the fact that bi-Lipschitz changes of variable preserve the Carleson measure condition).

\medskip

{\bf Conclusion.} The change of variables is $\rho := \rho_v \circ \rho_h$, which is bi-Lipschitz because $\rho_v$ and $\rho_h$ are bi-Lipschitz. The conjugate of $L$ by $\rho$ is $L_h$, and the ellipticity constant of $L_\rho$ is controlled by the ellipticity constant of $L$ (because Jacobian matrices of $\Jac_v$, $\Jac_h$, and their inverses are bounded by constants that depends only on $\lambda$). The top left corner of $\B_h$ does not really matter, but one can check that we (can) have
\[\B_h = \begin{pmatrix} b_4B_1 - B_2B_3  & [B_2 - B_3^T]\frac{t}{|t|} \\ 0 & I \end{pmatrix}\]
so $\B_h$ easily satisfies \HH$_{C_\lambda,C_\lambda\kappa}$. At last, notice that all our operations on the coefficients preserve the symmetry of matrix coefficients, which means that $\B_h$ and $\C_h$ are symmetric as long as $\B$ is symmetric. The proposition follows.
\end{proof}

We are now ready for the proof of our main theorem.

\medskip

\noindent {\em Proof of Theorem \ref{THRE3MA}.}
We consider the elliptic operator $L' = - \diver \B \nabla$, then we construct from it the change of variable from Proposition \ref{PPLL1J}. The conjugated operator of $L'$ by $\rho$ is in the form $L'_\rho:= - \diver [|t|^{d+1-n}(\B_\rho + \C_\rho) \nabla ]$, where $L'_{\rho,0}:= - \diver [|t|^{d+1-n}\B_\rho \nabla]$ satisfies \HH$_{C_\lambda,C_\lambda \kappa}$ and $\C_\rho \in CM(C\kappa)$. Therefore, if $\kappa:=\kappa(\lambda,n)$ is small enough we can apply Theorem \ref{THRE2MA} to say that the Regularity problem for the operator $L'_{\rho,0}$ is solvable in $L^2$ . 

The operator $L'_{\rho}$ is a small Carleson perturbation of $L'_{\rho,0}$, so Theorem \ref{ThCarlPert} gives that the Regularity problem for the operator $L'_{\rho}$ is solvable in the same space $L^{2}$. Since $L'$ and $L'_{\rho}$ are the same operator up to a bi-Lipschitz change of variable, the Regularity problem is also solvable for $L'$ in $L^{2}$. Now, $L$ is a small Carleson perturbation of $L'$, so we use Theorem \ref{ThCarlPert} again to obtain that the Regularity problem is still solvable for $L$ in $L^{2}$.  
\hfill $\square$

\section{A complement of a Lipschitz graph}\label{sLip}

The definition of cones, Whitney regions, non-tangential maximal function, and other objects given in the introduction was adapted to the that fact that the domain $\R^n\setminus \R^d$ is the product space $\R^d \times (\R^{n-d} \setminus \{0\})$. But we did so only for convenience, and equivalent definition can be given in general spaces.

If the domain is more general $\Omega$, we define the cones in $\Omega$ with vertex in $x\in \partial \Omega$ as
\begin{equation} \label{defconeG} 
\Gamma(x) := \{X\in \Omega, |X-x| < 2 \dist(X,\partial \Omega)\}.
\end{equation}
We can change the `aperture' of the cone by replacing the value 2 by any $\alpha >0$. The Whitney box $W(X)$ is defined as
\[W(X) := B(X,\dist(X,\partial\Omega)/2).\]
The definition of the cones and Whitney boxes given here are just example, as many variants exist. From there, we define the averaged non-tangential maximal function as
\[\wt N(u)(x) := \sup_{X\in \Gamma(x)} \left(\iint_{W(X)} |u(Y)|^2 dY\right)^\frac12.\]
From now on, we need a doubling measure on $\partial \Omega$, that we call $\sigma$. When $\partial \Omega$ is the graph of a Lipschitz function as in Corollary \ref{cLip} - $\sigma$ will simply be the $d$-dimensional Hausdorff measure. The Carleson measure condition, that is the substitute of \eqref{defCMM} is
\[f\in CM(M) \Longleftrightarrow \sup_{x\in \partial \Omega, r>0} \iint_{B(x,r) \cap \Omega} \sup_{W(X)} |f|^2 \frac{dX}{\dist(X,\partial \Omega)} \lesssim M \sigma(B(x,r) \cap \partial \Omega).\]
Moreover, we say that the Regularity problem is solvable in $L^p$ if, for any $g\in C_0^\infty(\R^d)$, we have
\[\|\wt N(\nabla u)\|_{L^p(\partial \Omega, \sigma)} \leq C \|\nabla_{\partial \Omega} g\|_{L^p(\partial \Omega,\sigma)}.\]
The gradient $\nabla_{\partial \Omega}$ is a gradient on $\partial \Omega$, which needs to be defined. In the simple case where $\partial \Omega$ is the graph of a Lipschitz function, as in Corollary \ref{cLip}, $\nabla_{\partial \Omega}$ is simply the classical gradient (that can be defined almost everywhere).

\bigskip

\noindent {\it{Proof of Corollary~\ref{cLip}. }}
We can construct a bi-Lipschitz change of variable $\rho$ (with Lipschitz constants close to 1) such that, for any weak solution $u$ to $L_\varphi u = 0$ in $\Omega_\varphi$, the function $u\circ \rho$ is a solution to $L_\rho (u\circ \rho) = 0$ in $\R^n \setminus \R^d$, where the operator $L_\rho$ satisfies the assumptions of Theorem \ref{THRE3MA}. The construction of such change of variable, and the properties of the conjugate operator $L_\rho$ are the main purpose of the article \cite{david2019dahlberg}.

The fact that $\rho$ is bi-Lipschitz and Theorem \ref{THRE3MA} entail then that 
\[ \|\wt N([\nabla u_g] \circ \rho)\|_{L^2(\R^d)}  \leq 2 \|\wt N(\nabla [u_g \circ \rho])\|_{L^2(\R^d)} \leq  C \|\nabla [g\circ \rho]\|_{L^2(\R^d)}.\]
The fact that $\|\wt N(v \circ \rho)\|_{L^2(\R^d)} \approx \|\wt N(v)\|_{L^2(\rho(\R^d),\sigma)}$ is a consequence of the fact that $\rho$ change the shape the regions $W(z,r)$, but preserves the fact that they are Whitney regions. Similarly, $\rho$ change the shape of the cones, but not the fact that they are the union of Whitney regions for a same point at all scale - i.e. a weaker version of ``cones'' variant to \eqref{defconeG} - and we know from \cite[Chapter II, § 2.5.1]{stein2016harmonic} and the various definitions of cones does not change the $L^p$-boundedness of the non-tangential maximal functions $N$ and $\wt N$. 
The fact that $\rho$ is bi-Lipschitz also infers the equivalence $\|\nabla [g\circ \rho]\|_{L^2(\R^d)} \approx \|\nabla g\|_{L^2(\partial \Omega_\varphi)}$. The corollary follows.
\ep

\end{document}